\documentclass[a4paper]{amsart}
\usepackage[utf8]{inputenc}
\usepackage[T1]{fontenc}
\usepackage{lmodern, enumerate}
\usepackage{amssymb,amsxtra}
\usepackage{amscd}
\usepackage[mathscr]{euscript}
\usepackage{stmaryrd}
\usepackage[all]{xy}
\usepackage{xcolor}

\usepackage{nicefrac,mathtools}
\usepackage{microtype}
\usepackage{tikz-cd}
\usepackage{tkz-graph}
\usetikzlibrary{arrows}
\usepackage{comment}
\usetikzlibrary{positioning}
\usepackage[pdftitle={Inverse Semigroups},
 pdfauthor={Pere Ara, Alcides Buss, Ado Dalla Costa},
 pdfsubject={Mathematics}
]{hyperref}
\usepackage[lite]{amsrefs}
\renewcommand*{\MR}[1]{ \href{http://www.ams.org/mathscinet-getitem?mr=#1}{MR #1}}

\numberwithin{equation}{section}
\theoremstyle{plain}
\newtheorem{theorem}[equation]{Theorem}
\newtheorem{lemma}[equation]{Lemma}
\newtheorem{proposition}[equation]{Proposition}
\newtheorem{corollary}[equation]{Corollary}
\theoremstyle{definition}
\newtheorem{definition}[equation]{Definition}

\newtheorem{notation}[equation]{Notation}
\theoremstyle{remark}
\newtheorem{remark}[equation]{Remark}
\newtheorem{example}[equation]{Example}

\DeclareMathOperator{\Pol}{\mathrm{Pol}}



 %
 %
\newcommand*{\IS}{\mathcal{S}} %

\newcommand{\act}{\curvearrowright}

\newcommand*{\FIM}{\mathfrak{F_{im}}}

\newcommand*{\Z}{\mathbb Z}

\newcommand*{\F}{\mathbb F} 
\newcommand*{\FC}{\mathbb F_{C}} 

\newcommand*{\LI}{\mathcal{LI}} %

\newcommand*{\Group}{\Gamma} 
\newcommand*{\G}{\mathcal{G}} 

\newcommand*{\ol}{\overline}

\newcommand*{\FG}{\mathfrak{F_{gr}}}

\newcommand*{\FIS}{\mathfrak{F_{is}}}

\newcommand*{\FIC}{\mathfrak{F_{ic}}}


\newcommand{\e}{\mathbf{e}}

\newcommand{\ecb}{\mathbf{ecb}}



\newcommand*{\bcomment}[1]{{\color{blue}#1}}

\newcommand*{\nb}{\nobreakdash}

\renewcommand*{\L}{\mathcal L}
\newcommand*{\Co}{\mathcal C}
\newcommand*{\Le}{\mathcal L}
\newcommand*{\CoV}{\mathcal C V}
\newcommand*{\LeV}{\mathcal L V}

\newcommand*{\isoX}{X^{\mathrm{iso}}}
\newcommand*{\isoisoX}{X^{\mathrm{iso}}_0}

\newcommand*{\isoOmega}{\Omega^{\mathrm{iso}}}
\newcommand*{\isoisoOmega}{\Omega^{\mathrm{iso}}_0}

\newcommand*{\isoOmegaEC}{\Omega(E,C)^{\mathrm{iso}}}
\newcommand*{\isoisoOmegaEC}{\Omega(E,C)^{\mathrm{iso}}_0}

\newcommand*{\isoG}{\G^{\mathrm{iso}}}

\newcommand{\soc}{\mathrm{soc}}


\newcommand{\TUM}{\mathcal T_U \mathcal M}


\newcommand{\Cfin}{C^{\mathrm{fin}}}
\newcommand{\Cinf}{C^{\mathrm{inf}}}

\newcommand*{\E}{\mathcal E}
\newcommand*{\Y}{\mathcal X}%

\newcommand*{\cbY}{\mathcal X_{\mathrm cb}}%

\newcommand*{\YY}{\mathcal Y}%
%
%
%
%






\newcommand{\SInf}{E^{0,r}_{\infty}}

\addtolength{\textwidth}{8.67mm}




\newcommand*{\sbe}{\subseteq} 

\newcommand*{\onto}{\twoheadrightarrow}
\newcommand*{\red}{\text{red}}
\renewcommand*{\max}{\mathrm{max}}
\newcommand*{\Bmax}{\text{B-max}}
\newcommand*{\NBmax}{\text{NB-max}}

\newcommand*{\ab}{\mathrm{ab}}

\newcommand*{\tight}{\mathrm{tight}}

\newcommand*{\dual}[1]{\widehat{#1}}

\newcommand{\Etight}{\dual\E_\tight}

\newcommand*{\Ninf}{\mathcal N ^{\mathrm{inf}}}

\newcommand*{\Nfin}{\mathcal N ^{\mathrm{fin}}}




\begin{document}
\title[The Leavitt inverse semigroup of a separated graph]{The Leavitt inverse semigroup of a separated graph}

\author{Pere Ara}
\address{Departament de Matem\`atiques, Edifici Cc, Universitat Aut\`onoma de Barcelona, 08193 Cerdanyola del Vall\`es (Barcelona), Spain, and}
\email{pere.ara@uab.cat}

\author{Alcides Buss}
\address{Departamento de Matem\'atica\\
  Universidade Federal de Santa Catarina\\
  88.040-900 Florian\'opolis-SC\\
  Brazil}
\email{alcides.buss@ufsc.br}

\author{Ado Dalla Costa}
\address{Setor de Matem\'atica do Departamento de Administração Empresarial \\
	Universidade Estadual de Santa Catarina\\
	88.035-001 Florian\'opolis-SC\\
	Brazil}
 \email{adodallacosta@hotmail.com \\ ado.costa@udesc.br}

\dedicatory{Dedicated to our friend and colleague Gene Abrams, on the occasions\\ of his retirement and 70th birthday}

\begin{abstract}
We introduce and study a new inverse semigroup associated to a separated graph $(E,C)$, which we call the \emph{Leavitt inverse semigroup}. This semigroup is obtained as a quotient of the separated graph inverse semigroup $\IS(E,C)$, introduced in our previous paper \cite{ABC25}, and it provides a canonical inverse semigroup model for the tame Leavitt path algebra $\Le_K^\ab(E,C)$ over a commutative unital ring $K$. 

Our first main result describes the Leavitt inverse semigroup $\LI(E,C)$ as a restricted semidirect product of the free group on the edges of $E$ acting partially on a certain semilattice, which is isomorphic to the semilattice of idempotents of $\LI(E,C)$. This description, given in terms of Leavitt--Munn trees, yields a normal form for the elements of $\LI(E,C)$. 

We obtain a normal form for elements of $\Le_K^\ab(E,C)$, leading to explicit linear bases for $\Le_K^\ab(E,C)$.
Building on this and on the structural properties of $\LI (E,C)$, we prove that the natural homomorphism from $\LI(E,C)$ to $\Le_K^\ab(E,C)$ is injective, so that $\LI(E,C)$ embeds as the inverse semigroup generated by the canonical partial isometries in $\Le_K^\ab(E,C)$. Further applications include the determination of natural bases of the kernel $\mathcal Q$ of the natural map from the tame Cohn algebra $\Co_K^\ab (E,C)$ to the tame Leavitt path algebtra $\Le_K^\ab (E,C)$, the computation of the socle, and a characterization of the isolated points of the spectrum. Several examples, such as the Cuntz separated graph and free separations, are discussed to illustrate the theory. 
\end{abstract}

\subjclass[2020]{16S88, 20M18}

\keywords{Inverse semigroup, Separated Graph, Leavitt path algebra}

\thanks{The first author was partially supported by the Spanish State Research Agency (grant No.\ PID2023-147110NB-I00 and CEX2020-001084-M), and by the Comissionat per Universitats i Recerca de la Generalitat de Catalunya (grant No.\ 2021-SGR-01015). The second author was supported by CNPq and Fapesc - Brazil, and the third author by CNPq - 402924/2022-3.}

\maketitle

\tableofcontents

\section{Introduction}
\label{sect:introduction}

Leavitt path algebras were introduced independently by Abrams and Aranda Pino \cite{abrams05} and
by Moreno, Pardo and the first author \cite{AMP},
as algebraic analogues of Cuntz--Krieger $C^*$-algebras. Since then, they have become a central object of study in the interplay between algebra, functional analysis, and symbolic dynamics, with deep connections to $K$-theory, classification of algebras, and noncommutative geometry, see \cites{abrams15, AAS, CorHaz24} and the references therein. A particularly fruitful perspective on Leavitt path algebras arises from the theory of inverse semigroups and groupoids, following pioneering ideas of Paterson \cite{Paterson:Groupoids} and Exel \cite{Exel:Inverse_combinatorial}, which were translated to the algebraic setting independently by Steinberg \cite{Steinberg2010} and by Clark, Farthing, Sims and Tomforde \cite{CFST2014}. In this framework, inverse semigroups encode the partial symmetries generated by paths in a graph. 

In recent years, separated graphs and their associated Leavitt path algebras have attracted considerable attention, see for instance \cites{ABPS, ABC23, ABC25, AC24, Ara-Exel:Dynamical_systems, AG12, AraLolk, KocOzaydin2020}. Separated graphs generalize ordinary directed graphs by allowing multiple ``separations'' of the set of edges emitted by a vertex. This setting encompasses both the classical and the free versions of graph algebras, and it reveals a much richer structural behavior. In particular, separated graphs give rise to a hierarchy of algebras — the Cohn path algebra, the Leavitt path algebra, and their tame versions — with the tame versions admitting a natural description via inverse semigroups and groupoids.

The inverse semigroup of a directed graph was introduced by Ash and Hall in \cite{AshHall}. Since then, it has been studied in several papers, often in relation with the structure of Leavitt path algebras, see for instance \cites{JonesLawson, LuoWhangWei23, meakin-milan-wang-2021, meakin-wang-2021, mesyan-mitchell-2016}. In particular, a certain quotient $\LI (E)$ of the graph inverse semigroup $\IS (E)$ was introduced and studied in \cite{meakin-milan-wang-2021}. This inverse semigroup $\LI(E)$, called the Leavitt inverse semigroup of $E$, is characterized by the fact that it coincides with the multiplicative subsemigroup of the Leavitt path algebra $\Le_K(E)$ generated by the natural copy of $E^0 \cup E^1 \cup (E^1)^*$. 

The purpose of this paper is to develop a systematic theory of \emph{normal forms} for tame Leavitt path algebras of separated graphs, through the introduction of a new inverse semigroup, the \emph{Leavitt inverse semigroup} $\LI(E,C)$ of the separated graph $(E,C)$. This semigroup is defined as a quotient of the separated graph inverse semigroup $\IS(E,C)$ introduced in \cite{ABC25}, by imposing certain additional natural relations. It can be described concretely in terms of Leavitt--Munn trees (see Section \ref{sect:Leavitt-inversem} for the detailed definitions). The Leavitt inverse semigroup $\LI (E,C)$ generalizes the homonymous semigroup $\LI (E)$ described before, to which it reduces when taking the trivial separation of $E$. We show in Theorem \ref{thm:semigroup-generated} that $\LI(E,C)$ is naturally isomorphic to the subsemigroup of the tame Leavitt path algebra $\L_K^\ab(E,C)$ generated by $E^0\cup E^1 \cup (E^1)^*$.

Our philosophy is that the Leavitt inverse semigroup $\LI (E,C)$ is the appropriate inverse semigroup to consider when working with tame Leavitt path algebras of arbitrary separated graphs, in the same way that the graph inverse semigroup $ \IS (E,C)$ is the natural object to consider when working with tame Cohn algebras $\Co_K^\ab (E,C)$. Accordingly, the consideration of the Leavitt inverse semigroup not only leads naturally to a new normal form for elements of $\Le_K^\ab(E,C)$, it also leads naturally to a different description of the points of the {\it tight spectrum} $\Etight $ of $\IS (E,C)$. Indeed, we directly show that the tight spectrum of $\LI (E,C)$ is equivariantly homeomorphic to the tight spectrum of $\IS (E,C)$, and this fact leads to a new model for the {\it tight groupoid} $\G_\tight (E,C)$ of the separated graph, that we call the {\it Leavitt model} of $\G_\tight (E,C)$, see Section \ref{sect:tight-spectrum}. Incidentally, this leads also to the question of whether the two semigroups $\IS (E,C)$ and $\LI (E,C)$ are {\it consonant}, a new concept recently introduced by Ruy Exel \cite{exel2025}. A proof outline is given in Remark~\ref{rem:Ruy-consonant-semigroups}. 

Using these tools and a recent result \cite{CCMMR25} on the socle of Steinberg algebras, we are able to completely characterize the socle of tame Leavitt path algebras $\Le_K^\ab (E,C)$. The determination of the socle is an important structural problem that has been addressed for different classes of combinatorial algebras, including Leavitt path algebras of non-separated graphs \cite{AMBMGS2010}, Kumjian-Pask algebras associated to $k$-graphs \cite{BaH2015} and one-sided shift algebras \cite{GR2025}. 

The tame Cohn algebra $\Co_K^\ab(E,C)$ and the tame Leavitt algebra $\Le_K^\ab (E,C)$ are related by the exact sequence 
$$ 0 \longrightarrow \mathcal Q \longrightarrow \Co_K^\ab (E,C)  \longrightarrow \Le_K^\ab (E,C) \longrightarrow 0 ,$$
where $\mathcal Q= \mathcal Q (E,C)$ is the ideal of $\Co_K^\ab(E,C)$ generated by all the elements $q_X= v- \sum_{e\in X} ee^*$, where $X\in C_v$, $v\in E^0$, and $|X|<\infty$. 
For Leavitt path algebras of non-separated graphs, the structure of the ideal $\mathcal Q$ plays a central role in the study of (bivariant) $K$-theory and its applications to classification; see \cite{CorMon21, CorHaz24}. In that case, the ideal $\mathcal Q$ is always contained in the socle of the Cohn path algebra, as follows from \cite[Propositions 1.5.8 and 1.5.11]{AAS}, but we show in the present paper that this is rarely the case for separated graphs, where the structure of the ideal $\mathcal Q$ is much more complicated. However we are able here to compute a linear basis for the ideal $\mathcal Q(E,C)$ for an arbitrary separated graph $(E,C)$, which is a first step in order to deepen our understanding of its structure.  

We conclude the introduction with an outline of the paper. After a section of preliminaries, 
our first main theorem (Theorem~\ref{thm:ECLeavittMunntrees}) describes $\LI(E,C)$ as a semidirect product of the free group $\F (E^1)$ acting partially on a semilattice naturally associated to $(E,C)$, which is isomorphic to the semilattice of idempotents of $\LI(E,C)$. This provides a transparent structural picture of $\LI(E,C)$ and a normal form for its elements. 

Section \ref{sect:normal-form} contains our results on the normal form for the elements of the tame Leavitt path algebra $\Le_K ^\ab (E,C)$.  
In fact, we obtain explicit linear bases for all components of its canonical partial crossed product decomposition (Theorem \ref{thm:full-basis-forLab}). 
We observe in Example \ref{exam:non-separated-case} that, for a  non-separated graph $E$, our basis for $\Le_K(E)$ is closely related to the one obtained by Alhamadi, Alsulami, Jain and Zelmanov in \cite{zel}. As in \cite{zel}, the basis we find for $\Le_K^\ab (E,C)$ depends on a {\it choice function}, which is a function $\mathfrak E \colon \Cfin \to E^1$ such that $\mathfrak E (X)\in X$ for all $X\in \Cfin$, where $\Cfin$ is the collection of sets $X\in C$ such that $|X|<\infty$. Using the basis of $\Le_K ^\ab (E,C)$ and the structure of $\LI(E,C)$, we prove in Theorem \ref{thm:semigroup-generated} that $\LI(E,C)$ embeds faithfully into $\Le_K^\ab(E,C)$, as the inverse semigroup generated by the canonical partial isometries. 

In Section \ref{sect:relation-with-Cohn-algebras}, we show that for any separated graph $(E,C)$, the tame Cohn path algebra $\Co_K^\ab (E,C)$ is closely related to the Leavitt path algebra of another  separated graph $(\ol{E},\ol{C})$, which is explicitly built from $(E,C)$. Indeed, we work more generally within the class of (tame) Cohn-Leavitt path algebras, also known as relative Cohn path algebras, which interpolate between Cohn and Leavitt algebras. This degree of flexibility is very convenient, as shown already in the non-separated situation in e.g. \cite{Ruiz25}. We use this relationship between tame Leavitt and Cohn algebras and our previous results to obtain a linear basis for the ideal $\mathcal Q (E,C)$, which is the kernel of the canonical projection 
$\Co_K^\ab (E,C) \to \Le_K^\ab (E,C)$ (Theorem \ref{thm:BQ-basis-ofQ}). 

Section \ref{sect:tight-spectrum} contains the computation of the spectrum and the tight spectrum of the Leavitt inverse semigroup
$\LI (E,C)$. As mentioned above, we show that the tight spectrums of $\IS (E,C)$ and $\LI (E,C)$ are equivariantly homeomorphic. 
As a result, we offer three different models for the tight groupoid of a separated graph: the {\it complete model}, the {\it standard model}, and the {\it Leavitt model}. For each specific problem, it might be that one of the three models is better suited than the others. The Leavitt model is specially useful to tackle the problem of computing the socle of a tame Leavitt path algebra.
We completely solve this problem in Section \ref{sect:socle}, by combining our techniques with a recent result on the socle of Steinberg algebras \cite{CCMMR25}. 

Finally, Section \ref{sect:examples} discusses examples and applications. We use our techniques to explicitly compute the ideal
$\mathcal Q (E,C) \cap \soc (\Co_K^\ab (E,C)$ for any separated graph $(E,C)$ and any field $K$ (Theorem \ref{thm:completelyblocked}).
We then consider the example of the Cuntz separated graph, endowed with the free separation (Example \ref{exam:Cuntz-free-separation}) and the important example of the semigroup algebra of the free inverse monoid $\FIM (X)$ on a set $X$ (Example \ref{exam:free-inverse-monoid}).
For the latter, we use a representation of the free inverse monoid algebra as a full corner of the tame Cohn algebra of a certain separated graph, obtained in \cite[Example 4.10]{ABC25}. We obtain that the socle of the semigroup algebra of the free inverse monoid on a finite set $X$ is an essential ideal of the algebra, and we compute its structure. However the socle vanishes for the semigroup algebra $K[\FIM (X)]$ when $X$ is an infinite set.   

Our results unify and extend the existing theory of normal forms for Leavitt path algebras, showing that the separated case can be handled in a systematic way through inverse semigroups. 
We expect that the Leavitt inverse semigroup will serve as a fundamental tool for future work on the structure and classification of Leavitt path algebras and of the associated $C^*$-algebras.

\section{Preliminaries}
\label{sect:prelims}

In this section we review fundamental definitions and results from \cite{ABC25}. We refer the reader to \cite{ABC25} for further information.

\subsection{Directed graphs}
\label{subsect:directed-graphs}
	A \emph{directed graph} $E$ is a quadruple $E=(E^0,E^1,s,r)$ consisting of two sets $E^0$ and $E^1$ and two maps $s,r: E^1 \rightarrow E^0$. The elements of $E^0$ and $E^1$ are called vertices and edges and the maps $s,r$ are called the source and range maps, respectively. We do not require our graphs to be finite or countable.
	
	 A \emph{finite path} in $E$ is a sequence of edges of the form $\mu:= e_1 \ldots e_n$ with $r(e_i)=s(e_{i+1})$ for all $i \in \{1, \ldots, n-1\}$. The length of $\mu$ is $|\mu|:=n$ and paths with length $0$ are identified with the vertices of $E$ (we set $s(v)=r(v)=v$). We denote by $E^n$ the set of all finite paths with length $n$ and $\text{Path}(E) := \displaystyle\cup_{n=0}^{\infty} E^n$ denotes the set of all paths of $E$. We can extend the source and range maps to $\text{Path}(E)$ in the obvious way: if $\mu=e_1\ldots e_n \in \text{Path}(E)$, then $s(\mu) = s(e_1)$ and $r(\mu) = r(e_n)$. Given two paths $\mu,\nu \in \text{Path}(E)$ with $r(\mu) = s(\nu)$, one obtains a new path $\mu\nu$ by concatenation with $|\mu\nu| = |\mu|+|\nu|$. 
 
	Given a graph $E$, we define its \emph{extended graph} (also called the \emph{double graph} of $E$)
as the new graph $\hat{E} = (E^0,E^1 \cup  E^{-1}, r, s)$, containing $E$ as a subgraph, and where we set
$s(e^{-1}) := r(e)$, $r(e^{-1}) := s(e)$ for all $e \in  E^1$. This construction is standard in inverse semigroup methods, as it allows the incorporation of formal inverses of edges.

\subsection{Separated graphs}
\label{subsect:separat-graphs}
Separated graphs were introduced to interpolate between classical and free constructions, and they naturally give rise to richer inverse semigroups.
A \emph{separated graph} is a pair $(E,C)$ consisting of a graph $E=(E^0,E^1,s,r)$ and a \emph{separation} $C=\bigsqcup_{v\in E^0}C_v$ on $E$, consisting of partitions $C_v$ of $s^{-1}(v)\sbe E^1$ into pairwise disjoint nonempty subsets (with $C_v= \emptyset$ if $v$ is a sink, i.e., if $s^{-1}(v)=\emptyset$). The \emph{trivial separation} is the separation with $C_v=\{ s^{-1}(v)\}$ for all non-sinks $v\in E^0$; a graph with the trivial separation is also called trivially separated or a non-separated graph. The \emph{free separation} of $E$ is the finest separation, where each $s^{-1}(v)$ is separated into singletons, that is, $C_v=\{\{e\}:e\in s^{-1}(v)\}$.

\subsection{Inverse semigroups}
\label{subsect:inverse-semigroups}
An inverse semigroup is a semigroup $S$ endowed with an involution $s\mapsto s^{-1}$ satisfying 
$$(1)\,\, ss^{-1}s=s\quad \mbox{and}\quad  (2)\,\, (ss^{-1})(tt^{-1})=(tt^{-1})(ss^{-1})\quad\mbox{for all }s,t\in S.$$ 
In this case, the pseudo-inverse $s^{-1}$ of $s\in S$ is uniquely determined by the relations $ss^{-1}s=s$ and $s^{-1}ss^{-1}=s^{-1}$. We write $\E(S)$ for the semilattice of idempotents of $S$. 
Inverse semigroups are naturally endowed with a partial order:
$$s\leq t\Leftrightarrow ts^{-1}s=s\Leftrightarrow ss^{-1}t=s.$$
Given a semigroup $S$, we may always add a (formal) zero element $0$, getting a semigroup with zero $S_0=S\sqcup \{0\}$, as well as a unit $1$, obtaining a semigroup with unit (i.e. a monoid) $S_1=S\sqcup\{1\}$. 
Our semigroups of interest will usually have already a zero, sometimes a unit. If $S$ already has a unit, we shall generally write $S^*=S\backslash\{1\}$, and if $S$ has a zero, $S^\times =S\backslash\{0\}$. Note that these sets are typically not subsemigroups.

We refer the reader to \cite{lawson} for more information about inverse semigroups.

Let $S$ be an inverse semigroup with zero and $\Group$ a group. A {\it partial homomorphism} from $S$ to $\Group$ is a map $\pi \colon S^\times \to \Group$ such that $\pi (st) =\pi (s)\pi (t)$ whenever $st\ne 0$. This notion appears frequently when describing inverse semigroups as (restricted) semidirect products.
 A partial homomorphism $\pi$ is {\it idempotent pure} if $\pi^{-1}(1)= \E(S)^\times$. 
If such an idempotent pure partial homomorphism exists, we say that $S$ is \emph{strongly $E^*$-unitary}. 
By \cite[Proposition 2.1]{ABC25}, an inverse semigroup with $0$ is strongly $E^*$-unitary if and only if it is a restricted semidirect product $S= \mathcal E (S) \rtimes_\theta ^r \Gamma$, for a partial action $(D_g,\theta_g)$ of $\Gamma $ on $\E (S)$. 

\subsection{Fundamental groupoid of a graph, $C$-separated paths and $C$-compatible trees}
\label{subsect:fundamental-groupoid}
Recall that the free group $\F (X)$ on a set $X$ can be defined as the set of reduced words in the free monoid on $X\cup X^{-1}$, with the product given by
$\alpha \cdot \beta = \text{red} (\alpha\beta)$ where $\alpha\beta$ is the concatenation of $\alpha$ and $\beta $ and $\text{red} (-)$ indicates reduction of words . Given a directed graph $E$, a similar process leads to the {\it fundamental groupoid} $\FG (E)$ of $E$, which plays an important role in this work. First consider the path $*$-semigroup $\mathcal P (E)$, whose nonzero elements are exactly the paths on $\hat{E}$, and the product  of two paths $\lambda $ and $\mu$ is given by its concatenation $\lambda \mu$ if $r(\lambda)= s(\mu)$, or $0$ if $r(\lambda) \ne s(\mu)$.
The fundamental groupoid $\FG(E)$ is the set of {\it reduced paths} in $\mathcal P (E)$, which are the paths that do not contain subpaths of the form $xx^{-1}$ for $x\in \hat{E}^1$.
The product in $\FG(E)$ is defined as
$$\alpha \cdot \beta =  \text{red} (\alpha \beta) $$
whenever $r(\alpha)= s(\beta)$, where again $\text{red}(-)$ indicates the reduction process. This is a groupoid with set of units $E^0$. Note that $g\cdot g^{-1} = s(g)$ and $g^{-1}\cdot g = r(g)$ for all $g\in \FG(E)$. It is important to take into account that we will always use the notation $\alpha \cdot \beta$ to indicate the product in $\FG(E)$, while the notation $\alpha\beta$ is used for the concatenation product. Whenever we have an expression of the form $\alpha_1\alpha_2 \cdots \alpha_n$ for an element of $\FG(E)$, we will generally understand that this expression is in reduced form as it stands. 

General nonzero elements of $\mathcal P(E)$ will be called \emph{paths}, and elements of $\FG (E)$ will be called \emph{reduced paths}. 

We define the \emph{prefix order} $\le_p$ on $\mathcal P (E)$ by $\mu\leq _p\nu$ if there is $\gamma\in \mathcal P (E)$ such that $\nu=\mu\gamma$. A {\it lower subset} of $\mathcal P (E)$
is a subset $L$ of $\mathcal P (E)$ such that, for $\lambda \in L$, we have $\nu\in L$ whenever $\nu\le_p \lambda$.   

We denote the Cayley graph of $\FG(E)$ by $\Gamma_E$. We have $\Gamma_E^0= \FG (E)$ and there is an edge $(g,e)$ from $g$ to $g\cdot e$ whenever $g,g\cdot e\in \FG (E)$, for $e\in E^1$. 
We have $\Gamma_E = \bigsqcup_{v\in E^0} K_v$, where $K_v$ are the connected components of $\Gamma_E$, consisting of all the paths $\gamma$ such that $s(\gamma) =v$. 

We now introduce the notion of a $C$-separated path. 

\begin{definition} \cite[Definition 3.6]{ABC25}
	\label{def:Cseparatedword-and-compatible}
    Let $(E,C)$ be a separated graph. 
	A {\it $C$-separated path} is a reduced path $w=y_1\cdots y_n\in \FG (E)$, with $y_i\in \hat{E}^1$, that does not contain any subpath of the form $x^{-1}y$ with $x,y\in X$ for $X\in C$. For $v\in E^0$, we denote by $\FC (v)$ the set of all $C$-separated paths $\gamma$ such that $s(\gamma ) =v$. The set $\FC := \bigsqcup_{v\in E^0} \FC (v)$ is the set of all $C$-separated paths.  
\end{definition}

We now come to the crucial concept of $C$-compatibility of $C$-separated paths.

\begin{definition}\cite[Definition 3.7]{ABC25}
	\label{def:Ccompatible}
	Let $v\in E^0$ and $\gamma,\nu\in \FC (v)$. We say that $\gamma $ and $\nu$ are {\it $C$-compatible} in case $\nu^{-1}\cdot \gamma = \red (\nu^{-1}\gamma) \in \FC$. Otherwise $\gamma $ and $\nu$ are said to be {\it $C$-incompatible}. A subset $T$ of $\FC(v)$ is said to be $C$-compatible if each pair of elements  of $T$ is $C$-compatible. 
	\end{definition}

In terms of the Cayley graph $\Gamma_E$ of $E$, a non-empty lower subset of $\FC(v)$, for $v\in E^0$,  is a subtree $T$ of the connected component 
$K_v$ of $\Gamma _E$, containing the base point $v$ of $K_v$. 
Such a non-empty lower subset $T$ is $C$-compatible if and only if the geodesic path joining two arbitrary vertices $\gamma,\nu$ of $T$ is a $C$-separated path, see \cite[Remark 3.8]{ABC25}. The set of all non-empty finite lower $C$-compatible subsets of $\FC (v)$ is denoted by $\Y (v)$. As remarked above, $\Y(v)$ can be seen as a certain family of finite subtrees of $K_v\subseteq \Gamma_E$, and these play a central role in the description of $\LI (E,C)$ in Section \ref{sect:Leavitt-inversem}.

	For a non-empty subset $A$ of $\FG (E)$, we write $A^{\downarrow}=\{x\in \FG(E) : x\leq_p a,\, \mbox{for some }a\in A\}$ for the lower subset  generated by $A$ with respect to the prefix order. If $A=\{a\}$ is a singleton, we shall also write $a^\downarrow:=\{a\}^\downarrow$ to simplify the notation. Note that $A^\downarrow \subseteq \FC$ whenever $A\subseteq \FC$.

\subsection{Inverse semigroup, tame Cohn path algebra, and tame Leavitt path algebra of a separated graph}
\label{subsect:Cohn-Leavitt-algs}
We start with the definition of the inverse semigroup of a separated graph, which generalizes the usual construction of the graph inverse semigroup. 

\begin{definition}\cite{ABC25}
	\label{def:graphsemigroup}
    Let $(E,C)$ be a separated graph. The \emph{inverse semigroup} of $(E,C)$ is the universal inverse semigroup $\IS (E,C)$ 
    generated by $E^0\cup \hat{E}^1=E^0\cup E^1\cup E^{-1}$ subject to the following relations:
	\begin{enumerate}
		\item $vw = \delta_{v,w}v$ for all $v,w\in E^0$;
		\item $s(x)x = x$ for all $x\in \hat{E}^1$;
		\item $xr(x) = x$ for all $x\in \hat{E}^1$;
		\item $e^{-1}f = \delta_{e,f} r(e)$ for all $e,f\in X$ with $X\in C$.  
	\end{enumerate}
    \end{definition}

\begin{definition}\cite{AG12}
\label{def:Cohn-and-Leavitt-algs}
Let $K$ be a commutative ring with an involution $*$ and let $(E,C)$ be a separated graph. The Cohn path algebra $\Co_K(E,C)$ is the universal $*$-algebra over $K$ generated by 
self-adjoint elements $v\in E^0$ and elements $e\in E^1$ satisfying the following relations:
\begin{enumerate}
    \item[(V)] $vw=\delta_{v,w}v$ for all $v,w\in E^0$;
    \item[(E1)] $s(e)e= e= er(e)$ for all $e\in E^1$;
    \item[(E2)] $e^ *s(e)=e^* = r(e)e^*$ for all $e\in E^1$;
    \item[(SCK1)] $e^*f=\delta_{e,f}r(e)$ for all $e,f\in X$ with $X\in C$.
\end{enumerate}
The Leavitt path algebra $\Le_K(E,C)$ is the universal $*$-algebra over $K$ generated by the same generators and relations above together with the Cuntz–Krieger relation:
\begin{enumerate}
    \item[(SCK2)] $\sum_{e\in X}ee^*=v$ for every finite subset $X\in C_v$ with $v\in E^0$.
\end{enumerate}
\end{definition}

Given a $*$-algebra $A$ with a subset $D\sbe A$ consisting of partial isometries (i.e. $ss^*s=s$ for all $s\in D$),  there is a canonical way to downsize the complexity of the algebra $A$, as follows. 
Let $S$ be the $*$-subsemigroup of $A$ generated by $D$ and let $J=J(D)$ be the two-sided ideal of $A$ generated by all the commutators $[e(x),e(y)]$ with $x,y\in S$, where $e(x):=xx^*$.
Then $A_{\ab}:= A/J$ is the {\it tame $*$-algebra associated to} $D$. It has the property that the $*$-subsemigroup of $A_{\ab}$ generated by the canonical image of $D$ in $A_{\ab}$ is an inverse semigroup. 

\begin{definition}
    Let $K$ be a commutative ring with an involution $*$ and let $(E,C)$ be a separated graph. The {\it tame Cohn path algebra} of $(E,C)$ is the tame $*$-algebra $\Co_K^\ab (E,C)$ associated to the family $E^1$ of partial isometries of $\Co_K(E,C)$. Similarly, the {\it tame Leavitt path algebra} of $(E,C)$ is the tame $*$-algebra $\Le_K^\ab (E,C)$ associated to the same family $E^1$ of partial isometries of $\Le_K(E,C)$.  
\end{definition}

By \cite[Lemma 6.15]{ABC25}, we have $\Le_K^\ab (E,C) \cong \Co_K^\ab (E,C)/\mathcal Q$, where $\mathcal Q$ is the two-sided ideal of $\Co_K^\ab (E,C)$ generated by the projections $q_X:= v- \sum_{e\in X} ee^*$, for all finite sets $X\in C_v$, $v\in E^0$. This shows that $\Le_K^\ab(E,C)$ may be seen as the quotient of the tame Cohn algebra where all Cuntz–Krieger relations of finite type are enforced.

We summarize in the next two theorems the structure of the $*$-algebras $\Co_K^\ab (E,C)$ and $\Le_K^\ab (E,C)$. For an inverse semigroup $S$, we will denote by $\G (S)$ the universal groupoid of $S$, and by $\G_\tight  (S)$ the tight groupoid of $S$. For a separated graph, set 
$$ \G (E,C):= \G (\IS (E,C)),\qquad \G_\tight (E,C) := \G_\tight (\IS (E,C)).$$
The Steinberg algebra of an ample groupoid $\G$ will be denoted by $A_K(\G)$.

\begin{theorem}\cite[Proposition 6.5 and Corollary 6.10]{ABC25}
    \label{thm:structure-of-tame-Cohn}
Let $(E,C)$ be a separated graph. Then we have $*$-isomorphisms
$$\Co_K^\ab (E,C) \cong K[\IS (E,C)] \cong A_K(\G (E,C)) \cong C_K(\widehat{\E})\rtimes \F,$$
where $\widehat{\E}$ is the spectrum of $\E=\E (\IS(E,C))$ and $\F = \F(E^1)$ is the free group on $E^1$. 
\end{theorem}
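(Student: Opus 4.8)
The plan is to establish the three isomorphisms in the stated chain one at a time, with the last one carrying the genuine structural content and the first two being essentially formal.

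First I would prove $\Co_K^\ab(E,C) \cong K[\IS(E,C)]$ by playing the two universal properties against each other, using the contracted semigroup algebra on the right (so that the formal zero of $\IS(E,C)$ is identified with $0$). The Cohn relations (V), (E1), (E2), (SCK1) of Definition~\ref{def:Cohn-and-Leavitt-algs} are, term by term, the $*$-algebra shadows of the inverse-semigroup relations (1)--(4) of Definition~\ref{def:graphsemigroup}, so the assignment $v\mapsto v$, $e\mapsto e$, $e^*\mapsto e^{-1}$ furnishes a $*$-homomorphism $\Co_K(E,C) \to K[\IS(E,C)]$. Since idempotents commute in any inverse semigroup, every generator $[e(x),e(y)]$ of the tame ideal $J$ maps to $0$, so the map descends to $\Co_K^\ab(E,C) \to K[\IS(E,C)]$. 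Conversely, the tame construction guarantees that the canonical images of $E^0\cup\hat{E}^1$ in $\Co_K^\ab(E,C)$ generate an inverse subsemigroup; one checks that these images satisfy (1)--(4), so the universal property of $\IS(E,C)$ yields an inverse-semigroup homomorphism and hence a $K$-algebra map $K[\IS(E,C)] \to \Co_K^\ab(E,C)$. The two maps are mutually inverse on generators.

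Next I would obtain $K[\IS(E,C)] \cong A_K(\G(E,C))$ as an instance of Steinberg's general identification of the contracted algebra of an inverse semigroup with zero with the Steinberg algebra of its universal (Paterson) groupoid; since $\G(E,C) = \G(\IS(E,C))$ by definition, this step is purely a matter of invoking that theorem. The heart of the argument is then the last isomorphism $A_K(\G(E,C)) \cong C_K(\widehat{\E}) \rtimes \F$. Here I would first exhibit the partial homomorphism $\pi\colon \IS(E,C)^\times \to \F = \F(E^1)$ sending each $v\in E^0$ to the unit, each $e\in E^1$ to $e$, and each $e^{-1}$ to $e^{-1}$, checking well-definedness against (1)--(4) (relation (4) is the only delicate one, and it is compatible because the constraint on a partial homomorphism applies only when the product is nonzero). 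Granting that $\pi$ is idempotent pure, $\IS(E,C)$ is strongly $E^*$-unitary, so by \cite[Proposition 2.1]{ABC25} it is a restricted semidirect product $\E \rtimes^r_\theta \F$ for a partial action $(D_g,\theta_g)$ of $\F$ on $\E$. Dualising, $\F$ acts partially on the spectrum $\widehat{\E}$; for a restricted semidirect product the universal groupoid is exactly the transformation groupoid of this dual partial action, whose Steinberg algebra is the algebraic partial crossed product $C_K(\widehat{\E}) \rtimes \F$.

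The main obstacle is the verification that $\pi$ is idempotent pure, i.e.\ that $\pi^{-1}(1) = \E^\times$: one must show that every nonzero element of $\IS(E,C)$ mapping to the identity of $\F$ is an idempotent, not merely that idempotents map to $1$. This is precisely where the combinatorics of $C$-separated paths and $C$-compatible trees enters. Concretely, I would produce a faithful model of $\IS(E,C)$ in which a nonzero element is represented by a pair of finite $C$-compatible lower subsets of some $\FC(v)$ (the trees in $\Y(v)$) together with a reduced word in $\F$ recording the shift between them; the idempotents are exactly those with trivial shift, so idempotent-purity can then be read directly off the normal form. Establishing that this tree model is faithful --- that relations (1)--(4) impose no further identifications in the universal inverse semigroup --- is the technical crux, since one must rule out any unexpected collapse among the generators.
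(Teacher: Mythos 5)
The paper does not prove this statement: Theorem~\ref{thm:structure-of-tame-Cohn} is imported verbatim from \cite{ABC25} (Proposition 6.5 and Corollary 6.10) as a preliminary, so there is no in-paper argument to compare against line by line. That said, your reconstruction follows exactly the route that the cited machinery supports, and I see no gap in it. The first isomorphism by playing the two universal properties off each other (with the commutators $[e(x),e(y)]$ dying in $K[\IS(E,C)]$ because idempotents of an inverse semigroup commute), the second by Steinberg's identification of the contracted algebra of an inverse semigroup with the Steinberg algebra of its universal groupoid, and the third via strong $E^*$-unitarity and \cite[Proposition 2.1]{ABC25} is precisely how the present paper treats the analogous tight-level statements (compare Theorem~\ref{thm:structure-of-Leavitt} and the discussion following it, and the parallel argument in the proof of Theorem~\ref{thm:ECLeavittMunntrees}).

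You also correctly locate the genuine content: idempotent purity of $\pi$, which requires the faithful Munn-tree/Scheiblich normal form for $\IS(E,C)$. That is exactly what \cite[Theorems 4.5, 4.6 and 4.8]{ABC25} provide, and it is the same device this paper reuses (Proposition~\ref{prop:MunnE-trees}, Corollary~\ref{cor:FSIE-strongly-unitary}, and their Leavitt analogues). One small refinement: the well-definedness of $\pi$ on all of $\IS(E,C)^\times$, not only its idempotent purity, ultimately rests either on that normal form or on the observation that every defining relation of $\IS(E,C)$ either equates two words of the same $\F$-degree or sets a word to zero; your remark that relation (4) ``applies only when the product is nonzero'' is the right observation but should be phrased as a statement about the homogeneity of the presentation rather than as a check on generators alone.
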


\begin{theorem}\cite[Theorem 6.20]{ABC25}
    \label{thm:structure-of-Leavitt}
Let $(E,C)$ be a separated graph. Then we have $*$-isomorphisms
$$\Le_K^\ab (E,C) \cong A_K ( \G_\tight (E,C)) \cong C_K (\widehat{\E}_\tight ) \rtimes \F,$$
where $\widehat{\E}_\tight $ is the tight spectrum of $\E=\E (\IS(E,C))$ and $\F = \F(E^1)$ is the free group on $E^1$. 
    \end{theorem}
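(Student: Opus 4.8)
The plan is to bootstrap the tight (Leavitt) picture from the already established Cohn picture of Theorem~\ref{thm:structure-of-tame-Cohn}, passing to the quotient by $\mathcal{Q}$ and matching this algebraic quotient with the geometric operation of restricting the universal groupoid to its tight part. The three ingredients I would assemble are: the isomorphisms $\Co_K^\ab(E,C) \cong A_K(\G(E,C)) \cong C_K(\widehat{\E}) \rtimes \F$ from Theorem~\ref{thm:structure-of-tame-Cohn}; the identification $\Le_K^\ab(E,C) \cong \Co_K^\ab(E,C)/\mathcal{Q}$ from \cite[Lemma~6.15]{ABC25}, where $\mathcal{Q}$ is the two-sided ideal generated by the diagonal idempotents $q_X = v - \sum_{e\in X} ee^*$ with $X \in C_v$ finite; and the general restriction principle for Steinberg algebras, namely that for a closed invariant subset $F$ of the unit space of an ample groupoid $\G$ one has $A_K(\G|_F) \cong A_K(\G)/I_F$, where $I_F$ is the ideal of functions vanishing on $F$.

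The conceptual core is to identify the tight spectrum $\widehat{\E}_\tight$ as the correct closed invariant subset. Each generator $q_X$ lies in the diagonal $C_K(\widehat{\E})$, and as a locally constant function on $\widehat{\E}$ its support is exactly the set of characters $\phi$ with $\phi(v) = 1$ but $\phi(ee^*) = 0$ for every $e \in X$, i.e.\ the characters failing the finite cover condition attached to $X$. Since a character is tight precisely when it satisfies all finite cover conditions, $\widehat{\E}_\tight$ is the common zero set of the family $\{q_X\}$; this set is closed and invariant under the partial action of $\F$ because tightness is preserved by the partial action. I would therefore aim to prove that $\mathcal{Q} \cap C_K(\widehat{\E})$ equals the ideal $I$ of diagonal functions vanishing on $\widehat{\E}_\tight$, so that $C_K(\widehat{\E})/I \cong C_K(\widehat{\E}_\tight)$.

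Granting this identification, the two isomorphisms follow quickly. Because $\mathcal{Q}$ is generated by invariant diagonal elements, it is a graded ($\F$-invariant) ideal, so the quotient commutes with the crossed product: $\Co_K^\ab(E,C)/\mathcal{Q} \cong (C_K(\widehat{\E})/I)\rtimes \F \cong C_K(\widehat{\E}_\tight)\rtimes \F$, which gives the second isomorphism in the theorem once we note --- exactly as in the Cohn case --- that the Steinberg algebra of the transformation groupoid of the partial action of $\F$ on $\widehat{\E}_\tight$ is this crossed product. Combining with $\Le_K^\ab(E,C)\cong \Co_K^\ab(E,C)/\mathcal{Q}$ from \cite[Lemma~6.15]{ABC25} and the restriction principle applied to $F = \widehat{\E}_\tight$ (so that $\G|_F = \G_\tight(E,C)$) yields $\Le_K^\ab(E,C) \cong A_K(\G_\tight(E,C))$.

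The step I expect to demand the most care is the identification $\mathcal{Q} \cap C_K(\widehat{\E}) = I$ --- more precisely, proving that the \emph{specific} covers coming from the finite sets $X \in \Cfin$ already generate the \emph{full} tightness condition on $\E(\IS(E,C))$. This is a combinatorial statement about the cover structure of the semilattice $\E$, best analysed through the $C$-compatible trees parametrising $\E$: one must verify that every tight cover of an idempotent refines to, or is detected by, the canonical finite covers $\{ee^* : e \in X\}$, and that no strictly smaller invariant diagonal ideal already contains all the $q_X$. Once the cover combinatorics is pinned down, the remaining passages --- invariance of $I$, compatibility of the quotient with the crossed product, and the restriction principle --- are formal.
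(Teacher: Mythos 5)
This theorem is quoted from \cite[Theorem 6.20]{ABC25}; the present paper gives no proof of it, so there is no internal argument to compare against. Your outline is the natural one and is consistent with the ingredients the paper does import: the identification $\Le_K^\ab(E,C)\cong \Co_K^\ab(E,C)/\mathcal Q$ of \cite[Lemma 6.15]{ABC25}, the partial crossed product picture of Theorem~\ref{thm:structure-of-tame-Cohn}, the compatibility of quotients by invariant diagonal ideals with partial crossed products, and the restriction of a Steinberg algebra to a closed invariant subset of the unit space (with $\G_\tight$ being by definition the reduction of the universal groupoid to $\widehat{\E}_\tight$). One small imprecision: $\widehat{\E}_\tight$ is the common zero set not of the $q_X$ alone but of all their translates $\mu q_X\mu^*$ under the partial action, since tightness must be tested at every idempotent of a filter, not only at the vertices; this is harmless for your argument because $\mathcal Q$ contains all such translates, and the statement you actually reduce to, namely $\mathcal Q\cap C_K(\widehat{\E})=I_{\widehat{\E}_\tight}$, is the correct one. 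The step you rightly single out as the non-formal core --- that the finite covers $\{ee^*:e\in X\}$, $X\in\Cfin$, and their translates already detect tightness --- is exactly the combinatorial content of \cite[Section 5]{ABC25}, reflected in the description of $\Omega(E,C)$ by finite-maximal local configurations recalled in Section~\ref{sect:tight-spectrum}; granting that citation, your proof is complete.
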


The isomorphisms $ A_K(\G (E,C)) \cong C_K(\widehat{\E})\rtimes \F$ and $A_K ( \G_\tight (E,C)) \cong C_K (\widehat{\E}_\tight ) \rtimes \F$ in the above theorems come from a canonical partial action $\F \act \widehat{\E}$ and its restriction
$\F \act \widehat{\E}_\tight $ to the tight spectrum $ \widehat{\E}_\tight $, respectively, where $\E = \E (\IS(E,C))$. See \cite{ABC25} for details. 

\section{The Leavitt inverse semigroup of a separated graph}
\label{sect:Leavitt-inversem}

In this section we introduce a new inverse semigroup associated to a separated graph $(E,C)$, namely the {\it Leavitt inverse semigroup} of $(E,C)$. This generalizes the Leavitt inverse semigroup of a directed graph, introduced in \cite{meakin-milan-wang-2021}.

\begin{definition}
\label{def:Leavitt-inverse-semigroup}
Let $(E,C)$ be a separated graph. The {\it Leavitt inverse semigroup} of $(E,C)$ is the universal inverse semigroup $\LI (E,C)$  
generated by $E^0\cup \hat{E}^1=E^0\cup E^1\cup E^{-1}$ subject to the following relations:
\begin{enumerate}
	\item $vw = \delta_{v,w}v$ for all $v,w\in E^0$;
	\item $s(x)x = x$ for all $x\in \hat{E}^1$;
	\item $xr(x) = x$ for all $x\in \hat{E}^1$;
	\item $e^{-1}f = \delta_{e,f} r(e)$ for all $e,f\in X$ with $X\in C$.
	\item $ee^{-1} = s(e)$ for all $e\in E^1$ such that $\{e\} \in C$.   
\end{enumerate}
In other words, $\LI (E,C)$ is the quotient of the inverse semigroup $\IS (E,C)$ of $(E,C)$ (as in Definition~\ref{def:graphsemigroup}) by the congruence generated by the relations (5).
\end{definition}

We show later that $\LI (E,C)$ is isomorphic to the inverse semigroup generated by $E^0\cup E^1$ in the abelianized Leavitt path algebra $\Le_K^\ab (E,C)$, for any commutative unital ring $K$ (see Theorem \ref{thm:semigroup-generated}). This was shown in \cite{meakin-milan-wang-2021} in the non-separated case. 

We start by establishing a normal form for the elements of $\LI (E,C)$. For this, we need to recall basic notions and terminology from \cite {ABC25}. 

We will use the notation of \cite{ABC25}. Recall from Definition \ref{def:Cseparatedword-and-compatible} that for $v\in E^0$, $\FC(v)$ is the set of all the $C$-separated paths $w$ such that $s(w)= v$.  The set  
$\Y (v)$ is the set of all non-empty  finite lower $C$-compatible subsets of $\FC (v)$, and $\Y = \bigsqcup_{v\in E^0} \Y (v)\bigsqcup \{0\}$. As in \cite[Definition 3.13]{ABC25}, we set $\YY= \Y/{\sim}$, where $\sim$ is the congruence on $\Y$ generated by $g^\downarrow \sim (gx^{-1})^\downarrow$, where $g\in \FC$, $x\in E^1$, and $gx^{-1}$ is reduced as it stands.

For $v\in E^0$, the set $\Y _0(v)$ is the set of those $I\in \Y (v)$ such
that each element in $\max (I)$ does not end in $E^{-1}$, and we set $\Y _0 = \bigsqcup_{v\in E^0} \Y _0(v)$ (\cite[Notation 3.17]{ABC25}). By \cite[Proposition 3.16]{ABC25}, each element $I\in \Y$ has a unique representative $I_0\in \Y_0$. 

We denote by $\FG (E)$ the fundamental groupoid of $E$ (see Subsection \ref{subsect:fundamental-groupoid}).

\begin{definition}
	\label{def:decomposition-gLw}
	Let $(E,C)$ be a separated graph. For each $g\in \FG (E)$, write 
	$$g= g_L w ,$$
	where $g_L$ is either $s(g)$ or a prefix of $g$ ending in an edge $e\in E^1$ such that $e\in X\in C$, with $|X| >1$, and $w= x_1\cdots x_r $, $r\ge 0$, where for each $i$, either $x_i\in E^{-1}$ or $\{x_i\}\in C$.    
	\end{definition}

Observe that due to relations (4) and (5) in Definition \ref{def:Leavitt-inverse-semigroup}, we have $g_Lg_L^* = gg^*$ in $\LI (E,C)$ for each $g\in \FG (E)$.  

Recall that an {\it inverse category} is a category $C$ such that for each arrow $\gamma$ there is a unique arrow $\gamma^{-1}$ such that $\gamma = \gamma \gamma^{-1}\gamma$ and $\gamma^{-1} = \gamma^{-1} \gamma \gamma^{-1}$.

For each graph $E$, there is a free inverse category $\FIC(E)$. It is the quotient of the free category over $\hat{E}$ by the congruence generated by the relations $\gamma \sim \gamma \gamma ^{-1}\gamma$ and $(\gamma \gamma^{-1}) (\nu\nu^{-1}) \sim (\nu \nu^{-1})(\gamma \gamma^{-1})$ whenever $s(\gamma) = s(\nu)$, see \cite[Section 4]{marg-meakin-93}.

Given an inverse category $C$ we can form an inverse semigroup $S(C)$ by adjoining a zero to $C$ and declaring any undefined product to be $0$.  We denote by $\FIS(E)$ the inverse semigroup $S(\FIC(E))$ associated to the free inverse category $\FIC(E)$. We call $\FIS(E)$ the {\it free inverse semigroup} of $E$.

Let $\Gamma_E$ be the Cayley graph of the groupoid $\FG (E)$. Hence we have
$\Gamma_E ^0 =  \FG (E)$, and there is an edge $(g,e)$ from $g$ to $g\cdot e$ whenever $g, g\cdot e\in \FG (E)$, $e\in E^1$.

The elements of $\FIC(E)$ can be interpreted as certain Munn trees in $\Gamma_E$, see \cite[Section 4]{marg-meakin-93} and \cite[Section 4]{ABC25}. This will play a central role in what follows, hence we recall here this description.

 \begin{definition}\cite[Definition 4.1(a)]{ABC25}
 	\label{def:Mun-EC-tree} Let $E$ be a directed graph.
	A {\it Munn $E$-tree} is a pair $(T,g)$, where $T$ is a finite connected subgraph (hence a subtree) of $\Gamma_E$ containing the unit $v$ of the connected component $K_v$ corresponding to $T$, and $g$ is a vertex of $T$.
\end{definition}

We now recall two basic results from \cite{ABC25}.

\begin{proposition}\cite[Proposition 4.2]{ABC25}
	\label{prop:MunnE-trees}
	Let $E$ be a directed graph. Then the free inverse semigroup $\FIS(E)$ associated to $E$ is isomorphic to the semigroup of all Munn $E$-trees, together with $0$, endowed with the product 
	$$(T_1,g_1)\cdot (T_2,g_2) = \begin{cases} (T_1\cup g_1\cdot T_2,\,  g_1\cdot g_2) & \text{ if } r(g_1) = s(g_2)\\ \qquad \,\,  \qquad \,\, 0 & \text{ if } r(g_1) \ne s(g_2) 
	\end{cases}  . $$
	The inverse of a Munn $E$-tree $(T,g)$ is given by
	$$(T,g)^{-1} = (g^{-1}\cdot T, g^{-1}).$$ 
\end{proposition}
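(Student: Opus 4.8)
The plan is to recognize this statement as the graph-theoretic form of Munn's solution to the word problem for free inverse semigroups, following Margolis and Meakin \cite{marg-meakin-93}. Write $\mathcal M(E)$ for the set of Munn $E$-trees together with $0$, equipped with the product and involution given in the statement. I would construct a homomorphism $\phi\colon \FIS(E)\to \mathcal M(E)$, prove it is well defined, and then show it is bijective. Setting up $\phi$ is the formal part; the substance lies in surjectivity (every tree is traced out by a word) and injectivity (the pair ``tree plus distinguished vertex'' is a complete invariant of an element), with injectivity being the crux.

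First I would verify directly that $\mathcal M(E)$ is an inverse semigroup. Associativity reduces to the fact that left translation by a fixed $g\in\FG(E)$ is a label-preserving graph isomorphism onto its image, so that $g\cdot(S\cup S')=(g\cdot S)\cup(g\cdot S')$ and $g_1\cdot(g_2\cdot T)=(g_1\cdot g_2)\cdot T$; both sides of the associativity law then collapse to $(T_1\cup g_1\cdot T_2\cup g_1g_2\cdot T_3,\,g_1g_2g_3)$, with the $0$-cases matched by the range/source conditions. The idempotents are exactly the trees $(T,v)$ whose distinguished vertex is the root $v$ (since $g\cdot g=g$ in the groupoid forces $g=s(g)=r(g)$), and $(T_1,v)(T_2,v)=(T_1\cup T_2,v)$, so idempotents commute; a one-line computation shows $(g^{-1}\cdot T,g^{-1})$ is an inverse of $(T,g)$. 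Hence $\mathcal M(E)$ is an inverse semigroup. I would then define $\phi$ on generators by $\phi(v)=(\{v\},v)$ for $v\in E^0$ and $\phi(e)=(e^\downarrow,e)$ for $e\in\hat{E}^1$, where $e^\downarrow=\{s(e),e\}$ is the one-edge subtree. Because $\mathcal M(E)$ is an inverse semigroup, the defining relations of $\FIC(E)$ hold among the images, so the universal property of $\FIS(E)$ yields $\phi$.

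For surjectivity I would trace out a given tree $(T,g)$ by a walk. A depth-first traversal from the root $v$ produces a closed walk at $v$ whose traversed edges are exactly those of $T$; reading its $\hat{E}^1$-labels gives a word $w_0$ with $\phi(w_0)=(T,v)$. Appending the labels $w_1$ of the geodesic from $v$ to $g$ inside $T$ gives $\phi(w_0w_1)=(T\cup g^\downarrow,g)=(T,g)$, using $g\in T$. Thus $\phi$ is onto.

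The main obstacle is injectivity, that is, showing $\phi$ separates elements of $\FIS(E)$. Here I would establish a normal form: writing $\max(T)=\{p_1,\dots,p_k\}$ for the root-to-leaf branches of $T$, each element pushes forward to $(T,g)$ and pulls back to $\big(\prod_{i}p_ip_i^{-1}\big)\,g$, the product being independent of the order and of the choice of branches because idempotents commute and $pp^{-1}qq^{-1}=qq^{-1}$ whenever $q\leq_p p$. It then suffices to see that distinct trees yield distinct idempotents, and the clean way to force this is to show that $\mathcal M(E)$ itself satisfies the universal property of $\FIS(E)$: for any inverse semigroup $S$ and any admissible map $\psi\colon\hat{E}^1\to S$, the assignment $(T,g)\mapsto\big(\prod_{i}\psi(p_i)\psi(p_i)^{-1}\big)\psi(g)$ defines a homomorphism $\bar\psi\colon\mathcal M(E)\to S$ extending $\psi$. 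The heart of this verification, and the one genuinely delicate computation, is the conjugation identity $\psi(g_1)\,\psi(p)\psi(p)^{-1}\,\psi(g_1)^{-1}=\psi(g_1\cdot p)\psi(g_1\cdot p)^{-1}$, expressing that translating a branch by $g_1$ corresponds to conjugating its idempotent in $S$; combined with commutativity and the prefix-absorption of idempotents (which handles the merging of branches of $T_1$ with those of $g_1\cdot T_2$), this gives $\bar\psi\big((T_1,g_1)(T_2,g_2)\big)=\bar\psi(T_1,g_1)\bar\psi(T_2,g_2)$. Since $\FIS(E)$ has this universal property by construction, the two semigroups are canonically isomorphic, and tracking the isomorphism back through $\phi$ yields injectivity. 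The stated formula for the inverse has already been checked in the first step.
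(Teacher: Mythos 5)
The paper does not actually prove this statement: it is quoted verbatim from \cite{ABC25}*{Proposition 4.2}, whose proof in turn is the Margolis--Meakin graph version of Munn's theorem. Your route --- verify directly that Munn $E$-trees form an inverse semigroup, map the generators in, obtain surjectivity from an Euler tour of the tree, and obtain injectivity by showing that the tree semigroup itself satisfies the universal property via the normal form $\bigl(\prod_i \psi(p_i)\psi(p_i)^{-1}\bigr)\psi(g)$ --- is exactly that standard argument, so in substance your approach matches the cited one.

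Two identities are mis-stated and should be corrected before the argument is airtight. First, prefix absorption goes the other way: if $q\le_p p$ then $pp^{-1}\le qq^{-1}$, so $pp^{-1}qq^{-1}=pp^{-1}$ (the longer branch absorbs its prefixes), not $qq^{-1}$. Second, and more seriously, the conjugation identity you single out as the heart of the verification is false as written whenever the concatenation $g_1p$ is not reduced: what is true is $\psi(g_1)\psi(p)\psi(p)^{-1}\psi(g_1)^{-1}=(\psi(g_1)\psi(p))(\psi(g_1)\psi(p))^{-1}$, and this equals $\psi(g_1)\psi(g_1)^{-1}\,\psi(g_1\cdot p)\psi(g_1\cdot p)^{-1}$ rather than $\psi(g_1\cdot p)\psi(g_1\cdot p)^{-1}$ alone --- the cancelled segment leaves behind the extra idempotent $\psi(g_1)\psi(g_1)^{-1}$. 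In your application this factor is harmless, because $g_1$ is a prefix of some branch $p_i$ of $T_1$ and so the factor is absorbed by $\prod_i\psi(p_i)\psi(p_i)^{-1}$; this is presumably what you intend by invoking prefix absorption, but the identity must be stated in the corrected form. The cleanest way to organize the whole computation is the usual induction on word length establishing $\psi(w)=\bigl(\prod_{p}\psi(p)\psi(p)^{-1}\bigr)\psi(\red(w))$ for every word $w$, the product running over the maximal vertices of the Munn tree of $w$; the homomorphism property of $\bar\psi$ then follows by comparing the Munn tree of a concatenation $w_1w_2$ with $(T_1\cup g_1\cdot T_2,\,g_1\cdot g_2)$.
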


\begin{corollary}\cite[Corollary 4.3]{ABC25}
	\label{cor:FSIE-strongly-unitary}
	Let $E$ be a directed graph. Then $\FIS(E)$ is strongly $E^*$-unitary and thus it is a restricted semidirect product 
	$$\FIS(E) =\mathcal E (\FIS(E)) \rtimes_\theta ^r \F,$$
	where $\F = \F (E^1)$ is the free group on $E^1$.  
\end{corollary}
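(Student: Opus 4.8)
The plan is to produce an explicit idempotent pure partial homomorphism from $\FIS(E)^\times$ to the free group $\F = \F(E^1)$ and then to invoke \cite[Proposition 2.1]{ABC25}, which asserts precisely that an inverse semigroup with zero is strongly $E^*$-unitary if and only if it decomposes as a restricted semidirect product of its semilattice of idempotents by the group at hand. Thus, once the partial homomorphism is in place the semidirect product decomposition is automatic, and the whole content of the statement reduces to constructing $\pi$ and checking idempotent purity.

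For the construction I would work entirely inside the Munn $E$-tree model of Proposition~\ref{prop:MunnE-trees}. There is a canonical groupoid homomorphism $\FG(E) \to \F(E^1)$ sending a reduced path $g = y_1\cdots y_n$, with $y_i \in \hat E^1$, to the word it spells in $\F(E^1)$; since $g$ is reduced as a path, it contains no subpath $x x^{-1}$, and therefore the spelled word is already reduced in $\F(E^1)$. Multiplicativity is clear: if $r(g_1) = s(g_2)$ then $g_1\cdot g_2 = \red(g_1 g_2)$, and reduction of paths in $\FG(E)$ is carried to reduction of words in $\F(E^1)$. I then set $\pi(T,g) := $ the image of $g$ in $\F(E^1)$. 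If $(T_1,g_1)\cdot(T_2,g_2)\neq 0$, then $r(g_1)=s(g_2)$ and the product equals $(T_1\cup g_1\cdot T_2,\, g_1\cdot g_2)$, so $\pi$ of the product is the image of $g_1\cdot g_2$, which equals $\pi(T_1,g_1)\,\pi(T_2,g_2)$ by multiplicativity. Hence $\pi$ is a partial homomorphism.

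It remains to verify idempotent purity, i.e. $\pi^{-1}(1) = \E(\FIS(E))^\times$. Computing $(T,g)(T,g)^{-1} = (T,g)(g^{-1}\cdot T, g^{-1}) = (T, s(g))$ shows that the idempotents of $\FIS(E)$ are exactly the pairs $(T,v)$ whose marked vertex is the base vertex $v\in E^0$ of the corresponding connected component. For such a pair one has $\pi(T,v) = 1$. Conversely, suppose $\pi(T,g) = 1$; then the reduced word spelled by $g$ is empty, but $g$ is reduced as a path, so the only possibility is that $g$ has length $0$, that is $g = v$ for a vertex $v$. Hence $(T,g)$ is idempotent, and both inclusions give $\pi^{-1}(1) = \E(\FIS(E))^\times$.

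The one genuinely load-bearing step is this last implication: that a \emph{nontrivial} reduced path in $\FG(E)$ is never sent to the identity of $\F(E^1)$. This is exactly the point at which the freeness of the construction enters, since the label-preserving map $\FG(E)\to\F(E^1)$ injects reduced paths into reduced words, so the fibre over the identity consists only of units. Everything else is formal bookkeeping. With idempotent purity established, $\FIS(E)$ is strongly $E^*$-unitary, and \cite[Proposition 2.1]{ABC25} then furnishes a partial action $(D_h,\theta_h)$ of $\F$ on $\E(\FIS(E))$ together with the identification $\FIS(E) = \E(\FIS(E))\rtimes^r_\theta \F$, completing the argument.
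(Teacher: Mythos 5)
Your proof is correct and follows exactly the canonical route the paper has in mind: the statement is quoted from \cite{ABC25}, and the paper itself later identifies the map $(T,g)\mapsto\omega(g)$ on the Munn-tree model as ``the canonical idempotent pure partial homomorphism of $\FIS(E)\cong\mathcal M$'' (see the proof of Proposition~\ref{prop:Toeplitz-Munn-EUtrees}), which, combined with \cite[Proposition 2.1]{ABC25}, is precisely your argument. The key points --- that reduced paths spell reduced words in $\F(E^1)$ so the fibre over $1$ consists only of units, and that the idempotents are exactly the pairs $(T,v)$ --- are handled correctly.
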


For an arbitrary directed graph $E$, we may consider a subset $U$ of edges in $E$, and impose relations 
(1)-(3) from Definition \ref{def:Leavitt-inverse-semigroup}, relations $e^{-1}e= r(e)$ for all edges $e$ in $E$,  and relations
\begin{enumerate}
\item[(5)$_U$]  $ee^{-1} = s(e)$   for all  $e\in U$,
\end{enumerate} 
obtaining a certain inverse semigroup, which will be denoted here by $\mathcal T _U (E)$. As we will see in Proposition \ref{prop:Toeplitz-Munn-EUtrees}, all these inverse semigroups follow the same pattern.

\begin{definition}
	\label{def:canonicaldec-respect-toU}
	Let $E$ be a directed graph and let $U$ be a subset of $E^1$. For each $g\in \FG (E)$, we can uniquely write $g=g_U w$, where $g_U$ is either a vertex or a prefix of $g$ such that $g_U$ does not end in $E^{-1}$ and does not end in an edge belonging to $U$, and $w$ is a product of elements of the form $f^{-1}$ for $f\in E^1$ and elements of the form $e$ for $e\in U$. 
	
	We say that $g=g_U w$ is the {\it canonical decomposition} of $g$ with respect to $U$. For $U=\emptyset$, we obtain the  decomposition $g= g_0w$ from \cite[Notation 3.15]{ABC25}. When $(E,C)$ is a separated graph and $U$ is the set of elements $e\in E^1$ such that $\{ e\}\in C$, this agrees with the decomposition $g=g_L w$ from Definition \ref{def:decomposition-gLw}.
\end{definition}

\begin{definition}
	\label{def:ToeplitzMun-EU-tree} Let $E$ be a directed graph and let $U$ be a subset of $E^1$.
 We define a {\it Toeplitz-Munn $(E,U)$-tree} as a pair $(T,g)$ where $T$ is a finite 
		connected subgraph of $\Gamma_E$ containing the root $v$ corresponding to its connected component, and such that all maximal elements of $T$ do not end in $E^{-1}$, and do
		not end in an edge $e\in U$, and  $g= g_U w$ satisfies that $g_U\in T$.    
	\end{definition}

From now on, we will identify the free inverse semigroup $\FIS (E)$ on $E$ with the inverse semigroup $\mathcal M $ of Munn $E$-trees, together with $0$ (see Proposition \ref{prop:MunnE-trees}).
We will simply refer to the first component $T$ of a Munn $E$-tree $(T,g)$ as an $E$-tree. The set of $E$-trees is identified with the set $\E (\FIS (E))^\times $ of nonzero idempotents of $\FIS (E)$. 

Let $U$ be a subset of $E^1$. Let $\sim_U$ be the congruence on $\FIS (E)\cong \mathcal M$ generated by the relations $e^{-1}e \sim_U r(e)$ for $e\in E^1$ and $ee^{-1}\sim_U s(e)$ for $e\in U$. In terms of the representation as Munn $E$-trees, $\sim_U$ is the congruence generated by the relations $(ge^{-1})^\downarrow \sim_U g^\downarrow $ for all $e\in E^1$ such that $ge^{-1} \in \FG (E)$, and $(ge)^\downarrow \sim_U g^\downarrow$ for all $e\in U$ such that $ge \in \FG (E)$.

Obviously, we have $$\mathcal T_U(E) \cong \FIS (E)/{\sim_U} \cong \mathcal M /{\sim_U}.$$

\begin{lemma}
	\label{lem:uniqueTU}
	Let $U$ be a subset of $E^1$, and set $\mathcal M_U= \mathcal M/{\sim_U}$.
	For each $E$-tree $T$, the $\sim_U$-equivalence class $[T]_U$ of $T$ in $\mathcal M$ contains a maximum element $T_U$ with respect to the natural order in the inverse semigroup $\mathcal M$. This maximum element is the unique element $T_U$ of $[T]_U$ such that each maximal element of $T_U$ does not end in $E^{-1}$ and does not end in $e$, for $e\in U$.  
\end{lemma}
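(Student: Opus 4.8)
The plan is to prove existence and uniqueness of the maximum element $T_U$ by understanding concretely what the congruence $\sim_U$ does to $E$-trees, using the explicit generating relations $(ge^{-1})^\downarrow \sim_U g^\downarrow$ and $(ge)^\downarrow \sim_U g^\downarrow$ described just before the statement. First I would observe that the two generating moves each \emph{grow} the tree: passing from $g^\downarrow$ to $(ge^{-1})^\downarrow$ (for $e\in E^1$ with $ge^{-1}$ reduced) adjoins to the tree the vertex $ge^{-1}$ together with the connecting edge, and similarly for $(ge)^\downarrow$ when $e\in U$. More precisely, for a general $E$-tree $T$ the relation says $T \sim_U T\cup (ge^{-1})^\downarrow$ whenever $g\in T$ and $ge^{-1}\in \FG(E)$, and $T\sim_U T\cup(ge)^\downarrow$ whenever $g\in T$ and $e\in U$ with $ge\in\FG(E)$. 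In the natural order of $\mathcal M$, adjoining vertices to a tree makes it \emph{smaller} (since $S\le T$ for idempotents means $ST=S$, i.e.\ $S\subseteq T$ corresponds to the larger tree being the smaller element — I would pin down the exact direction from the convention in Proposition~\ref{prop:MunnE-trees}), so each elementary move takes us down in the order toward a unique saturated tree.

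The key construction is then to define $T_U$ as the \emph{saturation} of $T$: start with $T$ and repeatedly apply all admissible moves, i.e.\ whenever a vertex $g$ of the current tree has a maximal extension $ge^{-1}$ (for some $e\in E^1$) or $ge$ (for some $e\in U$) lying in $\FG(E)$, adjoin it. I would argue that this closure process is confluent and terminates within the $\sim_U$-class, producing a well-defined tree $T_U$ characterized by the stated stability property: every maximal element of $T_U$ ends neither in $E^{-1}$ nor in an edge $e\in U$. Indeed, if some maximal element of $T_U$ ended in $x^{-1}$ with $x\in E^1$, write it as $gx^{-1}$; then $g\in T_U$ and the move would force $gx^{-1}$ to be non-maximal (it would have a predecessor, contradicting that we applied all moves) — so I would phrase stability as: no further growth move applies. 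Conversely, any tree in $[T]_U$ satisfying the stability property must coincide with $T_U$, since applying moves can only enlarge a tree and a stable tree admits no enlargement.

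The two things to verify carefully are (i) that $T_U$ genuinely lies in the class $[T]_U$, and (ii) that $T_U$ is the \emph{maximum} of the class in the order of $\mathcal M$, not merely a distinguished representative. For (i), since each atomic move is one of the generators of $\sim_U$ (applied at some vertex $g$), the finite sequence of moves from $T$ to its saturation witnesses $T\sim_U T_U$; termination follows because every $C$-separated path out of a fixed vertex $v$ has bounded length once we forbid the cancelling subwords, so only finitely many vertices can ever be adjoined (here I would invoke that reduced paths in $\FG(E)$ avoiding $xx^{-1}$ give a locally finite growth, and that the admissible extensions are of a restricted shape). For (ii), I must show every $S\in[T]_U$ satisfies $S\le T_U$, equivalently $S\supseteq T_U$ as trees. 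This is where I would need a normal-form or invariant argument: I would show that the \emph{underlying saturated tree is a $\sim_U$-invariant}, i.e.\ two $E$-trees are $\sim_U$-equivalent iff they have the same saturation, by checking that each generating move leaves the saturation unchanged. Granting that, every member of the class has saturation $T_U$ and hence contains $T_U$ after one builds down, giving $S\le T_U$.

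The main obstacle I anticipate is precisely establishing that the saturation is a complete $\sim_U$-invariant, i.e.\ confluence of the rewriting system generated by the two families of moves. One must rule out that applying moves in different orders, or applying a move at one vertex, could \emph{enable} or \emph{block} a move elsewhere in an incompatible way; concretely, one checks that the set of adjoinable vertices is determined by the current tree and that adjoining one never removes the eligibility of another, so the process is monotone and order-independent, yielding a unique fixed point. I expect this to reduce to a diamond-lemma style local confluence check on pairs of overlapping moves, combined with termination, after which Newman's lemma gives global confluence and hence the uniqueness of $T_U$ both as the stable representative and as the order-maximum.
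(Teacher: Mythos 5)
Your construction points in the wrong direction, and the resulting object does not exist. Since for idempotents of $\mathcal M$ one has $S\le T$ exactly when $T\subseteq S$ as trees, the \emph{maximum} of $[T]_U$ that the lemma asks for is the \emph{smallest} tree in the class — the one obtained by \emph{pruning} trailing $E^{-1}$- and $U$-edges — not a saturation. Your saturation is (at best) an attempt at the minimum of the class, and in fact it does not terminate: take a single vertex $v$ with one loop $e$; then $v\sim_U (e^{-1})^\downarrow\sim_U(e^{-1}e^{-1})^\downarrow\sim_U\cdots$, every word $e^{-1}\cdots e^{-1}$ is reduced, so the closure under growth moves is infinite and is not an $E$-tree (hence not an element of $\mathcal M$), and the class has no smallest element. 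Your termination argument via ``$C$-separated paths of bounded length'' fails on two counts: no $C$-separation is in force in $\FIS(E)$, and reduced paths in $\FG(E)$ out of a vertex are not of bounded length. Relatedly, your derivation of the leaf condition is confused: a maximal element $gx^{-1}$ of a saturated tree would fail to be maximal because a growth move appends a \emph{successor}, not because it ``has a predecessor''; the condition that maximal elements do not end in $E^{-1}\cup U$ is precisely the signature of the pruned tree, and a saturated tree (if finite) would have leaves characterized by a condition on the graph at $r(m)$, not on the last letter of $m$.

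The workable version of your ``complete invariant checked move-by-move'' idea is the one the paper uses. Write each $g\in\FG(E)$ as $g=g_Uw$ where $g_U$ drops the maximal trailing block of letters from $E^{-1}\cup U$, and set $T_U:=\max\{g_U:g\in T\}^{\downarrow}$. Then $T_U\subseteq T$, so $T\le T_U$; a finite sequence of generating moves shows $T\sim_U T_U$; and the decisive step is that each generating move $Z\mapsto Z\cup h\cdot x^{\downarrow}$ with $x\in E^{-1}\cup U$ leaves the set $\{z_U:z\in Z\}$ unchanged (if $hx$ is not reduced nothing is added; if it is reduced then $(hx)_U=h_U$). Hence $S\mapsto S_U$ is constant on $[T]_U$, every $S$ in the class contains $T_U$ and so satisfies $S\le T_U$, and $T_U$ is the unique member whose maximal elements end in neither $E^{-1}$ nor $U$. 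No confluence or Newman-style argument is needed once the invariant is chosen correctly.
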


\begin{proof}
	Let $T$ be an $E$-tree. For each $g\in \FG (E)$, we consider its canonical decomposition $g=g_U w$ with respect to $U$ (see Definition \ref{def:canonicaldec-respect-toU}).
	
	Now set $A:= \{ g_U: g\in T\}$, and $T_U = \max (A)^\downarrow $. Then the set of maximal elements of $T_U$ is exactly $\max (A)$, and thus all maximal elements of $T_U$ do not end in $E^{-1}$ and do not end in $U$. Since $T_U\subseteq T$, it follows that $T\le T_U$.
	
	Also, as in the proof of \cite[Lemma 3.16]{ABC25}, we have $T_U\sim_U T$.  
	
	We now want to show that, if $S$ is an $E$-tree and $T\sim_U S$ in $\mathcal M$, then $S_U=T_U$. Let $(T_0,g_0), (T_1,g_1), \dots , (T_n,g_n)$ be a sequence of Munn $E$-trees, with $(T_0,g_0)= (T,1)$ and $(T_n,g_n)= (S,1)$, and such that for each $0\le i < n$, we can pass from $(T_i,g_i)$ to $(T_{i+1},g_{i+1})$, or from $(T_{i+1}, g_{i+1})$ to $(T_i,g_i)$, by a single step either of the form 
	$$(X,h) (\{r(e)\}, r(e))(Y,k) \sim_U
	(X,h) ((e^{-1})^\downarrow , r(e)) (Y,k)$$
	 for $e\in E^1$, or of the form
	$$(X,h) (\{s(e)\}, s(e))(Y,k) \sim_U
(X,h) (e^\downarrow , s(e)) (Y,k)$$	  
	for $e\in U$. 
	
	In both cases, we have $(Z, h\cdot k) \sim_U (Z\cup h\cdot x^\downarrow, h\cdot k)$, where $x\in E^{-1}\cup U$, and  $Z= X\cup h\cdot Y $ is an $E$-tree such that $h\in Z$. It thus suffices to show that in such a situation, one has $Z_U = (Z\cup  h\cdot x^\downarrow)_U$. If $hx$ is not reduced then 
	$h\cdot x\in Z$ and we indeed have $Z=Z\cup  h\cdot x^\downarrow $. If $hx $ is reduced, then $h_U= (hx)_U$ and hence $\{ z_U: z\in Z\} = \{z_U: z\in Z\cup  h\cdot x^\downarrow \}$, showing that 
	$Z_U=  (Z\cup h\cdot x^\downarrow)_U$, as desired. Applying inductively this fact to all the consecutive pairs of elements in the sequence $\{ (T_i,g_i)\}$, we obtain that $T_U=S_U$. Hence $S\le T_U$ for all $S\in [T]_U$,
	and $T_U$ is the largest element of $[T]_U$ in $\mathcal M _U$. Also it is clear from this fact that 
	$T_U$ is the unique element in $[T]_U$ with the property that each maximal element does not end in $E^{-1}\cup U$. 
	\end{proof}

The following lemma is a straightforward consequence of Lemma \ref{lem:uniqueTU}. We leave its proof to the reader. 

\begin{lemma}
	\label{lem:cosequenceofuniqueTU}
With the above notation, we have, for Munn $E$-trees $(T,g)$ and $(S,h)$,
\begin{enumerate}
	\item $(T,g)\sim_U (S,h)$ if and only if $S_U= T_U$ and $g=h$.
	\item The $\sim_U$-equivalence class $[T,g]_U$ has a largest element, which is $(T_U\cup g^\downarrow , g)$.
\end{enumerate}
\end{lemma}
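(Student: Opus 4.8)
The plan is to deduce both assertions from Lemma~\ref{lem:uniqueTU} together with the explicit product formula for Munn $E$-trees of Proposition~\ref{prop:MunnE-trees}, via two elementary observations: that $\sim_U$ never alters the second coordinate of a Munn tree, and that for idempotents both the natural order and the congruence $\sim_U$ translate into inclusions of $E$-trees.

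First I would record that the second-coordinate map $\sigma\colon \mathcal M^\times \to \FG(E)$, $(T,g)\mapsto g$, satisfies $\sigma\big((X,p)\cdot(Y,q)\big)=p\cdot q$ whenever the product is nonzero; this is immediate from Proposition~\ref{prop:MunnE-trees}. Each generating relation of $\sim_U$ is a pair of idempotents with equal second coordinate, and hence with the same source and range: indeed $e^{-1}e=\big((e^{-1})^\downarrow, r(e)\big)$ and $r(e)=(\{r(e)\}, r(e))$ share the second coordinate $r(e)$, while $ee^{-1}=(e^\downarrow, s(e))$ and $s(e)=(\{s(e)\}, s(e))$ share the second coordinate $s(e)$. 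Consequently, for any $x,y\in\mathcal M$ the two sides $xay$ and $xby$ of a congruence step (with $(a,b)$ a generating pair) are simultaneously zero or nonzero, and when nonzero they carry the same image $\sigma(x)\cdot w\cdot\sigma(y)$ under $\sigma$, where $w$ is the common vertex. Passing to the congruence these steps generate, it follows that $(T,g)\sim_U(S,h)$ with both sides nonzero forces $g=h$ in $\FG(E)$; in particular $T$ and $S$ have the same root $v=s(g)$.

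For the forward direction of (1) it remains to obtain $T_U=S_U$. Here I would use the involution and a multiplication. Since any inverse-semigroup congruence respects inverses, $(T,g)\sim_U(S,h)$ yields $(T,g)(T,g)^{-1}\sim_U(S,h)(S,h)^{-1}$; and Proposition~\ref{prop:MunnE-trees} computes $(T,g)(T,g)^{-1}=(T,v)$ and $(S,h)(S,h)^{-1}=(S,v)$. Thus the idempotents $(T,v)$ and $(S,v)$ are $\sim_U$-equivalent, and Lemma~\ref{lem:uniqueTU} gives $T_U=S_U$. For the converse I would assume $T_U=S_U$ and $g=h$; Lemma~\ref{lem:uniqueTU} gives $(T,v)\sim_U(S,v)$, and right-multiplying this equivalence by the Munn tree $(g^\downarrow, g)$ produces $(T,v)(g^\downarrow, g)=(T\cup g^\downarrow, g)=(T,g)$ and likewise $(S,g)$, using that $g^\downarrow\subseteq T$ and $g^\downarrow\subseteq S$ because $g$ is a vertex of the $E$-trees $T$ and $S$, which are lower sets. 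Hence $(T,g)\sim_U(S,g)$, proving (1).

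For (2), I would first verify that $(T_U\cup g^\downarrow, g)$ is a genuine Munn tree (its first coordinate is a subtree of $\Gamma_E$ containing the root $v$ and having $g$ as a vertex) lying in $[T,g]_U$: right-multiplying $(T_U,v)\sim_U(T,v)$ from Lemma~\ref{lem:uniqueTU} by $(g^\downarrow, g)$ gives $(T_U\cup g^\downarrow, g)\sim_U(T\cup g^\downarrow, g)=(T,g)$. To see it is the largest element, I would translate the natural order into inclusions: from Proposition~\ref{prop:MunnE-trees} one gets $(S,g)\le(P,g)$ if and only if $P\subseteq S$. By part (1) any member of $[T,g]_U$ has the form $(S,g)$ with $S_U=T_U$, so $T_U=S_U\subseteq S$ and $g^\downarrow\subseteq S$, whence $T_U\cup g^\downarrow\subseteq S$, i.e.\ $(S,g)\le(T_U\cup g^\downarrow, g)$. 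As $(T_U\cup g^\downarrow, g)$ itself lies in the class, it is the maximum. The only step requiring genuine care is the first observation, that $\sim_U$ preserves the second coordinate; this is precisely where the equality of source and range of the paired generating idempotents is needed, to guarantee that a single congruence step cannot send one side to $0$ while leaving the other nonzero. Everything else is bookkeeping with the product formula of Proposition~\ref{prop:MunnE-trees} and the structural content already extracted in Lemma~\ref{lem:uniqueTU}.
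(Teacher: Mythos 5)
Your proof is correct, and it follows exactly the route the paper intends: the paper omits the proof, calling the lemma ``a straightforward consequence of Lemma~\ref{lem:uniqueTU}'', and your write-up supplies precisely the missing bookkeeping (preservation of the second coordinate under congruence steps, passage to the idempotents $(T,g)(T,g)^{-1}=(T,v)$ to invoke Lemma~\ref{lem:uniqueTU}, and the translation of the natural order into reverse inclusion of trees via Proposition~\ref{prop:MunnE-trees}). The one point genuinely requiring care --- that a single congruence step cannot annihilate one side while leaving the other nonzero, since the paired generating idempotents share source and range --- is correctly identified and handled.
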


\begin{proposition}
	\label{prop:Toeplitz-Munn-EUtrees}
	Let $E$ be a directed graph, let $U$ be a subset of $E^1$ and set $\mathcal M_U= \mathcal M/{\sim_U}$. 
	 Then the inverse semigroup $\mathcal M _U$ is isomorphic to the inverse semigroup $\TUM$ consisting of all the Toeplitz-Munn $(E,U)$-trees $(T,g)$, endowed with the product 
	$$(T_1,g_1)\cdot (T_2,g_2) = \begin{cases} \Big( (T_1'\cup g_1\cdot T_2')_U, \,\, g_1\cdot g_2\Big) & \text{ if } r(g_1) = s(g_2)\\ \qquad \,\,  \qquad \,\, 0 & \text{ if } r(g_1) \ne s(g_2) 
	\end{cases}   $$
	where $T_1',T_2'\in \E (\FIS (E))$ are representatives of $T_1$ and $T_2$ respectively, such that $g_1\in T_1'$ and $g_2\in T_2'$. The inverse is given by  	
	$(T,g)^{-1} = ((g^{-1}\cdot T')_U, g^{-1})$,
	where $T'$ is any representative of $T$ such that $g\in  T'$. 
	
	Moreover, the map $(T,g)\mapsto \omega (g)$ determines an idempotent pure partial homomorphism $\TUM \to \F$. 
\end{proposition}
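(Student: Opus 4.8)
The plan is to single out a canonical representative in each $\sim_U$-class of Munn $E$-trees, to identify these representatives with the Toeplitz--Munn $(E,U)$-trees, and then to transport the inverse semigroup structure of $\mathcal M_U$ along the resulting bijection, checking at the end that the transported operations are exactly those in the statement. First I would define $\Phi\colon \mathcal M_U \to \TUM$ by $\Phi([T,g]_U) = (T_U, g)$, where $T_U$ is the canonical form of the $E$-tree $T$ furnished by Lemma~\ref{lem:uniqueTU}. By Lemma~\ref{lem:cosequenceofuniqueTU}(1) the class $[T,g]_U$ is determined by the pair $(T_U, g)$, so $\Phi$ is well defined and injective. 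That $(T_U,g)$ is a Toeplitz--Munn $(E,U)$-tree holds because the maximal elements of $T_U$ end neither in $E^{-1}$ nor in an edge of $U$ (Lemma~\ref{lem:uniqueTU}), while $g\in T$ forces $g_U\in\{h_U : h\in T\}$ and hence $g_U\in T_U$. Surjectivity is the reverse reading: a Toeplitz--Munn tree $(T,g)$ satisfies $T=T_U$ by the uniqueness in Lemma~\ref{lem:uniqueTU}, and since $g_U\in T$ the pair $(T\cup g^\downarrow, g)$ is a genuine Munn $E$-tree (it is connected through $g_U$ and has $g$ as a vertex) with $(T\cup g^\downarrow)_U = T$, so $\Phi([T\cup g^\downarrow, g]_U) = (T,g)$.

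Next I would transport the operations of $\mathcal M_U$ to $\TUM$ through $\Phi$, so that $\Phi$ becomes an isomorphism of inverse semigroups by construction, and then verify that the transported product is the stated one. Writing the product of $[A,g_1]_U$ and $[B,g_2]_U$ in $\mathcal M_U$ (for $r(g_1)=s(g_2)$) as $[A\cup g_1\cdot B,\, g_1\cdot g_2]_U$, applying $\Phi$ gives $\big((A\cup g_1\cdot B)_U,\, g_1\cdot g_2\big)$. The point is then to show that this equals $\big((T_1'\cup g_1\cdot T_2')_U,\, g_1\cdot g_2\big)$ for \emph{any} representatives $T_1'\ni g_1$ of $A_U=T_1$ and $T_2'\ni g_2$ of $B_U=T_2$. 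This is where the whole argument concentrates, and it rests on the fact that $\sim_U$ is a congruence on $\mathcal M$: from $A\sim_U T_1'$ and $B\sim_U T_2'$ one deduces $A\cup g_1\cdot B \sim_U T_1'\cup g_1\cdot B \sim_U T_1'\cup g_1\cdot T_2'$, the left translation by $g_1$ being realized as left multiplication by the Munn tree $(g_1^\downarrow, g_1)$, which $\sim_U$ respects. Passing to canonical forms and invoking Lemma~\ref{lem:cosequenceofuniqueTU} then yields equality of the first coordinates. The same computation simultaneously shows that the formula in the statement is independent of the chosen representatives, i.e.\ that the product on $\TUM$ is well defined; this independence-of-representatives step is the main obstacle of the proof, everything else being bookkeeping built on Lemmas~\ref{lem:uniqueTU} and~\ref{lem:cosequenceofuniqueTU}.

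Finally, since $\Phi$ is a bijective homomorphism between inverse semigroups it automatically preserves pseudo-inverses, and the explicit formula $(T,g)^{-1} = \big((g^{-1}\cdot T')_U, g^{-1}\big)$ is obtained by feeding the inverse $(g^{-1}\cdot T, g^{-1})$ of $\mathcal M$ through $\Phi$ and applying the same congruence compatibility to discard the dependence on the representative $T'$. For the last assertion, I would observe that, by Lemma~\ref{lem:cosequenceofuniqueTU}(1), $\sim_U$ never identifies two Munn trees with distinct second coordinates, so the idempotent pure partial homomorphism $\FIS(E)\to\F$, $(T,g)\mapsto\omega(g)$, of Corollary~\ref{cor:FSIE-strongly-unitary} descends to $\TUM\cong\mathcal M_U$. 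It is a partial homomorphism because a product $(T_1,g_1)(T_2,g_2)$ is nonzero exactly when $r(g_1)=s(g_2)$, in which case its second coordinate is $g_1\cdot g_2$ and $\omega(g_1\cdot g_2)=\omega(g_1)\omega(g_2)$; and it is idempotent pure because for a reduced path $g\in\FG(E)$ one has $\omega(g)=1$ if and only if $g$ has length $0$, i.e.\ $g$ is a vertex, which is precisely the condition for $(T,g)$ to be an idempotent of $\TUM$.
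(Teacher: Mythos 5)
Your proposal is correct and follows essentially the same route as the paper: define $\Phi([T,g]_U)=(T_U,g)$, use Lemmas~\ref{lem:uniqueTU} and~\ref{lem:cosequenceofuniqueTU} to get a bijection onto the Toeplitz--Munn $(E,U)$-trees, transport the inverse semigroup structure, and descend the canonical idempotent pure partial homomorphism of $\FIS(E)$. The only difference is that you spell out the representative-independence of the product formula via the congruence property of $\sim_U$, a verification the paper leaves implicit.
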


\begin{proof}
	We define a map $\Phi \colon \mathcal M_U \to \TUM$ by $\Phi ([T,g]_U) = (T_U, g)$. Note that this map is well-defined since for $(T,g)\in \mathcal M$, we have $g\in T$ by definition, and thus $g_U\in T_U$, and moreover $T_U=S_U$ and $g=h$ whenever $(T,g)\sim_U (S,h)$ by Lemma \ref{lem:cosequenceofuniqueTU}(1). Moreover the map $\Phi$ is injective again by Lemma \ref{lem:cosequenceofuniqueTU}(1). If $(T,g)$ is a Toeplitz-Munn $(E,U)$-tree, then $(T\cup g^\downarrow, g)$ is a Munn $E$-tree, and $\Phi ([T\cup g^\downarrow , g]) = (T,g)$, which shows that $\Phi$ is surjective.
	
	Since $\Phi$ is a bijection, we can translate the product of $\mathcal M _U$ to a product in $\TUM$, so that $\Phi$ becomes a semigroup isomorphism. This leads to the product described in the statement.

	Let $\omega \colon \FG (E)^\times \to \F$ be the obvious 
	 idempotent pure partial homomorphism. Then the map
	 $\pi\colon \FIS (E)^\times \to \F$, $\pi (T,g) = \omega (g)$ is the canonical idempotent pure partial homomorphism of $\FIS (E)\cong \mathcal M$, see \cite[Corollary 4.3]{ABC25}. It is clear that $\pi$ 
	 factors through an idempotent pure partial homomorphism $\FIS (E)^\times /{\sim_U} \to \F$.   
\end{proof}

\begin{remark}
	Note that if $(T,g)$ is a Toeplitz-Munn $(E,U)$-tree (see Definition \ref{def:ToeplitzMun-EU-tree}), it might be that $(T,g)$ is not a Munn $E$-tree, because it might be that $g\notin T$, although by definition we always have that $g_U\in T$. This means that doing calculations in the inverse semigroup $\TUM \cong \mathcal T _U(E)$, we must be careful and take representatives $T'$ of $T$ in $\mathcal E (\FIS(E))$ such that $g\in T'$, so that $(T',g)$ is a Munn $E$-tree. We can always do the choice $T'=T\cup g^\downarrow$. We have the formulas 
$$(T,g)(T,g)^{-1} = T,\qquad (T,g)^{-1}(T,g) = (g^{-1}\cdot T \cup \{g^{-1}\}^\downarrow)_U$$
for a Toeplitz-Munn $(E,U)$-tree $(T,g)$.  
\end{remark}

Let $(E,C)$ be a separated graph. The elements in $\Y ^\times = \bigsqcup_{v\in E^0} \Y (v)$ can be interpreted as certain $E$-trees. Concretely, the $E$-trees $T$ which belong to $\Y^\times$ are characterized by the property that the geodesic path between any two vertices of $T$ is a $C$-separated path (see \cite[Remark 3.8]{ABC25}).
The elements of $\Y ^\times $ will be called {\it $C$-compatible $E$-trees}.

For the next definition, recall that, whenever $(E,C)$ is a separated graph and $g\in \FG (E)$, we have introduced in Definition \ref{def:decomposition-gLw} a canonical decomposition $g=g_Lw$. 

\begin{definition}
	\label{def:Leavitt-Munn-tree}
	Let $(E,C)$ be a separated graph. 
		\begin{enumerate}
		\item[(a)] For $v\in E^0$, we denote by $\Y _L (v)$ the set of all elements $T$ in $\Y(v)$ such that each maximal element of $T$ does not end in $E^{-1}$ and does not end in an edge $e\in E^1$ such that $\{e\}\in C$. Observe that $\Y_L(v)\subseteq \Y_0(v)\subseteq \Y(v)$. We set $\Y _L = \bigsqcup_{v\in E^0} \Y_L (v)$. We refer to the elements of $\Y_L$ as {\it Leavitt $(E,C)$-trees}. 
		\item[(b)]  A {\it Leavitt-Munn $(E,C)$-tree} is a pair $(T,g)$, where $T\in \Y_L$, and $g= g_L w\in \FC $ satisfies that $g_L\in T$. 	
			\end{enumerate}
\end{definition}

When $(E,C)$ is a separated graph, we can specialize the above results taking the subset $U$ of $E^1$ consisting of all the edges $e\in E^1$ such that $\{ e \}\in C$. This leads to the following concepts.

\begin{definition}
	\label{def:T-sub-L} Let $(E,C)$ be a separated graph. We shall denote by $\sim_L$ the congruence on $\FIS(E)$ generated by the relations $e^{-1} e\sim_L r(e)$ for all $e\in E^1$ and $ee^{-1}\sim_L s(e)$ whenever $e\in E^1$ and $\{ e\}\in C$. The corresponding equivalence classes are denoted by $[T,g]_L$, or $[T]_L$ if $T\in \mathcal E (\FIS(E))$.

	For each $E$-tree $T$, we denote by $T_L$ the unique element $T_U$ described in Lemma \ref{lem:uniqueTU}, where $U$ is the subset of $E^1$ consisting in all edges $e\in E^1$ such that $\{ e \}\in C$. Hence $T_L$ is the unique element of $[T]_L$ such that each maximal element of $T_L$ does not end in $E^{-1}$ and does not end in $e$, for $e\in E^1$ with $\{ e\}\in C$.  	
\end{definition}

 We now observe that $\Y$ is saturated with respect to the equivalence relation $\sim_L$ on $\FIS (E)\cong \mathcal M$.

\begin{lemma}
	\label{lem:extesnio-to-CcompatibleEtree}
	Let $(E,C)$ be a separated graph and let $T$ be a $C$-compatible $E$-tree. Then each $S\in \FIS(E)$
	with $S\sim_L T$ is also $C$-compatible.
		 In particular, if $(T,g)$ is a Leavitt-Munn $(E,C)$-tree, then $T\cup g^\downarrow$ is $C$-compatible, and thus it belongs to $\Y$.  
	\end{lemma}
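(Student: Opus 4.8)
The plan is to establish the first assertion---that $C$-compatibility is constant along $\sim_L$-classes---by checking it is preserved under the elementary moves generating $\sim_L$, and then to deduce the ``in particular'' clause by recognizing $T\cup g^\downarrow$ as $\sim_L$-equivalent to $T$.

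First I would reduce the problem to a single leaf modification. Recall from the proof of Lemma~\ref{lem:uniqueTU} that, on the nonzero idempotents of $\FIS(E)\cong \mathcal M$ (the $E$-trees), the congruence $\sim_L$ is generated by the moves $Z \longleftrightarrow Z\cup h\cdot x^\downarrow$, where $Z$ is an $E$-tree, $h\in Z$, and $x\in E^{-1}\cup U$ with $U=\{e\in E^1:\{e\}\in C\}$; concretely each such move adjoins (or deletes) a single leaf $g:=h\cdot x$, where $hx$ is reduced. These moves keep us among the idempotent $E$-trees rooted at the fixed base vertex, so any $S\sim_L T$ is again an $E$-tree and the notion of $C$-compatibility applies to it. Since a connected subtree of a $C$-compatible $E$-tree is again $C$-compatible (geodesics between its vertices are computed in the ambient tree $\Gamma_E$ and remain $C$-separated), the deletion direction is automatic, and everything reduces to showing that adjoining a leaf preserves $C$-compatibility.

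So suppose $Z$ is $C$-compatible and that $g=h\cdot x\notin Z$ is a newly adjoined leaf, with $x\in E^{-1}\cup U$. As $Z$ is convex in the tree $\Gamma_E$ and $g\notin Z$, for every $w\in Z$ the geodesic from $h$ to $w$ stays in $Z$ and so does not begin with the letter $x$; hence the geodesic from $g$ to $w$ equals $x^{-1}\cdot(h^{-1}\cdot w)$ with no cancellation, that is, it is obtained by prepending the single letter $x^{-1}$ to the ($C$-separated) geodesic $h^{-1}\cdot w$ from $h$ to $w$. The only new consecutive pair of letters is at this junction, so I only need to see it is not a forbidden pair $a^{-1}b$ with $a,b\in X\in C$. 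If $x=e^{-1}$ with $e\in E^1$, the prepended letter $x^{-1}=e$ is a forward edge and hence cannot play the role of the initial $a^{-1}$; no forbidden pair arises. If $x=e\in U$, the prepended letter is $e^{-1}$, and a forbidden pair $e^{-1}b$ would force $b\in X$ for the block $X\in C$ containing $e$; but $\{e\}\in C$ means $X=\{e\}$, so $b=e$, which is impossible since $x^{-1}\cdot(h^{-1}\cdot w)$ is reduced. In either case the geodesic from $g$ to $w$ is $C$-separated, so $Z\cup\{g\}$ is $C$-compatible, proving the first assertion.

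Finally, for the ``in particular'' clause, let $(T,g)$ be a Leavitt--Munn $(E,C)$-tree, so $T\in\Y_L$ is $C$-compatible and, by Definition~\ref{def:decomposition-gLw}, $g=g_Lw$ with $g_L\in T$ and $w=x_1\cdots x_r$, each $x_i\in E^{-1}\cup U$. Since $g_L\in T$ and $T$ is a lower set, $T\cup g^\downarrow$ is obtained from $T$ by successively adjoining the leaves $g_Lx_1,\ g_Lx_1x_2,\ \dots,\ g_Lx_1\cdots x_r=g$, each an elementary $\sim_L$-move of the type above; thus $T\cup g^\downarrow\sim_L T$. By the first part $T\cup g^\downarrow$ is $C$-compatible, and being a finite lower $C$-compatible subset of $\FC$ it lies in $\Y$. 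The main obstacle is the junction analysis in the third paragraph: one must check that prepending $x^{-1}$ can never create a forbidden pair, and it is precisely the singleton-block condition $\{e\}\in C$ defining $U$ that secures this in the second case.
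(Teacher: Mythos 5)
Your proof is correct. The core computation is the same as in the paper's proof --- the whole point is that every letter one passes beyond the canonical representative is either an inverse edge or an edge $e$ with $\{e\}\in C$, and neither can create a forbidden junction $a^{-1}b$ with $a\ne b$ in a common block $X$ --- but you organize the argument differently. The paper first reduces, via Lemma~\ref{lem:uniqueTU}, to the single implication ``$T_L$ $C$-compatible $\Rightarrow$ $T$ $C$-compatible'' and then checks an arbitrary pair $g,h\in T$ by a case analysis on where the branch point sits relative to $g_L$ and $h_L$; you instead induct along the chain of elementary generating moves $Z\leftrightarrow Z\cup h\cdot x^\downarrow$ extracted from the proof of Lemma~\ref{lem:uniqueTU}, so that only one new vertex and one new junction have to be examined at each step. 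Your version is more local and makes the ``in particular'' clause entirely mechanical (exhibiting $T\cup g^\downarrow\sim_L T$ as a concrete chain of leaf adjunctions), at the cost of having to justify that $\sim_L$ restricted to idempotents really is generated by those leaf moves --- which you correctly source from the proof of Lemma~\ref{lem:uniqueTU}. The paper's version avoids that appeal by working directly with the maximum representative. Both are complete; no gaps.
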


\begin{proof}
	By Lemma \ref{lem:uniqueTU}, it is enough to show that if $T$ is an $E$-tree and $T_L$ is $C$-compatible, then $T$ is also $C$-compatible. 
	
	Let $T$ be an $E$-tree such that $T_L$ is $C$-compatible. Given two vertices $g$ and $h$ in $T$, we have to show that $g^{-1}\cdot h$ is a $C$-separated path. 
	
	We first observe that all elements of $T$ are $C$-separated. Take $g\in T$. By the construction of $T_L$ we have $g_L\in T_L$, so that we can write $g= g_L w$, where $g_L\in T_L$ and $w$ is a product of inverse edges and of edges $e$ such that $\{ e\}\in C$. Since $T_L$ is $C$-compatible, $g_L$ is $C$-separated, and it follows that $g=g_L w$ cannot contain a subword of the form $e^{-1}f$, where $e,f$ are distinct edges in $E$ such that $e,f\in X$ for $X\in C$.
	
	Consider now two vertices $g,h\in T$. If one of them is a prefix of the other, then we deduce from the above that $g^{-1}\cdot h$ is $C$-separated. Hence we may assume that $g= dxg'$ and $h= dyh'$, where $x,y\in E^1\cup E^{-1}$ and $x\ne y$. We need to show that 
	$g^{-1}\cdot h = (g')^{-1} x^{-1}y h'$ is $C$-separated. Since $(g')^{-1}x^{-1}$ and $yh'$ are $C$-separated, we only have to show that $x^{-1}y$ is not of the form $e^{-1}f$ for $e,f$ distinct edges in the same set of the partition $C$. If $dx$ is a prefix of $g_L$ and $dy$ is a prefix of $h_L$, then the above property holds because $g_L,h_L\in T_L$, and $T_L$ is $C$-compatible. If $g_L$ is a proper prefix of $dx$, then either $x\in E^{-1}$ or $x\in E^1$ and $\{x\}\in C$, hence $x^{-1}y$ is not of the form $e^{-1}f$ described above. The same conclusion can be reached if $h_L$ is a proper prefix of $dy$. This completes the proof.      
	\end{proof}

We can finally obtain our main result for the inverse semigroup $\LI (E,C)$.

\begin{theorem}
	\label{thm:ECLeavittMunntrees}
	Let $(E,C)$ be a separated graph. Then the Leavitt  inverse semigroup $\LI (E,C)$ is isomorphic to the inverse semigroup consisting of all Leavitt-Munn $(E,C)$-trees, endowed with the following product
	$$(T_1,g_1)\cdot (T_2,g_2) = \begin{cases} \Big( (T_1'\cup g_1\cdot T_2')_L, \,\, g_1\cdot g_2\Big) & \text{ if } T_1'\cup g_1\cdot T_2' \text{ is }C\text{-compatible} \\ \qquad \,\,  \qquad \,\, 0 & \text{ otherwise }  
	\end{cases}   $$
	where $T_1',T_2'\in \Y$ are any representatives of $T_1$ and $T_2$ respectively, such that $g_1\in T_1'$ and $g_2\in T_2'$.
	The inverse is given $(T,g)^{-1} = ((g^{-1}\cdot T')_L, g^{-1})$, where $T'$ is any representative of $T$ such that $g\in T'$. 
	The map $(T,g)\mapsto \omega (g)$ provides an idempotent pure partial homomorphism $\LI(E,C)^\times \to \F$, which determines a semidirect product decomposition
	$$\LI (E,C) \cong \YY^L \rtimes_{\theta}^r \F,$$
	where $\YY^L = \Y/{\sim_L}\cong \mathcal E (\LI (E,C))$. The partial action $\theta^L$, with $\theta^L_g\colon D_{g^{-1}}^L\to D_g^L$ is defined as follows:
	For each $g\in \FC(v)\setminus \{v\}$, $D_g^L$ can be identified with the set of all those $T\in \Y_L(v)$ such that $g_L\in T$, and the action $\theta_g^L$ is defined by $\theta_g^L(T)= (g\cdot T')_L$, where $T'$ is a representative of $T$ containing $g^{-1}$. 
\end{theorem}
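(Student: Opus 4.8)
The plan is to realise $\LI(E,C)$ as a Rees quotient of the Toeplitz--Munn semigroup $\TUM$ of Proposition~\ref{prop:Toeplitz-Munn-EUtrees}, taken for the distinguished subset $U=\{e\in E^1:\{e\}\in C\}$, so that $\sim_U=\sim_L$ and the canonical decomposition $g=g_Uw$ of Definition~\ref{def:canonicaldec-respect-toU} coincides with $g=g_Lw$. The starting point is that $\FIS(E)\cong\mathcal M$ is the universal inverse semigroup on $E^0\cup\hat E^1$ subject only to relations (1)--(3), so that $\LI(E,C)$ is its quotient by the congruence generated by (4) and (5). Writing relation (4) as the two families $e^{-1}e=r(e)$ (all $e$) and $e^{-1}f=0$ (distinct $e,f\in X\in C$), I would observe that $e^{-1}e=r(e)$ together with relation (5) generate exactly the congruence $\sim_L$ of Definition~\ref{def:T-sub-L}, while the relations $e^{-1}f=0$ contribute separately. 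Hence $\LI(E,C)\cong\mathcal M_U/J$, where $\mathcal M_U=\mathcal M/{\sim_L}\cong\TUM$ and $J$ is the congruence generated by the images $[e^{-1}f]_L$; as these are set equal to $0$, $J$ is the Rees congruence of the two-sided ideal $I$ of $\mathcal M_U$ that they generate.

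The main work, and what I expect to be the principal obstacle, is to identify $I$ with the set of \emph{non-$C$-compatible} Toeplitz--Munn $(E,U)$-trees (together with $0$). First I would verify that this set is an ideal: a non-$C$-compatible tree has a vertex $d$ emitting two distinct edges $e,f$ lying in a common $X\in C$ with $|X|>1$; since $\{e\},\{f\}\notin C$, these edges lie in $E^1\setminus U$, and therefore the forbidden configuration $\{d,\,d\cdot e,\,d\cdot f\}$ is removed neither by the $\sim_L$-reduction of Lemma~\ref{lem:uniqueTU} (which deletes only maximal edges ending in $E^{-1}\cup U$) nor by left translation and union. Consequently incompatibility persists under the product of $\mathcal M_U$, and the generators $[e^{-1}f]_L$ are themselves non-compatible. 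The reverse inclusion comes from noting that every non-$C$-compatible tree factors in $\mathcal M_U$ through one of the $[e^{-1}f]_L$ (via $(ee^{-1})(ff^{-1})=e(e^{-1}f)f^{-1}$ applied at $d$). The decisive input throughout is Lemma~\ref{lem:extesnio-to-CcompatibleEtree}, which tells us that $C$-compatibility is constant on $\sim_L$-classes, so that passing to the canonical representative $T_L$ never repairs an incompatibility; this is precisely the point where the hypothesis $|X|>1$ (hence $e,f\notin U$) is indispensable.

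Granting the identification of $I$, the Rees quotient $\LI(E,C)\cong\mathcal M_U/I$ has as nonzero elements exactly the $C$-compatible Toeplitz--Munn $(E,U)$-trees, and these are precisely the Leavitt-Munn $(E,C)$-trees: the first component lies in $\Y_L$ by Definition~\ref{def:Leavitt-Munn-tree}(a), and the distinguished vertex $g=g_Lw$ satisfies $g_L\in T$. The product and inverse are inherited from $\TUM$ via Proposition~\ref{prop:Toeplitz-Munn-EUtrees}: for representatives $T_1',T_2'\in\Y$ with $g_i\in T_i'$ one obtains $\big((T_1'\cup g_1\cdot T_2')_L,\,g_1\cdot g_2\big)$ whenever this reduced tree is $C$-compatible, and $0$ otherwise, the case distinction being well defined because, by Lemma~\ref{lem:extesnio-to-CcompatibleEtree}, $T_1'\cup g_1\cdot T_2'$ is $C$-compatible if and only if its reduction is.

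Finally, to obtain the semidirect product decomposition I would push the idempotent pure partial homomorphism $(T,g)\mapsto\omega(g)$ of Proposition~\ref{prop:Toeplitz-Munn-EUtrees} through the Rees quotient. Its restriction to $(\mathcal M_U/I)^\times$ remains a partial homomorphism onto $\F$ that is still idempotent pure, so $\LI(E,C)$ is strongly $E^*$-unitary; by \cite[Proposition~2.1]{ABC25} it is then a restricted semidirect product $\LI(E,C)\cong\YY^L\rtimes_{\theta^L}^r\F$ with $\YY^L=\Y/{\sim_L}\cong\E(\LI(E,C))$, the idempotents being represented by the Leavitt $(E,C)$-trees of $\Y_L$. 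The domains $D_g^L$ and the action $\theta_g^L$ are then read off exactly as in Corollary~\ref{cor:FSIE-strongly-unitary}, giving $D_g^L=\{T\in\Y_L(v):g_L\in T\}$ and $\theta_g^L(T)=(g\cdot T')_L$ for a representative $T'$ of $T$ containing $g^{-1}$.
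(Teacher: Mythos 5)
Your proposal is correct and follows essentially the same route as the paper: the paper likewise realises $\LI(E,C)$ as the Rees quotient of $\mathcal M_L=\FIS(E)/{\sim_L}\cong \TUM$ (for $U=\{e:\{e\}\in C\}$) by the ideal generated by the $(e^{-1}f)^\downarrow$, observes that strong $E^*$-unitarity passes to Rees quotients, and concludes via Proposition~\ref{prop:Toeplitz-Munn-EUtrees} and \cite[Proposition~2.1]{ABC25}. You merely spell out, using Lemma~\ref{lem:extesnio-to-CcompatibleEtree}, the identification of that ideal with the non-$C$-compatible trees, a step the paper leaves implicit.
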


\begin{proof}
	The proof follows the pattern of \cite[proof of Theorem 4.5]{ABC25}. We include details for the convenience of the reader. 
	
	If $S$ is any strongly $E^*$-unitary inverse semigroup with an idempotent pure partial homomorphism $\theta \colon S^\times \to \Group$, and $I$ is an ideal of $S$, then $S/I$ is also strongly $E^*$-unitary with the same group $\Group$.
	This is clear since $(S/I)^\times$ can be identified with $S\setminus I$, and thus the restriction of $\theta $ to $S\setminus I$ gives the desired idempotent pure partial homomorphism. 
	
	Now let $(E,C)$ be a separated graph. Since the Leavitt inverse semigroup $\LI (E,C)$ is the Rees quotient of the inverse semigroup $\mathcal M _L = \FIS (E)/{\sim_L}$ by the ideal generated by all the elements of the form $(e^{-1}f)^\downarrow$, for distinct $e,f\in X$, $X\in C$, it follows from Proposition \ref{prop:Toeplitz-Munn-EUtrees} and the above observation that $ \LI (E,C)$ is a strongly $E^*$-unitary semigroup, with an idempotent pure partial homomorphism $\pi \colon \LI (E,C)^\times \to \F$. The result now follows from \cite[Proposition 2.1]{ABC25} and Proposition \ref{prop:Toeplitz-Munn-EUtrees}.  
\end{proof}

\section{Normal form in $\Le_K ^\ab (E,C)$}
\label{sect:normal-form}

In order to find a linear basis for $\Le_K^\ab(E,C)$ we will follow the route of \cite{AG12}, using Diamond's Lemma but this time for commutative algebras. 

Consider the commutative subalgebra $\mathcal C$ of $\Co_K^\ab(E,C)$ generated by the idempotents of $\IS(E,C)$. By \cite[Theorem 4.8]{ABC25}, each idempotent in $\IS (E,C)$ corresponds uniquely to an
$E$-tree $T\in \Y_0$, and the idempotent of $\Co_K^\ab(E,C)$ corresponding to $T$ is
$$\e (T) := \prod_{\lambda\in \max (T)} \lambda \lambda^*\in  \Co_K^\ab(E,C).$$ 
Therefore $\mathcal C$ coincides with the linear span of the family $\{ \e (T) : T\in \Y_0\}$, and it is the contracted semigroup algebra $K[\mathcal E (\IS (E,C))]$. 

For each $g\in \FC(v)\setminus \{v\}$, denote by $\mathcal C_g$ the linear span of $D_g$, where $D_g$ is the set of those $T\in \Y_0(v)$ such that $g_0\in T$, see \cite[Theorem 4.5]{ABC25}. Set $\mathcal C_1= \mathcal C$ and $\mathcal C_g = \{0\}$ if $g\in \F \setminus \FC$.  
Then the canonical structure of $\Co_K^\ab(E,C)$ as a partial crossed product is given by
$$\Co_K^\ab(E,C) = \bigoplus _{g\in \FC} \mathcal C_g \rtimes g.$$
The ideal $\mathcal Q$ is generated by homogeneous elements from the neutral component $\mathcal C$, hence the quotient algebra 
$\Le_K^\ab(E,C)$ has the structure of a partial crossed product
$$ \Le_K^\ab(E,C) = \bigoplus _{g\in \FC} \mathcal L_g \rtimes g ,$$
where $\mathcal L _g = \mathcal C_g/\mathcal Q_g$, with $\mathcal Q_g = \mathcal C_g \cap \mathcal Q_1$, where $\mathcal Q_1= \mathcal Q \cap \mathcal C$ is the corresponding invariant ideal of $\mathcal C$. 

Similarly we may view $\Le_K^\ab (E,C)$ as a quotient of the contracted semigroup algebra $K[\LI (E,C)]$ where $\LI (E,C)$ is the Leavitt inverse semigroup of $(E,C)$, as introduced in Section \ref{sect:Leavitt-inversem}.
By Theorem \ref{thm:ECLeavittMunntrees}, the elements of $\LI (E,C)$ can be identified with the Leavitt-Munn $(E,C)$-trees (see Definition \ref{def:Leavitt-Munn-tree}), and we will show below that the elements associated to a certain subfamily of those provide a linear basis for $\Le_K^\ab (E,C)$.

\begin{definition}
	\label{def:reduced-paths}
	Let $(E,C)$ be a separated graph, and let $\Cfin$ denote the subset of $C$ consisting of elements $X\in C$ such that $|X|<\infty$. We define a {\it choice function} as a function $\mathfrak E\colon \Cfin \to E^1$ such that $\mathfrak E (X) \in X$ for all $X\in \Cfin$.
We say that $\lambda \in \FC$ is $\mathfrak E$-reduced if it does not end with $\mathfrak E (X)$ for any $X\in \Cfin $. 

	  We say that an element $T\in \Y _0$ is {\it $\mathfrak E$-reduced} if each maximal element of $T$ is $\mathfrak E$-reduced. Note that, if $T$ is $\mathfrak E$-reduced, then $T\in \Y_L$, because the maximal elements of $T$ do not end in an edge $e$ of $E$ such that $\{e\}\in C$.  
\end{definition}

The above definition is suitable to find a linear basis for $\mathcal L_1 = \mathcal C_1/\mathcal Q_1$. We will give a suitable modification of it later in order to find a basis for $\mathcal L_g = \mathcal C_g/\mathcal Q_g$. The reason for this is that the reduction process needed to find the normal form of the elements does not respect the fact that the elements belong to the linear span of $D_g$, for $g\in \FC \setminus \{1\}$. 

Observe that due to the two relations (SCK1) and (SCK2), we have
$gg^* = g_Lg_L^*$ in $\Le_K^\ab(E,C)$, indeed this holds within the Leavitt inverse semigroup. Hence, the correct domains for the partial action associated to $\LI (E,C)$ are of the form 
$D_g^L := \{ T\in \Y_L : g_L\in T \}$, see Theorem \ref{thm:ECLeavittMunntrees}.

The following extreme example clarifies the situation.

\begin{example}
	\label{exam:free-group-and-Dgprime}
	Let $(E,C)$ be the Cuntz separated graph, consisting of a single vertex, and where all elements of $C$ are singletons (see \cite[Example 3.4]{ABC23}). Then for each $g\in \F\setminus \{1\}$, $D_g^L$ is trivial. More generally, if $(E,C)$ is a separated graph with the free separation, that is, all elements of $C$ are singletons, then 
	the only idempotents of $\LI (E,C)$ are the vertices of $E$, and $D_g^L = \{\{v\}\}$ whenever $g\in \FC (v)\setminus \{v\}$. 
\end{example}

We first abstractly describe the commutative subalgebra $\mathcal C$ of $\Co_K^\ab(E,C)$ generated by the idempotents of $\IS(E,C)$. 

\begin{lemma}
	\label{lem:algebra-mathcalC-abstractly-described}
	Let $\mathcal C$ be the (commutative) subalgebra of $\Co_K^\ab(E,C)$ generated by the idempotents of $\IS(E,C)$. Then 
	$$\mathcal C \cong \Pol_K[\FC]/N,$$
	where $\Pol_K[\FC]$ denotes the commutative polynomial algebra on the set $\FC$, and $N$ is the ideal of $\Pol_K[\FC]$ generated by the following elements
	\begin{enumerate}
		\item $\lambda \cdot \mu $ if $s(\lambda)\ne s(\mu)$. 
		\item $\lambda \cdot \mu - \lambda $ for $\mu \le_p \lambda$,
		\item $\lambda \cdot \mu $ if $\lambda$ and $\mu $ are $C$-incompatible.
		\item $\lambda e^{-1} - \lambda$ where $\lambda e^{-1}\in \FC$ and $e\in E^1$.  
	\end{enumerate}
\end{lemma}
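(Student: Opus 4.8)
The plan is to establish the isomorphism $\mathcal C \cong \Pol_K[\FC]/N$ by constructing a surjective algebra homomorphism $\Phi \colon \Pol_K[\FC] \to \mathcal C$ sending each generator $\lambda \in \FC$ to the idempotent $\lambda\lambda^* \in \Co_K^\ab(E,C)$, and then showing that its kernel is precisely the ideal $N$. First I would verify that $\Phi$ is well-defined as an algebra map: since $\Pol_K[\FC]$ is the free commutative $K$-algebra on $\FC$, and since the elements $\lambda\lambda^*$ are commuting idempotents in $\mathcal C$ (they commute because $\mathcal C$ is commutative by construction, being generated by the idempotents of $\IS(E,C)$), the universal property gives a unique such homomorphism. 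Surjectivity follows from the fact that $\mathcal C$ is the linear span of the $\e(T)=\prod_{\lambda\in\max(T)}\lambda\lambda^*$ for $T\in\Y_0$, and each such product lies in the image of $\Phi$.

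The heart of the argument is computing $\ker\Phi$. First I would check that each listed generator of $N$ maps to zero under $\Phi$, using the defining relations of $\Co_K^\ab(E,C)$ together with the interpretation of idempotents as $E$-trees from \cite[Theorem 4.8]{ABC25}. For (1), if $s(\lambda)\ne s(\mu)$ then $(\lambda\lambda^*)(\mu\mu^*)=0$ since the projections live over orthogonal vertices. For (2), if $\mu\le_p\lambda$ then $\lambda\lambda^*\le\mu\mu^*$ in the idempotent order, so $(\lambda\lambda^*)(\mu\mu^*)=\lambda\lambda^*$. For (3), $C$-incompatibility of $\lambda,\mu$ forces the product of the corresponding projections to vanish, which is exactly the content of the $C$-compatibility condition governing when $E$-trees multiply to a nonzero tree. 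For (4), the relation $\lambda e^{-1}\lambda^{*-1}$-type identity reflects that $g_0\in\Y_0$ requires maximal paths not to end in $E^{-1}$, so $\lambda e^{-1}$ and $\lambda$ determine the same idempotent; this is the passage to the canonical representative in $\Y_0$. Thus $N\subseteq\ker\Phi$ and $\Phi$ descends to a surjection $\bar\Phi\colon \Pol_K[\FC]/N\to\mathcal C$.

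To finish I would prove $\bar\Phi$ is injective, which is where I expect the real work to lie. The strategy is to find, modulo $N$, a spanning set of $\Pol_K[\FC]/N$ indexed by a set in bijection with a known basis of $\mathcal C$, namely $\{\e(T):T\in\Y_0\}$. Concretely, I would argue that the relations (1)--(4) allow every monomial in $\Pol_K[\FC]$ to be rewritten modulo $N$ either as $0$ or as a product $\prod_{\lambda\in\max(T)}\lambda$ for a unique $T\in\Y_0$: relation (3) kills monomials whose supporting paths are $C$-incompatible; relations (1) kills those with mismatched sources; relation (2) lets us discard any prefix of a longer path already present, reducing the support to an antichain; and relation (4) lets us replace each path by its $\Y_0$-representative not ending in $E^{-1}$. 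The nontrivial point is \emph{uniqueness}: one must check that two distinct elements $T\ne T'$ of $\Y_0$ are not identified modulo $N$, equivalently that the images $\e(T)$ are $K$-linearly independent in $\mathcal C$. This linear independence is precisely the statement that the idempotents of $\IS(E,C)$ correspond bijectively to $\Y_0$ and form a basis of $\mathcal C=K[\mathcal E(\IS(E,C))]$, which is guaranteed by \cite[Theorem 4.8]{ABC25}. The main obstacle is thus confirming that the rewriting system given by (1)--(4) is \emph{confluent} — that the reduced form in $\Y_0$ is genuinely independent of the order of reductions — and I would handle this by appealing to Diamond's Lemma for commutative algebras, as signalled in the opening of Section \ref{sect:normal-form}, taking the generators of $N$ as the reduction rules and verifying that all overlap ambiguities resolve. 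Matching the resulting normal forms against the known $\Y_0$-basis of $K[\mathcal E(\IS(E,C))]$ then yields injectivity and completes the proof.
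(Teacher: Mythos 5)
Your proposal is correct and follows the same overall route as the paper: define the homomorphism $\lambda\mapsto\lambda\lambda^*$, check the four relations hold in $\Co_K^\ab(E,C)$, reduce monomials modulo $N$ to the Scheiblich-type normal form indexed by $\Y_0$, and use \cite[Theorem 4.8]{ABC25} for the linear independence of the idempotents $\e(T)$ in $\mathcal C$. The one place you diverge is in how you close the injectivity argument: you flag confluence of the rewriting system as ``the main obstacle'' and propose to verify all overlap ambiguities via the Diamond Lemma. That step is sound (and the paper itself remarks after the lemma that this route works), but it is redundant for this statement. The paper's actual proof uses a shortcut: termination of the reductions alone shows the normal-form monomials \emph{span} $\Pol_K[\FC]/N$, and since their images under $\varphi$ are linearly independent in $\mathcal C$ by \cite[Theorem 4.8]{ABC25}, any element of $\ker\varphi$ written in terms of this spanning set must have all coefficients zero. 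A spanning set mapping to a linearly independent family forces injectivity with no confluence check at all; the Diamond Lemma is reserved in the paper for Theorem \ref{thm:basis-forLabneutralcomp}, where the additional reductions (5) genuinely require resolving ambiguities. So your argument works, but you can delete the confluence verification entirely --- the ``main obstacle'' you identify is not actually an obstacle here.
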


\begin{proof} Set $R:= \Pol_K[\FC]$.
	There is a $K$-algebra homomorphism
	$$\varphi \colon R/N \to \mathcal C$$
	given by $\varphi (\lambda) = \lambda \lambda^*$ for all $\lambda \in \FC$. In view of \cite[Theorem 4.8]{ABC25}, this map is surjective. Also, observe that, using the relations of $N$, we can bring each monomial of $R/N$ to a monomial of the form
	$$\lambda _1 \cdot \lambda _2 \cdots \lambda_r,$$
	where each $\lambda _i \in \FC$, all $\lambda _i $ do not end in $E^{-1}$ and $\{\lambda_1,\dots , \lambda_r\}$ is an incomparable $C$\nb-compatible subset of $\FC (v)$ for some $v\in E^0$. It follows that this set of monomials linearly spans $R/N$. Again by \cite[Theorem 4.8]{ABC25}, the images by $\varphi$ of these monomials are linearly independent in $\Co_K^\ab(E,C)$, hence they are also linearly independent in $R/N$, and $\varphi$ is injective.  
\end{proof}

Observe that the set of monomials in $\Pol_K[\FC]$ appearing in the proof of Lemma \ref{lem:algebra-mathcalC-abstractly-described}  corresponds exactly  to the set of elements in $\mathcal E = \mathcal E (\IS (E,C))$ in Scheiblich normal form. One can directly deduce that these elements form a basis of $R/N$ by an application of the results in \cite{Bergman-Diamond}, using the reduction system consisting of the pairs (1)--(4) in the next lemma. Adding a new set of pairs (5) will give the basis for the neutral component $\mathcal L_1$ of $\Le_K^\ab(E,C)$, as follows:

\begin{theorem}
	\label{thm:basis-forLabneutralcomp}
	Let $(E,C)$ be a separated graph and let $\mathfrak E\colon \Cfin \to E^1$ be a choice function. Then a $K$-basis for the neutral component $\mathcal L_1 = \mathcal C_1/\mathcal Q_1$ of $\Le_K^\ab(E,C)$ is given by the following set: 
	$$\ol{\mathcal B}_1 (E,C) = \{ \e (T) : T \text{ is $\mathfrak E$-reduced } \}.$$
\end{theorem}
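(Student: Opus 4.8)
The plan is to apply the Diamond Lemma (Bergman's approach via \cite{Bergman-Diamond}) to the commutative polynomial algebra $\Pol_K[\FC]$, using an extension of the reduction system already implicit in Lemma~\ref{lem:algebra-mathcalC-abstractly-described}. Recall that $\mathcal L_1 = \mathcal C_1/\mathcal Q_1$, and by that lemma $\mathcal C \cong \Pol_K[\FC]/N$, where the monomials in Scheiblich normal form (incomparable $C$-compatible subsets of some $\FC(v)$, none ending in $E^{-1}$) form a $K$-basis. These monomials correspond precisely to the elements $\e(T)$ with $T \in \Y_0$. To pass from $\mathcal C$ to $\mathcal L_1$, I need to quotient further by $\mathcal Q_1 = \mathcal Q \cap \mathcal C$, the invariant ideal generated by the finite Cuntz--Krieger relations $q_X = v - \sum_{e \in X} ee^*$ for $X \in \Cfin$. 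In terms of idempotents/trees, the relation $\e(v) = \sum_{e\in X}\e(e)$ (and more generally its localized versions at any vertex of a tree) must be enforced.

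First I would set up the enlarged reduction system: keep the pairs (1)--(4) from Lemma~\ref{lem:algebra-mathcalC-abstractly-described} (which already reduce arbitrary monomials to Scheiblich normal form), and add a new family (5) of reductions that encode the choice function $\mathfrak E$. Concretely, for each $X \in \Cfin$ with distinguished edge $e_0 = \mathfrak E(X)$ emitted from $v$, the Cuntz--Krieger relation lets me express the leaf $\lambda e_0$ (where $\lambda$ ends at $v$, or $\lambda$ is the relevant prefix reaching the vertex where the branching occurs) in terms of $\lambda$ together with the other leaves $\lambda e$, $e \in X \setminus \{e_0\}$. The reduction (5) should rewrite any tree having a maximal element ending in $\mathfrak E(X)$ by substituting $\lambda\lambda^* = \lambda\big(\sum_{e\in X}ee^*\big)\lambda^* $ rearranged so that the $\mathfrak E(X)$-leaf is eliminated in favor of $\lambda$ minus the sum over $X\setminus\{\mathfrak E(X)\}$. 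The terminal monomials under this combined system are exactly the $\e(T)$ with $T$ $\mathfrak E$-reduced, which is the claimed spanning set $\ol{\mathcal B}_1(E,C)$.

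The two things to verify are then: (i) that $\ol{\mathcal B}_1(E,C)$ spans $\mathcal L_1$, which follows directly because every $\e(T)$ can be rewritten using the (5)-reductions into a $K$-linear combination of $\mathfrak E$-reduced trees — one repeatedly eliminates any $\mathfrak E(X)$-ending leaf, and this process terminates since each step strictly decreases an appropriate well-founded measure (e.g. the number of leaves ending in a chosen edge, refined by tree size); and (ii) that the $\mathfrak E$-reduced trees are $K$-linearly independent in $\mathcal L_1$, which is the substantive point and where the Diamond Lemma enters. For independence I would confirm that all ambiguities of the reduction system are resolvable: the overlaps among pairs (1)--(4) are already resolvable by the Scheiblich normal form argument, so the new ambiguities to check are those created by (5) with itself and with (1)--(4). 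These arise when a tree has two distinct maximal leaves each ending in some chosen edge $\mathfrak E(X)$, $\mathfrak E(X')$ (possibly at different vertices of the tree), or when applying (5) at a leaf interacts with the $C$-compatibility/incomparability reductions. Because distinct finite sets $X$ emitted from the same vertex are disjoint parts of the separation, and because the commutativity of $\mathcal C$ means substitutions at independent leaves commute, these ambiguities resolve: applying the two reductions in either order yields the same $\mathfrak E$-reduced linear combination.

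The main obstacle I expect is the careful bookkeeping in verifying resolvability of the overlap ambiguities involving (5), particularly ensuring that after a Cuntz--Krieger substitution at one leaf $\lambda\mathfrak E(X)$, the newly produced monomials (trees with that leaf replaced by $\lambda$ or by $\lambda e$ for $e \in X$) remain in Scheiblich normal form or can be re-reduced consistently, and that the $C$-compatibility condition is preserved under these substitutions. One must check that replacing $\lambda \mathfrak E(X)$ by the prefix $\lambda$ does not create an incomparable pair that was previously comparable (and vice versa), and that the resulting trees are genuinely $\mathfrak E$-reduced after finitely many steps. Granting that all ambiguities are resolvable and the reduction is terminating, the Diamond Lemma yields that the irreducible monomials — exactly the $\e(T)$ for $\mathfrak E$-reduced $T$ — form a $K$-basis of $\Pol_K[\FC]/(N + \langle\text{(5)-relations}\rangle)$, which is $\mathcal L_1$, completing the proof.
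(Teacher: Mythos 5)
Your proposal follows essentially the same route as the paper: Bergman's Diamond Lemma for commutative algebras applied to $\Pol_K[\FC]$ with the reduction system (1)--(4) of Lemma~\ref{lem:algebra-mathcalC-abstractly-described} augmented by the rules $(\lambda e_X,\ \lambda - \sum_{e\in X\setminus\{e_X\}}\lambda e)$ for $X\in\Cfin$, with the only ambiguities needing work being the inclusion overlaps of (5) with (2) and (3). The one detail you leave vague is the termination measure: your suggested count of chosen-edge leaves need not decrease (replacing $\lambda e_X$ by $\lambda$ can expose a new leaf ending in some $e_Y$), and the paper instead assigns weight $2$ to each $e_X$ and weight $1$ to every other edge so that every rule strictly decreases total weight.
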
 

\begin{proof}	We show that the set of idempotents $\{\e (T) : T \text{ is $\mathfrak E$-reduced} \}$ is a $K$-basis for $\mathcal L_1$.    
	
	Set $e_X:= \mathfrak E (X)\in X$ for all $X\in \Cfin$. 
	
	We apply Bergman's Diamond's Lemma for commutative algebras \cite{Bergman-Diamond}, see especially \cite[Theorem 1.2 and Section 10.3]{Bergman-Diamond}. We define $\text{wt}(e_X)= 2$ and $\text{wt}(x)= 1$ for any other $x\in E^1\cup E^{-1}$. We consider a weight  function on $\FC$ such that the weight of $\lambda \in \FC$ is the sum of the weights of the edges of $\lambda$, or $\text{wt}(\lambda) = 1$ if $\lambda = v\in E^0$. We extend this weight function to the free commutative semigroup $W:= [\FC ]$ generated by $\FC$ by declaring the weight of a monomial $\lambda_1 \cdots \lambda_r$ as the sum of the weights $\text{wt}(\lambda_i)$, $i=1,\dots, r$.  
	
	For $X\in \Cfin$, set $X':= X \setminus \{e_X\}$. 
	
	We now consider the following reduction system $\mathcal S$ on the semigroup algebra $\Pol_K[\FC] = KW$: 
	\begin{enumerate}
		\item $(\lambda \cdot \mu , 0)$ if $\lambda,\mu\in \FC$ and $s(\lambda)\ne s(\mu)$. 
		\item $(\lambda \cdot \mu ,  \lambda )$ if $\lambda,\mu\in \FC$ and $\mu \le_p \lambda$,
		\item $(\lambda \cdot \mu , 0)$ if $\lambda $ and $\mu$ are $C$-incompatible, where $\lambda,\mu \in \FC$.
		\item $(\lambda e^{-1},  \lambda) $ where $\lambda e^{-1}\in \FC$ and $e\in E^1$.
		\item $(\lambda e_X, \lambda - \sum_{e\in X'} \lambda e)$ for $X\in \Cfin$ and $\lambda e_X \in \FC$.  
	\end{enumerate}	 
	
	Observe that the reduction system $\mathcal S'$ consisting of the pairs in (1)--(4) leads to the basis of $\mathcal C$ consisting of idempotents in Scheiblich normal form. 
	
	Define a partial order $\le $ on $W=[\FC]$ by
	$$\lambda \le \mu \iff \lambda = \mu \text{ or } \text{wt}(\lambda)	< \text{wt} (\mu).$$
	Then the partial order $\le $ on $W$ is a semigroup partial order compatible with the reduction system $\mathcal S$. Moreover $\le$ satisfies the descending chain condition. In order to apply the Diamond Lemma, we have to check that all ambiguities are resolvable. 
	We leave to the reader to check that all ambiguities involving pairs (1)--(4) are resolvable.
	We will check those concerning reduction pairs of the family (5), which are necessarily inclusion ambiguities with either (2) or (3). 
	
	Suppose we have an element of the form $\lambda e_X \cdot \mu$ such that $\mu \le_p \lambda e_X$. Then we have an inclusion ambiguity
	$$\lambda e_X \cdot \mu \overset{(2)}{\to} \lambda e_X, \qquad \lambda e_X \cdot \mu \overset{(5)}\to \lambda \cdot \mu  - \sum_{e\in X'} \lambda e \cdot \mu.$$
	Applying (5) to the first, we get
	$$ \lambda e_X \cdot \mu \overset{(2)}{\to} \lambda e_X \overset{(5)}{\to} \lambda - \sum_{e\in X'} \lambda e.$$
	We now distinguish two cases. If $\mu \le _p \lambda$ then we have
	$$\lambda \cdot \mu - \sum _{e\in X'} \lambda e \cdot \mu \overset{(2)}{\to} \lambda - \sum _{e\in X'} \lambda e ,$$
	and we have solved the ambiguity. If $\mu = \lambda e_X$, then 
	$$\lambda \cdot \lambda e_X - \sum _{e\in X'} \lambda e \cdot \lambda e_X \overset{(2)}{\to} \lambda e_X - \sum _{e\in X'} \lambda e \cdot \lambda e_X \overset{(3)}{\to} \lambda e_X ,$$
	and we have solved the ambiguity too.
	
	If $\lambda e_X \le_p \mu$, then we can solve the corresponding inclusion ambiguity for $\lambda e_X \cdot \mu$ just as in the case where $\mu = \lambda e_X$.  
	
	Finally we consider inclusion ambiguities involving (3) and (5). Suppose that 
	$\lambda e_X$ and $\mu$ are $C$-incompatible. If $\lambda $ and $\mu $ are already $C$-incompatible then we have
	$$ \lambda e_X \cdot \mu \overset{(2)}{\to}  0, \quad \lambda e_X \cdot \mu \overset{(5)}{\to} \lambda \cdot \mu - \sum_{e\in X'} \lambda e \cdot \mu \overset{(2)}{\to} 0-0 = 0.$$
	If $\mu = \lambda e \mu'$, where $e\in X'$, then 
	$$ \lambda e_X \cdot \mu \overset{(5)}{\to} \lambda \cdot \lambda e \mu'  - \sum_{f\in X'} \lambda f \cdot \lambda e \mu' \overset{(2),(3)}{\longrightarrow} 
	\lambda e \mu' - \lambda e \cdot \lambda e \mu' \overset{(2)}{\to} \mu - \mu = 0.$$
	
	The irreducible forms are precisely the elements which correspond to the family $\ol{\mathcal B}_1(E,C)$, that is, the elements
	$$\lambda _1\cdots \lambda_r ,$$
	where $\lambda _i $ does not end in $E^{-1}$, does not end in $e_X$ for $X\in \Cfin$, and $\lambda_1,\cdots ,\lambda _r$ is an incomparable $C$-compatible family of elements of $\FC (v)$, for some $v\in E^0$. 
	
	Hence we obtain the desired result for the neutral component.    
\end{proof}

In order to obtain a linear basis for the non-neutral components of $\L_K^\ab (E,C)$, we need the following concept:

\begin{definition}
	\label{def:relatively-reduced}
	Let $(E,C)$ be a separated graph, let $\mathfrak E\colon \Cfin \to E^1$ be a choice function, and let $g\in \FC$. 
	We say that $T\in D_g^L$ is $\mathfrak E$-reduced relatively to $g_L$ if each element in $\max (T)\setminus \{ g_L\}$ is $\mathfrak E$-reduced. That is, if either $g_L$ is $\mathfrak E$-reduced or $g_L$ is not maximal in $T$, then $T$ is $\mathfrak E$-reduced relatively to $g_L$ if and only if $T$ is $\mathfrak E$-reduced, and if $g_L$ is not $\mathfrak E$-reduced and $g_L$ is maximal in $T$, then $T$ is $\mathfrak E$-reduced relatively to $g_L$ if and only if all maximal elements of $T$ except $g_L$ are $\mathfrak E$-reduced.  
	\end{definition}

\begin{theorem}
	\label{thm:full-basis-forLab}
	A basis of $\Le_K^\ab(E,C)$ is given by the elements of the form 
	$$\ol{\mathcal B} (E,C) :=\{ \e (T)\rtimes g : T \in D_g^L \text{ and } T \text{ is $\mathfrak E$-reduced relatively to $g_L$} \}.$$  
\end{theorem}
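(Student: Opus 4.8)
The strategy is to lift the neutral-component result (Theorem~\ref{thm:basis-forLabneutralcomp}) across the partial crossed-product decomposition $\Le_K^\ab(E,C)=\bigoplus_{g\in\FC}\mathcal L_g\rtimes g$. Since the crossed product is a direct sum over the grading group, it suffices to produce, for each fixed $g\in\FC$, a $K$-basis of the homogeneous component $\mathcal L_g=\mathcal C_g/\mathcal Q_g$, and then the corresponding elements $\e(T)\rtimes g$ will assemble into a basis of the whole algebra. The claim to verify is therefore: for each $g$, the set $\{\e(T):T\in D_g^L,\ T\text{ is $\mathfrak E$-reduced relatively to }g_L\}$ descends to a basis of $\mathcal L_g$.

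**Reduction to the neutral component via the isomorphism $\mathcal C_g\cong\mathcal C_{1}$.** The key structural observation is that the partial action $\theta_g$ furnishes a linear isomorphism between the component $\mathcal C_g$ (the span of $D_g$) and a corner of the neutral algebra $\mathcal C_1$, intertwining the invariant ideals so that it descends to an isomorphism $\mathcal L_g\cong \mathcal L_1\cdot(\text{corner})$. Concretely, multiplying by $g^{-1}$ on the left identifies the span of $\{T\in\Y_0(v):g_0\in T\}$ with the span of those trees rooted so as to contain $r(g)$; the invariant ideal $\mathcal Q_1$ is carried to $\mathcal Q_g$ because $\mathcal Q$ is generated by homogeneous neutral elements and the grading is preserved. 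Under this identification I would transport the reduction system $\mathcal S$ of the proof of Theorem~\ref{thm:basis-forLabneutralcomp}. The crucial point is exactly the warning made before Definition~\ref{def:relatively-reduced}: the reduction rule (5), $(\lambda e_X,\ \lambda-\sum_{e\in X'}\lambda e)$, can be applied at the maximal element $g_L$ and thereby \emph{leave} the span of $D_g$, so one cannot simply reuse the neutral basis verbatim. This is why the notion ``$\mathfrak E$-reduced relatively to $g_L$'' is the right one: we forbid reducing at $g_L$ itself (the forced generator that pins the element into the $g$-component) while still reducing at every other maximal vertex.

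**Running Diamond's Lemma with the relative reduction system.** I would set up a modified reduction system $\mathcal S_g$ on the polynomial-type presentation of $\mathcal C_g$ obtained from Lemma~\ref{lem:algebra-mathcalC-abstractly-described}, using rules (1)--(4) unchanged and rule (5) applied only to those maximal $\lambda$ that are \emph{not} equal to $g_L$. The weight function $\mathrm{wt}$ and the descending chain condition carry over verbatim, since the weights of edges are unchanged. The irreducible monomials under $\mathcal S_g$ are precisely the $\e(T)$ with $T\in D_g^L$ that are $\mathfrak E$-reduced relatively to $g_L$ — this is a direct translation of the terminal-form description at the end of Theorem~\ref{thm:basis-forLabneutralcomp}, with the single vertex $g_L$ exempted from rule (5). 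To conclude linear independence and spanning, I would re-verify resolvability of ambiguities; almost all of them are identical to the neutral case, and the only new ambiguities to inspect are those in which $g_L$ participates, where the exemption of rule~(5) at $g_L$ removes precisely the overlaps that would otherwise need the argument.

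**Main obstacle.** The delicate step is confirming that the relative reduction system is genuinely confluent and that exempting rule~(5) at $g_L$ does not create an unresolved ambiguity. One must check that no chain of rules (1)--(4) applied elsewhere in a monomial can force a reduction at $g_L$ indirectly (for instance, by a $C$-incompatibility or prefix relation involving $g_L$ that only becomes visible after another reduction). Equivalently, at the level of $\mathcal Q_g$, one must confirm that the invariant ideal $\mathcal Q_g=\mathcal C_g\cap\mathcal Q_1$ is generated, inside the corner, by exactly the relations $q_X$ attached to vertices strictly below the maximal frontier other than $g_L$, so that the quotient leaves $g_L$ untouched. I expect this to follow cleanly from the homogeneity of $\mathcal Q$ and the explicit transport of $\mathcal Q_1$ under $\theta_g$, but it is the place where a careful bookkeeping argument, rather than a mechanical citation of the neutral case, is required.
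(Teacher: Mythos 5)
Your homogeneous-component decomposition and your diagnosis of the difficulty (rule (5) applied at $g_L$ pushes a monomial out of the $g$\nb-component) both match the paper's starting point, but the step you defer as ``careful bookkeeping'' is precisely the mathematical content of the theorem, and it does not ``follow cleanly from the homogeneity of $\mathcal Q$.'' Concretely, your plan requires two things you do not establish: (a) that the modified reduction system $\mathcal S_g$ (rule (5) exempted at the single variable $g_L$) has all its ambiguities resolvable, including the new ones created by the exemption, and (b) that the span of the $\mathcal S_g$\nb-reduction differences lying in $\mathcal C_g$ is exactly $\mathcal Q_g=\mathcal C_g\cap\mathcal Q_1$ --- this reverse containment is what makes the relatively reduced monomials linearly independent modulo $\mathcal Q_g$, and it is the whole point. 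Your proposed transport to the neutral component via $\theta_{g^{-1}}$ does not help here: it carries $\mathcal C_g$ onto $\mathcal C_{g^{-1}}$, which is again an ideal of $\mathcal C$ pinned by a forced tree, so the same exemption problem reappears verbatim; moreover Lemma~\ref{lem:algebra-mathcalC-abstractly-described} presents $\mathcal C$, not the ideal $\mathcal C_g$, so a per-component Diamond Lemma needs its own setup that you have not supplied.

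The paper avoids a second rewriting system altogether. It keeps the single global normal form map $n$ on $\mathcal C$ furnished by Theorem~\ref{thm:basis-forLabneutralcomp} (accepting that $n(a)$ may leave $\mathcal C_{g_L}$) and compresses back by defining $n^g(a):=n(a)\,g_Lg_L^*$; since $ag_Lg_L^*=a$ for $a\in\mathcal C_g$, one gets $a-n^g(a)\in g_Lg_L^*\mathcal Q_1$, so $n^g(a)$ represents $a$ in $\Le_K^\ab(E,C)$ and is visibly a combination of relatively reduced monomials. Uniqueness then reduces to the single identity $n^g(a)=a$ for every monomial $a=\e(T)$ already $\mathfrak E$\nb-reduced relatively to $g_L$, and \emph{this} is where the real work sits: writing $g_L=g_1w_1e_{X_1}\cdots w_re_{X_r}$, the correction terms of $n(a)$ are supported on trees containing some $g_1w_1e_{X_1}\cdots w_ie$ with $e\in X_i\setminus\{e_{X_i}\}$, which is $C$\nb-incompatible with $g_L$, so multiplication by $g_Lg_L^*$ annihilates them. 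Some argument of this type --- or an equivalent confluence-plus-completeness check for your modified system $\mathcal S_g$ --- is unavoidable, and without it your proposal does not yet prove linear independence of $\ol{\mathcal B}(E,C)$.
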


\begin{proof}
	We work within $\Co_K^\ab(E,C)$, where we have a natural linear basis, namely $\IS (E,C)$, whose elements admit a unique normal form, the Scheiblich normal form \cite[Theorem 4.6]{ABC25}. For $g\in \FC(v)\setminus \{v\}$, the ideal $\mathcal C_g$ of $\mathcal C$ is the linear span of the elements $\e (T)$, where $T\in \Y_0 (v)$ and $g_0\in T$, and the component $\mathcal C_g\rtimes g$ is the linear span in $\Co_K^\ab(E,C)$ of corresponding terms $\e (T)\rtimes g$.

	A {\it monomial} in $\mathcal C =\mathcal C_1$ is an element of the form 
$\lambda_1 \lambda_1^* \lambda_2 \lambda_2^* \cdots \lambda _r \lambda_r^*$, where $\lambda_i \in \FC$. 
	It follows from the proof of Theorem \ref{thm:basis-forLabneutralcomp} that 	
every element of $\mathcal C $, written as a linear combination of monomials, can be transformed, using the reductions (1)--(5) applied to these monomials, to a linear combination of monomials in $\mathfrak E$-reduced Scheiblich form, that is, monomials of the form $\e (T)$ for $T$ a $\mathfrak E$-reduced $C$-compatible $E$-tree. Moreover this linear combination does not depend on the particular reductions that we use. We denote the $\mathfrak E$-reduced Scheiblich normal form of $a\in \mathcal C$ by $n(a)$. Of course $n$ is linear: $n(\sum \alpha_i a_i) = \sum \alpha_i n(a_i)$, where $\alpha_i$ are scalars and $a_i \in \mathcal C $.
	
	Now if $g\in \FC$ and  $a\in \mathcal C_{g}$ there is no guarantee that $n(a) \in \mathcal C_{g_L}$. We define
	$$n^g(a) = n(a) g_Lg_L^*,$$
	and we call $n^g(a)$ the $\mathfrak E$-reduced Scheiblich normal form of $a$ relative to $g$. Note that $n^g$ is linear on $\mathcal C_g$.
	
	Since $a\in \mathcal C_g = gg^* \mathcal C _1$, and $g_Lg_L^* gg^*= gg^*$ we have $ag_Lg_L^* = a$, hence
	$$a-n^g(a) = g_Lg_L^*(a-n(a)) \in  g_Lg_L^* \mathcal Q_1 = \mathcal Q_{g_L}.$$
	In particular $a$ and $n^g(a)$ represent the same element of $\Le_K^\ab (E,C)$. 
	
	Moreover it is clear that $n^g(a)$ is a linear combination of monomials in $\mathfrak E$-reduced form relatively to $g_L$.
	
	We want to show that $n^g(a)$ is the unique linear combination of monomials in $\mathfrak E$-reduced form relatively to $g_L$ that represents $a$. Let $\sum \alpha _i a_i$ be a linear combination of monomials $a_i$ which are in $\mathfrak E$-reduced form relatively to $g_L$ and such that $\sum \alpha_i a_i$ represents $a$.
	
	Suppose that we can prove that $n^{g} (a_i)=a_i$ for all $i$. We will show then that $n^g(a) = \sum \alpha_i a_i$. 
	For, observe that since $a$ and $\sum \alpha_ia_i$ represent the same element of $\Le_K^\ab (E,C)$, we have
	$$n(a) = n(\sum \alpha_i a_i) = \sum \alpha _i n(a_i).$$
	Therefore 
	$$n^g(a) = g_Lg_L^* n(a) = \sum \alpha _i g_Lg_L^* n(a_i) = \sum \alpha_i n^{g} (a_i) = \sum \alpha _i a_i,$$
	where in the last step we have used that $n^{g} (a_i) = a_i$ for all $i$.
	
	Hence we only need to show that $n^{g}(a) = a$ for each monomial $a$ in $\mathfrak E$-reduced form relatively to $g_L$. Let $a$ be one of such monomials, and write $a= \e (T)$ for $T\in \mathcal X_L$. We can assume that $g_L\in \max (T)$ and that $g_L$ is not $\mathfrak E$-reduced. 
	Set $A= \max(T)\setminus \{g_L\}$.
	Write 
	$$g_L = g_1 w_1 e_{X_1}\cdots w_r e_{X_r},$$ where $X_i\in \Cfin$, each $w_i$ is either trivial or a product of elements of the form $x^*$ for $x\in E^1$, and either $g_1\in A^{\downarrow}$ and $g_1w_1e_{X_1}\notin A^{\downarrow}$, or $g_1\notin A^{\downarrow}$ and $g_1$ is $\mathfrak E$-reduced. Set $X'= X\setminus \{e_X\}$ for $X\in \Cfin$. In either case we have
	$$n(a) = \e ((\{g_1\} \cup A)^{\downarrow}) - \sum _{i=1}^r \sum _{e\in X_i'} \e ( (\{g_1 w_1 e_{X_1} \cdots w_ie\}\cup A)^{\downarrow}). $$
	Now observe that 
	$$g_Lg_L^* \e ((\{g_1\} \cup A)^{\downarrow}) = 
	\e ( (\{g_L\}\cup A)^{\downarrow}) = a$$
	and that for all $i\in \{ 1,\dots , r\}$ and $e\in X_i'$
	$$g_Lg_L^*\e ( (\{g_1w_1e_{X_1}\cdots w_ie\}\cup A)^{\downarrow}) = 0, $$
	because $g_L$ and $g_1w_1e_{X_1}\cdots w_ie$ are $C$-incompatible. We conclude that 
	$$n^g(a) = [g_Lg_L^* n(a)]= a,$$
	as desired. 
	
	Hence all elements of $\Le _g \rtimes g$ have a unique representative which is a linear combination of monomials in $\mathfrak E$-reduced form relatively to $g_L$, and we conclude that $\ol{\mathcal B}(E,C)$ is a linear basis of $\Le_K ^\ab (E,C)$.  
\end{proof}

We say that an element $\e (T) \rtimes g \in \ol{\mathcal B} (E,C)$ is in $\mathfrak E$-reduced Scheiblich normal form.  The {\it $\mathfrak E$-reduced Scheiblich normal form} of an element $a\in \Le_K ^\ab (E,C)$ is the unique linear combination of elements of the basis $\ol{\mathcal B}(E,C)$ that represents $a$.

\begin{example}
	\label{exam:non-separated-case}
We observe here that, in the non-separated case, the basis $\ol{\mathcal B}(E,C)$ from Theorem \ref{thm:full-basis-forLab} is closely related to the basis given in \cite[Theorem 1]{zel} (see also \cite[Corollary 1.5.12]{AAS}). Indeed, we show below that these two bases are essentially connected through the Scheiblich normal form. This gives further evidence that this is the appropriate basis for $\Le_K ^\ab (E,C)$.

Suppose that $a= \lambda \mu^*$ is a basis element in the Cohn path algebra $\Co_K(E)$ of a non-separated graph $E$, with $r(\lambda ) = r(\mu)$. Then we write 
$$\lambda = \lambda_1 \lambda_2, \qquad \mu = \mu_1 \lambda _2,$$
where $\lambda_1$ and $\mu_1$ do not end with the same edge.
Now $\lambda _1\mu_1^{-1}\in \FC$ is a reduced word, and
$$a= \lambda \mu^* =\lambda_1 \lambda_2 \lambda_2^* \mu_1^* = \lambda _1(\lambda _2\lambda_2^*)(\lambda_1^*\lambda_1)\mu_1^* = (\lambda \lambda^*) (\lambda_1 \mu_1^*) ,$$
so that the Scheiblich normal form of $a$ is precisely 
$$a= (\lambda\lambda^*)(\lambda_1\mu_1^*).$$

Let $\mathfrak E$ be a choice function for $E$, that is, for each regular vertex $v$ of $E$, we have  $e_v:= \mathfrak E (v)\in s^{-1}(v)$. 

	Suppose that $a=\lambda \mu^*$ is in $\mathfrak E$-reduced form with respect to a choice function $\mathfrak E$, in the sense of \cite{zel}. This means that  the expression $\lambda\mu^*$ does not contain the subword $e_ve_v^*$, for regular vertices $v$ in $E$.
	
	 Suppose first that $\lambda_2$ in the above decomposition is non-trivial. Then, with $g:= \lambda_1\mu_1^{-1}\in \FC$, $\e (\lambda) \rtimes g$ is in $\mathfrak E$-reduced Scheiblich normal form, and we clearly have $g_L <_p \lambda$. 
	 
	 Conversely, if $a=\e (\lambda) \rtimes g $  is in $\mathfrak E$-reduced Scheiblich normal form and $g_L <_p \lambda$, then we can write $g= g_L x_1\cdots x_r \mu^{-1}$ in reduced form in $\FC$, where $s^{-1}(s(x_i)) = \{ x_i\}$ for $i=1,\dots , r$, $r\ge 0$, and moreover $\lambda$ is $\mathfrak E$-reduced in the sense of Definition \ref{def:reduced-paths}. It follows that $\lambda = g_L x_1\cdots x_r \lambda _2$ for a non-trivial $\mathfrak E$-reduced path $\lambda_2$, and we get
	 \begin{align*}
	 a= & \e (\lambda )\rtimes g= g_L x_1\cdots x_r \lambda_2 \lambda _2^* x_r^*\cdots x_1^* g_L^* g_L x_1\cdots x_r \mu^* \\
	  =  & g_L x_1\cdots x_r \lambda _2 \lambda _2 ^* \mu^* = (\lambda_1\lambda _2) (\mu \lambda_2)^*,
	 \end{align*}
	 where $\lambda _1 = g_Lx_1\cdots x_r$, and $a= \lambda (\mu \lambda_2)^*= (\lambda_1\lambda _2)(\mu \lambda_2)^*$ is the $\mathfrak E$-reduced normal form of $a$ in the sense of \cite{zel}.  
	 
	 Suppose now that $\lambda _2$ is trivial. Then $g := \lambda \mu^{-1}$ is already a reduced word in $\FC$, and the $\mathfrak E$-reduced Scheiblich normal form of $a$ is $ \e (g_L) \rtimes g$, where $\lambda = g_L x_1\cdots x_r$, with $s^{-1}(s(x_i))= \{x_i\}$ for all $i$ and $r\ge0$. In this case, $g_L$ may be or may not be $\mathfrak E$-reduced in the sense of Definition \ref{def:reduced-paths}, but in any case 
	 $\e (g_L)\rtimes g$ is $\mathfrak E$-reduced relatively to $g_L$. Conversely, if we have a (reduced) element $g= \lambda \mu^{-1}$ in $\FC$, then $\e(g_L)\rtimes g$ is the $\mathfrak E$-reduced Scheiblich normal form of $\lambda \mu^*$, and $\lambda \mu^* $ is its $\mathfrak E$-reduced normal form in the sense of \cite{zel}. Incidentally, note that both normal forms do not depend on the choice function $\mathfrak E$ in the case where $\lambda_2$ is trivial.   
	 \end{example}

We can now characterize the (inverse) semigroup of $\Le_K^\ab (E,C)$ generated by
$E^0\cup E^1 \cup (E^1)^*$. First, we need a definition and an easy lemma.

For an $E$-tree $T$, we define the {\it total length} $|T|$ as the number of edges of $T$. Here an edge of $T$ is a pair $(g,gx)$, where $g,gx\in T$ and $x\in \hat{E}^1$.

\begin{lemma}
	\label{lem:total-length}
	Let $T$ be a non-trivial $E$-tree and let $hx\in \max (T)$, where $x\in \hat{E}^1$. Then $T\setminus \{hx\}$ is also an $E$-tree, and $|T\setminus \{hx\}| = |T|-1$.
	\end{lemma}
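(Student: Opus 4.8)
The plan is to prove the statement about total length directly from the definitions, since the claim is essentially combinatorial. Recall that an $E$-tree $T$ is a finite connected subtree of the Cayley graph $\Gamma_E$ containing the root $v$, and that $|T|$ counts the edges of $T$ (pairs $(g,gx)$ with both endpoints in $T$). The key observation is that a maximal element $hx\in\max(T)$, being maximal with respect to the prefix order, is a \emph{leaf} of the tree $T$: no proper extension $hxy$ lies in $T$, so the only edge of $\Gamma_E$ incident to $hx$ that lies inside $T$ is the edge joining $hx$ to its parent $h$.

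First I would verify that $T\setminus\{hx\}$ is again an $E$-tree. Since $T$ is a finite lower subset and $hx$ is maximal, removing it cannot destroy the lower-set property: any $\gamma\le_p\mu$ with $\mu\in T\setminus\{hx\}$ already satisfies $\gamma\in T$, and $\gamma\ne hx$ because $hx$ is maximal and not a proper prefix of any $\mu\ne hx$ in $T$. For connectedness, note that in a tree, deleting a leaf leaves a connected subtree; since $hx$ is a leaf, $T\setminus\{hx\}$ remains connected. Finally the root $v$ is not removed (as $v\le_p hx$ and $T$ is nontrivial, so $hx\ne v$), so $T\setminus\{hx\}$ still contains its root. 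Thus $T\setminus\{hx\}$ satisfies Definition~\ref{def:Mun-EC-tree}.

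For the count, I would argue that exactly one edge of $T$ is lost. Because $hx$ is a leaf, the unique edge of $T$ incident to $hx$ is $(h,hx)$, where $h=hx\cdot x^{-1}$ is the parent of $hx$. Every other edge $(g,gy)$ of $T$ has both endpoints distinct from $hx$, hence survives in $T\setminus\{hx\}$; conversely every edge of $T\setminus\{hx\}$ is an edge of $T$ not equal to $(h,hx)$. Therefore $|T\setminus\{hx\}| = |T|-1$.

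I expect there to be no serious obstacle here, as the statement is a basic property of finite trees phrased in the language of the Cayley graph. The only point requiring mild care is checking that $hx$ is genuinely a leaf, i.e.\ that maximality in the prefix order is equivalent to being a degree-one vertex of the subtree $T$; this follows because any edge of $T$ leaving $hx$ to a longer path would contradict maximality, while the edge back to the parent $h$ is forced to exist since $T$ is connected and contains $v\le_p h$. Once this is established, both the $E$-tree property and the edge count are immediate.
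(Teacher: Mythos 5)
Your proof is correct and follows essentially the same route as the paper's (which simply observes that removing a maximal element of a tree yields a tree and that exactly the edge $(h,hx)$ is lost); you merely spell out the leaf argument in more detail. No issues.
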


\begin{proof}
	It is clear that removing a maximal element of a tree will produce another tree. The formula $|T\setminus \{hx\}| = |T|-1$ is also clear because $T\setminus \{hx\}$ has exactly one edge less than $T$, namely the edge $(h,hx)$. 
\end{proof}

\begin{theorem}
	\label{thm:semigroup-generated} Let $(E,C)$ be a separated graph. Then
	the natural homomorphism $\iota \colon \LI (E,C)\to \Le_K^\ab (E,C)$ is injective. Hence the semigroup of $\Le_K^\ab (E,C)$ generated by $E^0\cup E^1\cup (E^1)^*$ is naturally isomorphic to the Leavitt inverse semigroup $\LI (E,C)$. 
\end{theorem}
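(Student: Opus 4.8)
**The plan is to prove injectivity of the natural map $\iota$ by exhibiting a linear basis of $\Le_K^\ab(E,C)$ that is indexed precisely by the elements of $\LI(E,C)$, and then matching the two indexings.** We already have, from Theorem~\ref{thm:ECLeavittMunntrees}, that the elements of $\LI(E,C)$ are exactly the Leavitt--Munn $(E,C)$-trees $(T,g)$ with $T\in \Y_L$ and $g_L\in T$. On the other side, Theorem~\ref{thm:full-basis-forLab} gives the basis $\ol{\mathcal B}(E,C)=\{\e(T)\rtimes g : T\in D_g^L,\ T \text{ is } \mathfrak E\text{-reduced relatively to } g_L\}$. So the key observation driving the argument is that the natural map $\iota$ sends a Leavitt--Munn tree $(T,g)$ to the element $\e(T)\rtimes g$ of $\Le_K^\ab(E,C)$, and I must show this map lands on (a reindexing of) a linearly independent family.

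First I would spell out exactly what $\iota$ does on each generator: $\iota(v)=v$, $\iota(e)=e$, $\iota(e^{-1})=e^*$, and hence on a Leavitt--Munn tree $(T,g)$, one computes $\iota(T,g)=\e(T)\rtimes g$, using that $\e(T)=\prod_{\lambda\in\max(T)}\lambda\lambda^*$ and that the partial-isometry structure forces $gg^*=g_Lg_L^*$ in $\Le_K^\ab(E,C)$. The point is that a nonzero element of the semigroup $\LI(E,C)$ maps to a \emph{single} (nonzero) semigroup element of $\Le_K^\ab(E,C)$, never to a zero element or to a genuine linear combination, because $\iota$ is a semigroup homomorphism into the multiplicative structure. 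Since $\iota$ is a homomorphism of inverse semigroups, to prove injectivity it suffices to show that distinct nonzero elements of $\LI(E,C)$ have distinct nonzero images; equivalently, that the idempotent-separating-plus-$\F$-grading data are preserved.

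The cleanest route is to use the $\F$-grading on both sides. The partial homomorphism $(T,g)\mapsto \omega(g)$ on $\LI(E,C)$ matches the $\F$-grading of $\Le_K^\ab(E,C)=\bigoplus_{g}\L_g\rtimes g$, so $\iota$ respects the grading and it suffices to prove injectivity within each homogeneous component, i.e.\ to separate Leavitt--Munn trees $(T_1,g)$ and $(T_2,g)$ sharing the same group element $g$. After replacing each $T_i$ by its canonical $\mathfrak E$-reduced-relative-to-$g_L$ representative (Definition~\ref{def:relatively-reduced}), I would invoke the uniqueness clause of Theorem~\ref{thm:full-basis-forLab}: the elements $\e(T)\rtimes g$ with $T$ in $\mathfrak E$-reduced form relative to $g_L$ are linearly independent, hence in particular distinct. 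The remaining gap is that two \emph{distinct} Leavitt--Munn trees might reduce to the same $\mathfrak E$-reduced relative representative; here Lemma~\ref{lem:uniqueTU} (applied with $U=\{e:\{e\}\in C\}$) guarantees a canonical form $T_L$ and that $\sim_L$-classes are separated by $(T_L,g)$, so the reduction $T\mapsto$ its $\mathfrak E$-reduced relative form is a well-defined \emph{bijection} between $\Y_L$-indexed trees in a fixed component and the basis indices in $\ol{\mathcal B}(E,C)$.

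\textbf{The main obstacle I expect is precisely this last matching step}: showing that the passage from a Leavitt--Munn tree $(T,g)$ to its $\mathfrak E$-reduced-relative-to-$g_L$ form is injective on $\LI(E,C)$, not merely surjective onto the basis. The subtlety is that the $\mathfrak E$-reduction can collapse the maximal element $g_L$ (when $g_L$ ends in a chosen edge $e_X$) by expanding it via relation (SCK2), and one must verify using Lemma~\ref{lem:total-length} and an induction on total length $|T|$ that this expansion, followed by multiplying back by $g_Lg_L^*$ to stay in $\mathcal C_{g_L}$, recovers the \emph{original} tree and never conflates two different Leavitt trees. Concretely, I would argue that the leading term (with respect to the weight function of Theorem~\ref{thm:basis-forLabneutralcomp}, under which $\text{wt}(e_X)=2$) of $n^g(\e(T))$ is $\e(T)$ itself whenever $T$ is already $\mathfrak E$-reduced relative to $g_L$, so that the map $(T,g)\mapsto \e(T)\rtimes g$ on these canonical representatives is identity-on-leading-terms and hence injective, closing the argument.
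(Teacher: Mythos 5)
Your overall strategy coincides with the paper's: identify $\iota((T,g))=\e(T)\rtimes g$, use the $\F$\nb-grading to reduce to comparing two Leavitt--Munn trees $(T,g)$ and $(T',g)$ with the same group element, and then separate them using the basis $\ol{\mathcal B}(E,C)$ of Theorem~\ref{thm:full-basis-forLab}. However, there is a genuine gap at the decisive step. You propose to ``replace each $T_i$ by its canonical $\mathfrak E$-reduced-relative-to-$g_L$ representative'' and then invoke linear independence of the basis. No such single representative exists: a tree $T\in\Y_L$ is only required to have maximal elements not ending in $E^{-1}$ or in edges $e$ with $\{e\}\in C$, so its maximal elements may well end in $e_X=\mathfrak E(X)$ for $X\in\Cfin$ with $|X|\ge 2$, and then the $\mathfrak E$-reduction of $\e(T)\rtimes g$ via (SCK2) is a genuine \emph{signed linear combination} of several basis elements (the paper's equation \eqref{eq:ng-reduction}), not a single basis element $\e(\widetilde T)\rtimes g$. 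Consequently, injectivity of $\iota$ is not a matter of matching two indexings; one must show that two distinct Leavitt--Munn trees cannot produce the same linear combination. This is precisely the combinatorial heart of the paper's proof: it compares the highest-total-length and lowest-total-length terms of the two expansions to deduce first $T_U\cup g_L^\downarrow=T'_U\cup g_L^\downarrow$ and $A\setminus B=A'\setminus B'$, and then, using $C$-compatibility to rule out coexistence of $h_{i_0}x$ and $h_{i_0}e_{X_{i_0}}$ in the same tree, concludes $B=B'$, hence $T=T'$.

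Your proposed resolution of the ``main obstacle'' does not close this gap. The leading-term argument you sketch (that $n^g(\e(T))=\e(T)$ when $T$ is already $\mathfrak E$-reduced relative to $g_L$) only covers trees that happen to lie in the basis already, which is exactly the easy case; it says nothing about distinguishing two trees whose expansions are nontrivial linear combinations. You have also slightly misplaced the source of the difficulty: you worry about $g_L$ ending in a chosen edge $e_X$, but $g_L$ is precisely the maximal element that Definition~\ref{def:relatively-reduced} exempts from the reducedness requirement (and the paper shows $g_Lg_L^*$ kills the unwanted expansion terms at $g_L$); the problematic maximal elements are the \emph{other} ones, in $B=\{h\in\max(T)\setminus\{g_L\}: h\text{ not }\mathfrak E\text{-reduced}\}$. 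To complete your argument you would need to supply the reconstruction of $T$ from the set of basis elements appearing in its normal form, which is the content the paper devotes the second half of its proof to.
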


\begin{proof} We will use the picture of $\LI (E,C)$ established in Theorem \ref{thm:ECLeavittMunntrees}. 
	With this picture, we have $\iota ((T,g)) = \e (T)\rtimes g$ for any Leavitt-Munn $(E,C)$-tree $(T,g)$. 
	We fix a choice function $\mathfrak E\colon \Cfin\to E^1$ for $(E,C)$.

	Let $(T,g)$ be a Leavitt-Munn $(E,C)$-tree. Then $T\in \Y_L$ and $g_L\in T$. Set $A := \max (T) \setminus \{g_L\}$, and $B := \{ g \in A : g \text{ is not } \mathfrak E\text{-reduced}\, \}$.  
	If $B=\emptyset$, then $\e (T) \rtimes g$ is $\mathfrak E$-reduced relatively to $g_L$, so that $\iota ((T,g))= \e (T)\rtimes g \in \ol{\mathcal B}(E,C)$

If $B\ne \emptyset$, 
	write $B= \{h_1e_{X_1},\dots , h_r e_{X_r}\}$, where $X_i\in \Cfin_{r(h_i)}$ and $|X_i| \ge 2$ for $i=1,\dots , r$. Set $X'=X\setminus \{e_X\}$ for any $X\in \Cfin$ with $|X|\ge 2$.

	Using the relation $r(h_1)= e_{X_1}e_{X_1}^* + \sum _{x\in X_1'} xx^*$, we can write in $\Le_K^\ab (E,C)$
$$\e (T)\rtimes g = - \sum_{x\in X_1'} \e ((T\setminus \{h_1e_{X_1}\})\cup \{h_1x\}\}))\rtimes g + \e (T\setminus \{h_1e_{X_1}\}))\rtimes g.$$
By Lemma \ref{lem:total-length}, $|T\setminus \{h_1e_{X_1}\}| = |T| -1$, and observe that 
$$| (T\setminus \{h_1e_{X_1}\})\cup \{h_1x\} | = |T|$$
and that $\max((T\setminus \{h_1e_{X_1}\})\cup \{h_1x\})\setminus \{g_L\} = (A \setminus \{h_1e_{X_1}\})\cup \{h_1x\}$, for each $x\in X_1'$. On the other hand, the $E$-tree $T\setminus \{h_1e_{X_1}\}$ is obviously $C$-compatible, but it does not necessarily belong to $\Y_0$ or $\Y_L$. However its reduction to a tree in $\Y_L$, namely $(T\setminus \{h_1e_{X_1}\})_L$, will have total length strictly less than $|T|$.

For $(x_1,\dots ,x_r)\in X'_1\times \cdots \times X'_r$, set
$$T_{(x_1,\dots , x_r)} = (T\setminus \{h_1e_{X_1},\dots , h_re_{X_r}\})\cup \{h_1x_1,\dots ,h_rx_r\}.$$
Also, set $U:= \mathfrak E (\Cfin) = \{ e_X : X\in \Cfin\}$, and observe that the set of edges $e\in E^1$ such that $s^{-1}(s(e)) = \{ e\}$ is a subset of $U$. Let $T_U$ be the $U$-reduction of $T$, as in Lemma \ref{lem:uniqueTU}.  

Iterating the above process a finite number of times, we obtain an identity
in $\Le_K^\ab (E,C)$ of the form
\begin{equation}
	\label{eq:ng-reduction}
\e (T)\rtimes g = (-1)^r \sum_{(x_i)\in \prod_{i=1}^r X_i'} \e (T_{(x_i)})\rtimes g + \tau + \e (T_U\cup g_L^\downarrow)\rtimes g ,	
\end{equation}
  where $\tau$ is a linear combination of terms of the form $\e (S)\rtimes g\in \ol{\mathcal B} (E,C)$ such that $$|T_U\cup g_L^\downarrow | < |S| < |T_{(x_i)}| = |T|$$
  for all $(x_i)\in \prod_{i=1} ^r X_i'$. Observe that \eqref{eq:ng-reduction} gives the $\mathfrak E$-reduced Scheiblich normal form of $\e (T)\rtimes g$, since all $T_{(x_i)}$ and also $T_U\cup g_L^\downarrow$ are $\mathfrak E$-reduced relatively to $g_L$. 
  
  We are now ready to show the injectivity of $\iota$. Suppose that $\iota ((T,g)) = \iota ((T',h))$ for Leavitt-Munn $(E,C)$-trees $(T,g)$ and $(T',h)$. Since these are homogeneous elements of degree $g$ and $h$ respectively, we must have $g=h$. Let $A'$ and $B'$ be the sets as above corresponding to $T'$. Set
  $B'= \{h_1'e_{Y_1},\dots , h_s' e_{Y_s}\}$, where $Y_j\in \Cfin_{r(h_i')}$ and $|Y_j| \ge 2$ for $j=1,\dots , s$. We have a formula similar to \eqref{eq:ng-reduction} for $T'$, involving the $\mathfrak E$-reduced
  relatively to $g_L$ trees $T'_{(y_1,\dots , y_s)}$ for $(y_j)\in \prod_{j=1}^s Y_j'$, and $T'_U\cup g_L^\downarrow$. 
  
  Since $\e(T)\rtimes g = \e (T')\rtimes g$ in $\Le_K ^\ab (E,C)$, it follows that their $\mathfrak E$-reduced Scheiblich normal forms are equal. 
  Looking at the highest length terms and the lowest length terms of the developments of $\e (T)\rtimes g$ and $\e (T')\rtimes g$ in \eqref{eq:ng-reduction}, we conclude that 
  \begin{equation}
  	\label{eq:highest-terms}
  	 (-1)^r \sum_{(x_i)\in \prod_{i=1}^r X_i'} \e (T_{(x_i)}) =  (-1)^s \sum_{(y_j)\in \prod_{j=1}^s Y_j'} \e (T'_{(y_j)})  
  \end{equation}     
and $T_U\cup g_L^\downarrow = T'_U\cup g_L^\downarrow$. We claim that $A\setminus B= A'\setminus B'$. 
By symmetry, it suffices to check one of the inclusions. Let $g\in A\setminus B$. Then obviously $g\notin B'$. We want to check that $g\in A'$. Looking for a contradiction, suppose that $g\notin A'$. Then by
\eqref{eq:highest-terms} there is $j\in \{1,\dots ,s\}$ and $y\in Y_j'$ such that $g=h_j'y$. But since $g\in A= \max (T)\setminus \{g_L\}$, and $g$ is $\mathfrak E$-reduced, it follows that $g\in \max (T_U)$. Hence, using that $T_U\cup g_L^\downarrow = T'_U\cup g_L^\downarrow$, we get $g=h_j'y\in T'_U$, which is a contradiction, since $T'_U$ does not contain any path of the form $h_j'z$, for $z\in Y_j$. This shows that 
$A\setminus B = A'\setminus B'$. It is now easy to see, by using again \eqref{eq:highest-terms}, that $B=B'$.
Indeed, assume that $g=h_{i_0}e_{X_{i_0}}\in B$ and fix an edge $x\in X_{i_0}'$. Then $h_{i_0}x$ is a maximal element in some 
of the the trees $T_{(x_i)}$ and so, by \eqref{eq:highest-terms}, it must be a maximal element in some of the trees $T'_{(y_j)}$. If $h_{i_0}x\in A'\setminus B'$, then $h_{i_0}x\in A\setminus B$, but then 
$h_{i_0}x,h_{i_0}e_{X_{i_0}}\in T$, which is impossible, because $T$ is $C$-compatible. Hence $h_{i_0}x\notin A'\setminus B'$, and it follows that there is $j_0\in \{ 1,\dots ,s\}$ and $y\in Y'_{j_0}$ such that $h_{i_0}x = h_{j_0}'y$, which implies that $h_{i_0}=h_{j_0}'$ and $x=y$. In particular, $X_{i_0}=Y_{j_0}$ and thus
$$g= h_{i_0}e_{X_{i_0}} = h'_{j_0}e_{Y_{j_0}} \in B'.$$
Hence $B\subseteq B'$ and similarly $B'\subseteq B$. Since $A\setminus B = A'\setminus B'$ and $B=B'$, we get $A=A'$ and thus $(T,g)=(T',g)= (T',h)$, as desired.

This concludes the proof. 
\end{proof}

\section{Relation between Cohn path algebras and Leavitt path algebras}
\label{sect:relation-with-Cohn-algebras}

Here we want to determine the relationship between Cohn and Leavitt algebras. It was shown in 
\cite[Theorem 1.5.18]{AAS} that every Cohn path algebra of a non-separated graph is isomorphic to the Leavitt path algebra of another non-separated graph. 
In the separated situation, this apparently cannot be achieved, but there is also a nice relation.

We proceed more generally and relate the class of Cohn-Leavitt algebras with the class of Leavitt path algebras. Cohn-Leavitt path algebras of separated graphs have been introduced in \cite{AG12}. 
Here we will recall its definition, and we will also consider the abelianized (or tame) version of these algebras.

\begin{definition} cf. \cite[Definition 2.5]{AG12}
	\label{def:Cohn-Leavitt}
	Let $(E,C)$ be a separated graph, and let $S$ be a subset of $\Cfin$. Then let $CL_K(E,C,S)$ be the $*$-algebra with generators $E^0\cup E^1$ subject to relations (V), (E1), (E2) and (SCK1) of Definition \ref{def:Cohn-and-Leavitt-algs}, together with relation (SCK2) only for the sets $X\in S$.
 The $*$-algebra $CL_K(E,C,S)$ is called the {\it Cohn-Leavitt path algebra} of $(E,C,S)$. 	
	
	We denote by $CL_K^\ab (E,C,S)$ the tame $*$-algebra associated to the canonical set of generators $E^0\cup E^1$ of $CL_K(E,C,S)$. This will be called the  {\it tame Cohn-Leavitt path algebra} of the triple $(E,C,S)$.  Observe that $CL_K^\ab (E,C,S) = \Co_K^\ab (E,C)/{\mathcal Q_S}$, where $\mathcal Q_S$ is the ideal of $\Co_K^\ab (E,C)$ generated by the projections $q_X$, for $X\in S$.  
	\end{definition}

A linear basis for $CL_K(E,C,S)$ was found in \cite[Theorem 2.7]{AG12}. We will now show that we can also provide a linear basis of $CL_K^\ab (E,C,S)$ by using the same method as we used for $\Le_K^\ab (E,C)$ in 
Theorem \ref{thm:full-basis-forLab}. 

We need to extend Definition \ref{def:Leavitt-Munn-tree} to this new setting.

Given $S\subseteq \Cfin$, we set
$$S^1= \{e\in E^1 : \{e\} \in S\}.$$
Every element $g\in \FC$ can be uniquely written as $g= g_Sw$, where $g_S$ is either a vertex or ends in an edge $e\in E^1$ which does not belong to $S^1$, and $w$ is a product of elements in $E^{-1}$ and elements in $S^1$.   

For $v\in E^0$, define $\Y_S$ as the set of those $T\in \Y_0$ such that all maximal elements of $T$ do not end in an edge $e\in S^1$. 
For $g\in \FC\setminus E^0$, we denote by $D_g^S$ the set of elements $T\in \Y_S$ such that $g_S\in T$. For $S=\Cfin$, we have $g_S=g_L$, $\Y_S=\Y_L$ and $D_g^S=D_g^L$.

\begin{definition}
	\label{def:Sreduced-paths}
	Let $(E,C)$ be a separated graph and let $S$ be a subset of $\Cfin$. 
	We define a {\it $S$-choice function} as a function $\mathfrak E\colon S \to E^1$ such that $\mathfrak E (X) \in X$ for all $X\in S$.
	We say that $\lambda \in \FC$ is $\mathfrak E$-reduced if it does not end with $e_X$ for all $X\in S$. 
	
	We say that an element $T\in \Y _0$ is {\it $\mathfrak E$-reduced} if each maximal element of $T$ is $\mathfrak E$-reduced. 
	
	Let $g\in \FC$. 
	We say that $T\in D_g^S$ is $\mathfrak E$-reduced relatively to $g_S$ if each element in $\max (T)\setminus \{ g_S\}$ is $\mathfrak E$-reduced. That is, if either $g_S$ is $\mathfrak E$-reduced or $g_S$ is not maximal in $T$, then $T$ is $\mathfrak E$-reduced relatively to $g_S$ if and only if $T$ is $\mathfrak E$-reduced, and if $g_S$ is not $\mathfrak E$-reduced and $g_S$ is maximal in $T$, then $T$ is $\mathfrak E$-reduced relatively to $g_S$ if and only if all maximal elements of $T$ except $g_S$ are $\mathfrak E$-reduced.  
	\end{definition}

With these definitions at hand, we can state the following result, whose proof follows the lines of the proof of Theorem \ref{thm:full-basis-forLab}.
 
\begin{theorem}
	\label{thm:full-basis-forCLS}
	Let $(E,C)$ be a separated graph, and let $S$ be a subset of $\Cfin$, and let $\mathfrak E$ be an $S$-choice function for $(E,C)$. Then a linear basis 
	of $CL_K^\ab (E,C,S)$ is given by the elements of the form 
	$$\ol{\mathcal B} (E,C,S) :=\{ \e (T)\rtimes g : T \in D_g^S \text{ and } T \text{ is $\mathfrak E$-reduced relatively to $g_S$} \}.$$  
\end{theorem}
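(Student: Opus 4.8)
The plan is to follow, step for step, the two-stage strategy of the proof of Theorem~\ref{thm:full-basis-forLab}, replacing $\Cfin$ by $S$ throughout. I would again work inside $\Co_K^\ab(E,C)$, whose elements have a unique Scheiblich normal form by \cite[Theorem 4.6]{ABC25}, and use that $CL_K^\ab(E,C,S) = \Co_K^\ab(E,C)/\mathcal Q_S$. The first stage is to establish the analogue of Theorem~\ref{thm:basis-forLabneutralcomp} for the neutral component $\mathcal C_1/\mathcal Q_{S,1}$, where $\mathcal Q_{S,1} = \mathcal Q_S \cap \mathcal C$, obtaining the basis $\{\e(T) : T \text{ is } \mathfrak E\text{-reduced}\}$ with the $S$-version of $\mathfrak E$-reducedness from Definition~\ref{def:Sreduced-paths}.

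For this neutral stage I would run Bergman's Diamond Lemma on $\Pol_K[\FC] = KW$ exactly as before, with the single modification that the weight function assigns $\text{wt}(e_X)=2$ only for $X \in S$ (and $\text{wt}(x)=1$ for every other $x \in E^1 \cup E^{-1}$), and that the Cuntz--Krieger reduction pair $(\lambda e_X,\, \lambda - \sum_{e\in X'}\lambda e)$ of family (5) is imposed only for $X \in S$. The pairs (1)--(4) are Cohn-level relations, valid already in $\Co_K^\ab(E,C)$, and are kept unchanged. Since the pairs of family (5) for $X \notin S$ are simply absent, no new ambiguities are created; the inclusion ambiguities between (5) and (2), (3) are resolved by the very same computations as in the proof of Theorem~\ref{thm:basis-forLabneutralcomp}. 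The irreducible monomials are precisely the $\e(T)$ with $T$ an $\mathfrak E$-reduced $C$-compatible $E$-tree, yielding the desired basis of the neutral component.

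The second stage transfers this to the non-neutral components. The key identity, playing the role of $gg^*=g_Lg_L^*$, is that $gg^* = g_S g_S^*$ in $CL_K^\ab(E,C,S)$: writing $g=g_Sw$ with $w$ a product of inverse edges and of edges in $S^1$, each letter of $w$ telescopes to a vertex using relation (SCK1) for the inverse edges and relation (SCK2) for the singleton sets $\{e\}\in S$ for the edges in $S^1$, so that $ww^* = r(g_S)$. With $n(a)$ the $\mathfrak E$-reduced Scheiblich normal form in $\mathcal C$ produced by the Diamond Lemma, I would set $n^g(a) = n(a)\,g_S g_S^*$. Since $a \in \mathcal C_g = gg^*\mathcal C_1$ and $g_S g_S^* gg^* = gg^*$, one gets $ag_S g_S^* = a$, whence $a - n^g(a) = g_S g_S^*(a-n(a)) \in g_S g_S^*\mathcal Q_{S,1} = \mathcal Q_{S,g_S}$, so $a$ and $n^g(a)$ represent the same class. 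Uniqueness then follows, exactly as in Theorem~\ref{thm:full-basis-forLab}, by checking that $n^g(a)=a$ for every monomial $a=\e(T)$ already $\mathfrak E$-reduced relatively to $g_S$; here the terms arising from expanding the trailing $e_{X}$'s of $g_S$ are annihilated because $g_S$, which ends in some $\mathfrak E(X)$, and the expanded path, which ends in a different edge of the same $X\in S$, are $C$-incompatible, while the surviving term is $a$ itself.

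The only genuinely new bookkeeping, and thus the main point to watch, is the $g=g_Sw$ decomposition and the verification of $gg^*=g_Sg_S^*$: one must ensure that treating the edges $e_X$ with $X\notin S$ as ordinary, non-expandable edges is consistent, i.e.\ that such edges are never forced into the tail $w$. This is guaranteed by the definition $S^1=\{e\in E^1:\{e\}\in S\}$ and the resulting decomposition $g=g_Sw$, in which $g_S$ is permitted to end in any edge outside $S^1$. Everything else is a line-by-line repetition of the arguments for Theorem~\ref{thm:full-basis-forLab}, with $\Cfin$, $g_L$, $\Y_L$, $D_g^L$ replaced by $S$, $g_S$, $\Y_S$, $D_g^S$.
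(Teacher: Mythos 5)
Your proposal is correct and coincides with the paper's intended argument: the paper itself only remarks that the proof of Theorem~\ref{thm:full-basis-forCLS} ``follows the lines of the proof of Theorem~\ref{thm:full-basis-forLab},'' which is precisely the substitution $\Cfin\rightsquigarrow S$, $g_L\rightsquigarrow g_S$, $\Y_L\rightsquigarrow\Y_S$ that you carry out. Your explicit verification of $gg^*=g_Sg_S^*$ via (SCK1) for inverse edges and (SCK2) for singletons in $S$ is exactly the point that makes the transfer work.
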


Note that in the extreme cases where $S=\emptyset$ and $S= \Cfin$, we recover the already known basis of $\Co_K^\ab (E,C)$ and $\Le_K^\ab (E,C)$, respectively. 

We are now ready to study the relationship with Leavitt path algebras. 

Let $(E,C)$ be a separated graph and $S$ a subset of $\Cfin$. We build a new separated graph $(E_S,C^S)$ as follows. First we set
$$(E_S)^0 = E^0 \sqcup \{ v_X: X\in \Cfin \setminus S \}$$
$$(E_S)^1= E^1 \sqcup \{ d_X : X\in \Cfin \setminus S \},$$
The source and range maps in $E_S$ are such that the natural map $E\to E_S$ is a graph homomorphism, and moreover 
$$s(d_X) = v,  \quad r(d_X) = v_X  \, \, \text{ if } X\in  \Cfin_v \setminus S.$$
The partitions $C^S_v$ are given by $C^S_v=\{\ol{X} :X\in C_v\}$, where $\ol{X} =X\cup \{d_X\}$ if $X\in \Cfin \setminus S$ and $\ol{X} = X$ otherwise. 

In the case where $S=\emptyset$, so that the Cohn-Leavitt path algebra coincides with the Cohn path algebra, we write $(\ol{E},\ol{C})$ for the separated graph $(E_\emptyset, C^\emptyset)$.

\begin{proposition}
	\label{prop:relationLPACPA}
	Let $(E,C)$ be a separated graph and $S$ a subset of $\Cfin$. Let $p:= \sum_{v\in E^0} v\in \mathcal M (\Le_K (E_S,C^S))$. Then we have a natural $*$-isomorphism
	$$\varphi\colon CL_K (E,C,S) \cong p\Le _K (E_S,C^S)p$$
	such that $\varphi (v)= v$ for all $v\in E^0$ and $\varphi (e)= e$ for all $e\in E^1$. 
	
	In particular $\Co_K (E,C) \cong p\Le _K (\ol{E},\ol{C}) p$. 
\end{proposition}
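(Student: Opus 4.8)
The plan is to build $\varphi$ via the universal property of $CL_K(E,C,S)$, to check that it maps into the corner, and then to prove it is bijective: surjectivity by a direct generation argument and injectivity by comparing linear bases. Throughout write $A=\Le_K(E_S,C^S)$.

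By the universal property of $CL_K(E,C,S)$, to define a \(*\)-homomorphism $\varphi\colon CL_K(E,C,S)\to A$ it suffices to send $v\mapsto v$ for $v\in E^0$ and $e\mapsto e$ for $e\in E^1$ and to verify that the images satisfy (V), (E1), (E2), (SCK1), together with (SCK2) only for $X\in S$. Relations (V), (E1), (E2) and (SCK1) are inherited, being part of the defining relations of $A$ restricted to the original vertices and edges. For $X\in S$ we have $\ol{X}=X$ in $(E_S,C^S)$, so the Leavitt relation for $\ol{X}$ reads $\sum_{e\in X}ee^*=v$, exactly as required; for $X\in\Cfin\setminus S$ nothing is imposed in $CL_K(E,C,S)$, and consistently the Leavitt relation for $\ol{X}=X\cup\{d_X\}$ only yields
$$\sum_{e\in X}ee^*=v-d_Xd_X^*=v-q_X .$$
Since every $v\in E^0$ and $e\in E^1$ has source and range in $E^0$, we get $pvp=v$ and $pep=e$, so $\varphi$ indeed takes values in $pAp$. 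The identity to record is that the dummy edge absorbs the Cuntz--Krieger defect: in $A$ one has $d_Xd_X^*=q_X$ and $d_X^*d_X=v_X$ for every $X\in\Cfin\setminus S$. Taking $S=\emptyset$ will give the final assertion $\Co_K(E,C)\cong p\Le_K(\ol{E},\ol{C})p$.

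For surjectivity I would show that $pAp$ is generated as a \(*\)-algebra by $E^0\cup E^1$. As $A$ is spanned by monomials in the generators $E^0\sqcup\{v_X\}\sqcup E^1\sqcup\{d_X\}$ and their adjoints, it suffices to reduce each such monomial lying in $pAp$ to a word in $E^0\cup E^1$. The key is that each $v_X$ is a sink whose only incoming edge is $d_X$: tracking composability of factors, in a nonzero corner monomial a factor $d_X$ must be followed --- after absorbing vertices $v_X$ --- by $d_X^*$, while a factor $d_X^*$ cannot occur unless preceded by $d_X$ (otherwise the monomial would start or end at $v_X\notin E^0$ and be killed by $p$). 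Hence every dummy edge appears only inside the block $d_Xd_X^*=q_X$, which already lies in the subalgebra generated by $E^0\cup E^1$; replacing these blocks and iterating proves that $\varphi$ is onto.

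The main obstacle is injectivity, which I would settle by comparing the explicit linear bases of $CL_K(E,C,S)$ and of $A$ given by \cite[Theorem 2.7]{AG12} (the latter being the Leavitt case for the enlarged graph $(E_S,C^S)$). The idea is to fix a choice function on $(E_S,C^S)$ that extends a chosen choice function $\mathfrak E$ on $S$ and selects the dummy edge $d_X$ as the distinguished edge of $\ol{X}=X\cup\{d_X\}$ for every $X\in\Cfin\setminus S$. With this choice, a basis monomial $\lambda\mu^*$ of $CL_K(E,C,S)$ uses only edges of $E$ and is reduced exactly as an $E$-word of the basis of $A$: the additional SCK2-reductions available in $A$ all concern the sets $\ol{X}$ whose distinguished edge is $d_X$, and these never touch words avoiding $d_X$, while the SCK1-conditions and the reductions for $X\in S$ coincide on both sides. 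Thus $\varphi$ carries the basis of $CL_K(E,C,S)$ bijectively onto the subfamily of $E$-words inside the basis of $A$, which is linearly independent; hence $\varphi$ is injective. The delicate point is precisely this matching of reduction conventions --- that ``not collapsing $q_X$ for $X\notin S$'' on the Cohn--Leavitt side corresponds to ``not ending in the distinguished edge $d_X$ of $\ol{X}$'' on the Leavitt side. This also explains why one cannot produce an inverse inside $CL_K(E,C,S)$ itself: realizing the $v_X$ would require partial isometries $u_X$ with $u_Xu_X^*=q_X$ and pairwise orthogonal $u_X^*u_X$, but the defects $q_X$ for distinct $X\in C_v\setminus S$ are in general non-orthogonal, so the extra room afforded by the enlarged graph is genuinely needed.
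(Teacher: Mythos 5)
Your proposal is correct, but it reaches the conclusion by a genuinely different route than the paper. The paper identifies a presentation of the corner $p\Le_K(E_S,C^S)p$ --- generated by $E^0\cup E^1$ together with the projections $d_Xd_X^*$, subject to the Cohn relations, the Leavitt relations for $X\in S$, and $v=\sum_{e\in X}ee^*+d_Xd_X^*$ for $X\in \Cfin_v\setminus S$ --- and then writes down an explicit inverse $\psi$ with $\psi(d_Xd_X^*)=q_X$, checking it respects these relations. You instead split the bijectivity: surjectivity via the composability argument (since $v_X$ is a sink receiving only $d_X$, every occurrence of a dummy edge in a nonzero corner monomial sits inside a block $d_Xd_X^*=q_X$, which lies in the image), and injectivity by matching the linear basis of $CL_K(E,C,S)$ from \cite[Theorem 2.7]{AG12} with the subfamily of $E$-words in the basis of $\Le_K(E_S,C^S)$ for the choice function distinguishing $d_X$ in $\ol{X}$. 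That injectivity argument is in fact exactly the strategy the paper deploys for the tame version (Theorem \ref{thm:relationLPACPA-TAME}), so it is certainly viable here; it has the advantage of not requiring one to justify that the listed relations actually \emph{present} the corner (the one point the paper's proof passes over quickly), at the cost of invoking the basis theorem. Two minor remarks: for separated graphs the basis elements of $CL_K(E,C,S)$ are not all of the simple form $\lambda\mu^*$ but are more general reduced words (SCK1 only collapses $e^*f$ within a single $X\in C$); your matching argument is unaffected, since it only uses that these are $E$-words whose forbidden patterns coincide on both sides. And your closing remark about the impossibility of an internal inverse is beside the point --- the paper's $\psi$ is defined on the corner and never needs to realize the vertices $v_X$.
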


\begin{proof}
	We have a natural $*$-homomorphism $\varphi \colon CL_K (E,C,S) \to p \Le_K (E_S,C^S) p$ defined by $\varphi (v)= v$ and $\varphi (e)= e$ for all $v\in E^0$ and $e\in E^1$. Note that $R:=p\Le _K (E_S,C^S)p$ is generated as $*$-algebra by $E^0\cup E^1$ together with the projections $d_Xd_X^*$, for $X\in \Cfin\setminus S$, with the Cohn path algebra relations amongst the elements of $E^0\cup E^1$, the Leavitt path algebra relations $v=\sum _{e\in X} ee^*$ for all $X\in S$, and the additional relations that $d_Xd_X^*$ is a projection, and that
	$$v = \sum _{e\in X} ee^* +d_Xd_X^*$$
	for each $X\in \Cfin_v\setminus S$, $v\in E^0$. 
	Note that these relations automatically imply that $e^*d_Xd_X^* = 0=d_Xd_X^* e$ for all $e\in X\in \Cfin\setminus S$ and that $d_Xd_X^*\le v$ whenever $X\in \Cfin_v \setminus S$. 
	Note that $\varphi (q_X) = d_Xd_X^*$ for $X\in \Cfin\setminus S$.
	
	We can define a map $\psi \colon p\Le_K(E_S,C^S)p\to CL_K(E,C,S)$ by 
	$\psi (v)= v$, $\psi (e)= e$ for $v\in E^0$ and $e\in E^1$, and $\psi (d_Xd_X^*)= q_X = v-\sum_{e\in X} ee^*$ for $X\in \Cfin_v\setminus S$. Using the above observations, we see that this map preserves all the relations amongst the generators of $p\Le_K(E_S,C^S)p$, and hence induces a $*$-homomorphism which is clearly the inverse of $\varphi$. 
	This completes the proof. 
\end{proof}
	
	We now extend the above result to the tame algebras.
	
\begin{theorem}
	\label{thm:relationLPACPA-TAME}
	Let $(E,C)$ be a separated graph and $S$ a subset of $\Cfin$. Let $p:= \sum_{v\in E^0} v\in \mathcal M (\Le_K (E_S,C^S))$. Then we have a natural $*$-isomorphism
$$\ol{\varphi} \colon CL_K^\ab (E,C,S) \cong p\Le _K^\ab (E_S,C^S)p$$
such that $\ol{\varphi} (v) = v$ for all $v\in  E^0$ and $\ol{\varphi} (e) = e$ for all $e\in E^1$.  
	
	In particular, $\Co_K^\ab (E,C) \cong p \Le_K^\ab (\ol{E},\ol{C}) p$. 	
	\end{theorem}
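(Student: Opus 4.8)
The plan is to bootstrap from the non\nb-tame isomorphism $\varphi\colon CL_K(E,C,S)\congto p\Le_K(E_S,C^S)p$ of Proposition \ref{prop:relationLPACPA} and show that it descends to the tame quotients. Write $J\idealin CL_K(E,C,S)$ for the ideal with $CL_K^\ab(E,C,S)=CL_K(E,C,S)/J$, generated by the commutators $[xx^*,yy^*]$ with $x,y$ in the $*$-subsemigroup generated by $E^1$; and $J'\idealin\Le_K(E_S,C^S)$ for the analogous ideal with $\Le_K^\ab(E_S,C^S)=\Le_K(E_S,C^S)/J'$, now generated by commutators of projections $uu^*$ with $u$ in the $*$-subsemigroup generated by $(E_S)^1=E^1\sqcup\{d_X:X\in\Cfin\setminus S\}$. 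Since $p$ is a sum of vertices, compression commutes with passing to quotients, so that $p\Le_K^\ab(E_S,C^S)p=(p\Le_K(E_S,C^S)p)/(pJ'p)$, and it suffices to prove that $\varphi$ induces an isomorphism $\ol\varphi\colon CL_K^\ab(E,C,S)\congto p\Le_K^\ab(E_S,C^S)p$.

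First I would check that $\ol\varphi$ is a well-defined surjection. As $\varphi$ is the identity on the generators $E^0\cup E^1$, it carries each generating commutator $[xx^*,yy^*]$ of $J$ to a commutator of projections $uu^*,ww^*$ with $u,w$ in the $*$-subsemigroup generated by $E^1$, which is contained in the one generated by $(E_S)^1$; hence $\varphi(J)\subseteq J'$, and since its image lies in the corner, $\varphi(J)\subseteq pJ'p$. Composing $\varphi$ with the quotient map thus yields a well-defined $*$-homomorphism $\ol\varphi$, which is surjective because $\varphi$ is onto $p\Le_K(E_S,C^S)p$ and the quotient map restricts to a surjection of corners.

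The substantive point is injectivity, which I would obtain by matching linear bases. Extend the given $S$-choice function $\mathfrak E$ to a choice function $\mathfrak E'$ on $\Cfin(E_S,C^S)$ by setting $\mathfrak E'(\ol X)=\mathfrak E(X)$ for $X\in S$ and, crucially, $\mathfrak E'(\ol X)=d_X$ for $X\in\Cfin\setminus S$. By Theorems \ref{thm:full-basis-forCLS} and \ref{thm:full-basis-forLab} this produces bases $\ol{\mathcal B}(E,C,S)$ of $CL_K^\ab(E,C,S)$ and $\ol{\mathcal B}(E_S,C^S)$ of $\Le_K^\ab(E_S,C^S)$; a basis of the corner $p\Le_K^\ab(E_S,C^S)p$ is cut out by the elements $\e(T')\rtimes g'$ of the latter with $s(g'),r(g')\in E^0$. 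I would then verify that $\ol\varphi$ carries a basis element $\e(T)\rtimes g$ of $CL_K^\ab(E,C,S)$ to the element with the same combinatorial data viewed in $(E_S,C^S)$, and that this is precisely a corner basis element: since the singleton classes of $C^S$ are exactly the $\{e\}$ with $\{e\}\in S$, the Leavitt decomposition $g_{L'}$ of $g\in\FC$ agrees with $g_S$, the tree $T$ lies in $\Y_{L'}$, and $\mathfrak E'$-reducedness relative to $g_{L'}$ collapses to $\mathfrak E$-reducedness relative to $g_S$ because no vertex of $T$ or letter of $g$ involves a $d_X$.

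The heart of the argument, and the step I expect to cost the most, is checking that this correspondence is a bijection onto the corner basis, which rests on a structural observation about the auxiliary edges. Each $v_X$ ($X\in\Cfin\setminus S$) is a sink of $E_S$ with $C^S_{v_X}=\emptyset$, and $d_X$ is the only edge with range $v_X$; consequently any reduced $C^S$-separated path $g'$ with $s(g'),r(g')\in E^0$ contains no $d_X^{\pm1}$, for a forward $d_X$ can only be a dead end at $v_X\notin E^0$ while a $d_X^{-1}$ would have to be preceded by $d_X$, contradicting reducedness. The same reasoning, together with the normalization $\mathfrak E'(\ol X)=d_X$, shows that a $d_X$ can occur in a tree $T'$ only as a maximal element $\alpha d_X$, which is excluded from an $\mathfrak E'$-reduced tree unless it equals $g'_{L'}$, and this is impossible since $g'_{L'}=g'_S$ avoids $d_X$. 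Hence every corner basis element avoids the $d_X$ entirely and arises from unique $(E,C,S)$ data, so $\ol\varphi$ maps $\ol{\mathcal B}(E,C,S)$ bijectively onto the corner basis and is therefore an isomorphism. The final assertion is the case $S=\emptyset$, in which $(E_S,C^S)=(\ol E,\ol C)$.
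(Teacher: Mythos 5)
Your proposal is correct and follows essentially the same route as the paper's proof: descend the isomorphism of Proposition \ref{prop:relationLPACPA} to the tame quotients to get a well-defined surjection, then prove injectivity by extending the $S$-choice function via $\ol{\mathfrak E}(\ol{X})=d_X$ for $X\in \Cfin\setminus S$ and checking that the basis $\ol{\mathcal B}(E,C,S)$ of $CL_K^\ab(E,C,S)$ is carried injectively into the basis of $\Le_K^\ab(E_S,C^S)$. The only (harmless) difference is that you go further and identify the image as precisely the corner basis, using that the edges $d_X$ are terminal; the paper stops once the basis is seen to land in a linearly independent family, which already gives injectivity.
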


\begin{proof}
	Observe that the $*$-homomorphism $\varphi$ from Proposition \ref{prop:relationLPACPA} induces a surjective
	$*$-homomorphism $\ol{\varphi} \colon  CL_K^\ab (E,C,S) \onto p\Le _K^\ab (E_S,C^S)p$.
	To show that it is injective, we prove that $\ol{\varphi}$ sends a basis of  $CL_K^\ab (E,C,S)$ to 
	a linearly independent family in  $\Le_K^\ab (E_S,C^S)$. Let $\mathfrak E$ be a $S$-choice function for $(E,C)$. This means that we have a map $\mathfrak E\colon S\to E^1$ such that $\mathfrak E (X) \in X$ for all $X\in S$. We extend $\mathfrak E$ to a choice function $\ol{\mathfrak E} \colon (C^S)^{\text{fin}} \to E_S^1$ by letting $\ol{\mathfrak E} (X) = \mathfrak E (X) \in X$ for each $\ol{X} = X \in S$ and $\ol{\mathfrak E} (\ol{X}) = d_X \in \ol{X} = X\cup \{d_X\}$ for all $X\in \Cfin \setminus S$.  
	
	We consider the basis $\mathcal B := \ol{\mathcal B}_{\mathfrak E} (E,C,S)$ of $CL_K^\ab (E,C,S)$ given in Theorem \ref{thm:full-basis-forCLS} with respect to the $S$-choice function $\mathfrak E$, and the basis
	$\ol{\mathcal B} := \ol{\mathcal B}_{\ol{\mathfrak E}} (E_S,C^S)$ of $\Le _K^\ab (E_S,C^S)$ with respect to the choice function $\ol{\mathfrak E}$ given in Theorem \ref{thm:full-basis-forLab}. If $\e (T)\rtimes g$ is an element of the basis $\mathcal B$, then $g_S\in T$ and $T$ is $\mathfrak E$-reduced relatively to $g_S$. But then, since neither of the edges of $g$ and neither of the edges of $T$ are of the forms $d_X$ or $d_X^{-1}$ for $X\in \Cfin \setminus S$, we see that $g_L = g_S$ and that $T$ is $\ol{\mathfrak E}$-reduced relatively to $g_L$. Here $g_L$ is computed with respect to the separated graph $(E_S,C^S)$, and we remark that $|\ol{X}|\ge 2$ for all $\ol{X}\in C^S\setminus S$. Hence $\e (T)\rtimes g$ belongs to the basis $\ol{\mathcal B}$. Hence $\ol{\varphi}$ restricts to an injective map from the basis $\mathcal B$ of $CL_K^\ab (E,C,S)$ into the basis 
	$\ol{\mathcal B}$ of $\Le_K ^\ab (E_S,C^S)$, as desired.  
	\end{proof}

Let $(E,C)$ be a bipartite separated graph, see  \cites{Ara-Exel:Dynamical_systems, AraLolk, AraWeighted}. This means that we have a decomposition $E^0=E^{0,0}\sqcup E^{0,1}$ such that $s(E^1)= E^{0,0}$ and $r(E^1)= E^{0,1}$.  
We then define the $*$-algebra $\LeV_K^{ab}(E,C) = V \Le_K^{ab}(E,C)V$, where $V= \sum_{v\in E^{0,0}} v \in \mathcal M (\Le_K^\ab (E,C))$, see for instance \cite{AraWeighted}. Similarly, we may consider $\CoV _K^\ab (E,C)= V \Co_K ^\ab (E,C) V$. Note that since, for $w\in E^{0,1}$, we have $w= e^*e$ for any edge $e\in E^1$ such that $r(e)=w$, we always have that  $\LeV_K^{ab}(E,C) $ is a full corner of $\Le_K^\ab (E,C)$, and similarly  $\CoV_K^\ab (E,C) $ is a full corner in $\Co_K^\ab (E,C)$.  

With these definitions, the $*$-isomorphism $\ol{\varphi}$ from Theorem \ref{thm:relationLPACPA-TAME} nicely
restricts to a $*$-isomorphism between the corresponding full corners. For this, note that, when $(E,C)$ is bipartite, the separated graphs $(E_S,C^S)$ associated to a subset $S$ of $\Cfin$ are also bipartite in a natural fashion, namely we set $E_S^0 = E_S^{0,0}\sqcup E_S^{0,1}$, with $E_S^{0,0} = E^{0,0}$ and $E_S^{0,1} = E^{0,1}\sqcup \{ v_X: X\in \Cfin \setminus S \}$.

\begin{corollary}
	\label{cor:lfbipartite} Let $(E,C)$ be a bipartite separated graph, let $S$ be a subset of $\Cfin$,  and let $(E_S,C^S)$ be the bipartite separated graph constructed above. Then we have
	$$V \cdot CL_K^\ab  (E,C,S) \cdot V \cong \LeV _K^\ab (E_S,C^S) .$$
	In particular $ \CoV_K^\ab  (E,C) \cong \LeV _K^\ab (\ol{E}, \ol{C}) .$ 
\end{corollary}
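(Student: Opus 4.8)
The plan is to obtain the statement simply by compressing the $*$-isomorphism $\ol{\varphi}\colon CL_K^\ab(E,C,S)\cong p\Le_K^\ab(E_S,C^S)p$ of Theorem \ref{thm:relationLPACPA-TAME} by the multiplier $V$. Since $\ol{\varphi}$ is a $*$-isomorphism with $\ol{\varphi}(v)=v$ for all $v\in E^0$ and $\ol{\varphi}(e)=e$ for all $e\in E^1$, it extends uniquely to an isomorphism of the associated multiplier algebras; the whole argument then amounts to checking that this extension carries the corner cut out by $V$ on the source to the corner cut out by $V$ on the target.

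First I would record that, because the induced bipartite decomposition satisfies $E_S^{0,0}=E^{0,0}$, the projection $V=\sum_{v\in E^{0,0}}v$ is literally the same on both sides, and since $\ol{\varphi}$ fixes each vertex of $E^{0,0}$ one has $\ol{\varphi}(V)=V$. Next I would observe that $V$ is dominated by $p=\sum_{v\in E^0}v$: as $E^{0,0}\subseteq E^0$ and the vertices are pairwise orthogonal idempotents with $vw=\delta_{v,w}v$, we get $Vp=pV=V$, and moreover $p$ is the unit of the corner $p\Le_K^\ab(E_S,C^S)p$.

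With these two facts in place, restricting $\ol{\varphi}$ to the corner determined by $V$ gives
\[
\ol{\varphi}\bigl(V\cdot CL_K^\ab(E,C,S)\cdot V\bigr)=V\cdot p\Le_K^\ab(E_S,C^S)p\cdot V=V\Le_K^\ab(E_S,C^S)V=\LeV_K^\ab(E_S,C^S),
\]
where the middle equality uses $Vp=pV=V$ and the last is the definition of $\LeV_K^\ab(E_S,C^S)$. Since the restriction of a $*$-isomorphism to compatible corners is again a $*$-isomorphism, this yields the desired identification. The final assertion is then the special case $S=\emptyset$, where $(E_S,C^S)=(\ol{E},\ol{C})$ and $CL_K^\ab(E,C,\emptyset)=\Co_K^\ab(E,C)$, giving $\CoV_K^\ab(E,C)\cong\LeV_K^\ab(\ol{E},\ol{C})$.

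I expect the only real subtlety to be the bookkeeping with multiplier algebras when $E^{0,0}$ is infinite, in which case $V$ is a genuine multiplier rather than an element of the algebra. The key point there is that $\ol{\varphi}$, being a vertex- and edge-preserving $*$-isomorphism, transports the multiplier $V$ of the source to the multiplier $V$ of the target, so that the compression is well defined and bijective; everything else is the routine observation that a corner of a corner by a smaller projection is again a corner.
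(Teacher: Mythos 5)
Your argument is correct and is essentially the paper's own proof: both compress the isomorphism $\ol{\varphi}$ of Theorem \ref{thm:relationLPACPA-TAME} by $V$, using that $\ol{\varphi}(V)=V$ and $Vp=pV=V$ to identify $V\cdot p\Le_K^\ab(E_S,C^S)p\cdot V$ with $\LeV_K^\ab(E_S,C^S)$. The extra care you take with the multiplier-algebra bookkeeping is sound but not needed beyond what you already state.
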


\begin{proof}
	Let $\ol{\varphi} \colon CL_K^\ab  (E,C,S) \to p \Le_K^\ab (E_S,C^S) p$ be the isomorphism constructed in Theorem \ref{thm:relationLPACPA-TAME}. Since $\ol{\varphi} (V) = V$, 
	$$V CL_K^\ab (E,C,S) V   \cong  Vp\Le_K ^\ab (E_S,C^S) pV = V \Le_K^\ab  (E_S,C^S) V = \LeV_K^{ab} (E_S,C^S) $$
	showing the result. 
\end{proof}

\section{A linear basis of the ideal $\mathcal Q$}

Recall that $\mathcal Q$ is the ideal of $\Co_K^\ab (E,C)$ generated by the idempotents $q_X$, for all $X\in \Cfin$, so that $\Le_K^\ab (E,C)\cong \Co_K^\ab (E,C)/\mathcal Q$. Our goal in this section is to find a linear basis of $\mathcal Q$. For non-separated graphs, it is not difficult to find such a basis, see for instance \cite[Proposition 1.5.11]{AAS} and Example \ref{exam:Q-for-nonseparated} below. However the separated case is much more involved. 

We start by recalling some necessary notation and terminology from \cite{ABC25}.

\begin{notation}
	For an element $g= x_1\cdots x_n \in \F$ of length $n\ge 1$, with $x_j\in E^1\cup E^{-1}$, and $1\le i \le n$, we denote by $[g]_i$ the prefix of $g$ of length $i$, that is, $[g]_i := x_1\cdots x_i\in \F$.
\end{notation}

\begin{notation}\cite[Notation 5.10(1),(2)]{ABC25}
	\label{notati:opencompactbasis}
	\begin{enumerate}
		\item For $v\in E^0$, write
		$$\mathcal N (v) = \{ x_1^{-1}x_2^{-1}\cdots x_n^{-1}y \in \FC (v) : n\ge 0, x_1,\dots , x_n,y\in E^1\}.$$
	\item 	For each $T\in \Y_0$, write  
$$\mathcal N (T) : = \{gg' \in \FC : g\in T,\,  g' \in \mathcal N (r(g)),\,  g[g']_1\notin T, \, \text{ and }  T\cup \{gg'\}^{\downarrow}\in \Y_0 \}.$$
\end{enumerate}
\end{notation}

We think of $\mathcal N (T)$ as the set of neighbors of $T$. Now for each $T\in \Y_0$ and each finite subset of $\mathcal N (T)$, we can define idempotents in $\Co_K^\ab (E,C)$, as follows:

\begin{definition}\cite[Definition 4.3]{ABC25}
	\label{def:idempotents-IsetminusF}
	Let $(E,C)$ be a separated graph and let $T\in \Y_0$. 
	\begin{enumerate}
		\item The idempotent $\e (T)$ associated to $T$ is 
		$$\e (T) = \prod _{\lambda\in \max (T)} \lambda \lambda^*.$$ 
		\item Suppose now that $\lambda \in \mathcal N (T)$. Then there is a unique decomposition $\lambda = \lambda_0\lambda_1$, where $\lambda_0\in T$, $\lambda_1= x_1^{-1}\cdots x_r^{-1}y$, $r\ge 0$, $x_1,\dots , x_r,y\in E^1$ and $\lambda_0[x_1^{-1}\cdots x_r^{-1}y]_1\notin T$. Define
		$$\e (\lambda_0\setminus \lambda) = \lambda_0\lambda_0^*-\lambda \lambda^* = \lambda_0 (r(\lambda_0) - \lambda_1\lambda_1^*)\lambda_0^* \in \Co_K^\ab(E,C).$$
		\item For a finite subset $F$ of $\mathcal N (T)$, define the idempotent $\e (T\setminus F)$ as follows: 		
		$$\e (T\setminus F) = \e (T)\cdot \prod_{\lambda_0\lambda_1\in F} \e(\lambda_0\setminus \lambda_0 \lambda_1) = \prod _{\lambda_0\lambda_1\in F} \e (T\setminus \{\lambda_0\lambda_1\}).$$
		\item For $v\in E^0$ and $e\in s^{-1}(v)$ we define
		$$\e (v\setminus e) = v-ee^*.$$
		\item For $v\in E^0$ and $X\in \Cfin_v$, define
		$$q_X = \e (v\setminus X) = \prod_{e\in X} \e (v\setminus e) = v - \sum_{e\in X} ee^* \in \Co_K^\ab(E,C).$$
	\end{enumerate}
\end{definition}

Observe that if $\mu = \mu_0 \mu_1$ with $\mu_1 = x_1^{*}\cdots x_r^{*}$ and $\mu_0$ does not end in $(E^1)^*$, where $x_1,\dots ,x_r\in E^1$, and $X\in \Cfin$, then 
$$\mu q_X \mu^* = \e (\mu_0\setminus \mu_0(\mu_1X)) = \prod_{e\in X} \e (\mu_0\setminus \mu_0(\mu_1e)),$$
where we use the notation introduced in Definition \ref{def:idempotents-IsetminusF}. Intuitively, when considering $\e (\mu_0\setminus \mu_0(\mu_1X))$, we are blocking the exit in the direction of $X$ in the path $\mu = \mu_0\mu_1$. This motivates the following definition:   

\begin{definition}
	\label{def:blocking family}
	Let $(E,C)$ be a separated graph and $T\in \Y_0$. A {\it blocking family} for $T$ consists of a finite subset $F$ of $\mathcal N (T)$, where
	$$F= \bigsqcup_{i=1}^r \gamma_0^i(\gamma_1^i X_i)$$
	where $\gamma_0^i\in T$, and $\gamma_0^i\gamma_1^iX_i = \{ \gamma_0^i\gamma_1^ix : x\in X_i \}$ for $X_i \in \Cfin_{r(\gamma_0^i\gamma_1^i)}$, with $\gamma^i x = (\gamma_0^i)(\gamma_1^i x)$ being the standard form of the element $\gamma^i x\in \mathcal N (T)$, as described in Definition \ref{def:idempotents-IsetminusF}(2).
	
	We say that a blocking family $F$ is {\it prime} if it consists of a single set, that is if $F= \gamma X$ for a single $X\in \Cfin$. Observe that 
	$$\e \Big(T \setminus \big(\bigsqcup_{i=1}^r \gamma_0^i(\gamma_1^i X_i)  \big)\Big) = \prod_{i=1}^r \e (T\setminus \gamma_0^i\gamma_1^iX_i) .$$ 
\end{definition}

Again, intuitively when considering $\e (T\setminus F)$ for a blocking family $F$ for $T$ as above, we are blocking each of the exits in the direction of the sets $X_i$ at the paths $\gamma_0^i\gamma_1^i$, for $i=1,\dots , r$. Note that, if the $C$-separated word $\gamma^i_0\gamma_1^i$ ends in $y^{-1}$, with $y\in E^1$, then $y\notin X_i$, because all the paths $\gamma_0^i \gamma_1^i x$, for $x\in X_i$ must be $C$-separated.  

We will use throughout the following expression of the elements $\e(T\setminus gX)$. 
For $T\in \Y_0$ and $F=gX$ a prime blocking family for $T$, write $g=g_0w$, where $g_0\in T$ does not end in $E^{-1}$ and $w$ is a product of elements $x_i^{-1}\in E^{-1}$. Then the element $\e (T\setminus gX)$ can be written as follows:
\begin{equation}
	\label{eq:e-of-I-setminus-gX}
	\e(T\setminus gX) = \e(T) - \sum_{x\in X} \e (T\cup \{gx\}^{\downarrow}).
\end{equation}
Indeed we have
\begin{align*}
	\e(T\setminus gX) & = \e(T)g_0(r(g_0)- \sum_{x\in X} wxx^*w^*)g_0^* = \e(T)(g_0g_0^*) - \sum_{x\in X} \e (T\cup \{gx\}^{\downarrow}) \\ & = \e(T) - \sum_{x\in X} \e (T\cup \{gx\}^{\downarrow})
\end{align*}
because, since $g_0 \in T$, we have $\e (T)(g_0g_0^*)= \e(T)$. This establishes the formula \eqref{eq:e-of-I-setminus-gX}.

The following basic lemma will be used in the proof of Lemma \ref{lem:keyforbasis-of-L}.

\begin{lemma}
	\label{lem:key-for-keyforbasis-of-L}
	Let $(E,C)$ be a separated graph, let $I,J\in \Y _0$ and let $F\in \mathcal N (I)$, $G\in \mathcal N (J)$. Then we have
	$$\e(I\setminus F) \e(J\setminus G)= \begin{cases}
		\e((I\cup J)\setminus (F\cup G)) & \text{ if } I\cup J \text{ is $C$-compatible  and }\\ & \quad F\cap J = G\cap I = \emptyset \\
		\qquad \quad  0 & \text{ otherwise }
	\end{cases}
	$$
\end{lemma}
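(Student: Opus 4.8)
The plan is to work entirely inside the commutative subalgebra $\mathcal C$ of $\Co_K^\ab(E,C)$ (Lemma \ref{lem:algebra-mathcalC-abstractly-described}), where all of the idempotents $\e(T)$ and $\e(T\setminus F)$ commute. I would first record two facts. The first is the semilattice multiplication of tree idempotents: for $T,S\in\Y_0$ one has $\e(T)\e(S)=\e(T\cup S)$ if $T\cup S$ is $C$-compatible and $\e(T)\e(S)=0$ otherwise; this follows from \cite[Theorem 4.8]{ABC25} (equivalently from the relations (1),(3) generating $N$ in Lemma \ref{lem:algebra-mathcalC-abstractly-described}), together with the observation that a union of two $\Y_0$-trees, when $C$-compatible, again lies in $\Y_0$. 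The second is the single-path blocking formula: for $\lambda=\lambda_0\lambda_1$ with $\lambda_0\in T$, Definition \ref{def:idempotents-IsetminusF} gives $\e(T\setminus\{\lambda\})=\e(T)(\lambda_0\lambda_0^*-\lambda\lambda^*)=\e(T)-\e(T\cup\{\lambda\}^\downarrow)$, where we adopt the convention $\e(\text{tree})=0$ for a $C$-incompatible tree. Expanding the product in Definition \ref{def:idempotents-IsetminusF}(3) then yields the inclusion--exclusion expansion
\[
\e(T\setminus F)=\sum_{F'\subseteq F}(-1)^{|F'|}\,\e\big(T\cup\textstyle\bigcup_{\lambda\in F'}\{\lambda\}^\downarrow\big),
\]
valid for any finite family $F$ of paths $\lambda=\lambda_0\lambda_1$ with $\lambda_0\in T$ (in particular for a single neighbour).

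Next I would dispose of the two situations giving $0$. If $I\cup J$ is $C$-incompatible, then since $\e(I\setminus F)$ and $\e(J\setminus G)$ have $\e(I)$ and $\e(J)$ as factors, commutativity lets me extract the factor $\e(I)\e(J)=0$, so the product vanishes. If instead $I\cup J$ is $C$-compatible but, say, some $\lambda\in F\cap J$, then $\lambda\in J$ forces $\e(J)\le\lambda\lambda^*\le\lambda_0\lambda_0^*$, whence $\e(J)\,\e(\lambda_0\setminus\lambda)=\e(J)(\lambda_0\lambda_0^*-\lambda\lambda^*)=\e(J)-\e(J)=0$; since both $\e(J)$ and $\e(\lambda_0\setminus\lambda)$ occur as commuting factors of the product, it is again $0$. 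The case $G\cap I\ne\emptyset$ is symmetric. Both subcases match the ``otherwise'' branch.

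It remains to treat the main case: $I\cup J$ is $C$-compatible with $F\cap J=G\cap I=\emptyset$. Writing $K=I\cup J\in\Y_0$ and $e_{K,A}:=\e(K\cup\bigcup_{\nu\in A}\{\nu\}^\downarrow)$, I multiply the two inclusion--exclusion expansions and collapse each product $\e(I\cup\bigcup_{F'})\,\e(J\cup\bigcup_{G'})$ to $e_{K,F'\cup G'}$ by the semilattice fact, obtaining $\sum_{F'\subseteq F,\,G'\subseteq G}(-1)^{|F'|+|G'|}e_{K,F'\cup G'}$. Regrouping by $H=F'\cup G'\subseteq F\cup G$, the coefficient of $e_{K,H}$ is $\sum_{F'\cup G'=H}(-1)^{|F'|+|G'|}$; splitting $H$ into its parts lying only in $F$, only in $G$, and in $F\cap G$, the contribution of each element of $H\cap F\cap G$ is $(-1)+(-1)+(+1)=-1$, so the coefficient equals $(-1)^{|H|}$. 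This is exactly the inclusion--exclusion expansion of $\e(K\setminus(F\cup G))$, completing the identity. The one delicate point I would be careful about is precisely this sign bookkeeping in the presence of overlaps $F\cap G\ne\emptyset$ (which can genuinely occur, e.g.\ when a common $\lambda_0\in I\cap J$ is extended in the same direction), together with checking that the right-hand side $\e((I\cup J)\setminus(F\cup G))$ is well defined: each $\lambda\in F\cup G$ satisfies $\lambda_0\in K$ (since $\lambda_0\in I$ or $J$), and the convention $\e(\text{incompatible})=0$ reconciles any $\lambda\notin\mathcal N(K)$ with the formula. This combinatorial step, rather than any algebraic difficulty, is the main obstacle.
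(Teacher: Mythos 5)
Your proof is correct, but it takes a genuinely different route from the paper's. The paper disposes of this lemma in one line, by quoting the set-theoretic identity $\mathcal Z(I\setminus F)\cap\mathcal Z(J\setminus G)=\mathcal Z((I\cup J)\setminus(F\cup G))$ (or $=\emptyset$ in the degenerate cases) from \cite[Lemma 5.11]{ABC25} and transporting it back through the spectral representation $\Co_K^\ab(E,C)\cong C_K(\widehat{\E})\rtimes\F$ of Theorem \ref{thm:structure-of-tame-Cohn}, under which $\e(I\setminus F)$ becomes the characteristic function of the compact open set $\mathcal Z(I\setminus F)$ and products of idempotents become intersections of supports. You instead stay inside the commutative subalgebra $\mathcal C$ and verify the identity by hand: the semilattice rule $\e(T)\e(S)=\e(T\cup S)$ (or $0$), the one-path formula $\e(T\setminus\{\lambda\})=\e(T)-\e(T\cup\{\lambda\}^{\downarrow})$ --- which is the paper's own computation \eqref{eq:e-of-I-setminus-gX} --- and an inclusion--exclusion expansion whose cross terms you regroup with the correct signs; your treatment of the two vanishing cases (extracting the commuting factors $\e(I)\e(J)$, respectively $\e(J)\e(\lambda_0\setminus\lambda)$ with $\lambda\in F\cap J$) is also sound. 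Both arguments ultimately rest on \cite[Theorem 4.8]{ABC25}; yours trades the external Lemma 5.11 for an explicit sign computation, and it has the merit of making visible two boundary phenomena that the spectral argument hides: the possible overlap $F\cap G\neq\emptyset$, and the fact that a neighbour $\lambda\in\mathcal N(I)$ with $\lambda\notin J$ may still fail to lie in $\mathcal N(I\cup J)$ because $I\cup J\cup\{\lambda\}^{\downarrow}$ is $C$-incompatible, in which case the corresponding factor of $\e((I\cup J)\setminus(F\cup G))$ is redundant; your convention $\e(\text{incompatible})=0$ handles this consistently with Definition \ref{def:idempotents-IsetminusF}(3) read as a product of the factors $\e(\lambda_0\setminus\lambda)$.
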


\begin{proof} This follows from the identity
	$$\mathcal Z(I\setminus F) \cap \mathcal Z(J\setminus G)= \begin{cases}
		\mathcal Z ((I\cup J)\setminus (F\cup G)) & \text{ if } I\cup J \text{ is $C$-compatible  and }\\ 
		&\quad F\cap J = G\cap I = \emptyset \\
		\qquad \quad  \emptyset & \text{ otherwise }
	\end{cases}
	$$
	which has been shown in \cite[Lemma 5.11]{ABC25}.
\end{proof}

In the following lemma, we obtain a generating family for the ideal $\mathcal Q$. 

\begin{lemma}
	\label{lem:keyforbasis-of-L}
	Let $(E,C)$ be a separated graph. Consider the set
	$$\mathcal B'' = \{ \e (T\setminus F) \rtimes g : T\in \Y _0, [T]\in D_g \text{ and } F=\gamma X \text{ is a prime blocking family for } T\}. $$
	Then $\mathcal Q$ is the $K$-linear span of $\mathcal B''$. 
\end{lemma}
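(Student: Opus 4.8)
The plan is to prove the two inclusions $\operatorname{span}_K(\mathcal B'')\subseteq\mathcal Q$ and $\mathcal Q\subseteq\operatorname{span}_K(\mathcal B'')$ separately, working throughout with the identification $\Co_K^\ab(E,C)\cong K[\IS(E,C)]$ of Theorem \ref{thm:structure-of-tame-Cohn}, so that the elements of $\IS(E,C)$ form a linear basis and the product of two of them is again a single semigroup element (or $0$), and with the partial crossed product grading $\Co_K^\ab(E,C)=\bigoplus_g \mathcal C_g\rtimes g$ to keep track of degrees.

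For the inclusion $\operatorname{span}_K(\mathcal B'')\subseteq\mathcal Q$, I would first record that for a prime blocking family $F=\gamma X$ with $\gamma=\gamma_0 w$ ($w$ a product of inverse edges) the computation behind \eqref{eq:e-of-I-setminus-gX} gives $\e(T\setminus\gamma X)=\e(T)\,\gamma q_X\gamma^*$, since $r(\gamma_0)-\sum_{x\in X}wxx^*w^*=wq_Xw^*$ and $\gamma_0w=\gamma$. As $q_X\in\mathcal Q$ and $\mathcal Q$ is a two-sided ideal, $\e(T\setminus\gamma X)\in\mathcal Q$. Because $\e(T\setminus\gamma X)\le\e(T)$ and $[T]\in D_g$, we have $\e(T\setminus\gamma X)\in\mathcal C_g$ and $\e(T\setminus\gamma X)\rtimes g=\e(T\setminus\gamma X)\cdot(\e(T)\rtimes g)\in\mathcal Q$, which settles this inclusion.

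For the reverse inclusion the starting point is that $\mathcal Q=\operatorname{span}_K\{\,s\,q_X\,t : s,t\in\IS(E,C),\ X\in\Cfin\,\}$, since $\mathcal Q$ is the ideal generated by the $q_X$. I would first compute $q_X t$ for $t=\e(R)\rtimes h$: writing $X\in C_v$, this product is $0$ unless the root of $R$ is $v$ and $R$ carries no edge of $X$ at its root, in which case $q_X\e(R)=\e(R)-\sum_{e\in X}\e(R\cup\{e\}^{\downarrow})=\e(R\setminus vX)$ by \eqref{eq:e-of-I-setminus-gX}, so that $q_X t=\e(R\setminus vX)\rtimes h\in\mathcal B''$. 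Hence it remains to prove that $\operatorname{span}_K(\mathcal B'')$ is closed under left multiplication by an arbitrary semigroup element $s=\e(S)\rtimes h_1$, that is, that $s\cdot(\e(T\setminus\gamma X)\rtimes g)\in\operatorname{span}_K(\mathcal B'')$.

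To this end I would expand $\e(T\setminus\gamma X)$ by \eqref{eq:e-of-I-setminus-gX}, multiply termwise using the semigroup product, and reassemble. Setting $T^\sharp=(S\cup h_1\cdot T)_0$, the product takes the form $\e(T^\sharp)\rtimes h_1 g-\sum_{x\in X}\e\big((T^\sharp\cup\{h_1\gamma\cdot x\}^{\downarrow})_0\big)\rtimes h_1 g$, which, whenever $h_1\gamma\,X$ is again a prime blocking family for $T^\sharp$, recombines by \eqref{eq:e-of-I-setminus-gX} into the single element $\e(T^\sharp\setminus h_1\gamma X)\rtimes h_1 g\in\mathcal B''$; here Lemma \ref{lem:key-for-keyforbasis-of-L} governs the bookkeeping of the blocking data. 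The main obstacle is the analysis of the degenerate situations: when the junction $h_1\gamma$ is not reduced, or when the $X$-exit at $h_1\gamma$ is already occupied in $T^\sharp$ (either by a branch coming from $S$ or through the $\Y_0$-normalization), the summands no longer combine into a fresh blocking family. I expect that in every such case the incompatible summands vanish and the single surviving summand cancels the leading term, so that the whole product collapses to $0$; making this case distinction precise, together with controlling the normalization $(S\cup h_1 T)_0$ and the validity condition of Definition \ref{def:blocking family} (that each $\gamma x$ remains $C$-separated), is the technical heart of the argument. Once this left-multiplication closure is in place, combining it with the computation of $q_X t$ yields $\mathcal Q\subseteq\operatorname{span}_K(\mathcal B'')$, and hence the desired equality.
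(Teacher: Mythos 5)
Your first inclusion and your starting point for the second (reducing to elements $s\,q_X\,t$ with $s,t\in\IS(E,C)$, and computing $q_X t=\e(R\setminus vX)\rtimes h$) are correct. But the core of the argument --- that $\operatorname{span}_K(\mathcal B'')$ is closed under left multiplication by an arbitrary $\e(S)\rtimes h_1$ --- is exactly the part you do not prove: you expand $\e(T\setminus\gamma X)$ into $|X|+1$ unblocked terms, multiply termwise, and then state that you ``expect'' the degenerate configurations (non-reduced junction $h_1\gamma$, collisions of the blocked directions with branches of $S$ or with the $\Y_0$-normalization) to collapse to $0$. That case analysis is the technical heart of your route and it is left as a conjecture; as written the proof is incomplete. (The expectation does appear to be correct --- e.g.\ if $h_1$ ends in $x_0^{-1}$ with $x_0\in X$ the leading term cancels against the $x_0$-summand and the remaining summands die by $C$-incompatibility --- but each configuration has to be checked, including the interaction with passing to $\Y_0$-representatives.)

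The paper sidesteps this entirely by absorbing $q_X$ into the \emph{left} factor first: $(\e(I)\rtimes g)\,q_X=\e(I\setminus gX)\rtimes g$, which places the block at $gX$ rather than at the root. The subsequent right multiplication by $\e(J)\rtimes h$ then does not translate the block at all --- only $J$ gets transported to some $J'\in\Y_0$ --- and Lemma \ref{lem:key-for-keyforbasis-of-L} (already proved) gives in one line that $\e(I\setminus gX)\e(J')$ is either $0$ or $\e((I\cup J')\setminus gX)$, a single element of $\mathcal B''$. So the order in which you kill the two semigroup factors matters: your choice (right first, then left) forces you into the reduction/collision analysis, while the opposite choice makes the whole second inclusion a two-line computation. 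I would either switch to that order or, if you want to keep your route, actually carry out the full case distinction you have only sketched.
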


\begin{proof}
	It is enough to prove that for $\e(I) \rtimes g, \e (J)\rtimes h \in \IS(E,C)$, the element 
	$$(\e (I)\rtimes g)q_X(\e (J) \rtimes h)$$ is in the $K$-span of $\mathcal B''$.
	We may assume that this product is nonzero. In this case we have $r(g) = v$, where $X\in C_v$, and thus we have
	$$ (\e (I)\rtimes g) q_X= \e(I\setminus gX) \rtimes g.$$
	Now writing $((gg^{-1})\rtimes g)(\e (J)\rtimes h) = \e (J')\rtimes (g\cdot h)$ for some $J'\in \Y_0$, we get from Lemma~\ref{lem:key-for-keyforbasis-of-L}:
	$$(\e (I)\rtimes g)q_X(\e (J) \rtimes h) = (\e(I\setminus gX)e(J'))\rtimes (g\cdot h) = \e ((I\cup J')\setminus gX) \rtimes (g\cdot h)\in \mathcal B''.$$
\end{proof}

\begin{remark}
	\label{rmk:noĺinearly-independent}
	The generating family $\mathcal B''$ of $\mathcal Q$ given in Lemma \ref{lem:keyforbasis-of-L} is not linearly independent in general. We show this with an example. Let $(E,C)$ be the Cuntz separated graph, with $|C| >1$, let $a,b$ be two distinct edges of $E$ and set $E^0=\{v\}$. Let $A=\{a\}$ and $B=\{b\}$. Then we have
	$$\e (A\setminus B) = aa^*- aa^* bb^*,\qquad \e (B\setminus A) = bb^*-aa^* bb^*.$$
	Hence we obtain the following non-trivial linear relation for the distinct elements $\e(A\setminus B), \e(B\setminus A), \e(v\setminus A), \e(v\setminus B)$:
	$$\e (A\setminus B) + \e (v\setminus A) = \e(B\setminus A) + \e(v\setminus B).$$
 \end{remark}

On the other hand, we can prove, using our previous results, that the elements $\e (I\setminus F)$, where $I\in \Y_0$ and $F$ is a blocking family for $I$, are all distinct in $\Co_K^\ab (E,C)$:

\begin{lemma}
	\label{lem:blocking-are-different}
	Let $(E,C)$ be a separated graph, $I,J\in \Y_0$, $F$ a blocking family for $I$, and $G$ a blocking family for $J$. If $\e(I\setminus F) = \e (J\setminus G)$, then $I=J$ and $F=G$.  
\end{lemma}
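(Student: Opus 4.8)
The plan is to recover both the tree $I$ and the blocking family $F$ from the single element $\e(I\setminus F)$, working inside the commutative subalgebra $\mathcal C=K[\mathcal E(\IS(E,C))]$. By Lemma \ref{lem:algebra-mathcalC-abstractly-described} (together with \cite[Theorem 4.8]{ABC25}) this algebra has the family $\{\e(T):T\in\Y_0\}$ as a $K$-basis, with product $\e(T_1)\e(T_2)=\e(T_1\cup T_2)$ when $T_1\cup T_2$ is $C$-compatible and $0$ otherwise. The strategy is to read off suitable extremal data of the basis expansion of $\e(I\setminus F)$.

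First I would recover the tree. Writing $F=\bigsqcup_{i=1}^r\gamma^i X_i$ and combining $\e(I\setminus F)=\prod_{i=1}^r\e(I\setminus\gamma^i X_i)$ with formula \eqref{eq:e-of-I-setminus-gX}, expanding the product yields
$$\e(I\setminus F)=\sum_{S\subseteq\{1,\dots,r\}}(-1)^{|S|}\sum_{(x_i)_{i\in S}}\e\Big(I\cup\bigcup_{i\in S}\{\gamma^i x_i\}^{\downarrow}\Big),$$
where $C$-incompatible combinations simply contribute the zero term. Every tree appearing contains $I$, and strictly contains it unless $S=\emptyset$; hence $\e(I)$ is the unique basis vector of minimal total length occurring, and it occurs with coefficient $+1$. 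Thus $I$ is determined by $\e(I\setminus F)$, so $\e(I\setminus F)=\e(J\setminus G)$ forces $I=J$. The same observation shows that $\e(T\setminus F)\neq0$ for any blocking family, a fact I will use below.

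Next I would recover $F$ via the multiplication rule of Lemma \ref{lem:key-for-keyforbasis-of-L}. Applying it with the trivially-blocked tree $I\cup\{\mu\}^{\downarrow}$, for $\mu\in\mathcal N(I)$, gives
$$\e(I\setminus F)\,\e(I\cup\{\mu\}^{\downarrow})=\begin{cases}\e\big((I\cup\{\mu\}^{\downarrow})\setminus F\big)&\text{if } F\cap\{\mu\}^{\downarrow}=\emptyset,\\ 0&\text{otherwise,}\end{cases}$$
and the first value is nonzero by the previous paragraph. Hence the product vanishes exactly when some $\nu\in F$ satisfies $\nu\le_p\mu$, so the set $Z(F):=\{\mu\in\mathcal N(I):\e(I\setminus F)\,\e(I\cup\{\mu\}^{\downarrow})=0\}$ is the prefix-upward closure of $F$ in $\mathcal N(I)$ and depends only on $\e(I\setminus F)$. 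Therefore $\e(I\setminus F)=\e(I\setminus G)$ gives $Z(F)=Z(G)$.

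To finish, I would prove that every blocking family is an antichain for $\le_p$: two elements $\gamma^i x$ and $\gamma^j x'$ both end in a forward edge of $E^1$, whereas the part of the stem $\gamma^j$ lying beyond its base point in $I$ consists of inverse edges, so $\gamma^i x<_p\gamma^j x'$ would force either $\gamma^i x\in I$ (impossible, as $\gamma^i x\in\mathcal N(I)$) or a forward edge to coincide with an inverse edge (also impossible). Granting this, $F$ is precisely the set of $\le_p$-minimal elements of $Z(F)$, and likewise $G=\min Z(G)$; since $Z(F)=Z(G)$ we conclude $F=G$ as subsets of $\mathcal N(I)$, which is exactly equality of blocking families (Definition \ref{def:blocking family}). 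The main obstacle is this antichain property: it is the one place where the specific shape of elements of $\mathcal N(I)$ — a prefix in $I$ followed by inverse edges and terminating in a single forward edge — is genuinely used, while everything else is bookkeeping with the basis and the product formula.
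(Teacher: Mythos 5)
Your proof is correct, but it follows a genuinely different route from the paper's. The paper transports the problem into the Leavitt setting: it applies the isomorphism $\ol{\varphi}\colon \Co_K^\ab(E,C)\cong p\Le_K^\ab(\ol{E},\ol{C})p$ of Theorem~\ref{thm:relationLPACPA-TAME}, under which $\e(I\setminus F)$ becomes $\e(T)$ for the Leavitt $(\ol{E},\ol{C})$-tree $T=I\cup\{\gamma_1 d_{X_1},\dots,\gamma_r d_{X_r}\}^{\downarrow}$; it then invokes Theorems~\ref{thm:semigroup-generated} and~\ref{thm:ECLeavittMunntrees} to conclude $T=T'$, and finally recovers $I$ and $F$ from $T$ by deleting the paths ending in the auxiliary edges $d_X$. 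You instead stay entirely inside the commutative subalgebra $\mathcal C\cong K[\mathcal E(\IS(E,C))]$ of the Cohn algebra: you recover $I$ as the unique minimal-length basis tree in the expansion of $\e(I\setminus F)$ over the basis $\{\e(T):T\in\Y_0\}$ (where its coefficient is $+1$, since only the $S=\emptyset$ term produces it), and you recover $F$ by testing the element against the idempotents $\e(I\cup\{\mu\}^{\downarrow})$ and taking $\le_p$-minimal elements of the vanishing locus. The two small points your argument genuinely needs both check out: the trees $I\cup\bigcup_{i\in S}\{\gamma^i x_i\}^{\downarrow}$ do lie in $\Y_0$ when $C$-compatible (their new maximal elements end in positive edges), and your antichain observation in fact shows that all of $\mathcal N(I)$ is a $\le_p$-antichain, since a proper prefix of $\gamma_0\gamma_1 y$ either lies in $I$ or ends in an inverse edge. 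What each approach buys: the paper's proof is shorter but leans on the heavier machinery of the auxiliary graph $(\ol{E},\ol{C})$ and the injectivity result Theorem~\ref{thm:semigroup-generated}, whereas yours is self-contained, using only the Scheiblich-form basis of $\mathcal C$ and the product formula of Lemma~\ref{lem:key-for-keyforbasis-of-L}, and it applies verbatim to arbitrary finite subsets of $\mathcal N(I)$, not only to blocking families.
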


 \begin{proof}
 	Write $\ol{\varphi}(\e (I\setminus F)) = \e (T)$ and $\ol{\varphi}(\e( J\setminus G)) = \e (T')$, where $T,T'\in \Y_L (\ol{E},\ol{C})$, and $\ol{\varphi}$ is the isomorphism established in Theorem \ref{thm:relationLPACPA-TAME}. Here 
 	$$T= I\cup \{\gamma_1d_{X_1},\dots , \gamma_r d_{X_r}\}^\downarrow ,$$
 	where $F= \bigsqcup _{i=1}^r \gamma_i X_i$, and similarly with $T'$. 
 		Since $\e (T)= \e (T')$ we get $T=T'$ by Theorems \ref{thm:semigroup-generated} and \ref{thm:ECLeavittMunntrees}. So it suffices to notice that $I$ and $F$ can be recovered from $T$. Indeed $I = S_0$, where $S$ is the tree given by
 	$$S = T\setminus  \{ \gamma d_X : \gamma d_X \in T\},$$
 	and then $F$ is the blocking family $\{\gamma X : \gamma d_X\in T\}$ for $I=S_0$. 
 	 \end{proof}

We can now apply our previous results to find a linear basis of the ideal $\mathcal Q$.
Somewhat surprisingly, this linear basis is obtained by taking a certain subset of elements of the form $\e (T\setminus F)\rtimes g$, where $F$ is a blocking family for $T$, and $T$ misses certain edges.

In the following key definition, we will use the following terminology and notation. 
For $T\in \Y_0$, $F$ a blocking family for $T$, and $h\in \max (T)$, we say that $h$ is {\it blocked by $F$} if $F$ contains a block of the form $hwX$, where $w$ is a product of inverse edges and $X\in \Cfin$. Correspondingly, $h$ is {\it non-blocked by $F$} if no such block is contained in $F$, so that no exit of $T$ in the direction of $h$ is blocked. The set of $F$-blocked maximal elements of $T$ will be denoted by $\Bmax (T\setminus F)$, and the set of non-$F$-blocked maximal elements of $T$ will be denoted by $\NBmax (T\setminus F)$. Let $\mathfrak E$ be a choice function for $(E,C)$. If $g\in \FC$ and $g_0\in T$, we say that $\e (T\setminus F)$ is {\it $\mathfrak E$-reduced relatively to $g_0$} if all elements in $\NBmax (T\setminus F)\setminus \{g_0\}$ are $\mathfrak E$-reduced. In other words,  $\e (T\setminus F)$ is  $\mathfrak E$-reduced relatively to $g_0$ if all non-$F$-blocked maximal elements of $T$, except possibly $g_0$, are $\mathfrak E$-reduced.    

\begin{definition}
	\label{def:key-basis-for-Q}
	Let $(E,C)$ be a separated graph and let $\mathfrak E \colon \Cfin \to E^1$ be a choice function for $(E,C)$. The {\it canonical $\mathfrak E$-basis of $\mathcal Q$} is the set $\mathcal B (\mathcal Q) $ consisting of all elements of the form $\e(T\setminus F)\rtimes g$, where $T\in \Y_0$, $F$ is a non-empty blocking family for $T$, $g_0\in T$, and $\e (T\setminus F)$ is $\mathfrak E$-reduced relatively to $g_0$. 
\end{definition}

We now show that the set $\mathcal B (\mathcal Q)$ is indeed a linear basis for $\mathcal Q$.

\begin{theorem}
	\label{thm:BQ-basis-ofQ}
	Let $(E,C)$ be a separated graph and $\mathfrak E$ a choice function for $(E,C)$. Then the canonical $\mathfrak E$-basis $\mathcal B (\mathcal Q)$ is a linear basis of $\mathcal Q$.
	\end{theorem}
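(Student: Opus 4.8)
The plan is to transport the entire problem to the Leavitt path algebra of the enlarged separated graph $(\ol E,\ol C)$ through the isomorphism $\ol\varphi\colon \Co_K^\ab(E,C)\xrightarrow{\sim} p\Le_K^\ab(\ol E,\ol C)p$ of Theorem \ref{thm:relationLPACPA-TAME} (taken with $S=\emptyset$), under which $\ol\varphi(q_X)=d_Xd_X^*$ for every $X\in\Cfin$. Consequently $\ol\varphi$ carries $\mathcal Q$ onto the ideal $\mathcal I$ of $p\Le_K^\ab(\ol E,\ol C)p$ generated by $\{d_Xd_X^*:X\in\Cfin\}$, so it suffices to exhibit a linear basis of $\mathcal I$ and check that it is the $\ol\varphi$-image of $\mathcal B(\mathcal Q)$.

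The first key move is to choose the right basis of $\Le_K^\ab(\ol E,\ol C)$. Theorem \ref{thm:full-basis-forLab} supplies a basis for every choice function, and I would use the function $\mathfrak E^*$ extending $\mathfrak E$ by $\mathfrak E^*(\ol X)=\mathfrak E(X)\in X\subseteq\ol X$ — crucially \emph{not} the function selecting the new edges $d_X$ used in Theorem \ref{thm:relationLPACPA-TAME}. With this choice a maximal element ending in $d_X$ is $\mathfrak E^*$-reduced, so the basis $\ol{\mathcal B}_{\mathfrak E^*}(\ol E,\ol C)$ retains the $d_X$-branches that encode blocking families. Since each new vertex $v_X$ is a sink, any reduced path with both endpoints in $E^0$ avoids the edges $d_X$; and since $p\,b\,p\in\{b,0\}$ for each basis element $b=\e(T)\rtimes g$, restricting to the corner $p(\cdot)p$ keeps exactly those $\e(T)\rtimes g$ with $s(g),r(g)\in E^0$. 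These split into the set $B_0$ of elements whose tree $T$ is $d_X$-free and the set $B_1$ of those whose tree contains at least one edge $d_X$.

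Next comes the splitting argument. Let $q\colon p\Le_K^\ab(\ol E,\ol C)p\onto\Le_K^\ab(E,C)$ be the quotient by $\mathcal I$, so that $q\circ\ol\varphi$ is the canonical projection $\Co_K^\ab(E,C)\to\Le_K^\ab(E,C)$. I would observe that $q$ kills every element of $B_1$, since a tree containing some $\gamma d_X$ has $\e(T)$ divisible by $\gamma d_Xd_X^*\gamma^*$ and $q(d_Xd_X^*)=0$; whereas $q$ maps $B_0$ bijectively onto the basis $\ol{\mathcal B}_{\mathfrak E}(E,C)$ of $\Le_K^\ab(E,C)$, because a $d_X$-free, $\mathfrak E^*$-reduced tree is exactly an $\mathfrak E$-reduced tree in $E$. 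As $B_0\sqcup B_1$ is a basis of the corner while $q(B_0)$ is a basis of the quotient and $q(B_1)=0$, a one-line linear-algebra argument over the exact sequence $0\to\mathcal I\to p\Le_K^\ab(\ol E,\ol C)p\xrightarrow{q}\Le_K^\ab(E,C)\to 0$ shows that $B_1$ is a basis of $\mathcal I$. Pulling back, $\ol\varphi^{-1}(B_1)$ is a basis of $\mathcal Q$.

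It then remains to identify $\ol\varphi^{-1}(B_1)$ with $\mathcal B(\mathcal Q)$, and this bookkeeping is where I expect the real work to lie. Using the dictionary from the proof of Lemma \ref{lem:blocking-are-different}, a $d_X$-containing tree $T$ corresponds to the pair $(I,F)$ with $I=S_0$ (the reduction of the $d_X$-free part $S=T\setminus\{\gamma d_X\in T\}$) and $F=\{\gamma X:\gamma d_X\in T\}$, a non-empty blocking family. Under this dictionary the condition $g_{\ol L}\in T$ becomes $g_0\in I$ — note that $g$ is $d_X$-free and, since $\ol C$ has no singletons, $g_{\ol L}=g_0$ — and the condition that $T$ be $\mathfrak E^*$-reduced relative to $g_{\ol L}$ becomes the condition that $\e(I\setminus F)$ be $\mathfrak E$-reduced relative to $g_0$, because the maximal elements of $T$ ending in some $d_X$ are automatically $\mathfrak E^*$-reduced and correspond to the blocked exits, while the surviving maximal elements of $I$ are precisely the non-blocked ones. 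The delicate point is to verify this last correspondence carefully, matching $\Bmax(I\setminus F)$ and $\NBmax(I\setminus F)$ with the fate of maximal elements of $I$ when the branches $\gamma d_X$ — with $\gamma=\gamma_0\gamma_1$ possibly carrying trailing inverse edges $\gamma_1$ — are grafted onto $I$. Once this identification is complete we have $\ol\varphi(\mathcal B(\mathcal Q))=B_1$, and the theorem follows.
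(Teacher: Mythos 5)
Your proposal follows essentially the same route as the paper's proof: the transfer through $\ol{\varphi}$ to $p\Le_K^\ab(\ol{E},\ol{C})p$, the crucial choice of the extended choice function sending $\ol{X}$ to $\mathfrak E(X)$ rather than to $d_X$, the identification of a basis of $\ol{\varphi}(\mathcal Q)$ with the basis elements whose tree contains some $\gamma d_X$, and the final dictionary matching blocked branches with the grafted $d_X$-edges. The only difference is presentational (your explicit $B_0\sqcup B_1$ exact-sequence argument is stated more tersely in the paper, and you defer the $\Bmax/\NBmax$ bookkeeping that the paper carries out in both directions), so this is the paper's proof in outline.
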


\begin{proof}
	Observe first that, for $\e (T\setminus F)\rtimes g\in \mathcal B (\mathcal Q)$, we have $\e (T\setminus F)\in \mathcal Q$, because $F$ is a non-empty blocking family for $T$, and thus $\e (T\setminus F)\rtimes g \in \mathcal Q$.

	Now let $(\ol{E},\ol{C})$ be the separated graph associated to $(E,C)$, so that we have a $*$-isomorphism $\ol{\varphi} \colon \Co_K^\ab (E,C) \to p\Le_K^\ab (\ol{E},\ol{C})p$ by Theorem \ref{thm:relationLPACPA-TAME}. Recall that $\ol{E}^0=E^0 \sqcup \{v_X: X\in \Cfin \}$, $\ol{E}^1 = E^1\sqcup \{ d_X: X \in \Cfin \}$, and $\ol{C}_v = \{\ol{X}: X\in \Cfin_v \}\sqcup \Cinf_v$, where $\ol{X}= X \sqcup \{ d_X\}$ for each $X\in \Cfin $ and $\Cinf = \{X\in C : |X|=\infty\}$. We define a choice function 
	$\widetilde{\mathfrak E}\colon \ol{C}^{\text{fin}} \to \ol{E}^1$ for $(\ol{E}, \ol{C})$ by
	$$\widetilde{\mathfrak E}(\ol{X}) = \mathfrak E (X) \in E^1 .$$
	Let $\ol{\mathcal B} := \ol{\mathcal B}_{\widetilde{\mathfrak E}} (\ol{E}, \ol{C})$ be the linear basis of $\Le _K^\ab (\ol{E},\ol{C})$ associated to the choice function $\widetilde{\mathfrak E}$, see Theorem \ref{thm:full-basis-forLab}. The elements of $\ol{\mathcal B}$ are of the form $\e (S)\rtimes g$, where  $S\in D_g^{\ol{L}}$ is $\widetilde{\mathfrak E}$-reduced relatively to $g_{\ol{L}}$, and $g\in \F _{\ol{C}}$. Note that the edges $d_X$ are terminal, in the sense that any path $\gamma \in \F_{\ol{C}}$ such that $s(\gamma )\in E^0$ and ending in $d_X$ for some $X\in \Cfin$ cannot be properly extended to a path in $\F_{\ol{C}}$ . 
	
	Set $\ol{\mathcal Q}:= \ol{\varphi} (\mathcal Q) \subset p\Le_K^{\ab} (\ol{E}, \ol{C}) p$. By the previous observation, any basis element $\e (S)\rtimes  g \in \ol{\mathcal B}\cap p\Le_K^\ab (\ol{E},\ol{C})p$ satisfies that $g\in \FC$, that is, $g$ cannot contain any edge of the form $d_X$, for $X\in \Cfin$. 
	It follows that a basis of $\ol{\mathcal Q}$ is given by all the elements 
	$\e (S)\rtimes g \in \ol{\mathcal B}$ such that $S$ contains some path of the form $hd_X$, for some $h\in \FC$ and some $X\in \Cfin$, and $g\in \FC$. Note that these paths $hd_X$ are necessarily maximal in $S$, since they cannot be properly extended. We denote this basis of $\ol{\mathcal Q}$ by $\mathcal B (\ol{\mathcal Q})$     	
	  
	  In order to complete the proof, we have to check that $\ol{\varphi} (\mathcal B (\mathcal Q)) = \mathcal B (\ol{\mathcal Q})$. Suppose first that $\e (T\setminus F)\rtimes g\in \mathcal B (\mathcal Q)$. Then $T\in \Y_0$, $F= \bigsqcup_{i=1}^r \gamma^0_i(\gamma^1_i X_i) $ is a non-empty
	  blocking family for $T$, in the sense of Definition \ref{def:blocking family}, $g_0\in T$, and all non-$F$-blocked maximal elements of $T$, except possibly $g_0$, are $\mathfrak E$-reduced.  Now observe that 
	 $\ol{\varphi} (\e (T\setminus F)\rtimes g) = \e (S)\rtimes g$,
	  where 
	  $$\max (S) = \NBmax (T\setminus F) \bigsqcup  \{ \gamma_i^0\gamma _i^1 d_{X_i} : i=1,\dots , r\}.$$
	  On the other hand, since $|\ol{X}| \ge 2$ for all $\ol{X}\in \ol{C}^{\text{fin}}$, we have $g_{\ol{L}} = g_0$ for all $g\in \FC$, so that $g_{\ol{L}} \in S$. Since  
	  $$\max (S)\setminus \{g_{\ol{L}}\} = (\NBmax (T) \setminus \{g_0\}) \bigsqcup  \{ \gamma_i^0\gamma _i^1 d_{X_i} : i=1,\dots , r\}$$
	and $\e (T\setminus F)$ is $\mathfrak E$-reduced relatively to $g_0$, it follows that $S$ is $\widetilde{\mathfrak E}$-reduced relatively to $g_{\ol{L}}= g_0$. Moreover $r\ge 1$ because $F$ is non-empty, so we conclude that $\e (S)\rtimes g\in \mathcal B (\ol{\mathcal Q})$. 
	
	Conversely, suppose that $\e (S)\rtimes g \in \mathcal B (\ol{\mathcal Q})$. We can write
	$\max (S) = M_1 \sqcup M_2$, where $M_1$ are the maximal elements of $S$ which end in $E^1$ and $M_2$ are the maximal elements of $S$ which end in $d_X$ for some $X\in \Cfin$. We can write
	$$M_2 = \{ \gamma_i d_{X_i} : i=1,\dots , r\},$$
	where $\gamma_i\in \FC$ and $X_i\in \Cfin$, for $i=1,\dots ,r$. 
	
	Set $S' := S\setminus M_2$, and note that $S'\in \Y (= \Y (E))$, because no path in $S'$ involves the edges $d_X$, for $X\in \Cfin $. Now let 
	$$A:= \max \{ h_0 : h\in S'\}  \quad  \text{ and } \quad T= A^\downarrow .$$
	Then $T\in \Y_0$, and $T\subseteq S'\subseteq S$. We now will define a suitable blocking family $F$ for $T$. Take an element $\gamma_i d_{X_i}\in M_2$. We can write $\gamma_i = \gamma_i^0 \gamma_i^1$ where $\gamma^0_i$ is a maximal prefix of $\gamma_i$  with the property $\gamma^0_i\in T$. Note that $\gamma_i^1$ might be trivial. Define
	$$F:= \bigsqcup_{i=1}^r \gamma_i^0 (\gamma_i^1 X_i).$$
	We claim that $F$ is a blocking family for $T$. The only thing we need to check for this is that 
	$F\cap T= \emptyset$. Suppose that $\gamma_i^0\gamma_i^1 x\in T$ for some $x\in X_i$. If $\gamma_i^1$ is non-trivial, this cannot happen, so we may assume that $\gamma_i^1$ is trivial. In this case, $\gamma _i x \in S$ because $T\subseteq S$, and also $\gamma_i d_{X_i}\in S$, with $x,d_{X_i} \in \ol{X}_i$, which is impossible because $S$ is $\ol{C}$-compatible. Hence we obtain that $F$ is a blocking family for $T$. Moreover $g_0 = g_{\ol{L}} \in S'$ and thus $g_0\in T$ by the  definition of $T$.  
	
	We now check that $\NBmax (T\setminus F) = M_1$. By construction we have $M_1\subseteq \NBmax (T\setminus F)$. Suppose that $\gamma \in \NBmax (T\setminus F)$. In particular, $\gamma \in \max (T)$, and since $T\subseteq S$, we can extend $\gamma $ to a maximal element $h$ of $S$. If $h\in M_1$, then 
	$h\in T$ and thus $\gamma = h\in M_1$ by maximality of $\gamma$ in $T$. If $h\in M_2$ then
	$\gamma \le_p \gamma_i^0 \gamma_i^1 d_{X_i}$ for some $i\in \{ 1,\dots, r\}$.  But since $\gamma\in \max (T)$, it follows that $\gamma = \gamma _i^0$, which implies that $\gamma \in \Bmax (T\setminus F)$, in contradiction with our assumption. We have therefore shown that $\NBmax (T\setminus F) = M_1$.
	Since $S$ is $\widetilde{\mathfrak E}$-reduced relatively to $g_0$, it follows that $ \e (T\setminus F)$ is $\mathfrak E$-reduced relatively to $g_0$. It follows that $\e (T\setminus F)\rtimes g \in \mathcal B (\mathcal Q)$. 
    
    Finally we show that $\ol{\varphi} (\e (T\setminus F)\rtimes g)= \e (S)\rtimes g$. By the first part of the proof we have
    $$\ol{\varphi} (\e (T\setminus F)\rtimes g)= \e (S'')\rtimes g ,$$
    where $\max (S'') = \NBmax (T\setminus F) \bigsqcup \{ \gamma_i d_{X_i}: i=1\dots , r\}$ and since
    $\NBmax (T\setminus F) = M_1$, we get $\max (S'') = M_1\sqcup M_2 = \max (S)$, which shows that
    $S= S''$. Hence  $\ol{\varphi} (\e (T\setminus F)\rtimes g)= \e (S)\rtimes g$, as desired. 
    This completes the proof. 
	 \end{proof}

\section{The spectrum and the tight spectrum of the Leavitt inverse semigroup}
\label{sect:tight-spectrum}

In this section we compute the spectrum and the tight spectrum of the Leavitt inverse semigroup $\LI (E,C)$ of a separated graph $(E,C)$. We show in a direct way that the tight spectrum of $\LI (E,C)$ is naturally homeomorphic to the tight spectrum of the inverse semigroup $\IS (E,C)$. See \cite[Proposition 5.1]{meakin-milan-wang-2021} for the non-separated case. As a consequence, the tight groupoids associated to $\IS (E,C)$ and $\LI (E,C)$ are naturally isomorphic. Motivated by these developments, we will provide in the final part of this section three different pictures of this tight groupoid.

All results in this section are 'Leavitt' versions of the corresponding results in \cite[Section 5]{ABC25}. The proofs are easy adaptations of the proofs offered in \cite{ABC25}, so we will omit them. It is however convenient to have clear statements of all definitions and theorems. In general we will use the same notation as in \cite{ABC25}, but with a decoration of an $L$ in order to distinguish the notions used here from the ones introduced in \cite{ABC25}. 

We start with a `Leavitt' analogue of the set $\mathfrak F$ from \cite[Definition 5.1]{ABC25}. 

\begin{definition}
	\label{def:filters-ultra-tight}
	\begin{enumerate}
		\item For $v\in E^0$, let $\mathfrak F_L(v)$ be the set of all (possibly infinite) non-empty $C$-compatible lower subsets $Z$ of $\FC(v)$ such that each $z\in Z$ can be extended to an element of $Z$ not ending in $E^{-1}$ and not  ending in an edge $e$ such that $\{e\}\in C$. Set $\mathfrak F_L = \bigsqcup_{v\in E^0} \mathfrak F_L(v)$. 
		\item We order $\mathfrak F_L$ by inclusion, and let $\mathfrak U_L$ be the set of all maximal elements of $F_L$.
	\end{enumerate}
\end{definition}

Note that $\mathfrak F_L \subseteq \mathfrak F$ and that $\Y_L$ is the collection of finite trees in $\mathfrak F_L$, that is, 
$$\Y_L = \{ Z\in \mathfrak F_L : |Z| <\infty\}.$$ 

Recall from Theorem \ref{thm:ECLeavittMunntrees} that $\YY^L := \Y/{\sim_L}$ is isomorphic with the semilattice  $\mathcal E (\LI (E,C))$ of idempotents of the Leavitt inverse semigroup $\LI (E,C)$.

\begin{proposition}(cf. \cite[Proposition 5.1]{ABC25})
	\label{prop:characterizing-filt-ultra-L}
	With the above notation we have
	\begin{enumerate}
		\item The set $\mathcal F_L$ of filters of $\YY^L$ is order-isomorphic with 
		$\mathfrak{F}_L$.
		\item The set of ultrafilters of $\YY^L$ is isomorphic to $\mathfrak{U}_L$.
	\end{enumerate}
\end{proposition}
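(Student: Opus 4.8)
The plan is to mirror the proof of \cite[Proposition 5.1]{ABC25}, constructing a pair of mutually inverse, inclusion-preserving maps between the set $\mathcal F_L$ of filters of the semilattice $\YY^L\cong \E(\LI(E,C))$ and the poset $\mathfrak F_L$, while carefully tracking the extra reduction encoded by $\sim_L$. Throughout I identify each class $[T]_L\in\YY^L$ with its canonical Leavitt representative $T_L\in\Y_L$ provided by Lemma~\ref{lem:uniqueTU}; recall this is the \emph{smallest} tree in the class and that, by Definition~\ref{def:Leavitt-Munn-tree}, its maximal elements end neither in $E^{-1}$ nor in an edge $e$ with $\{e\}\in C$.

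First I would define $\Psi\colon\mathfrak F_L\to\mathcal F_L$ by $\Psi(Z)=\{[T]_L:T_L\subseteq Z\}$ and $\Phi\colon\mathcal F_L\to\mathfrak F_L$ by $\Phi(\xi)=\bigcup_{[T]_L\in\xi}T_L$. That $\Phi(\xi)\in\mathfrak F_L$ is routine: a union of lower sets is lower; directedness of $\xi$ together with the product formula of Theorem~\ref{thm:ECLeavittMunntrees} forces any two members $T_L,S_L$ to sit inside the common $C$-compatible tree $(T_L\cup S_L)_L$ (the product being nonzero), so $\Phi(\xi)$ is $C$-compatible and lies in a single $\mathfrak F_L(v)$; and the extension property defining $\mathfrak F_L$ holds because every $z\in T_L$ is a prefix of a maximal element of $T_L\subseteq\Phi(\xi)$, which ends neither in $E^{-1}$ nor in a singleton-class edge. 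For $\Psi$, nonemptiness follows by extending any $z\in Z$ to an admissible $m\in Z$ and noting $[m^\downarrow]_L\in\Psi(Z)$, and downward directedness follows since $(T_L\cup S_L)_L\subseteq T_L\cup S_L\subseteq Z$ whenever $T_L,S_L\subseteq Z$; upward closure is the delicate point treated below.

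Next I would check the two maps are mutually inverse. The equality $\Phi(\Psi(Z))=Z$ holds because each $z\in Z$ lies in some $m^\downarrow\subseteq Z$ with $m$ admissible, hence is recovered in the union. For $\Psi(\Phi(\xi))=\xi$, the nontrivial inclusion uses that $\Phi(\xi)$ is a \emph{directed} union of finite trees, so any finite $T_L\subseteq\Phi(\xi)$ already lies in a single $S_L$ with $[S]_L\in\xi$; then $[S]_L\le[T]_L$ and upward closure of $\xi$ give $[T]_L\in\xi$. Since both maps visibly preserve inclusion, they form the order isomorphism claimed in part~(1). Part~(2) is then immediate: ultrafilters are the maximal filters, $\mathfrak U_L$ consists of the maximal elements of $\mathfrak F_L$, and an order isomorphism carries maximal elements to maximal elements.

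The step I expect to be the main obstacle is reconciling the natural order on $\YY^L$ with set inclusion of reduced trees, which is where $\sim_L$ intervenes. Indeed $[T]_L\le[S]_L$ does \emph{not} mean $S_L\subseteq T_L$ but rather $(T_L\cup S_L)_L=T_L$, since adjoining branches ending in $E^{-1}$ or in singleton-class edges leaves the idempotent unchanged (relations (4)--(5) of Definition~\ref{def:Leavitt-inverse-semigroup}). To close the upward-closure argument I would proceed as follows: from $(T_L\cup S_L)_L=T_L$ one gets $g_L\in T_L$ for every $g\in S_L$, because $g_L$ lies in the set whose maximal-element downward closure is $(T_L\cup S_L)_L$; applying this to a maximal element $s\in S_L$, which is $L$-reduced and therefore satisfies $s=s_L$, yields $s\in T_L\subseteq Z$, and lowerness of $Z$ then gives $S_L\subseteq Z$, i.e. $[S]_L\in\Psi(Z)$. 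Once this reduction bookkeeping is installed, the remaining verifications are a direct transcription of \cite[Proposition 5.1]{ABC25}.
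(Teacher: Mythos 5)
Your proof is correct and follows essentially the same route the paper intends: the paper omits the argument (declaring it an easy adaptation of \cite[Proposition 5.1]{ABC25}) and only records the isomorphism $\iota$, which sends a filter $\eta$ to the lower set generated by $\{g_L : [g^\downarrow]\in\eta\}$ — this coincides with your $\Phi$, since the maximal elements of each $T_L$ are $L$-reduced. Your treatment of upward closure via the identity $(T_L\cup S_L)_L=T_L$ and the fact that maximal elements of Leavitt trees satisfy $s=s_L$ is exactly the reduction bookkeeping that distinguishes this statement from its analogue in \cite{ABC25}, and it is carried out correctly.
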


The isomorphism $\iota \colon\mathcal F_L\to \mathfrak F _L$ is provided as follows. For a filter $\eta$ on $\YY^L$, we consider the collection $C$ of all the elements of the form $g_L$, where $[g^\downarrow]\in \eta$, and $g= g_Lw$ is the canonical decomposition of $g$. Let $Z$ be the lower subset of $\FC $ generated by $C$. Then $\iota (\eta) = Z$. 

Recall from \cite[Definition 5.5]{ABC25} the notion of a maximal local configuration. The set of ultrafilters $\mathfrak U$ on $\mathcal E (\IS (E,C))$ was identified in \cite[Theorem 5.7]{ABC25} with the set $\mathfrak U'$ of all non-trivial lower subsets $Z'$ of $\FC (v)$ such that $Z'_g$ is a maximal local configuration for all $g\in Z'$, where $v$ ranges on all vertices of $E$. 

We show here that the same set $\mathfrak U'$ is isomorphic to the set $\mathfrak U_L$ of ultrafilters on $\mathcal E (\LI (E,C))$.

\begin{theorem}
	\label{cor:ultrafilters} Let $\mathfrak U'$ be the set of all non-trivial lower subsets $Z'$ of $\FC (v)$ such that $Z'_g$ is a maximal local configuration for all $g\in Z'$, where $v$ ranges over all vertices of $E$. Then there is a bijection $\mathfrak U'\cong \mathfrak U_L$ between $\mathfrak U '$ and the set of ultrafilters $\mathfrak U_L$.      
\end{theorem}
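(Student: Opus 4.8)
The plan is to produce explicit mutually inverse bijections between $\mathfrak U'$ and $\mathfrak U_L$, using the canonical decomposition $g=g_Lw$ of Definition~\ref{def:decomposition-gLw} to translate between the two pictures. By Proposition~\ref{prop:characterizing-filt-ultra-L}(2) I may identify $\mathfrak U_L$ with the set of maximal elements of $\mathfrak F_L$, and by \cite[Theorem 5.7]{ABC25} I may identify $\mathfrak U'$ with the maximal $C$\nobreakdash-compatible lower subsets of the sets $\FC(v)$ (a $C$\nobreakdash-compatible lower subset being maximal precisely when its local configuration at every vertex is maximal). I then define
$$\alpha\colon\mathfrak U'\to\mathfrak U_L,\quad \alpha(Z)=\{\,g_L : g\in Z\,\}^{\downarrow},\qquad \beta\colon\mathfrak U_L\to\mathfrak U',\quad \beta(W)=\{\,g\in\FC : g_L\in W\,\},$$
where $\alpha$ records the $L$\nobreakdash-reductions of the elements of $Z$ and $\beta$ saturates $W$ by all of its inverse\nobreakdash-edge and singleton\nobreakdash-edge prolongations.

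First I would check that $\alpha$ is well defined. Since $g_L\le_p g$ and $Z$ is a lower set, $\alpha(Z)\subseteq Z$, so $\alpha(Z)$ is automatically $C$\nobreakdash-compatible and lower; moreover every $z\in\alpha(Z)$ satisfies $z\le_p g_L$ for some $g\in Z$, and $g_L\in\alpha(Z)$ ends neither in $E^{-1}$ nor in a singleton edge, which is exactly the extension property of Definition~\ref{def:filters-ultra-tight}(1). Hence $\alpha(Z)\in\mathfrak F_L$; its maximality I defer to the last step. For $\beta$, the one point needing care is $C$\nobreakdash-compatibility of the saturation, which I would obtain by the argument of Lemma~\ref{lem:extesnio-to-CcompatibleEtree} applied verbatim to possibly infinite lower sets: appending inverse edges and singleton edges never creates a forbidden subword $e^{-1}f$, so $\beta(W)$ is $C$\nobreakdash-compatible whenever $W$ is.

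The heart of the proof, and the step I expect to be the main obstacle, is showing that $\beta(W)$ actually lies in $\mathfrak U'$, i.e.\ that the local configuration of $\beta(W)$ at every $g\in\beta(W)$ is maximal. The inverse edges and singleton edges come for free: appending such an $x$ does not change the $L$\nobreakdash-reduction, so $(gx)_L=g_L\in W$ and $x$ belongs to the configuration. The delicate case is an edge $e$ in a block $X\in C_{r(g)}$ with $|X|\ge 2$; here $(ge)_L=ge$, so $e$ is admitted exactly when $ge\in W$. That at most one such $e$ occurs is immediate from $C$\nobreakdash-compatibility of $W$. To see that at least one occurs I would argue from the maximality of $W$: if no $e\in X$ satisfied $ge\in W$, then for a fixed $e_0\in X$ the set $W\cup (ge_0)^{\downarrow}$ would still be $C$\nobreakdash-compatible (no $h\in W$ can prolong $g$ inside $X$, as that would force some $ge\in W$), lower, and would retain the extension property, since each newly adjoined vertex prolongs to the $L$\nobreakdash-reduced leaf $ge_0$; thus it would be an element of $\mathfrak F_L$ strictly containing $W$, contradicting $W\in\mathfrak U_L$.

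Finally I would verify that $\alpha$ and $\beta$ are mutually inverse, which simultaneously supplies the deferred maximality of $\alpha(Z)$. The identity $\alpha\circ\beta=\mathrm{id}$ holds because the $L$\nobreakdash-reduced elements of $W$ are cofinal in $W$ by the extension property, so their lower closure recovers $W$; the identity $\beta\circ\alpha=\mathrm{id}$ holds because, $Z$ having maximal configurations everywhere, every prolongation of an element $g_L\in Z$ by singleton and inverse edges again lies in $Z$, whence $\{g:g_L\in\alpha(Z)\}=Z$. Given this bijection, with $\beta$ monotone and valued in the maximal sets $\mathfrak U'$, the maximality of $\alpha(Z)$ in $\mathfrak F_L$ follows: enlarging $\alpha(Z)$ to some $\tilde W\in\mathfrak U_L$ (possible by Zorn, as unions of chains in $\mathfrak F_L$ remain in $\mathfrak F_L$) would give $Z=\beta\alpha(Z)\subseteq\beta(\tilde W)\in\mathfrak U'$, forcing $Z=\beta(\tilde W)$ by maximality of $Z$ and hence $\alpha(Z)=\alpha\beta(\tilde W)=\tilde W$. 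This establishes the bijection $\mathfrak U'\cong\mathfrak U_L$; I would close by remarking that it is the bijection induced by the quotient map $\IS(E,C)\to\LI(E,C)$, in keeping with the identification of the two tight groupoids.
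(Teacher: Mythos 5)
Your proof is correct and follows essentially the same route as the paper: your maps $\alpha$ and $\beta$ coincide with the paper's $\varphi(Z')=Z'\setminus S=\{g_L:g\in Z'\}^{\downarrow}$ and $\psi(Z)=Z\sqcup\{gw\in \mathrm{NC}(g)\}=\{h\in\FC: h_L\in Z\}$. The only difference is that the paper delegates the verifications to an adaptation of \cite[Theorem 5.7]{ABC25}, whereas you carry them out explicitly (with a slightly different but valid organization of the maximality of $\alpha(Z)$ via Zorn's lemma and the mutual-inverse identities).
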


\begin{proof}
	The proof is an adaptation of the proof of \cite[Theorem 5.7]{ABC25}. The map $\varphi \colon \mathfrak U' \to \mathfrak U_L$ is defined by sending $Z'\in \mathfrak U'$ to $Z:=Z'\setminus S$, where $S$ is the subset of $Z'$ of those paths $\gamma $ such that $\gamma $ ends in an inverse edge or in an edge $e$ such that $\{e\}\in C$, and $\gamma $ cannot be extended to a 
	path $\lambda \in Z'$ such that $\lambda$ ends in an edge $e\in E^1$ such that $e\in X\in C$ with $|X| \ge 2$.

	The inverse of $\varphi$ is the map $\psi \colon \mathfrak U _L \to \mathfrak U '$ given by
	$$\psi (Z) = Z\bigsqcup \{ gw : gw \in \text{NC}(g) \},$$
	where $\text{NC}(g)$ is the set of all paths $gw\in \FC$ such that $w=w_1\cdots w_r$, for some $r\ge 1$,
	each $w_i$ is an inverse edge or an edge such that $\{w_i\}\in C$, and $gw_1\notin Z$. 
	
	With these definitions at hand, the proof follows the steps of the one of \cite[Theorem 5.7]{ABC25}. 
\end{proof}

For any non-empty lower subset $Z$ of $\FC (v)$, $v\in E^0$, we will denote by $Z_L$ the largest element of 
$\mathfrak F_L$ contained in $Z$ (in the order given by inclusion), that is $Z_L = Z\setminus S$, where $S$ is the subset of $Z$ of those paths $\gamma $ ending in an inverse edge or in an edge $e$ such that $\{e\}\in C$, and $\gamma $ cannot be extended to a 
path $\lambda \in Z$ such that $\lambda$ ends in an edge $e\in E^1$ such that $e\in X\in C$ with $|X| \ge 2$.
The set $Z_L$ can also be described as the lower subset of $\FC$ generated by the set of elements of the form $g_L$, where $g$ ranges on $Z$. Observe that this definition extends the one given in Definition \ref{def:T-sub-L}.

We now extend in various ways the notions and notations introduced in \cite[Notation 5.10]{ABC25}. Indeed we include the case of infinite trees, we consider elements from $\Y_L$ (and $\mathfrak F_L$), and we distinguish between finite and infinite elements in the 'neighborhoods' $\mathcal N (Z)$.

\begin{notation}
	\label{notati:opencompactbasis1}
	\begin{enumerate}
		\item For $v\in E^0$, write $\Ninf_L (v)$ for the set of elements $\gamma y \in \FC (v)$ such that $\gamma$ is a product of inverse edges and edges $e\in E^1$ such that $\{e\}\in C$, and $y\in X$ with $X \in \Cinf$. Similarly we denote by $\Nfin_L (v)$ the corresponding set of elements $\gamma y$ such that 
		$y\in X$, where $X\in C$ and $1< |X| <\infty$.  
		\item 	For each $Z\in \mathfrak F _L$, write  
		$$\Ninf_L  (Z) : = \{gg' \in \FC : g\in Z,\,  g' \in \Ninf_L (r(g)),\,  g[g']_1\notin Z, \, \text{ and }  Z\cup \{gg'\}^{\downarrow}\in \mathfrak F_L \}.$$
		\item 	For each $Z\in \mathfrak F _L$, write  
		$$\Nfin_L  (Z) : = \{gg' \in \FC : g\in Z,\,  g' \in \Nfin_L (r(g)),\,  g[g']_1\notin Z, \, \text{ and }  Z\cup \{gg'\}^{\downarrow}\in \mathfrak F_L \}.$$
		We also set $\mathcal N _L(Z) = \Nfin _L (Z) \sqcup \Ninf _L(Z)$. 
		\item For each $T\in \Y_L$ and each finite subset $F$ of 
		$\mathcal N _L (T)$ we set
		$$\mathcal Z_L (T\setminus F) = \{ Z\in \mathfrak F_L :  T\subseteq Z \text{ and } F\cap Z= \emptyset \},$$
		and $\mathcal Z _L(T):= \mathcal Z _L(T\setminus \emptyset) = \{ Z\in \mathfrak F _L :  T\subseteq Z \}$.  
	\end{enumerate}
	Observe that for $T = \{v\}\in \Y_L(v)$, we have $\Ninf_L (T)= \Ninf_L (v)$ and $\Nfin_L (T) = \Nfin_L (v)$, so the notations introduced in (1), (2) and (3) are coherent.
\end{notation}

We now introduce a fundamental definition.

\begin{definition}
	\label{def:exits}
	Let $Z\in  \mathfrak F_L$. An element in $\Ninf_L (Z)$ will be called an {\it infinite exit of} $Z$.
	An element of $\Nfin_L (Z)$ will be called a {\it finite exit of} $Z$. We say that $Z$ {\it does not have finite (respectively, infinite)} exits if $\Nfin_L (Z)= \emptyset$ (respectively, $\Ninf_L (Z)=\emptyset$). We say that $Z$ {\it does not have exits} if $\Ninf_L (Z) = \Nfin_L (Z)= \emptyset$.    
	\end{definition}

The following is the Leavitt version of \cite[Lemma 5.11]{ABC25}.

 \begin{lemma}
	\label{lem:opencompactbasisL}
	With the above notation, the family of sets $\mathcal Z_L (T\setminus F)$, where $T$ ranges on $\Y _L$ and $F$ ranges on all finite subsets of $\mathcal N_L (T)$,
	is a basis of open compact subsets of $\mathfrak F_L$. Each open compact subset of $\mathfrak F_L$ is a finite disjoint union of sets $\mathcal Z_L (T\setminus F)$.  
\end{lemma}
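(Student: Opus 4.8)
The plan is to transport everything to the semilattice $\YY^L \cong \E(\LI(E,C))$ and then run the standard argument that identifies the compact-open sets of a filter space. By Proposition~\ref{prop:characterizing-filt-ultra-L} the set $\mathfrak F_L$ is identified, as an ordered set, with the space of filters of $\YY^L$, and I would equip it with the topology it thereby inherits from the product topology on $\{0,1\}^{\YY^L}$; under this identification a filter $Z\in\mathfrak F_L$ ``contains'' the class $[T]$, for $T\in\Y_L$, precisely when $T\subseteq Z$. First I would translate the two relevant families into this language: for $T\in\Y_L$ the cone $\mathcal Z_L(T)=\{Z:T\subseteq Z\}$ is the set of filters evaluating to $1$ on the single coordinate $[T]$, hence clopen; and for a neighbour $gg'\in\mathcal N_L(T)$ the set $\mathcal Z_L(T\cup\{gg'\}^\downarrow)$ is again such a cone, noting that $T\cup\{gg'\}^\downarrow\in\Y_L$ since $gg'$ ends in an edge $y\in X$ with $|X|\ge 2$. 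Therefore
\[
\mathcal Z_L(T\setminus F)=\mathcal Z_L(T)\setminus\bigcup_{gg'\in F}\mathcal Z_L\big(T\cup\{gg'\}^\downarrow\big)
\]
is a finite Boolean combination of clopen cones, so it is clopen, in particular open and compact.

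Next I would show these sets form a basis by the pointwise criterion. A subbasic open set of $\mathfrak F_L$ has the form $\{Z:T\subseteq Z\}$ or $\{Z:T\not\subseteq Z\}$ for $T\in\Y_L$; using that $Z$ is a filter, the several positive conditions $R_1,\dots,R_k\subseteq Z$ collapse to the single condition $(R_1\cup\dots\cup R_k)_L\subseteq Z$ (and if $R_1\cup\dots\cup R_k$ is $C$-incompatible the intersection is empty, a trivial union), so a basic open is $U=\{Z:R\subseteq Z,\ S_1\not\subseteq Z,\dots,S_m\not\subseteq Z\}$. Given $Z\in U$, I want a finite $T'$ with $R\subseteq T'\subseteq Z$ and a finite $F\subseteq\mathcal N_L(T')$ with $Z\in\mathcal Z_L(T'\setminus F)\subseteq U$. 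Since $S_j=(\max S_j)^\downarrow$ and $Z$ is lower, for each $j$ with $S_j\not\subseteq Z$ I choose a maximal $\gamma_j\in\max(S_j)\setminus Z$; as $S_j\in\Y_L$, this $\gamma_j$ ends in a branching edge. Letting $g_j\in Z$ be the maximal prefix of $\gamma_j$ lying in $Z$ and walking forward to the first branching edge $y_j$ yields a prefix $g_jg_j'\le_p\gamma_j$ with $g_j'\in\mathcal N_L(r(g_j))$ of the required shape (reducible tail followed by $y_j$). Taking $T'$ a large enough finite subtree of $Z$ containing $R$ and all the $g_j$, and $F=\{g_jg_j'\}$, one verifies $Z\in\mathcal Z_L(T'\setminus F)\subseteq U$: indeed if $Z'\in\mathcal Z_L(T'\setminus F)$ then $R\subseteq T'\subseteq Z'$, while $g_jg_j'\notin Z'$ forces $S_j\not\subseteq Z'$, because $g_jg_j'\le_p\gamma_j\in S_j$ and $Z'$ is lower.

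The hard part is the bookkeeping that makes each $g_jg_j'$ an honest element of $\mathcal N_L(T')$, where the finite/infinite-exit distinction of Notation~\ref{notati:opencompactbasis1} and the reducedness built into $\mathfrak F_L$ come in: $T'\cup\{g_jg_j'\}^\downarrow$ must stay $C$-compatible and lie in $\mathfrak F_L$. The delicate case is when $Z$ already contains an edge $g_jx'$ with $x'$ in the same separation set $X$ as the first new edge of $g_j'$; then $g_jg_j'$ is $C$-incompatible with $T'$. In that case I would \emph{not} add $\gamma_j$ to $F$, because including $g_jx'$ in $T'$ already excludes the whole branch of $\gamma_j$ from every $Z'\supseteq T'$ by $C$-incompatibility, so the negative condition $S_j\not\subseteq Z'$ holds automatically on $\mathcal Z_L(T')$. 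Sorting the indices into these two cases, and choosing $T'$ to contain the relevant witnessing edges, is exactly the content of the analogue of \cite[Lemma~5.11]{ABC25}, and I expect this case analysis to be the only genuinely technical step.

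Finally, for the statement that every compact-open subset is a finite disjoint union of the $\mathcal Z_L(T\setminus F)$, I would argue as usual. A compact-open set is open, hence a union of basic sets by the previous paragraph, and compactness extracts a finite subcover $\bigcup_{i=1}^n\mathcal Z_L(T_i\setminus F_i)$. Disjointness then follows from the successive-difference device $B_1\sqcup(B_2\setminus B_1)\sqcup\dots\sqcup\big(B_n\setminus\bigcup_{i<n}B_i\big)$, once one knows the family is closed under finite intersection and that the difference of two members is again a finite disjoint union of members. Closure under intersection is the Leavitt analogue of Lemma~\ref{lem:key-for-keyforbasis-of-L}, which yields that $\mathcal Z_L(T_1\setminus F_1)\cap\mathcal Z_L(T_2\setminus F_2)$ is again of the form $\mathcal Z_L(T\setminus F)$ (with $T=(T_1\cup T_2)_L$ when $T_1\cup T_2$ is $C$-compatible and $F$ assembled from $F_1,F_2$, and $\emptyset$ otherwise), while the difference is handled by expanding $\mathcal Z_L(T\setminus F)=\mathcal Z_L(T)\setminus\bigcup_{f}\mathcal Z_L(T\cup\{f\}^\downarrow)$ and distributing the intersections, so that both operations stay within the family.
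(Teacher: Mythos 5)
Your argument is correct. Note that the paper itself gives no proof of this lemma: at the start of Section~\ref{sect:tight-spectrum} the authors state that all results there are obtained by routine adaptation of \cite[Section 5]{ABC25} (here, of \cite[Lemma 5.11]{ABC25}), so what you have written is precisely the filling-in that the paper defers. Your route --- identifying $\mathfrak F_L$ with the filter space of $\YY^L$, writing $\mathcal Z_L(T\setminus F)$ as a Boolean combination of clopen cylinders, and proving the basis property pointwise --- is the expected one, and you correctly isolate the only genuinely delicate step: the choice of the neighbour $g_jg_j'$ (maximal prefix $g_j$ of $\gamma_j$ in $Z$, followed by the reducible tail up to the first edge lying in some $X$ with $|X|\ge 2$, which exists because $\gamma_j$ is maximal in $S_j\in\Y_L$), together with the dichotomy according to whether $Z$ already contains a $C$-incompatible witness $g_jx'$, in which case one enlarges $T'$ instead of enlarging $F$. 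The only detail worth making explicit is that the ``large enough finite subtree'' $T'$ must be completed inside $Z$ so that its maximal elements end in branching edges (possible since $Z\in\mathfrak F_L$), ensuring $T'\in\Y_L$ and $T'\cup\{g_jg_j'\}^\downarrow\in\mathfrak F_L$; with that, the proof is complete.
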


The tight spectrum $\Omega (E,C)$ of the inverse semigroup $\IS (E,C)$ was identified in \cite[Theorem 5.14]{ABC25} with the space $\mathfrak T' \cup  \SInf$, where $\mathfrak T'$ is the set of all non-trivial lower subsets $Z$ of $\FC (v)$ such that $Z_g$ is a finite-maximal local configuration for all $g\in Z$, for $v\in E^0$, and $\SInf$ is the set of infinite sources of $E$ (\cite[Definition 5.13]{ABC25}). An identical proof identifies this space with the tight spectrum of $\LI (E,C)$, as follows.

\begin{theorem}
	\label{thm:new-charac-tight-spectrum}
	Let $(E,C)$ be a separated graph. Then the space $\Omega(E,C)$ is $\F$-equivariantly homeomorphic with the tight spectrum of $\LI (E,C)$. Hence we have equivariant homeomorphisms
	$$\Omega (E,C) = \Etight (\IS(E,C)) \cong \Etight (\LI (E,C)).$$
	Moreover the space $\Omega (E,C)$ can be identified with the space $\mathfrak T_L$ of all those $Z\in \mathfrak F_L$ such that $Z$ does not have finite exits, endowed with the topology generated by a basis of compact open sets consisting of all the sets of the form $\mathcal Z_L (T\setminus F)\cap \mathfrak T_L$, where $T$ ranges on $\Y_L$ and $F$ ranges on all the finite subsets of $\Ninf_L(T)$.    
	\end{theorem}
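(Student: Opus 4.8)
The plan is to run the computation of \cite[Theorem 5.14]{ABC25} verbatim for the semilattice $\YY^L\cong\E(\LI(E,C))$ in place of $\E(\IS(E,C))$, and to deduce the equivariant homeomorphism of tight spectra from the fact that the two semilattices have, up to a canonical identification, the same ultrafilters. Recall that the tight spectrum of an inverse semigroup is by definition the tight spectrum of its semilattice of idempotents, so that $\Etight(\IS(E,C))$ and $\Etight(\LI(E,C))$ are the spaces of tight characters on $\E(\IS(E,C))$ and on $\YY^L$. The quotient map $\IS(E,C)\twoheadrightarrow\LI(E,C)$ restricts to a surjective, $\F$-equivariant semilattice homomorphism $q\colon\E(\IS(E,C))\to\YY^L$ (in the tree picture, $q(T)=[T]_L$, with canonical representative $T_L$), and dually induces a continuous, $\F$-equivariant injection $q^*\colon\widehat{\YY^L}\to\widehat{\E(\IS(E,C))}$, $\chi\mapsto\chi\circ q$, between the character spaces.

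First I would isolate the conceptual heart of the statement: \emph{every} tight character on $\E(\IS(E,C))$ factors through $q$. The only relation distinguishing $\LI$ from $\IS$ is relation~(5), $ee^{-1}=s(e)$ for singletons $\{e\}\in C$, which at the level of characters reads $\chi(ee^*)=\chi(s(e))$. I would check that, for such a singleton, the one-element set $\{ee^*\}$ is already a cover of $s(e)$ in $\E(\IS(E,C))$: a nonzero idempotent orthogonal to $ee^*$ and below $s(e)$ would be an $\e(T)$ with $T\cup e^\downarrow$ $C$-incompatible, and this forces $T$ to contain an edge lying in the same partition class as $e$, which is impossible since $\{e\}$ is a singleton. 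As tight characters respect all finite covers, it follows that $\chi(ee^*)=\chi(s(e))$ for every tight $\chi$, so $\chi$ descends along $q$; conversely a tight character on $\YY^L$ pulls back to a tight character on $\E(\IS(E,C))$. This already yields that $q^*$ restricts to an $\F$-equivariant bijection $\Etight(\LI(E,C))\congto\Etight(\IS(E,C))=\Omega(E,C)$.

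To upgrade this to a homeomorphism and to obtain the concrete model, I would carry out the combinatorial argument of \cite[Theorem 5.14]{ABC25}, now feeding in Proposition~\ref{prop:characterizing-filt-ultra-L} (filters of $\YY^L\leftrightarrow\mathfrak F_L$, ultrafilters $\leftrightarrow\mathfrak U_L$), Theorem~\ref{cor:ultrafilters} ($\mathfrak U_L\cong\mathfrak U'$), and the compact-open basis of Lemma~\ref{lem:opencompactbasisL}. The tight spectrum is the closure of the ultrafilters $\mathfrak U_L$ inside $\mathfrak F_L$. Using the characterization of tightness in terms of finite covers, which at each vertex reduce to the Cuntz--Krieger covers attached to finite partition classes $X$ with $1<|X|<\infty$, one identifies this closure with the set of $Z\in\mathfrak F_L$ having $\Nfin_L(Z)=\emptyset$, that is, with $\mathfrak T_L$; the surviving basic open sets are exactly the $\mathcal Z_L(T\setminus F)\cap\mathfrak T_L$ with $F$ a finite subset of $\Ninf_L(T)$, since blocking a finite exit is incompatible with membership in $\mathfrak T_L$. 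Together with the identification of $\mathfrak T'\cup\SInf$ with $\Omega(E,C)$ from \cite{ABC25}, this matches the two tight spectra on the nose and shows the bijection above is a homeomorphism.

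The hard part will be the interplay of covers across $q$: while the finite Cuntz--Krieger covers are manifestly preserved, one must make sure that no tightness condition is silently lost or created in passing between $\E(\IS(E,C))$ and $\YY^L$, in particular that the pullback $q^*(\chi)$ of a tight character meets \emph{every} finite cover of $\E(\IS(E,C))$, not only those visibly coming from finite partition classes. This is precisely where the description via (finite-)maximal local configurations does the bookkeeping, matching the finite-maximality of $\mathfrak T'$ with the ``no finite exits'' condition of $\mathfrak T_L$ and sending the infinite sources $\SInf$ to the trivial filters $\{v\}$. The $\F$-equivariance is then automatic from the $\F$-equivariance of $q$ together with the fact that the identifications of characters with elements of $\mathfrak F_L$ are performed through the canonical decomposition $g=g_Lw$, which is visibly compatible with the partial action $\theta^L$ of Theorem~\ref{thm:ECLeavittMunntrees}.
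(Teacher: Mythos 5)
Your proposal is correct and follows essentially the same route as the paper, which simply adapts the proof of \cite[Theorem 5.14]{ABC25} using Proposition~\ref{prop:characterizing-filt-ultra-L}, Theorem~\ref{cor:ultrafilters} and Lemma~\ref{lem:opencompactbasisL}, exactly the ingredients you invoke. Your additional observation that $\{ee^*\}$ is already a (one-element) cover of $s(e)$ when $\{e\}\in C$, so that every tight character automatically factors through the quotient $q$, is a nice conceptual gloss but does not change the substance of the argument.
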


\begin{remark}
    \label{rem:Ruy-consonant-semigroups}
The notion of consonant inverse semigroups has been recently introduced by Exel \cite{exel2025}. Roughly, two inverse semigroups are consonant if they have the same tight groupoid, see \cite[Theorem 9.5]{exel2025} for the precise statement. Here we notice that the natural surjection $\pi \colon \IS (E,C) \to \LI (E,C)$ is a consonance for every separated graph $(E,C)$. 

Let $S$ be an inverse semigroup and let 
$\Delta \colon S \to  \text{Bis}^{\text{oc}}(\G_\tight(S))$ 
be the natural map defined in \cite{exel2025}, 
where $\text{Bis}^{\text{oc}}(\G_\tight(S))$ is the inverse semigroup of open compact bisections of the tight groupoid $\G_\tight (S)$ of $S$. Then there is a natural injective semigroup homomorphism  $U \mapsto 1_U$ from  
$B:= \text{Bis}^{\text{oc}}(\G_\tight(S))$ to the multiplicative semigroup of the Steinberg algebra $A_K (\G_\tight (S))$.
Hence we may identify $B$ with an inverse subsemigroup of the multiplicative semigroup of $A_K (\G_\tight (S))$. 
In particular $\Delta (S)$ is identified with an inverse subsemigroup of $A_K (\G_\tight (S))$.

Now let $(E,C)$ be a separated graph and let $S := \IS(E,C)$ be the inverse semigroup of the separated graph $(E,C)$. By 
Theorem \ref{thm:structure-of-Leavitt} there is a natural  $*$-isomorphism $\Le^\ab_K (E,C) \cong A_K (\G_\tight (S))$.
Under this isomorphism, $\Delta (S)$ corresponds to the natural image of $S$ in $\Le_K^\ab(E,C)$, which is the subsemigroup of 
$\Le_K^\ab(E,C)$ generated by $E^0 \cup E^1 \cup  (E^1)^*$. But now by Theorem \ref{thm:semigroup-generated}, 
this semigroup is isomorphic to $\LI (E,C)$. Hence there is an isomorphism $\ol{\Delta}\colon  \LI(E,C) \to \Delta (S)$ such that $\ol{\Delta} \circ \pi = \Delta$.
Since, by \cite[Proposition 8.11]{exel2025}, $\Delta \colon  S \to  \Delta (S)$ is a consonance, so is $\pi \colon  \IS(E,C) \to \LI(E,C)$.
\end{remark}

We will refer to the concrete description of $\Omega (E,C)$ given in Theorem \ref{thm:new-charac-tight-spectrum} as the {\it Leavitt model of} $\Omega(E,C)$.
As an immediate consequence of Theorem \ref{thm:new-charac-tight-spectrum}, we obtain:

\begin{theorem}
	\label{thm:new-groupoid-model}
		Let $(E,C)$ be a separated graph. Then there is an isomorphism of groupoids
		$$\mathcal G _\tight (E,C) := \mathcal G_\tight (\IS(E,C)) \cong \mathfrak T_L \rtimes \F.$$
		Moreover 
		$$\Le_K^\ab (E,C) \cong A_K ( \mathfrak T_L \rtimes \F) \quad \text{ and } \quad \mathcal O  (E,C) \cong C^* (\mathfrak T_L\rtimes \F).$$
\end{theorem}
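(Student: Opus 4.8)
The plan is to reduce everything to the equivariant homeomorphism furnished by Theorem \ref{thm:new-charac-tight-spectrum}, exploiting the fact that the tight groupoid of $\IS(E,C)$ is already a transformation (partial action) groupoid. Concretely, since $\IS(E,C)$ is strongly $E^*$\nobreakdash-unitary with an idempotent pure partial homomorphism onto $\F = \F(E^1)$, its tight groupoid is the groupoid of the dual partial action $\F \act \widehat{\E}_\tight$; that is,
$$\G_\tight(E,C) \cong \widehat{\E}_\tight \rtimes \F,$$
where $\widehat{\E}_\tight = \Omega(E,C)$ is the tight spectrum. This is precisely the partial action underlying the crossed-product decompositions recorded in Theorem \ref{thm:structure-of-Leavitt}, so I would take this identification as the starting point.

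Next I would invoke Theorem \ref{thm:new-charac-tight-spectrum}, which provides an $\F$-equivariant homeomorphism $\phi\colon \Omega(E,C) \to \mathfrak T_L$ between the tight spectrum and the space $\mathfrak T_L$ of those $Z\in \mathfrak F_L$ without finite exits, equipped with the natural partial $\F$-action. An $\F$-equivariant homeomorphism between the underlying spaces of two partial $\F$-actions induces an isomorphism of the associated transformation groupoids, sending the arrow $(Z,g)$ to $(\phi(Z),g)$ and germs to germs. Applying this to $\widehat{\E}_\tight \cong \mathfrak T_L$ yields
$$\G_\tight(E,C) \cong \widehat{\E}_\tight \rtimes \F \cong \mathfrak T_L \rtimes \F,$$
which is the first assertion.

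For the algebra statements I would feed this groupoid isomorphism into the identifications already in hand. By Theorem \ref{thm:structure-of-Leavitt} we have $\Le_K^\ab(E,C) \cong A_K(\G_\tight(E,C))$, and since the Steinberg-algebra construction is functorial under isomorphisms of ample groupoids, the groupoid isomorphism above gives $A_K(\G_\tight(E,C)) \cong A_K(\mathfrak T_L \rtimes \F)$; composing yields $\Le_K^\ab(E,C) \cong A_K(\mathfrak T_L \rtimes \F)$. The $C^*$\nobreakdash-statement is formally identical: as $\mathcal O(E,C)$ is by definition the (full) groupoid $C^*$\nobreakdash-algebra $C^*(\G_\tight(E,C))$, the same groupoid isomorphism together with the functoriality of the groupoid $C^*$\nobreakdash-algebra construction gives $\mathcal O(E,C) \cong C^*(\mathfrak T_L \rtimes \F)$.

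The only genuinely delicate point — and the reason the statement is labelled an \emph{immediate} consequence — is the passage from an equivariant homeomorphism of \emph{spaces} to an isomorphism of transformation \emph{groupoids} at the level of germs. One must verify that $\phi$ carries the graph of each partial homeomorphism $\theta_g$ to the graph of the corresponding partial homeomorphism on $\mathfrak T_L$, so that $(Z,g)\mapsto(\phi(Z),g)$ is a well-defined bijection on arrows that is continuous and open for the \'etale topologies. Given the explicit basis of compact-open sets $\mathcal Z_L(T\setminus F)\cap\mathfrak T_L$ from Lemma \ref{lem:opencompactbasisL} and the concrete form of the homeomorphism in Theorem \ref{thm:new-charac-tight-spectrum}, this verification is routine, and I would treat it as such rather than grinding through the details.
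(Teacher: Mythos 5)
Your proposal is correct and follows essentially the same route as the paper: the paper's proof is a one-line citation of Theorem \ref{thm:new-charac-tight-spectrum} together with the facts (from \cite{ABC25}) that $\G_\tight(E,C)$ is the transformation groupoid of the dual partial action on $\widehat{\E}_\tight$ and that $\Le_K^\ab(E,C)\cong A_K(\G_\tight(E,C))$, which is exactly the chain you assemble. Your additional remarks on transporting the partial action along the equivariant homeomorphism and on functoriality of the Steinberg and groupoid $C^*$\nobreakdash-algebra constructions simply make explicit what the paper leaves implicit.
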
 

\begin{proof}
	This follows from Theorem \ref{thm:new-charac-tight-spectrum}, \cite[Proposition 6.8]{ABC25} and \cite[Theorem 6.20]{ABC25}.
\end{proof}

A corresponding model for the tight groupoid $\mathcal G _\tight (E,C)$ is in order. Identifying 
$\Omega (E,C)$ with the space  
$$\mathfrak T_L = \{ Z\in \mathfrak F_L : Z \text{ does not admit finite exits }\} \, ,$$
we have the picture 
$$\G_\tight (E,C) \cong \{ (Z',g,Z) : Z,Z'\in \mathfrak T_L, g\in \FC, (g^{-1})_L\in Z \text{ and }  Z'= [g\cdot (Z\cup \{g^{-1}\}^\downarrow )]_L\}.$$
Observe that $g \cdot (Z\cup \{g^{-1}\}^\downarrow) = g\cdot Z \cup g^\downarrow$. 

The inverse of $(Z',g,Z)$ is given by 
$$(Z',g,Z)^{-1} = (Z, g^{-1}, Z').$$
The product of $(Z_1',g_1,Z_1)$ and $(Z_2',g_2,Z_2)$ is defined if and only if $Z_1= Z_2'$, and in this case the product is given by
$$(Z_1',g_1,Z_1)\cdot (Z_2',g_2,Z_2) = (Z_1', g_1\cdot g_2, Z_2).$$
 The topology on $\G_\tight (E,C)$ is generated by the cylinders $\mathcal Z (T',g,T)$, with $T,T'\in \Y_L$,
 $(g^{-1})_L\in T$, and $T' = [g\cdot (T\cup \{g^{-1}\}^\downarrow )]_L$, where
 $$\mathcal Z (T',g,T) = \{(Z',g,Z) \in \G_\tight (E,C) : T\subseteq Z\}.$$  

It is instructive to check that the definitions given above make sense, as follows. 

\begin{lemma}
\label{lem:well-defined-inverse-product}
The inverse and product in $\G_\tight (E,C)$ given above are well defined, and correspond to the groupoid   structure on the tight groupoid of $(E,C)$.
	\end{lemma}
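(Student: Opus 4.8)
The plan is to realize the displayed triple description as the concrete transformation groupoid $\mathfrak T_L\rtimes\F$ already identified with $\G_\tight(E,C)$ in Theorem~\ref{thm:new-groupoid-model}, and then to read the structure maps off the transformation-groupoid axioms. First I would make the partial action of $\F$ on $\mathfrak T_L$ fully explicit. By Theorem~\ref{thm:ECLeavittMunntrees} the finite level $\YY^L$ carries the partial action $\theta^L$ with $\theta^L_g\colon D^L_{g^{-1}}\to D^L_g$, where $D^L_{g^{-1}}=\{T\in\Y_L:(g^{-1})_L\in T\}$ and $\theta^L_g(T)=[g\cdot(T\cup\{g^{-1}\}^\downarrow)]_L$. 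Passing to filters (equivalently, to the tight spectrum), this extends to a partial action of $\F$ on $\mathfrak T_L$ whose domain $\mathrm{dom}(\theta_g)$ is $\{Z\in\mathfrak T_L:(g^{-1})_L\in Z\}$ (empty when $g\in\F\setminus\FC$), with $\theta_g(Z)=[g\cdot(Z\cup\{g^{-1}\}^\downarrow)]_L$; this is the action induced on $\Omega(E,C)$ by $\theta^L$, and it preserves $\mathfrak T_L$ since $\Omega(E,C)$ is $\F$-invariant by Theorem~\ref{thm:new-charac-tight-spectrum}.

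With this in hand, an arrow of $\mathfrak T_L\rtimes\F$ is a pair $(g,Z)$ with $Z\in\mathrm{dom}(\theta_g)$, which we encode as the triple $(Z',g,Z)$ with $Z'=\theta_g(Z)$; this is exactly the set displayed before the lemma, since $Z\in\mathrm{dom}(\theta_g)$ unwinds to $(g^{-1})_L\in Z$ and $Z'=\theta_g(Z)$ unwinds to $Z'=[g\cdot(Z\cup\{g^{-1}\}^\downarrow)]_L$. The source and range of $(g,Z)$ are $Z$ and $\theta_g(Z)=Z'$, matching the triple notation. For the inverse, the transformation groupoid gives $(g,Z)^{-1}=(g^{-1},\theta_g(Z))$, i.e.\ $(Z',g,Z)^{-1}=(Z,g^{-1},Z')$; well-definedness amounts to checking that $Z'$ lies in the range of $\theta_g$, which equals $\mathrm{dom}(\theta_{g^{-1}})$, and that $\theta_{g^{-1}}(Z')=Z$, both being instances of the partial-action identity $\theta_{g^{-1}}=\theta_g^{-1}$. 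Concretely these read $(g)_L\in Z'$ and $[g^{-1}\cdot(Z'\cup\{g\}^\downarrow)]_L=Z$, which follow once the extended action is shown to inherit the partial-action identities from $\theta^L$.

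For the product, composability of $(Z_1',g_1,Z_1)$ and $(Z_2',g_2,Z_2)$ is the condition $Z_1=Z_2'$, that is, source of the first equals range of the second; the transformation-groupoid product is $(g_1,\theta_{g_2}(Z_2))(g_2,Z_2)=(g_1\cdot g_2,Z_2)$, whose range is $\theta_{g_1}(\theta_{g_2}(Z_2))=Z_1'$. Thus the triple product is $(Z_1',g_1\cdot g_2,Z_2)$, as claimed. Well-definedness is the cocycle inclusion $\theta_{g_1}\circ\theta_{g_2}\subseteq\theta_{g_1\cdot g_2}$ for the partial action: on the overlap dictated by $Z_1=Z_2'$ it yields both the domain condition $((g_1\cdot g_2)^{-1})_L\in Z_2$ and the equality $\theta_{g_1\cdot g_2}(Z_2)=Z_1'$. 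Finally, the cylinders $\mathcal Z(T',g,T)$ are the images of the basic bisections of $\mathfrak T_L\rtimes\F$ cut out by the compact open sets $\mathcal Z_L(T)\cap\mathfrak T_L$ of Lemma~\ref{lem:opencompactbasisL}, so the topology agrees and the triple description is an isomorphic copy of $\G_\tight(E,C)$.

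The main obstacle will be the first step together with the cocycle inclusion used in the product: one must check that the free-group reduction $g_1\cdot g_2=\red(g_1g_2)$ is compatible with the filter reduction $(-)_L$ and with the $\downarrow$-completion, so that $\theta_{g_1}\circ\theta_{g_2}$ is genuinely the restriction of $\theta_{g_1\cdot g_2}$ to the appropriate domain, in particular that $Z_1=Z_2'$ forces $((g_1\cdot g_2)^{-1})_L\in Z_2$. Once the partial action on $\mathfrak T_L$ is correctly set up as the filter extension of $\theta^L$ from Theorem~\ref{thm:ECLeavittMunntrees}, the remaining assertions are formal, since the well-definedness of inverse and product and their agreement with the tight groupoid are then immediate consequences of the transformation-groupoid axioms and of the identification in Theorem~\ref{thm:new-groupoid-model}.
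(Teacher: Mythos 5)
Your framing---realize the triples as the transformation groupoid $\mathfrak T_L\rtimes\F$ and derive inverse, product, and their well-definedness from the partial-action axioms for the filter extension of $\theta^L$---is a legitimate and in principle cleaner route than the paper's, which simply verifies everything by hand. The difficulty is that you have not actually proved the one thing that makes the lemma nontrivial: you explicitly defer, as ``the main obstacle,'' the verification that the explicit formulas (domain $\{Z:(g^{-1})_L\in Z\}$, $\theta_g(Z)=[g\cdot(Z\cup\{g^{-1}\}^\downarrow)]_L$) genuinely satisfy the partial-action identities $\theta_{g^{-1}}=\theta_g^{-1}$ and $\theta_{g_1}\circ\theta_{g_2}\subseteq\theta_{g_1\cdot g_2}$, i.e.\ that free-group reduction is compatible with the operations $(-)_L$ and $(-)^\downarrow$. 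That verification \emph{is} the content of the paper's proof, and it is not formal: one must show (i) that $Z'\cup g^\downarrow=g\cdot Z$, which uses the defining property of $\mathfrak F_L$ that every element of $Z$ extends inside $Z$ to a path not ending in $E^{-1}$ or in a singleton-$C$ edge; (ii) that $Z_1=Z_2'$ forces $(g_2^{-1}\cdot g_1^{-1})_L\in Z_2$, which requires a case analysis on where the cancellation in $g_1\cdot g_2$ stops relative to the decompositions $(g_1^{-1})_L$ and $(g_2^{-1})_L$, using $(g_1^{-1})_L\in Z_1=g_2\cdot Z_2\cup g_2^\downarrow$; and (iii) that $(g_1)_L\subseteq (g_1\cdot g_2)\cdot Z_2\cup(g_1\cdot g_2)^\downarrow$, so that applying $(-)_L$ to $g_1\cdot Z_1\cup g_1^\downarrow=(g_1\cdot g_2)\cdot Z_2\cup(g_1\cdot g_2)^\downarrow\cup g_1^\downarrow$ gives $Z_1'=\theta_{g_1\cdot g_2}(Z_2)$. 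None of these follow from the abstract existence of the dual partial action on $\widehat{\E}_\tight(\LI(E,C))$ without additionally checking that, under the dictionary of Proposition~\ref{prop:characterizing-filt-ultra-L}, that dual action is given by your stated formulas---which is again a computation of the same nature.

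So as it stands the proposal is a correct plan with the central step missing. To complete it along your lines you would either carry out the three verifications above (at which point you have essentially reproduced the paper's argument inside your framework), or prove once and for all that the identification $\mathcal F_L\cong\mathfrak F_L$ intertwines the canonical dual partial action of $\F$ on the tight spectrum of $\LI(E,C)$ with the formula $Z\mapsto[g\cdot(Z\cup\{g^{-1}\}^\downarrow)]_L$ on the stated domain; the latter would then let you invoke the general transformation-groupoid machinery exactly as you propose.
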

	
	   \begin{proof}
	   	We first check that the inverse is well-defined, that is, for $(Z',g,Z)\in \G_\tight (E,C)$, we have
	   	that $(Z,g^{-1},Z')\in \G_\tight (E,C)$. By definition, $Z\in \mathfrak F_L$, $(g^{-1})_L\in Z$, and
	   	$Z'= [g\cdot Z \cup g^\downarrow]_L$. Hence we see that $g_L\in Z'$. 
	   	
	   	We claim that $Z'\cup g^\downarrow = g\cdot Z$. Indeed, it is clear that $Z'\cup g^\downarrow =
	   	(g\cdot Z \cup g^\downarrow)_L \cup g^\downarrow $ is contained in $g\cdot Z$. Now let $z\in Z$. We can write $g=x_1\cdots x_r x_{r+1}\cdots x_n$, $z= x_n^{-1}\cdots x_{r+1}^{-1}y_1\cdots y_m$ in reduced form, where $m\ge 0$ and $y_1\ne x_r$ if $m\ge 1$. If $m=0$ then $g\cdot z = x_1\cdots x_r \in g^\downarrow \subseteq  [g\cdot Z \cup g^\downarrow]_L \cup g^\downarrow $. If $m\ge 1$, then 
	   	$g\cdot z = x_1\cdots x_r y_1\cdots y_m$, and  since $Z\in \mathfrak F_L$, there exists $y_{m+1},\dots ,y_{m+k}\in E^1\cup E^{-1}$, with $k\ge 0$, such that 
	   	$$x_n^{-1} \cdots x_{r+1}^{-1}y_1\cdots y_my_{m+1} \cdots y_{m+k} \in Z \quad \text{ and } \quad y_{m+k}\in X\in C^{>1},$$
	   	and it follows that $x_1\cdots x_r y_1\cdots y_{m+k} \in (g\cdot Z)_L$ and thus that $g\cdot z \in (g\cdot Z)_L$, because $(g\cdot Z)_L$ is a lower set. Therefore $g\cdot z \in (g\cdot Z \cup g^\downarrow)_L \cup g^\downarrow $ also in this case. 
	   	
	   	 We now check that the product is well-defined. Let $(Z_1',g_1,Z_1), (Z_2',g_2,Z_2)$ be two triples such that $Z_1= Z_2'$. We know that $(g_1^{-1})_L \in Z_1$ and that $(g_2)_L\in Z_2'= Z_1$. Hence assuming that $Z_1\in \mathfrak F_L(v)$, we get
	   	 $$s(g_2) = s((g_2)_L)= v = s((g_1^{-1})_L) = r(g_1) ,$$
	   	 so that $s(g_2)= r(g_1)$. Hence $g_1\cdot g_2$ is at least in $\FG (E)$. Hence, to show that 
	   	 $g_1\cdot g_2 $ belongs to $\FC$, it suffices to show that $g_2^{-1}\cdot g_1^{-1}$ belongs to $\FC$, which in turn is equivalent to check that $(g_2^{-1}\cdot g_1^{-1})_L \in \FC$.  
	   	 
	   	We will check that  $(g_2^{-1}\cdot g_1^{-1})_L \in Z_2$, which in particular implies that  $(g_2^{-1}\cdot g_1^{-1})_L \in \FC$. Write $g_2 = x_1\cdots x_n$ and $g_1 = y_1\cdots y_t x_r^{-1} \cdots x_1^{-1}$ in reduced form, with $y_t\ne x_r$, $y_t\ne x_{r+1}^{-1}$, $0\le r\le n$, and $x_i,y_j\in E^1\cup E^{-1}$. We then have
	   	$g_1\cdot g_2 = y_1\cdots y_t x_{r+1} \cdots x_n$, and 
	   	$$(g_1\cdot g_2)^{-1} = g_2^{-1}\cdot g_1^{-1} = x_n^{-1} \cdots x_{r+1}^{-1} y_t^{-1} \cdots y_1^{-1}.$$   
	   	 
	   	 We distinguish two cases:
	   	 \begin{enumerate}
	   	 	\item $(g_2^{-1}\cdot g_1^{-1})_L = x_n^{-1}\cdots x_{r+1}^{-1}y_t^{-1}\cdots y_i^{-1}$, with $i \le t$.
	   	 	\item $(g_2^{-1} \cdot g_1^{-1})_L = x_n^{-1}\cdots x_j^{-1}$, with $j\ge r+1$. 
	   	 \end{enumerate}
	   	 
	   	 If (2) happens, then $(g_2^{-1}\cdot g_1^{-1})_L \le_p (g_2^{-1})_L \in Z_2$ and thus $(g_2^{-1}\cdot g_1^{-1})_L\in Z_2$. If (1) holds, then $(g_1^{-1})_L = x_1\cdots x_r y_t^{-1}\cdots y_i^{-1}$ and since
	   	 $$(g_1^{-1})_L \in Z_1 = Z_2' = g_2\cdot Z_2 \cup g_2^\downarrow ,$$
	   	 we get 
	   	 $$(g_2^{-1}\cdot g_1^{-1})_L =  x_n^{-1}\cdots x_{r+1}^{-1}y_t^{-1}\cdots y_i^{-1} = g_2^{-1}\cdot (g_1^{-1})_L \in g_2^{-1}\cdot (g_2\cdot Z_2 \cup g_2^\downarrow ) = Z_2\cup (g_2^{-1})^\downarrow .$$
	   	  If $(g_2^{-1}\cdot g_1^{-1})_L\in Z_2$, we are done. Otherwise we have $(g_2^{-1}\cdot g_1^{-1})_L\in (g_2^{-1})^\downarrow$, which is impossible since $x_{r}\ne y_t$.

	   	Now we check that $[(g_1\cdot g_2)\cdot Z_2 \cup (g_1\cdot g_2)^\downarrow ]_L = Z_1'$.  Indeed we have
	   	\begin{align*}
	   		g_1\cdot Z_1 \cup g_1^\downarrow & = g_1\cdot (g_2\cdot Z_2 \cup g_2^\downarrow) \cup g_1^\downarrow \\
	   		& = (g_1\cdot g_2) \cdot Z_2 \cup g_1\cdot g_2^\downarrow \cup g_1^\downarrow \\ 
	   		& = (g_1\cdot g_2) \cdot Z_2 \cup (g_1\cdot g_2)^\downarrow \cup g_1^\downarrow .
	   		   	\end{align*}
	   	  	   	Hence $Z_1'= [g_1\cdot Z_1 \cup g_1^\downarrow]_L = [(g_1\cdot g_2) \cdot Z_2 \cup (g_1\cdot g_2)^\downarrow \cup g_1^\downarrow]_L$, and it suffices to show that $(g_1)_L \subseteq  
	   	  	   	(g_1\cdot g_2) \cdot Z_2 \cup (g_1\cdot g_2)^\downarrow$. Suppose first that $(g_1)_L= y_1\cdots y_l$ for some $l\le t$. Then we have
	   	  	   	$$(g_1)_L = y_1\cdots y_l \le_p (g_1\cdot g_2)_L .$$
	   	  	   	Assume now that $(g_1)_L = y_1\cdots y_t x_r^{-1}\cdots x_s^{-1}$ for some $s\le r$.  Then $x_n^{-1} \cdots x_{r+1}^{-1}x_r^{-1}\cdots x_s^{-1} = (g_2^{-1})_L$, and hence
	   	  	   	$$(g_1)_L = (y_1\cdots y_tx_{r+1}\cdots x_n)\cdot (x_n^{-1}\cdots x_s^{-1}) = (g_1\cdot g_2)\cdot (g_2^{-1})_L \in (g_1\cdot g_2)\cdot Z_2.$$
	   	  	   	Hence $(g_1)_L\in (g_1\cdot g_2)\cdot Z_2$ in this case.

	   	  	   	Finally, it is easily seen that this structure agrees with the groupoid structure of $\mathfrak T_L\rtimes \F$, which by Theorem \ref{thm:new-groupoid-model} is isomorphic to $\mathcal G_\tight (E,C)$. This concludes the proof. 
	   	   	   \end{proof}

Hence we have three different pictures or models of the tight groupoid $\G_\tight (E,C)$:

$\bullet $ The {\it complete model} is the model in which the elements of $\Omega (E,C)$ are the {\it configurations}, that is, the lower subsets $Z$ such that all the local configurations $Z_g$ are finite-maximal, together with the set $\SInf$ of infinite sources of $E$, see \cite[Definition 5.13 and Theorem 5.14]{ABC25}. This is the model considered in \cite{Lolk:tame} in the case of finitely separated graphs, for instance. The groupoid model in this picture is slightly simpler: The elements of the groupoid are of the form $(Z',g,Z)$, where $Z,Z'\in \mathfrak T'\cup  \SInf $, $g^{-1}\in Z$, and $Z'= g\cdot Z$. The inverse of $(Z',g,Z)$ is $(Z,g^{-1},Z')$, and $(Z_1,g,Z_2)(Z_2,h,Z_3)= (Z_1,g\cdot h ,Z_3)$. 

$\bullet$ The {\it standard model} is the model introduced in \cite[Section 5]{ABC25}. It is based on the description of the elements of $\IS (E,C)$ as elements in $\Y_0$. We call it the standard model because it is the model that is commonly used to describe space $\Omega (E,C)$ in the case of a non-separated graph.
The formulas for the product and the inverse are similar to the ones corresponding to the Leavitt model, replacing the decoration $L$ with the decoration $0$. The topology is generated by corresponding cylinder sets. 

$\bullet$ The {\it Leavitt model} is the model we have introduced here. It is based on the description of the elements of $\LI (E,C)$ as elements in $\Y_L$.

\medskip

We believe that all three  models have advantages and disadvantages. For a specific problem, it might be that one of the three models is better than the others. Note that the standard and the Leavitt model agree when $|X|\ge 2$ for all $X\in C$. We will see in the next section that the Leavitt model is specially suited to describe the socle of the tame Leavitt path algebras of separated graphs.

\section{Isolated points of a groupoid. The socle.}
\label{sect:socle}

In this section, we use recent results on the socle of a Steinberg algebra \cite{CCMMR25} to compute the socle of the tame Leavitt algebra $\Le^\ab _K (E,C)$ of a separated graph. 

{\bf In this section, $K$ will denote a field, and an algebra will mean an algebra over $K$.}

We first collect some basic definitions. 

Recall that a point $x$ of a topological space $X$ is said to be an {\it isolated point} if $\{ x\}$ is an open subset of $X$. Note that if $X$ is Hausdorff (which is the only case that will interest us here) then points are closed and so $\{x\}$ is a clopen subset of $X$ for each isolated point $x$ of $X$. We denote the set of isolated points of $X$ by $\isoX$. 

Let $\G$ be an ample locally compact Hausdorff groupoid. We set $X:= \G^{(0)}$, the unit space of $\G$. 
Observe that for $\gamma \in \G$, we have $\gamma \in \isoG$ if and only if $s(\gamma)\in \isoX$ if and only if $r(\gamma) \in \isoX$. We define an equivalence relation $\mathcal R$ on $X$ by $x\mathcal R y$ if and only if there exists $\gamma \in \G$ such that $s(\gamma) = x$ and $r(\gamma) = y$. That is, two points in $X$ are related if and only if they belong to the same orbit. Observe that all elements in the orbit of an isolated point are also isolated points. 

If $x\in \isoX$, then $\{x\}$ is a compact open subset of $\G$, so that $1_{\{x\}}$ is a projection in $A_K(\G)$.

For $x\in \isoX$, let $\Lambda_x$ be a transversal of $\G_x/\G^x_x$ containing $x$, that is $\Lambda $ contains exactly one representative of each left coset $\gamma \G^x_x$, where $\gamma \in \G_x$. Then we have the following result. For $\gamma \in \isoG$, we will denote $1_{\{\gamma\}}$ just by $1_\gamma$.

\begin{theorem}
\label{thm:structure-isolated}
Let $x\in \isoX$. Then we have
\begin{enumerate}
	\item $$1_x A_K(\G) 1_x \cong K \G^x_x,$$ 
	that is, the corner $*$-algebra $1_x A_K(\G) 1_x$ is $*$-isomorphic to the group $*$-algebra $K\G^x_x$ of the isotropy group $\G_x^x$. 
	\item $$A_K(\G) 1_x A_K(\G) \cong M_{\Lambda_x} (K\G_x^x),$$ 
	that is, the two-sided ideal
	$A_K(\G) 1_x A_K(\G)$ generated by $1_x$ is $*$-isomorphic to the $*$-algebra of $\Lambda_x \times \Lambda_x$ matrices with coefficients in $K\G^x_x$.  
	\end{enumerate}
\end{theorem}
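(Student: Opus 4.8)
The plan is to work directly inside the Steinberg algebra, using that $A_K(\G)=\spn_K\{1_U : U \text{ a compact open bisection}\}$ with convolution product satisfying $1_U 1_V = 1_{UV}$ and $1_U^* = 1_{U^{-1}}$. Since $x$ is isolated, all points in its orbit $[x]=\{y : x\,\R\,y\}$ are isolated, and hence every arrow $\gamma$ with $s(\gamma),r(\gamma)\in[x]$ is an isolated point of $\G$, so $\{\gamma\}$ is a compact open bisection and $1_\gamma\in A_K(\G)$. The whole argument then reduces to identifying each of the two algebras as a span of such $1_\gamma$ and matching the products. Throughout I write $\G_x=s^{-1}(x)$ and $\G^x=r^{-1}(x)$.

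For (1), I would first compute the corner explicitly. A direct unwinding of the convolution shows, for $f\in A_K(\G)$,
\[ (1_x f 1_x)(\gamma) = \begin{cases} f(\gamma) & \text{if } s(\gamma)=r(\gamma)=x,\\ 0 & \text{otherwise},\end{cases}\]
so that $1_x A_K(\G) 1_x$ consists precisely of the compactly supported locally constant functions supported on the isotropy group $\G^x_x$. As each element of $\G^x_x$ is isolated, this space is exactly $\spn_K\{1_\gamma : \gamma\in\G^x_x\}$. The assignment $\gamma\mapsto 1_\gamma$ extends $K$-linearly to a map $K\G^x_x \to 1_x A_K(\G)1_x$; it is multiplicative because $1_\gamma 1_\delta = 1_{\gamma\delta}$ for $\gamma,\delta\in\G^x_x$ (the product in the group coincides with the composition in the groupoid), it is $*$-preserving because $1_\gamma^* = 1_{\gamma^{-1}}$, and it is bijective because the $1_\gamma$ are linearly independent and span. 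This gives the $*$-isomorphism of (1).

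For (2), the key is a concrete description of the ideal together with a normal form for its entries. A computation of $f\,1_x\,h$, using that $f 1_x$ is supported on $\G_x$ and $1_x h$ on $\G^x$, shows that $A_K(\G) 1_x A_K(\G)$ is the span of the functions $1_\gamma$ with $\gamma$ ranging over the restriction $\G|_{[x]} := \{\gamma\in\G : s(\gamma),r(\gamma)\in[x]\}$ of $\G$ to the orbit of $x$ (here one checks $\G_x\cdot\G^x=\G|_{[x]}$). Next, since $\Lambda_x$ is a transversal of $\G_x/\G^x_x$, every $\gamma\in\G|_{[x]}$ factors uniquely as $\gamma=\alpha g \beta^{-1}$ with $\alpha,\beta\in\Lambda_x$ (determined by $r(\alpha)=r(\gamma)$ and $r(\beta)=s(\gamma)$) and $g=\alpha^{-1}\gamma\beta\in\G^x_x$. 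I then define $\Psi\colon M_{\Lambda_x}(K\G^x_x)\to A_K(\G)1_x A_K(\G)$ on the standard generators by
\[ \Psi(E_{\alpha\beta}\,g) = 1_\alpha\, 1_g\, 1_\beta^{*} = 1_{\alpha g \beta^{-1}}, \qquad \alpha,\beta\in\Lambda_x,\ g\in\G^x_x.\]
That $\Psi$ is a $*$-homomorphism amounts to the bisection identity that $\{\alpha g\beta^{-1}\}\{\alpha'g'\beta'^{-1}\}$ is empty unless $\beta=\alpha'$ (distinct elements of $\Lambda_x$ have distinct ranges) and equals $\{\alpha (gg')\beta'^{-1}\}$ when $\beta=\alpha'$, matching $E_{\alpha\beta}g\cdot E_{\alpha'\beta'}g'=\delta_{\beta,\alpha'}E_{\alpha\beta'}(gg')$; the involution is handled by $1_{\alpha g\beta^{-1}}^*=1_{\beta g^{-1}\alpha^{-1}}$. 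Surjectivity is the unique factorization above, and injectivity holds because $\Psi$ sends the $K$-basis $\{E_{\alpha\beta}g\}$ to the linearly independent family $\{1_{\alpha g\beta^{-1}}\}$. When the orbit is infinite, $M_{\Lambda_x}(K\G^x_x)$ is understood as the finitely supported matrices, which is exactly what these finite spans produce.

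The genuinely substantive steps are the two support computations—pinning down that the corner lives over $\G^x_x$ and the ideal over $\G|_{[x]}$—and the unique triangular factorization $\gamma=\alpha g\beta^{-1}$; everything else is bookkeeping with characteristic functions of bisections. The main obstacle I anticipate is one of care rather than depth: keeping the composability conditions straight (which products of bisections are nonempty) and invoking the isolatedness of $x$ at each point where a characteristic function of a single arrow must belong to $A_K(\G)$—in particular to guarantee that $1_{r(\alpha)}\in A_K(\G)1_xA_K(\G)$, so that the claimed spanning set genuinely lies in the ideal.
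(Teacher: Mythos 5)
Your proof is correct and, for part (2), follows essentially the same route as the paper: the ideal is identified as the span of the $1_\gamma$ with $\gamma$ in the restriction of $\G$ to the orbit of $x$, and the matrix isomorphism is exactly the unique factorization $\gamma=\alpha(\alpha^{-1}\gamma\beta)\beta^{-1}$ over the transversal $\Lambda_x$ (the paper writes the inverse of your $\Psi$, sending $1_\gamma$ to the matrix with entry $\lambda_2^{-1}\gamma\lambda_1$ in the $(\lambda_2,\lambda_1)$ spot). The only difference is in part (1), where the paper simply cites \cite{ADN22}*{Proposition 2.3} while you supply the short direct support computation; that computation is correct and is the expected content of the cited result.
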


\begin{proof}
	(1) follows from \cite[Proposition 2.3]{ADN22}. We give a direct proof of (2) here. First note that $A_K(\G) 1_x A_K(\G)$ is linearly spanned by the characteristic functions $1_\gamma$, where $\gamma \in \mathcal G _y^z$, for $y,z$ in the orbit $r(\mathcal G_x)$ of $x$ in $X$.
	
	Given such $\gamma \in \mathcal G _y^z$ there are unique $\lambda_1,\lambda_2\in \Lambda_x$ such that $r(\lambda_1)= y$ and $r(\lambda_2)= z$. Then the element $1_\gamma$ is sent to the matrix whose only non-zero component is in the $(\lambda_2,\lambda_1)$ spot, and this component is $\lambda_2^{-1}\gamma \lambda_1\in \mathcal G_x^x$. It is now straightforward to show that this assignment defines an isomorphism between
 $A_K(\G) 1_x A_K(\G)$ and $M_{\Lambda_x} (K\mathcal G _x^x)$. 
	\end{proof}

We now collect some useful information on isotropy. For a partial action $(U_t, \theta_t) $ of a discrete group $\Gamma $ on a topological space $X$, and $z\in X$, we denote by $F_z$ the isotropy of $z$, that is the set of elements $t \in \Gamma$ such that $z\in U_{t^{-1}}$ and
$\theta_{t} (z)=z$. It is a general well-known fact that $F_z$ is a subgroup of $\Gamma$. Moreover if $\G = X\rtimes \Gamma$ is the transformation groupoid associated to the partial action, then $\G_z^z \cong  F_z$, hence the isotropy groups $\G_z^z$ are isomorphic to subgroups of $\Gamma$.   

\begin{definition}\cite[Definition 1.9]{Lolk:tame}
	\label{def:cycles}
	A non-trivial $C$-separated path $\alpha$ in a separated graph $(E,C)$ is called a {\it closed path} if $r(\alpha ) =s(\alpha)$, and it is called a {\it cycle} if $\alpha^2 := \alpha \alpha$ is also a $C$-separated path. In this case $\alpha^n$ is $C$-separated for each $n\in \Z$.\footnote{Here we interpret $\alpha^0$ as the trivial path at the vertex $s(\alpha)=r(\alpha)$, and for $n<0$ we set $\alpha^n := (\alpha^{-1})^{-n}$. In particular, if $\alpha \alpha$ is $C$-separated, then so are $\alpha^{-1}$ and all integer powers $\alpha^n$.}
 A {\it simple closed path} is a closed path $\alpha$ such that the only vertex repetition occurs at the end, that is, $\alpha_1 <_p \alpha_2 \le_p \alpha$ and $r(\alpha _1)= r(\alpha_2)$ implies that $\alpha_1 = s(\alpha)$ and  $\alpha_2 = \alpha$.   
\end{definition}

\begin{lemma}
	\label{lem:trivial-properties-isotropy}
	With the above notation, let $Z\in \Omega (E,C)_v$, where $v\in E^0$. Then the set 
	$$F_Z= \{ \alpha \in \F : Z\in D_{\alpha^{-1}} \text{ and } \theta_\alpha (Z)=Z\}$$ is a subgroup of $\F$, consisting of elements that belong to $\FC(v)$. 
	If $\alpha $ is a non-trivial element of $F_Z$, then $\alpha = \beta \gamma \beta^{-1}$, where $\gamma $ is a cycle and $\gamma \in F_{\theta_{\beta^{-1}}(Z)}$.
	\end{lemma}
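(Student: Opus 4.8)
The plan is to handle the two assertions separately, leaning on the Leavitt model of the tight groupoid from Theorem~\ref{thm:new-groupoid-model} and the explicit product recorded in Lemma~\ref{lem:well-defined-inverse-product}. The subgroup property of $F_Z$ is exactly the general fact about isotropy groups of partial actions recalled just before the lemma, so I would simply invoke it. To prove $F_Z \subseteq \FC(v)$, suppose $\alpha \in F_Z$, so that $Z \in D_{\alpha^{-1}}$. Since the domains of the partial action $\F \act \Omega(E,C)$ are empty whenever the index lies in $\F \setminus \FC$, this forces $\alpha^{-1} \in \FC$, hence $\alpha \in \FC$ (the set of $C$-separated paths is stable under inversion). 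The condition $(\alpha^{-1})_L \in Z \subseteq \FC(v)$ then gives $r(\alpha) = v$, while $\theta_\alpha(Z) = Z \subseteq \FC(v)$ gives $s(\alpha) = v$; thus every element of $F_Z$ is a closed $C$-separated path at $v$.

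For the second assertion I would begin with the cyclic reduction of $\alpha$ in the free group: write $\alpha = \beta\gamma\beta^{-1}$ in reduced form with $\gamma$ cyclically reduced, so that $\beta$ is the maximal prefix that cancels against the corresponding suffix. As $\alpha$ is a nontrivial reduced word, $\gamma$ is nontrivial, and matching sources and ranges yields $s(\gamma) = r(\gamma) = r(\beta)$, so $\gamma$ is a closed path. To see that $\gamma$ is a cycle, note that cyclic reducedness of $\gamma$ means $\alpha^{n} = \beta\gamma^{n}\beta^{-1}$ is already reduced for every $n \ge 1$; since $\alpha^{n} \in F_Z \subseteq \FC$ by the first part together with the subgroup property, the word $\gamma^{n}$ is a subpath of a $C$-separated path and is therefore itself $C$-separated. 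In particular $\gamma^{2}$ is $C$-separated, which is precisely the condition for $\gamma$ to be a cycle in the sense of Definition~\ref{def:cycles}.

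It then remains to place $\gamma$ in the isotropy of $W := \theta_{\beta^{-1}}(Z)$. First I would verify that $W$ is defined, i.e. $Z \in D_\beta$, equivalently $\beta_L \in Z$: since $\beta \le_p \alpha$ and $\beta_L$ is a prefix of $\alpha$ ending in a genuine (non-singleton) Leavitt edge, maximality of $\alpha_L$ gives $\beta_L \le_p \alpha_L$ (trivially if $\beta_L$ is a vertex); and because $\alpha^{-1}\in F_Z$ also gives $Z \in D_\alpha$, that is $\alpha_L \in Z$, the lower-set property of $Z$ yields $\beta_L \in Z$. With $W$ defined, I would conclude via the groupoid product rather than by manipulating partial-action domains directly: in $\G_\tight(E,C) \cong \mathfrak T_L \rtimes \F$ the triple $(W,\beta^{-1},Z)$ is the inverse of the composable element realizing $\theta_\beta(W) = Z$, and the computation
$$(W,\beta^{-1},Z)\,(Z,\alpha,Z)\,(Z,\beta,W) = (W,\,\beta^{-1}\alpha\beta,\,W) = (W,\gamma,W)$$
is a legitimate composition, as the sources and ranges match at each stage. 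Being a genuine element of $\G_\tight(E,C)$, the triple $(W,\gamma,W)$ encodes exactly $W \in D_{\gamma^{-1}}$ and $\theta_\gamma(W) = W$, i.e. $\gamma \in F_{\theta_{\beta^{-1}}(Z)}$, as required.

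The step I expect to be the main obstacle is showing that $W = \theta_{\beta^{-1}}(Z)$ is defined, i.e. $Z \in D_\beta$, since this is where the free-group cyclic reduction must be reconciled with the $L$-decomposition $g = g_L w$ of Definition~\ref{def:decomposition-gLw}. Once that is secured, recasting the conjugation identity inside $\mathfrak T_L \rtimes \F$ neatly sidesteps the delicate domain bookkeeping that a purely partial-action argument would otherwise demand.
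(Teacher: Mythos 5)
Your proposal is correct and follows essentially the same route as the paper's (very terse) proof: invoke the general subgroup fact for isotropy of partial actions, cyclically reduce $\alpha=\beta\gamma\beta^{-1}$, deduce that $\gamma$ is a cycle from $\alpha\cdot\alpha=\beta\gamma\gamma\beta^{-1}\in\FC$, and conjugate to land in $F_{\theta_{\beta^{-1}}(Z)}$. The only difference is that you carefully justify the final membership $\gamma\in F_{\theta_{\beta^{-1}}(Z)}$ (checking $\beta_L\in Z$ and composing in $\G_\tight(E,C)$), which the paper dismisses with ``Clearly''; your verification is sound.
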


\begin{proof}
	As mentioned above, $F_Z$ is a subgroup of $\F$. Observe that $\gamma \in \FC (v)$ for all $\gamma \in F_Z^*$. If $\alpha $ is a non-trivial element of $F_Z$, then we can uniquely write $\alpha = \beta \gamma \beta^{-1}$, where $\gamma = \gamma _1 \cdots \gamma_r$ is a closed path with $\gamma _r \ne \gamma_1 ^{-1}$. Since $\alpha \cdot \alpha = \beta \gamma \gamma \beta^{-1}  \in \FC$, we see that $\gamma $ is a cycle. Clearly $\gamma \in F_{\theta_{\beta^{-1}}(Z)}$.
	\end{proof}

For a ring with local units $A$, the {\it left socle} of $A$ is the left ideal generated by all the minimal left ideals of $A$. The left socle is a two-sided ideal of $A$. A similar definition gives the right socle of $A$. If $A$ is {\it semiprime}, meaning that $A$ does not contain nonzero nilpotent ideals, then the left and the right socle coincide, and each minimal left (or right) ideal is generated by an idempotent, see \cite[Section IV.3]{jac}. 

The left socle also coincides with the right socle in case the algebra $A$ admits an involution. In that case, we simply will name it the socle, denoted by $\text{soc} (A)$. Note that this is always the case when $A=A_K(\G)$, the Steinberg algebra of an ample groupoid $\G$.  

We can now state a basic property of the algebra $\Le^\ab_K (E,C)$. We will use throughout that 
$\Le^\ab_K (E,C)$ is isomorphic to the crossed product $C_K(\Omega(E,C))\rtimes \F \cong A_K(\G_\tight (E,C))$. This follows from Theorem \ref{thm:structure-of-Leavitt} and the fact that
$\widehat{\E}_\tight \cong \Omega(E,C)$ (Theorem \ref{thm:new-charac-tight-spectrum}).  

\begin{proposition}
	\label{prop:Leavitt-is-semiprime}
	Let $(E,C)$ be a separated graph. Then the algebra $\Le^\ab_K (E,C)$ is semiprime.
\end{proposition}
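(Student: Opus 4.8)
The plan is to realize $\Le^\ab_K (E,C)$ as the Steinberg algebra of its tight groupoid and to reduce the absence of nonzero nilpotent ideals to a statement about isotropy. As recorded just before the statement, Theorem~\ref{thm:new-groupoid-model} gives a $*$-isomorphism $\Le^\ab_K (E,C) \cong A_K(\G)$, where $\G = \mathfrak T_L \rtimes \F \cong \G_\tight (E,C)$ is an ample Hausdorff groupoid, namely the transformation groupoid of the partial action of the free group $\F = \F(E^1)$ on $\Omega(E,C) \cong \mathfrak T_L$. Recall that semiprimeness is the absence of nonzero nilpotent two-sided ideals, equivalently that $a A_K(\G) a = 0$ forces $a = 0$. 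The first step I would record is the general principle underlying the socle computations of \cite{CCMMR25}: for an ample Hausdorff groupoid $\G$ over a field $K$, the Steinberg algebra $A_K(\G)$ is semiprime whenever every isotropy group algebra $K[\G_z^z]$, with $z \in \G^{(0)}$, is semiprime.

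The key input is then that all isotropy groups of $\G$ are free. Since $\G$ is the transformation groupoid $\Omega(E,C)\rtimes \F$, every isotropy group $\G_Z^Z$ is isomorphic to the isotropy subgroup $F_Z \le \F$ of the partial action, as used in Lemma~\ref{lem:trivial-properties-isotropy}. By the Nielsen--Schreier theorem every subgroup of the free group $\F$ is free, so each $\G_Z^Z$ is free, and in particular torsion-free. Now a classical theorem on group algebras (Passman) asserts that $K[G]$ is semiprime whenever $G$ has no nontrivial finite normal subgroup of order divisible by $\mathrm{char}(K)$; a torsion-free group has no nontrivial finite subgroup at all, so $K[\G_Z^Z]$ is semiprime for every $Z$ and every field $K$. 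Combining this with the groupoid criterion yields that $A_K(\G) \cong \Le^\ab_K (E,C)$ is semiprime.

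I expect the genuine work to lie in justifying the reduction to isotropy over an \emph{arbitrary} field. Over $\C$ one could simply invoke the faithful positive conditional expectation onto $C_K(\Omega(E,C))$ together with an identity of the form $(f^{*}*f)(z) = \sum_{\gamma \in \G_z} f(\gamma)^{*} f(\gamma)$; but in positive characteristic the involution is not positive definite (for instance $1^2 + 2^2 = 0$ in $\mathbb F_5$), so this shortcut is unavailable, and the fact that the commutative neutral component is reduced does not by itself suffice. The careful route I would take uses the faithful family of regular representations $\pi_z$ of $A_K(\G)$ on the free modules $K[\G_z]$ indexed by the units $z$: assuming $a A_K(\G) a = 0$, one reorganizes each $\pi_z$ along the cosets of $\G_z$ under the right action of $\G_z^z$, so that the relevant diagonal contribution is governed by $K[\G_z^z]$; semiprimeness of the latter then forces $\pi_z(a) = 0$ for every $z$, whence $a = 0$ by faithfulness of the family. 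This last reorganization, which is precisely where the torsion-free hypothesis is consumed, is the technical heart of the argument.
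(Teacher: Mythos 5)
Your proof is correct and follows essentially the same route as the paper: the paper realizes $\Le_K^\ab(E,C)$ as $A_K(\G_\tight(E,C))$ and invokes \cite{Steinberg2019}*{Theorem 4.10}, which says that the Steinberg algebra of an ample Hausdorff groupoid is semiprime provided every isotropy group algebra $K\G_x^x$ is semiprime, and then observes that the isotropy groups here are free (being subgroups of $\F(E^1)$), so their group algebras are semiprime. The reduction to isotropy that you describe as ``the technical heart'' and sketch via regular representations is exactly this cited theorem of Steinberg, so it needs no independent justification; the only correction is that the criterion should be attributed to \cite{Steinberg2019} rather than to \cite{CCMMR25}.
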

 
 \begin{proof}
 	By \cite[Theorem 4.10]{Steinberg2019}, $\Le_K^\ab (E,C) \cong A_K (\G_\tight (E,C))$ is semiprime if $K\G_x^x$ is semiprime for each $x\in \G^{(0)}$. Since in our case $\G_x^x$ is always a free group, the result follows. 
 	\end{proof}
 
 \begin{definition}\cite{CCMMR25}
 	\label{def:LP}
 	Let $\G$ be an ample Hausdorff groupoid. We say that $\G$ satisfies condition (LP) if, for every $x\in \G^{(0)}$,
 	$$\text{soc} (K(\G^x_x))\ne 0 \iff \G_x^x = \{x\}.$$
 	 \end{definition}

\begin{theorem}\cite{CCMMR25}
\label{thm:mainCGMMR}
Let $\G$ be an ample Hausdorff groupoid satisfying condition {\rm(LP)}, with $X=\G^{(0)}$. The socle $\mathrm{soc}(A_K(\G))$ of $A_K(\G)$ coincides with the ideal generated by all the characteristic functions $1_{x}$, where $x$ is an isolated point in $X= \G^{(0)}$ such that $\G_x^x= \{x\}$. Moreover 
$$\mathrm{soc} (A_K (\G)) \cong \bigoplus_{[x]\in \isoisoX/{\mathcal R}} M_{|[x]|} (K) ,$$
	where $\isoisoX$ is the set of elements $x\in \isoX$ such that $\G_x^x= \{x\}$, so that $\isoisoX/{\mathcal R}$ is the set of orbits of $\isoisoX$.  
\end{theorem}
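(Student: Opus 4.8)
The plan is to prove the two inclusions separately and then read off the matrix decomposition from Theorem~\ref{thm:structure-isolated}(2). Throughout I use that $A_K(\G)$ is semiprime (in the case of interest this is Proposition~\ref{prop:Leavitt-is-semiprime}), together with the two standard facts that, in a semiprime ring with local units, a self-adjoint idempotent $f$ generates a minimal left ideal $A_K(\G)f$ if and only if the corner $fA_K(\G)f$ is a division ring, and that the socle of such a ring is the direct sum $\soc(A_K(\G))=\bigoplus_\alpha B_\alpha$ of its distinct minimal two-sided ideals $B_\alpha$, each of which is a simple ring coinciding with its own (nonzero) socle. The diagonal subalgebra $D=A_K(\G^{(0)})$ of functions supported on $X$, together with its canonical faithful conditional expectation $P\colon A_K(\G)\to D$, will be the tool for locating isolated points.

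For the inclusion $\supseteq$, fix $x\in\isoisoX$. Then $\{x\}$ is compact open, so $1_x$ is a self-adjoint idempotent, and since $\G^x_x=\{x\}$ Theorem~\ref{thm:structure-isolated}(1) gives $1_xA_K(\G)1_x\cong K\G^x_x=K$, a field. Hence $A_K(\G)1_x$ is a minimal left ideal and $1_x\in\soc(A_K(\G))$; as the socle is a two-sided ideal, the ideal generated by all such $1_x$ is contained in $\soc(A_K(\G))$.

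For the inclusion $\subseteq$, write $\soc(A_K(\G))=\bigoplus_\alpha B_\alpha$ and fix a minimal two-sided ideal $B_\alpha$, containing a minimal self-adjoint idempotent $e$ with $eA_K(\G)e$ a division ring. The crux is to show that $B_\alpha=A_K(\G)\,1_x\,A_K(\G)$ for some \emph{isolated} point $x\in X$. I would obtain this by analysing the simple $A_K(\G)$-module $A_K(\G)e$ through $P$: faithfulness forces $P(e)\neq 0$, so $P(e)$ has nonempty compact open support $U\subseteq X$, and simplicity of the module restricts this support to a single orbit. A point of $U$ that is \emph{not} isolated would admit arbitrarily fine splittings of its compact open neighborhoods into orthogonal diagonal idempotents of $D$, which is incompatible with the Artinian (simple) module $A_K(\G)e$; this contradiction pins the support down to (the orbit of) an isolated point $x$, with $B_\alpha=A_K(\G)\,1_x\,A_K(\G)$. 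Granting this, Theorem~\ref{thm:structure-isolated}(2) yields $B_\alpha\cong M_{\Lambda_x}(K\G^x_x)$; since $B_\alpha$ is a minimal two-sided ideal it coincides with its own nonzero socle, so $\soc(K\G^x_x)\neq 0$, and condition~(LP) (Definition~\ref{def:LP}) then forces $\G^x_x=\{x\}$. Thus $x\in\isoisoX$ and $1_x\in B_\alpha$, proving that $\soc(A_K(\G))$ lies in the ideal generated by $\{1_x : x\in\isoisoX\}$; combined with the first inclusion this gives the asserted generation.

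Finally, the decomposition follows at once: for each such $x$ we have $\G^x_x=\{x\}$, so the transversal $\Lambda_x$ of $\G_x/\G^x_x$ is exactly the orbit $[x]$ and $B_\alpha\cong M_{\Lambda_x}(K)=M_{|[x]|}(K)$. Distinct orbits give distinct minimal two-sided ideals, which annihilate one another in a semiprime ring, so the sum over orbit representatives is direct and $\soc(A_K(\G))\cong\bigoplus_{[x]\in\isoisoX/\mathcal R}M_{|[x]|}(K)$. The main obstacle is precisely the localization step — showing that a minimal idempotent of $A_K(\G)$ is supported on the orbit of an isolated point; the splitting argument in the diagonal $D$ sketched above is where the topology of $X$ (through isolation) enters, and condition~(LP) is exactly what converts the resulting statement ``$\soc(K\G^x_x)\neq 0$'' into ``trivial isotropy''.
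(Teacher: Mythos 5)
The paper does not prove Theorem~\ref{thm:mainCGMMR} at all: it is quoted as an external result from \cite{CCMMR25}, so there is no internal proof to compare your argument against. Judged on its own merits, your outline gets the easy parts right: the inclusion $\supseteq$ via Theorem~\ref{thm:structure-isolated}(1), the use of condition (LP) to convert ``$\soc(K\G^x_x)\neq 0$'' into trivial isotropy, and the final identification of $\Lambda_x$ with the orbit $[x]$ giving $M_{|[x]|}(K)$ are all correct and are surely close to what \cite{CCMMR25} does.

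The inclusion $\subseteq$, however, contains a genuine gap, and it sits exactly at the step you yourself identify as the crux. The assertion that a non-isolated point in the support of $P(e)$ ``would admit arbitrarily fine splittings of its compact open neighborhoods into orthogonal diagonal idempotents of $D$, which is incompatible with the Artinian (simple) module $A_K(\G)e$'' is not an argument: refining a compact open set $V$ meeting the support of $P(e)$ only produces nonzero submodules $A_K(\G)1_Ve$ of $A_K(\G)e$, and simplicity forces every one of these to equal $A_K(\G)e$ --- no contradiction follows, and no strictly decreasing chain is produced. Likewise, ``simplicity of the module restricts this support to a single orbit'' does not follow from the fact that each piece generates the whole module. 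Since this localization step (minimal idempotent $\Rightarrow$ isolated point in the support) is the entire content of the theorem, the proposal as written does not prove it. There are also secondary points needing care: you take a \emph{self-adjoint} minimal idempotent $e$ and deduce $P(e)\neq 0$ from ``faithfulness'' of $P$, but over an arbitrary field $K$ with involution neither the existence of a self-adjoint generator of a minimal left ideal nor the nonvanishing of $P(a^*a)$ for $a\neq 0$ is automatic (the usable form of faithfulness is that $P(1_{B^{-1}}f)\neq 0$ for a suitable compact open bisection $B$); and semiprimeness of $A_K(\G)$ is not among the stated hypotheses of the theorem --- Proposition~\ref{prop:Leavitt-is-semiprime} covers the application to $\Le_K^{\ab}(E,C)$ but not the general statement, which assumes only (LP).
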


In the following, we will use the Leavitt model for the space $\Omega (E,C)$ and the canonical action of $\F = \F(E^1)$ on it, see Section \ref{sect:tight-spectrum}. 
Recall that the notion of a Leavitt $(E,C)$-tree has been introduced in Definition \ref{def:Leavitt-Munn-tree}(a). 

\begin{notation}
	\label{not:isoOmega}
	Let $(E,C)$ be a separated graph. We denote by $\isoOmega$ the set of Leavitt $(E,C)$-trees which do not admit an exit, and we denote by $\isoisoOmega$ the subset of $\isoOmega$ of those Leavitt $(E,C)$-trees $T$ such that there is no closed path $\alpha = \beta \gamma \beta^{-1}$, where $\gamma $ is a cycle, $(\alpha^{-1})_L \in T$ and $(\alpha\cdot T \cup \alpha^\downarrow)_L = T$. We will write $\isoOmegaEC$ and $\isoisoOmegaEC$ in case we need to specify the separated graph $(E,C)$.  
\end{notation}

We now characterize the set of isolated points in $\Omega (E,C)$, where $(E,C)$ is a separated graph. 

\begin{proposition}
	\label{prop:isoand-isoiso-forOmega}
	Let $(E,C)$ be a separated graph. Then the set of isolated points of $\Omega (E,C)$ coincides with the set $\isoOmega$ of all (finite) Leavitt $(E,C)$-trees which do not admit exits. Moreover the set $\isoisoOmega$ is the set of isolated points $z\in \Omega (E,C)$ with trivial isotropy. 
		\end{proposition}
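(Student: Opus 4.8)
The plan is to work entirely inside the Leavitt model of Theorem~\ref{thm:new-charac-tight-spectrum}, in which $\Omega(E,C)$ is the set $\mathfrak T_L$ of those $Z\in\mathfrak F_L$ having no finite exits, topologized by the cylinders $\mathcal Z_L(T\setminus F)\cap\mathfrak T_L$ with $T\in\Y_L$ and $F$ a finite subset of $\Ninf_L(T)$. The first thing I would record is a reframing: for any $Z\in\mathfrak F_L$ one has $\mathcal N_L(Z)=\emptyset$ (that is, $Z$ has no exits) if and only if $Z$ is maximal in $\mathfrak F_L$, i.e. $Z\in\mathfrak U_L$. Indeed an exit $gg'$ yields the strictly larger element $Z\cup\{gg'\}^\downarrow\in\mathfrak F_L$; conversely if $W\supsetneq Z$ in $\mathfrak F_L$, then choosing a minimal $hx\in W\setminus Z$ (so $h\in Z$), extending $hx$ inside $W$ to a leaf ending in a non-singleton edge, and cutting at the first such edge $y$, exhibits an exit $h\gamma y\in\mathcal N_L(Z)$. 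Consequently $\isoOmega$ is exactly the set of finite maximal elements of $\mathfrak F_L$, and it lies in $\mathfrak T_L$ since no exits implies no finite exits. The easy direction is then immediate: if $Z\in\isoOmega$ then $Z\in\Y_L$, so $\mathcal Z_L(Z)\cap\mathfrak T_L$ is a basic neighbourhood of $Z$, and any $W$ in it satisfies $Z\subseteq W\in\mathfrak F_L$, forcing $W=Z$ by maximality; hence $\{Z\}$ is open.

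The main work is the converse: every $Z\in\mathfrak T_L$ that is \emph{not} a finite ultrafilter is a limit point. I would split this into (a) $Z$ finite but non-maximal, which (having no finite exits) must carry an infinite exit $g\gamma y$ with $y\in X\in\Cinf$; and (b) $Z$ infinite, which then contains non-singleton branching edges of arbitrarily large length. Given an arbitrary basic neighbourhood $\mathcal Z_L(T\setminus F)\cap\mathfrak T_L$ of $Z$, I would produce a second point $W_0\in\mathfrak F_L$ as follows. In case (a), infiniteness of $X$ lets me pick $y'\in X$, $y'\ne y$, with $g\gamma y'\notin F$, and set $W_0:=Z\cup\{g\gamma y'\}^\downarrow$; this is $C$-compatible because $Z$ carries no $X$-branch at $g\gamma$, and $W_0\neq Z$. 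In case (b), I would pick a non-singleton branching position $p$ of $Z$ of length exceeding every length occurring in $T\cup F$ and replace the branch of $Z$ at $p$ by a different admissible edge; then $W_0\neq Z$ and $W_0$ agrees with $Z$ away from a cone at level greater than those lengths, so it automatically misses $F$.

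The genuine obstacle is to upgrade $W_0$ to a point of $\mathfrak T_L$ (no finite exits) still inside the cylinder. The clean principle I would exploit is that every $f\in F\subseteq\Ninf_L(T)$ ends in an edge of an \emph{infinite} set, whereas resolving a finite exit only appends paths ending in \emph{finite}-set edges. Thus I would saturate $W_0$ by repeatedly killing its finite exits — which are confined to the newly created cone — obtaining, via a Zorn argument inside $\{W'\supseteq W_0: W'\setminus W_0 \text{ lies in that cone},\ W'\in\mathfrak F_L\}$, a maximal such $W\in\mathfrak T_L$ with $W\setminus W_0$ consisting of vertices none of which end in an infinite-set edge. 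Since $Z\cap F=\emptyset$ and the unique new infinite-set-ending vertex was chosen outside $F$, we get $W\cap F=\emptyset$ and $T\subseteq Z\subseteq W$, so $W\neq Z$ belongs to the neighbourhood and $Z$ is not isolated. I expect verifying that all finite exits of $W_0$ really are confined to the new cone (so that saturation never disturbs $Z$ at low levels or reintroduces an $F$-element) to be the delicate bookkeeping step.

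For the final clause about $\isoisoOmega$, I would combine the above with the isotropy analysis. By the identification $\G_Z^Z\cong F_Z$ for the partial action $\F\act\Omega(E,C)$ and by Lemma~\ref{lem:trivial-properties-isotropy}, an isolated point $Z$ has trivial isotropy exactly when $F_Z$ has no non-trivial element, and every such element is of the form $\beta\gamma\beta^{-1}$ with $\gamma$ a cycle. Translating $\alpha\in F_Z$ into the Leavitt model through Lemma~\ref{lem:well-defined-inverse-product}, namely $(\alpha^{-1})_L\in Z$ together with $(\alpha\cdot Z\cup\alpha^\downarrow)_L=Z$, reproduces verbatim the condition defining $\isoisoOmega$ in Notation~\ref{not:isoOmega}. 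Hence $\isoisoOmega$ is precisely the set of isolated points of $\Omega(E,C)$ with trivial isotropy.
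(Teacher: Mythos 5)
Your identification of the isolated points with the finite maximal elements of $\mathfrak F_L$, and the easy direction via $\mathcal Z_L(T)\cap\mathfrak T_L=\{T\}$, are correct and agree with the paper. For the converse the paper argues differently: from $\mathcal Z_L(T\setminus F)\cap\mathfrak T_L=\{Z\}$ it deduces that the \emph{finite} tree $T$ defining the neighbourhood can have no exits (an exit of $T$ in a class $X$ with $|X|\ge 2$ yields two $C$\nb-incompatible one-edge enlargements of $T$, both avoiding $F$ because $F\subseteq\Ninf_L(T)$, and both extendable to points of $\mathfrak T_L$), whence $T$ is maximal, $F=\emptyset$ and $Z=T$. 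You instead perturb $Z$ itself, splitting into $Z$ finite and $Z$ infinite. Your case (a) is correct, and your Zorn saturation killing the finite exits created in the new cone is a genuine point that the paper leaves implicit in the phrase ``can be extended to two different elements''; the observation that elements of $F$ end in infinite-class edges while saturation only appends paths ending in finite-class edges is exactly the right reason the saturation stays inside the cylinder.

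Case (b), however, rests on a false claim. An infinite $Z\in\mathfrak T_L$ need \emph{not} contain non-singleton branching positions of arbitrarily large length: the graph is not assumed locally finite, so $Z$ can be infinite of bounded depth, e.g.\ $Z=\{v\}\cup\{e_i\}_{i\in I}$ with $I$ infinite and each $e_i$ in a distinct class $X_i\in C_v$ with $|X_i|\ge 2$ (such $Z$ lies in $\mathfrak T_L$ when every finite non-singleton class of $C_v$ occurs among the $X_i$ and the vertices $r(e_i)$ emit no finite non-singleton classes). For such $Z$ and a nontrivial $T$ there is no branching position of length exceeding the lengths in $T\cup F$, so your $W_0$ cannot be constructed. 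The repair is to drop the length argument: since $T$ is finite and $Z\supsetneq T$, pick $hx\in Z\setminus T$ with $h\in T$, extend $hx$ inside $Z$ to a path ending in an edge $y\in X$ with $|X|\ge 2$, and truncate at the first such edge to obtain an exit $h\gamma y\in\mathcal N_L(T)$; then replace $y$ by $y'\in X\setminus\{y\}$ with $h\gamma y'\notin F$ (automatic when $X$ is finite, since then $h\gamma y'\notin\Ninf_L(T)$) and saturate $T\cup\{h\gamma y'\}^\downarrow$ exactly as in your case (a). The resulting point of $\mathfrak T_L$ contains $h\gamma y'$, hence cannot contain $h\gamma y\in Z$, so it differs from $Z$; this subsumes both of your cases and is essentially the paper's uniform argument. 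The final clause on $\isoisoOmega$ is handled as in the paper, via Lemma \ref{lem:trivial-properties-isotropy}.
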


\begin{proof}
	Suppose that $Z$ is an isolated point in $\Omega (E,C) = \mathfrak T _L$. Then by Theorem \ref{thm:new-charac-tight-spectrum}, $Z$ is an element in $\mathfrak F_L$ with no finite exits.  Moreover, there is an open set of the form $V:=\mathcal Z_L (T\setminus F) \cap \mathfrak T_L$, where $T\in \Y_L$ (that is, $T$ is a Leavitt $(E,C)$-tree), and $F$ a finite subset of $\Ninf_L (T)$, such that $V= \{Z\}$. But if $T$ admits an exit, then we can build at least two different elements $Z_1,Z_2\in \mathcal Z_L (T\setminus F)$, which then can be extended to two different elements $Z_1',Z_2'\in \mathcal Z_L (T\setminus F) \cap \mathfrak T _L$, and we get a contradiction. It follows that $T$ has no exits, and then again by Theorem \ref{thm:new-charac-tight-spectrum}, $T\in \Omega (E,C)$. Moreover $F=\emptyset$, because $T$ has no infinite exits. It follows that $Z=T$ is a Leavitt $(E,C)$-tree without exits. 
	
	Conversely, if $T$ is a Leavitt $(E,C)$-tree without exits, then $T\in \Omega (E,C)$ and $\mathcal Z_L (T) = \{T\}$, and so $T$ is an isolated point in $\Omega (E,C)$. 
	
	The last statement follows from Lemma \ref{lem:trivial-properties-isotropy}. 
		\end{proof}

The condition that $T\in \isoOmega$ imposes serious restrictions, and $\isoOmega = \emptyset $ for many separated graphs. 

\begin{lemma}
	\label{lem:isotropy-when-all-groups-havemoreon-element}
	Let $(E,C)$ be a separated graph such that $|X| \ge 2$ for all $X\in C$, and suppose that $T$ is a Leavitt $(E,C)$-tree without exits. Suppose that $h\in \FC$ is such that $T\cup h^\downarrow \in \Y$ and $h\notin T$, and write $h= gg'$, where  $g\in T$, $g[g'] _1\notin T$. Then all the edges of $g'$ are inverse edges $e^{-1}\in E^{-1}$ with the property that $|C_{s(e)}| = 1$.
	Moreover, $T\in \isoisoOmega$ and thus $\isoOmega = \isoisoOmega$. 
\end{lemma}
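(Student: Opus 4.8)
The plan is to treat the two assertions separately, both times exploiting that a Leavitt $(E,C)$-tree $T$ without exits satisfies $\mathcal N_L(T)=\emptyset$ (Definition \ref{def:exits} and Notation \ref{notati:opencompactbasis1}), and that under the hypothesis $|X|\ge 2$ for all $X\in C$ there are no singleton classes, so that $\Y_L=\Y_0$ and $g_L$ is obtained from $g\in\FC$ simply by deleting its trailing inverse edges. I would first prove that $g'$ contains no forward edge. Write $g'=x_1\cdots x_n$ with $x_i\in E^1\cup E^{-1}$, and suppose toward a contradiction that $x_k\in E^1$ is the first forward edge, so $x_1,\dots,x_{k-1}\in E^{-1}$. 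Setting $g'':=x_1\cdots x_k=\gamma y$ with $\gamma=x_1\cdots x_{k-1}$ a product of inverse edges and $y=x_k\in X$ (where $|X|\ge 2$), I would check that $gg''$ is an exit of $T$: indeed $g''\in\Nfin_L(r(g))\cup\Ninf_L(r(g))$, and since $gg''\le_p h$ we have $\{gg''\}^\downarrow\subseteq h^\downarrow$, so $T\cup\{gg''\}^\downarrow\subseteq T\cup h^\downarrow$ is finite and $C$-compatible; as $gg''$ ends in a forward edge of a class of size $\ge 2$, every element of $T\cup\{gg''\}^\downarrow$ extends inside it to a maximal element not ending in $E^{-1}$, whence $T\cup\{gg''\}^\downarrow\in\mathfrak{F}_L$. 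With $g[g'']_1=gx_1\notin T$ this yields $gg''\in\mathcal N_L(T)$, contradicting that $T$ has no exits. Hence every $x_i\in E^{-1}$, say $x_i=e_i^{-1}$.

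Next I would show $|C_{s(e_i)}|=1$ for each $i$. Assuming $|C_{s(e_i)}|\ge 2$, let $X_0\in C_{s(e_i)}$ be the class of $e_i$, choose a class $X_1\ne X_0$ at $s(e_i)$, and pick $f\in X_1$. Then $gx_1\cdots x_i f$ is reduced (as $f\ne e_i$) and $C$-separated (its only new length-two subword $e_i^{-1}f$ is harmless since $e_i,f$ lie in distinct classes), it ends in a forward edge of a class of size $\ge 2$, and $gx_1\notin T$. The delicate point is to verify that $T\cup\{gx_1\cdots x_i f\}^\downarrow$ is $C$-compatible: since $gx_1\cdots x_i\le_p h$ is a leaf of $T\cup\{gx_1\cdots x_i\}^\downarrow$, any geodesic in the Cayley graph from the new vertex $gx_1\cdots x_i f$ to a vertex $\nu$ of that tree has the form $f^{-1}e_i\cdot(\text{geodesic from }gx_1\cdots x_i\text{ to }\nu)$, and $f^{-1}e_i$ is not a forbidden pattern because $e_i\in X_0$, $f\in X_1$ with $X_0\ne X_1$, while the remaining portion is $C$-separated by $C$-compatibility of $T\cup h^\downarrow$. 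This exhibits $gx_1\cdots x_i f$ as an exit of $T$, a contradiction, so $|C_{s(e_i)}|=1$. I expect this last $C$-compatibility verification to be the main obstacle of the first assertion, as it is the only place where one must argue directly inside the Cayley graph instead of quoting $C$-compatibility of $T\cup h^\downarrow$.

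For the ``moreover'' I would prove that $T$ has trivial isotropy, which by Proposition \ref{prop:isoand-isoiso-forOmega} is exactly the statement $T\in\isoisoOmega$, and hence that $\isoOmega=\isoisoOmega$. Suppose $\alpha\in F_T$ is non-trivial. By Lemma \ref{lem:trivial-properties-isotropy} I may write $\alpha=\beta\gamma\beta^{-1}$ with $\gamma$ a non-trivial cycle and $\gamma\in F_{T'}$, where $T'=\theta_{\beta^{-1}}(T)$ is again an isolated point (the partial action is by homeomorphisms, and isolated points are precisely the finite Leavitt trees without exits by Theorem \ref{thm:new-charac-tight-spectrum} and Proposition \ref{prop:isoand-isoiso-forOmega}); in particular $T'$ is finite. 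Since $F_{T'}$ is a group, $\gamma^{-n}\in F_{T'}$ for all $n$, so $T'\in D_{\gamma^{n}}^L$ and therefore $(\gamma^n)_L\in T'$ for every $n\in\Z$. Now $\gamma$ or $\gamma^{-1}$ has a forward edge (a non-trivial cycle has at least one edge, and both $\gamma,\gamma^{-1}$ are cycles in $F_{T'}$); replacing $\gamma$ by $\gamma^{-1}$ if necessary, assume $\gamma$ does. Under $|X|\ge 2$ this forward edge lies in a class of size $\ge 2$, so it is not stripped in the $L$-decomposition and $\gamma_L\ne s(\gamma)$; writing $m$ for the position of the last forward edge of the reduced word $\gamma$, the cycle condition ensures $\gamma^n$ is reduced and $(\gamma^n)_L=\gamma^{n-1}\gamma_L$ has length $(n-1)|\gamma|+m\to\infty$. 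Thus $\{(\gamma^n)_L:n\ge 1\}$ is an infinite subset of the finite set $T'$, a contradiction, and $F_T=\{1\}$. The essential use of $|X|\ge 2$ is precisely in forcing $\gamma_L\ne s(\gamma)$ and hence the unbounded growth of these lengths; without it a cycle assembled from singleton edges could fix $T$.
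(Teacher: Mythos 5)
Your proof is correct and follows essentially the same route as the paper's: a positively oriented edge in $g'$, or a second class in $C_{s(e_i)}$, would produce an exit of $T$, and for the triviality of the isotropy one uses that the $L$\nb-parts of the powers of a cycle containing a forward edge have unbounded length yet must all lie in a finite tree. The only cosmetic difference is that the paper runs the length argument directly on $(\alpha^n)_L\in T$ for $\alpha^n=\beta\gamma^n\beta^{-1}$, whereas you conjugate to $T'=\theta_{\beta^{-1}}(T)$ and use $(\gamma^n)_L\in T'$; your write-up also supplies the $C$\nb-compatibility and $\mathfrak F_L$ verifications that the paper leaves implicit.
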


\begin{proof}
	If some of the edges of $g'$ is positively oriented, then we would obtain an exit of $T$, because $|X|\ge 2$ for all $X\in C$. Hence all edges of $g'$ are inverse edges. Let $e^{-1}$ be one of these edges, and suppose that $e\in X\in  C_{s(e)}$. We then have $gg''e^{-1} \le_p h$ for some initial segment $g''$ of $g'$. If there is $Y\in C_{s(e)}$ such that $X\ne Y$, then choosing $y\in Y$, we obtain an exit $gg''e^{-1}y$ of $T$, which is a contradiction. Hence $|C_{s(e)}| = 1$.

	We now show that the isotropy group at $T$ is trivial. Suppose, by way of contradiction that $\alpha \in F_{T}$ is a non-trivial element in the isotropy group of $T$. By Lemma \ref{lem:trivial-properties-isotropy}, we have $\alpha = \beta \gamma \beta^{-1}$, where $\gamma $ is a cycle. Moreover $\alpha ^n = \beta \gamma^n \beta^{-1}$ is also an element in $F_{T}$ for all $n\in \Z$, by Lemma \ref{lem:trivial-properties-isotropy}. Replacing $\alpha$ with $\alpha^{-1}$ if needed, we may assume that $\gamma$ contains an edge $e\in E^1$. In particular, $\beta \gamma^{n-1} \gamma _L = \beta \gamma^{n-1} \gamma_0 \le _p (\alpha^n)_L$ for all $n>0$, and thus the lengths of the paths $(\alpha^n)_L$ are unbounded. Since $(\alpha^n)_L\in T$ for all $n$, and $T$ is a finite tree, this is impossible, and thus we get a contradiction. Hence $F_T$ is trivial, and so $T\in \isoisoOmega$.
	\end{proof}

\begin{theorem}
	\label{thm:socle-tameLeav-algebra}
	Let $(E,C)$ be a separated graph. Then $\mathrm{soc} (\Le^\ab (E,C))$ coincides with the ideal generated by all projections $\e (T)$, where $T\in \isoisoOmega$. Moreover  
	$$ \mathrm{soc} (\Le^\ab (E,C)) \cong \bigoplus_{[T]\in \isoisoOmega/{\mathcal R}} M_{|[T]|} (K).$$
		For a fixed $T\in \isoisoOmega$, we can specify a set of matrix units in the component 
		$M_{|[T]|} (K)$ of $T$ by setting 
	   $$e_{\alpha,\beta}:= 1_{(\theta_{\alpha}(T), \alpha\cdot \beta^{-1} , \theta_{\beta} (T))},$$
	   where $\alpha,\beta \in \FC$ are such that $(\alpha^{-1})_L, (\beta^{-1})_L\in T$.
\end{theorem}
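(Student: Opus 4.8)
The plan is to derive everything from the general socle theorem for Steinberg algebras, Theorem~\ref{thm:mainCGMMR}, applied to the ample Hausdorff groupoid $\G := \G_\tight(E,C)$, via the identification $\Le^\ab_K(E,C) \cong A_K(\G_\tight(E,C))$ of Theorem~\ref{thm:structure-of-Leavitt} together with the Leavitt model $\G_\tight(E,C) \cong \mathfrak T_L \rtimes \F$ of Theorem~\ref{thm:new-groupoid-model}, under which the unit space $X = \G^{(0)}$ is $\Omega(E,C) = \mathfrak T_L$.

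First I would check that $\G$ satisfies condition (LP) of Definition~\ref{def:LP}. By Lemma~\ref{lem:trivial-properties-isotropy} (as already exploited in Proposition~\ref{prop:Leavitt-is-semiprime}), every isotropy group $\G_Z^Z \cong F_Z$ is a subgroup of $\F$, hence a free group by Nielsen--Schreier. So it suffices to record that for a free group $F$ one has $\mathrm{soc}(K[F]) \neq 0$ if and only if $F$ is trivial: if $F$ is trivial then $K[F] = K$ is a field with $\mathrm{soc}(K) = K \neq 0$; if $F$ is nontrivial it is orderable, so $K[F]$ is a domain, and a domain that is not a division ring has no minimal one-sided ideals (a minimal left ideal $L$ would satisfy $L^2 \neq 0$, hence $L = Re$ for a nonzero idempotent $e$ by Brauer's lemma, but the only idempotents of a domain are $0$ and $1$, forcing $L = R$ and $R$ to be a division ring). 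Since $\G_Z^Z = \{Z\}$ exactly when $F_Z$ is trivial, this is precisely condition (LP).

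With (LP) in hand, Theorem~\ref{thm:mainCGMMR} identifies $\mathrm{soc}(A_K(\G))$ with the ideal generated by the functions $1_x$ over the isolated units $x$ with trivial isotropy, and gives the decomposition $\bigoplus_{[x]\in\isoisoX/\mathcal R} M_{|[x]|}(K\G_x^x)$. It remains to translate this into tree language. By Proposition~\ref{prop:isoand-isoiso-forOmega}, $\isoX = \isoOmega$ is the set of Leavitt $(E,C)$-trees without exits and $\isoisoX = \isoisoOmega$ is its subset of trees with trivial isotropy; for such a $T$ one has, in the Leavitt model, $1_T = 1_{\mathcal Z_L(T)} = \e(T)$, since $\mathcal Z_L(T) = \{T\}$. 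As $\G_T^T$ is trivial for $T \in \isoisoOmega$, each block is $M_{|[T]|}(K)$, yielding both the asserted decomposition and the fact that $\mathrm{soc}(\Le^\ab_K(E,C))$ is the ideal generated by $\{\e(T) : T \in \isoisoOmega\}$.

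Finally I would pin down the matrix units using Theorem~\ref{thm:structure-isolated}(2). Fix $T \in \isoisoOmega$. The arrows of $\G$ with source $T$ are exactly the triples $\lambda_\alpha := (\theta_\alpha(T), \alpha, T)$ with $(\alpha^{-1})_L \in T$; since $F_T = \{T\}$, the range map is a bijection between such $\alpha$ and the orbit of $T$, so I take the transversal $\Lambda_T = \{\lambda_\alpha\}$, one element per orbit point. Under the isomorphism of Theorem~\ref{thm:structure-isolated}(2), the function $1_\gamma$ of an arrow $\gamma$ from $\theta_\beta(T)$ to $\theta_\alpha(T)$ maps to the matrix with unique nonzero entry $\lambda_\alpha^{-1}\gamma\lambda_\beta \in \G_T^T$ in the $(\lambda_\alpha,\lambda_\beta)$ spot; for $\gamma = (\theta_\alpha(T), \alpha\cdot\beta^{-1}, \theta_\beta(T))$ the group part is $\alpha^{-1}\cdot(\alpha\beta^{-1})\cdot\beta$, which reduces to the unit at $T$, i.e. the scalar $1 \in K\G_T^T = K$. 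Thus $e_{\alpha,\beta} = 1_{(\theta_\alpha(T),\alpha\cdot\beta^{-1},\theta_\beta(T))}$ are the stated matrix units, and a direct composition check in the Leavitt model confirms $e_{\alpha,\beta}e_{\gamma,\delta} = \delta_{\beta,\gamma}\,e_{\alpha,\delta}$: the product is defined iff $\theta_\beta(T) = \theta_\gamma(T)$, and triviality of the isotropy forces $\theta_\beta(T) = \theta_\gamma(T) \Leftrightarrow \beta = \gamma$ (since then $(T,\beta^{-1}\cdot\gamma,T) \in \G_T^T = \{T\}$). I expect the main obstacle to be not any single deep step but the careful bookkeeping of this last paragraph—matching the abstract transversal and matrix-unit formula of Theorem~\ref{thm:structure-isolated} with the concrete triples of the Leavitt model, where triviality of $F_T$ is exactly what makes the indexing $\alpha \mapsto \theta_\alpha(T)$ injective on the orbit so that the $e_{\alpha,\beta}$ are well defined; the free-group-algebra socle computation is the only genuinely new algebraic input.
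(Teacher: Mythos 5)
Your proposal is correct and follows essentially the same route as the paper: apply the general socle theorem for Steinberg algebras (Theorem~\ref{thm:mainCGMMR}) together with the identification of the isolated points and their isotropy in Proposition~\ref{prop:isoand-isoiso-forOmega}, and then read off the matrix units from the multiplication rules of the Leavitt model of $\G_\tight(E,C)$ via Theorem~\ref{thm:structure-isolated}. The one place where you add genuine content is the explicit verification of condition (LP) for free groups (orderability, hence domain, hence zero socle unless trivial), a hypothesis the paper's proof invokes only implicitly through the observation that all isotropy groups are free; this check is correct and worth having on record.
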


\begin{proof}
	The first part follows from Theorem \ref{thm:mainCGMMR} and Proposition \ref{prop:isoand-isoiso-forOmega}. The second part follows easily from the multiplication rules in the groupoid $\G_\tight (E,C)$ given in Section~\ref{sect:tight-spectrum}.
\end{proof}

With some extra hypothesis, we can obtain more precise information about the sizes of the matrices in the homogeneous components. 

\begin{proposition}
	\label{prop:finite-orbits}
	Let $(E,C)$ be a separated graph such that $|X|\ge 2$ for all $X\in C$ and $|C_v|\ge 2$ for all $v\in E^0$ such that $C_v\neq \emptyset$. Then if $T\in \isoOmega$, the size of the orbit of $T$ is exactly $|T|$, the cardinality of $T$, and the elements of the orbit of $T$ are exactly $\{ \theta_{\alpha} (T): \alpha^{-1}\in T\}$. Moreover $r(\gamma)$ is a sink of $E$ for each $\gamma \in \max (T)$. Hence, we have
		$$ \mathrm{soc} (\Le^\ab (E,C)) \cong \bigoplus_{[T]\in \isoisoOmega/{\mathcal R}} M_{|T|} (K).$$
\end{proposition}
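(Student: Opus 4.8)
The plan is to read everything off the Leavitt model of the tight groupoid $\G_\tight(E,C)$ developed in Section~\ref{sect:tight-spectrum}, and then feed the result into Theorem~\ref{thm:socle-tameLeav-algebra}. The starting observation is that the two cardinality hypotheses force $T$ to be \emph{inclusion-maximal} in $\Y$: if there were $h\notin T$ with $T\cup h^\downarrow\in\Y$, then writing $h=gg'$ as in Lemma~\ref{lem:isotropy-when-all-groups-havemoreon-element} would force every edge of $g'$ to be an inverse edge $e^{-1}$ with $|C_{s(e)}|=1$; but $s(e)$ emits $e$, so it is not a sink and hence $|C_{s(e)}|\ge 2$, a contradiction. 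Thus no proper $C$-compatible extension of $T$ exists. Note also that, since $|X|\ge 2$ for all $X\in C$, there are no singleton classes, so by Lemma~\ref{lem:isotropy-when-all-groups-havemoreon-element} we already have $\isoOmega=\isoisoOmega$ and every such $T$ has trivial isotropy.

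First I would prove the claim about $\max(T)$. A maximal element $\gamma$ of a Leavitt $(E,C)$-tree does not end in $E^{-1}$ and (there being no singleton classes) does not end in a singleton edge, so $\gamma$ is either the root vertex or ends in a positive edge. If $w:=r(\gamma)$ were not a sink, then $C_w\ne\emptyset$, and picking any $X\in C_w$ and $x\in X$ the concatenation $\gamma x$ is $C$-separated (the junction is positive--positive, never of the forbidden form $y^{-1}z$), so $\gamma x$ is an exit of $T$ --- a finite exit if $X\in\Cfin$ and an infinite one otherwise. This contradicts $T\in\isoOmega$, whose defining property (Proposition~\ref{prop:isoand-isoiso-forOmega}) is the absence of exits. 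Hence $r(\gamma)$ is a sink; the same argument applied to $\gamma=v$ covers the trivial tree $T=\{v\}$.

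Next I would compute the orbit using the groupoid picture $\G_\tight(E,C)\cong\{(Z',g,Z)\}$ from Section~\ref{sect:tight-spectrum}. The arrows with source $T$ are the triples $(\theta_g(T),g,T)$ for which $T\cup\{g^{-1}\}^\downarrow$ is $C$-compatible --- otherwise the range $[g\cdot(T\cup\{g^{-1}\}^\downarrow)]_L$ degenerates to $0$ and leaves $\mathfrak T_L$. By inclusion-maximality this compatibility holds precisely when $g^{-1}\in T$, so the set of arrows out of $T$ is in bijection with $\{g : g^{-1}\in T\}$, hence with $T$ itself via $g\mapsto g^{-1}$. Since $T\in\isoisoOmega$ has trivial isotropy, the range map is injective on these arrows: indeed $\theta_g(T)=\theta_{g'}(T)$ with $g^{-1},g'^{-1}\in T$ gives $g^{-1}\cdot g'$ in the trivial isotropy group $F_T$, forcing $g=g'$. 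Therefore the orbit of $T$ is exactly $\{\theta_\alpha(T):\alpha^{-1}\in T\}$ and has cardinality $|T|$.

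Finally, substituting $|[T]|=|T|$ into the description of the socle in Theorem~\ref{thm:socle-tameLeav-algebra} (applicable because $\isoOmega=\isoisoOmega$ here) yields $\mathrm{soc}(\Le_K^\ab(E,C))\cong\bigoplus_{[T]\in\isoisoOmega/\mathcal R}M_{|T|}(K)$. The delicate point --- and the step I would expect to cost the most care --- is the orbit computation, specifically pinning down the domain of the partial action at $T$. A naive reading of the groupoid description suggests $\theta_g$ is defined whenever $(g^{-1})_L\in T$, which would allow $g^{-1}$ to carry spurious trailing inverse edges and seemingly inflate the orbit beyond $|T|$; the resolution is exactly that inclusion-maximality (coming from the hypothesis $|C_v|\ge 2$, which goes beyond the $|X|\ge 2$ used for the general socle formula) rules out such inverse-edge tails, collapsing the domain to $\{g : g^{-1}\in T\}$.
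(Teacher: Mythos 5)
Your proof is correct and follows essentially the same route as the paper: both arguments hinge on Lemma~\ref{lem:isotropy-when-all-groups-havemoreon-element}, first to show that the hypothesis $|C_v|\ge 2$ rules out inverse-edge tails (so that $(g^{-1})_L\in T$ forces $g^{-1}\in T$, collapsing the domain of the partial action at $T$ to $T$ itself), and then to get trivial isotropy; the sink claim follows as you say from the absence of exits. Your write-up is in fact somewhat more explicit than the paper's about why the map $\alpha\mapsto\theta_\alpha(T)$ is injective, which the paper leaves implicit.
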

 
\begin{proof}
	Obviously, the elements $\theta_{\alpha} (T)$, where $\alpha^{-1}\in T$, belong to the orbit of $T$. 
	By Lemma \ref{lem:isotropy-when-all-groups-havemoreon-element}, if $\gamma \in \FC$ and $\gamma_L = \gamma_0\in T$, then $\gamma \in T$. Hence the only elements in the orbit of $T$ are the ones described in the statement. It is also clear from Lemma \ref{lem:isotropy-when-all-groups-havemoreon-element} that $r(\gamma)$ is a sink of $E$ for each $\gamma\in \max (T)$.  
	\end{proof}

Hence the situation for the separated graphs satisfying the hypothesis of Proposition \ref{prop:finite-orbits} is completely different from the situation in the non-separated case, in which the orbit of the points in $\isoisoOmega$ can easily be infinite, as in the case of the Toeplitz algebra.

\section{Examples}
\label{sect:examples}

In this section, we consider some examples. We first consider two easy examples, the case of a non-separated graph and the case where $E$ is a finite, acyclic and connected as non-oriented graph, and $C$ is the free partition.
We then consider the problem of describing the ideal $\mathcal Q$ of an arbitrary tame Cohn algebra $\Co _K^\ab (E,C)$ of a separated graph, which in some cases is closely related to the socle of $\Co _K^\ab (E,C)$. This is obviously related to the socle of $\Le_K ^\ab (\ol{E},\ol{C})$, where $(\ol{E},\ol{C})$ is the separated graph associated to $(E,C)$ in Section \ref{sect:relation-with-Cohn-algebras}. After this, we will consider two specific non-trivial examples.

\begin{example}
	\label{exam:socle-non-separated-graph}
	Let $E$ be a non-separated graph. Then $\soc (\Le_K (E))$ is the ideal generated by the line points of $E$, which are the vertices $v$ of $E$ such that the tree of $v$ does not contain any bifurcation vertex or any cycle \cite[Theorem 2.6.14]{AAS}. 
	In the Leavitt model, a line point $v$ is represented by a single vertex $\{v\}$, which does not have exits and does not have isotropy. This differs from the representation in the standard model, where the line point is represented by the path consisting of the tree of $v$, which can be finite or infinite. The third picture, that is, the complete model, also gives a different representation. The points in the orbit of a line point $v$ are represented in the Leavitt model by the vertices in the tree of $v$, the line points connecting to $v$, and the finite paths $e_1\cdots e_r$ in $E$ such that $|s^{-1}(s(e_r))| \ge 2$ and $r(e_r)$ is a line point connecting to $v$. Note that these are all finite trees without exits, as predicted by Proposition \ref{prop:isoand-isoiso-forOmega}. \qed
	\end{example}

\begin{example}
	\label{exam:finite-acyclic}
	Let $(E,C)$ be a separated graph where $E$ is a finite graph which is connected and acyclic as a non-oriented graph. Suppose that $C$ is the free separation, that is, $|X|= 1$ for all $X\in C$. Then 
	$$\Le_K (E,C) = \Le_K^\ab (E,C) = \soc (\Le_K (E,C)) \cong M_n (K),$$
	where $n=|E^0|$. 
	
	Indeed, each vertex is a Leavitt $(E,C)$-tree without exits, because $|X|= 1$ for all $X\in C$. Moreover $T_L$ consists of a vertex for each $T\in \Y$. 
	Since the graph is connected as a non-oriented graph, there is only one orbit of vertices. Since the graph is acyclic as a non-oriented graph, the isotropy of each vertex in trivial. Hence the result follows from Theorem \ref{thm:socle-tameLeav-algebra}. \qed
\end{example}

Let $(E,C)$ be a separated graph. Recall from Section \ref{sect:relation-with-Cohn-algebras} that there is a natural isomorphism $\Co^\ab _K(E,C)\cong p\Le^\ab (\ol{E},\ol{C})p$. The separated graph $(\ol{E},\ol{C})$ is built in Section \ref{sect:relation-with-Cohn-algebras} by enlarging the graph $(E,C)$. Each set $X\in \Cfin$ is enlarged with a new edge $d_X$ such that $r(d_X) =v_X$ is a sink in $\ol{E}$, and $\ol{C}^{\text{fin}} = \{ \ol{X} : X\in \Cfin \}$, where $\ol{X}=X\cup \{d_X\}$. 

We are going to describe a family of projections in the socle of $\Co^\ab _K(E,C)$. We will use the above isomorphism with $p\Le^\ab (\ol{E},\ol{C})p$ and the results of Section \ref{sect:socle} for the proof of the main result of this section, Theorem \ref{thm:completelyblocked}. However, the definition of the family of projections is made directly in terms of the separated graph $(E,C)$. 

For the next definition, recall the notion of a {\it blocking family} $F$ for $T$, where $T\in \Y_0$, which has been introduced in Definition \ref{def:blocking family}.

\begin{definition}
	\label{def:completely-blocked} Let $(E,C)$ be a separated graph, and let 
	$T\in \Y_0$. We say that $T$ {\it can be completely blocked} if $\mathcal N (T)$ is finite and non-empty. This implies that for any element $\gamma_0\gamma_1 y\in \mathcal N(T)$, we have $y\in Y\in \Cfin$, and that there is a maximum blocking family $F=\sqcup _{i=1}^r \gamma_0^i(\gamma_1^iX_i)$ for $T$, which contains all the elements of $\mathcal N (T)$.
	We denote by $\ecb (T)$ the idempotent given by
	$\ecb (T) = \e (T\setminus F)$, where $F$ is the maximum blocking family for $T$. The set of elements $T\in \Y_0$ which can be completely blocked will be denoted by $\cbY$.        
	\end{definition}

\begin{theorem}
	\label{thm:completelyblocked} Let $(E,C)$ be a separated graph. 
	For each $T\in \cbY$, the idempotent $\ecb (T)$ belongs to $\mathcal Q \cap \mathrm{soc} (\Co_K ^\ab (E,C))$. Moreover,  $\mathcal Q \cap \mathrm{soc} (\Co_K ^\ab (E,C))$ is the ideal of $\Co_K ^\ab (E,C)$
	generated by all the idempotents $\ecb (T)$, for $T\in \cbY$. 
\end{theorem}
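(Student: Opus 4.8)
The plan is to transport the entire statement, via the $*$-isomorphism $\ol\varphi\colon\Co_K^\ab(E,C)\congto pAp$ of Theorem~\ref{thm:relationLPACPA-TAME} (with $A:=\Le_K^\ab(\ol E,\ol C)$ and $p=\sum_{v\in E^0}v$), into the Leavitt path algebra $A$, where the socle is already computed. Two preliminary facts will do the organizing work. First, the corner $pAp$ is \emph{full}: each new sink satisfies $v_X=d_X^*(d_Xd_X^*)d_X\in ApA$, so $ApA=A$; since $A$ is semiprime (Proposition~\ref{prop:Leavitt-is-semiprime}), this yields $\ol\varphi(\soc(\Co_K^\ab(E,C)))=\soc(pAp)=p\,\soc(A)\,p$ by Morita invariance of the socle for semiprime rings with local units, a fact I would record as a short lemma. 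Second, every partition class of $(\ol E,\ol C)$ has at least two elements, so Lemma~\ref{lem:isotropy-when-all-groups-havemoreon-element} gives $\isoOmegaEC=\isoisoOmegaEC$ and Theorem~\ref{thm:socle-tameLeav-algebra} describes $\soc(A)$ as the ideal generated by the idempotents $\e(S)$ with $S$ a finite Leavitt $(\ol E,\ol C)$-tree without exits, decomposing as $\bigoplus_{[S]}M_{|[S]|}(K)$ over orbits of such trees. Under $\ol\varphi$ the ideal $\mathcal Q$ becomes the ideal $\ol{\mathcal Q}$ of $pAp$ whose basis (Theorem~\ref{thm:BQ-basis-ofQ}) consists of the $\e(S)\rtimes g$ with $g\in\FC$ and $S$ containing a cap $hd_X$.

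For the first assertion, fix $T\in\cbY$ with maximum blocking family $F$. By the computation in the proof of Theorem~\ref{thm:BQ-basis-ofQ}, $\ol\varphi(\ecb(T))=\e(S)$ with $\max(S)=\NBmax(T\setminus F)\sqcup\{\gamma^0_i\gamma^1_id_{X_i}\}$, and I would check that $S$ has no exits: finiteness of $\mathcal N(T)$ forces every infinite class of $T$ to be already taken, each finite class is either taken or capped, a cap $\gamma d_X$ blocks every further exit through $X$ (the geodesic $d_X^{-1}x$ is non-separated), and a non-blocked maximal $\gamma$ must end at a sink of $E$, for otherwise $\gamma x$ would be an uncapped neighbour. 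Hence $S$ is a finite exit-free Leavitt $(\ol E,\ol C)$-tree, so $\e(S)\in\soc(A)$; its root lies in $E^0$ and $F\neq\emptyset$ supplies a cap, giving $\e(S)\in\ol{\mathcal Q}\cap p\,\soc(A)\,p$. Transporting back, $\ecb(T)\in\mathcal Q\cap\soc(\Co_K^\ab(E,C))$, and since this intersection is an ideal, the ideal generated by all the $\ecb(T)$ is contained in it.

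For the reverse inclusion I would use that $\mathcal Q\cap\soc(\Co_K^\ab(E,C))$ is a two-sided ideal contained in the socle, hence a direct sum of homogeneous components of the socle; via $\ol\varphi$ this is a sum of the corner components $p\,H_{[S]}\,p$ that lie inside $\ol{\mathcal Q}$. Fix such a component, with orbit $[S]$. Being nonzero in the corner, it contains some $S'\in[S]$ with root in $E^0$; being inside $\ol{\mathcal Q}$, its diagonal idempotent $\e(S')$ lies in $\ol{\mathcal Q}$, forcing $S'$ to contain a cap. Reversing the recipe in the converse part of the proof of Theorem~\ref{thm:BQ-basis-ofQ} (delete the caps, take the $\Y_0$-reduction $T$ of what remains, read off $F$ from the cap data, with the group element $g=s(S')\in E^0$ trivial) produces $T\in\Y_0$ and a blocking family $F$ with $\ol\varphi(\e(T\setminus F))=\e(S')$. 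Exit-freeness of $S'$ forces every neighbour of $T$ to be capped and to run through a finite class, so $\mathcal N(T)$ is finite and non-empty with maximum blocking family $F$, i.e.\ $T\in\cbY$ and $\e(T\setminus F)=\ecb(T)$. As $\e(S')$ is a nonzero idempotent of the simple component $H_{[S]}$, the ideal generated by $\ecb(T)$ recovers all of $H_{[S]}$; summing over components gives $\mathcal Q\cap\soc(\Co_K^\ab(E,C))\subseteq I$, hence equality.

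The main obstacle I anticipate is the precise equivalence ``finite Leavitt $(\ol E,\ol C)$-tree without exits $\leftrightarrow$ completely blocked tree of $(E,C)$'', together with the bookkeeping about infinite classes: I must verify that exit-freeness of $S$ holds exactly when, back in $(E,C)$, all finite exit directions of $T$ are capped and all infinite classes are forced to be taken, which is precisely the finiteness of $\mathcal N(T)$ built into the definition of $\cbY$ (Definition~\ref{def:completely-blocked}), and that non-blocked maxima land at sinks of $E$. The second, more routine but essential, ingredient is the identification $\soc(pAp)=p\,\soc(A)\,p$ for the full corner and the decomposition of an ideal contained in a socle into homogeneous components; both are standard for semiprime algebras with local units and I would isolate them as preparatory lemmas.
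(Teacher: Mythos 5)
Your proof is correct and, in its first half, coincides with the paper's: both arguments transport $\ecb(T)$ through the isomorphism $\ol{\varphi}\colon \Co_K^\ab(E,C)\cong p\Le_K^\ab(\ol E,\ol C)p$ of Theorem~\ref{thm:relationLPACPA-TAME}, observe that the capped tree has no exits, and invoke Proposition~\ref{prop:isoand-isoiso-forOmega}, Lemma~\ref{lem:isotropy-when-all-groups-havemoreon-element}, Theorem~\ref{thm:socle-tameLeav-algebra} and the identity $\soc(pAp)=p\,\soc(A)\,p$. Where you genuinely diverge is in the reverse inclusion, specifically in how you force the blocking family to be non-empty. The paper takes a minimal idempotent $e$ of $\mathcal Q$, shows $e\sim\ecb(T)$ with $\mathcal N(T)$ finite, and then proves $F\neq\emptyset$ by expanding $f=\ecb(T)$ in the basis $\mathcal B(\mathcal Q)$ of Theorem~\ref{thm:BQ-basis-ofQ} and exploiting minimality together with Lemmas~\ref{lem:key-for-keyforbasis-of-L} and~\ref{lem:blocking-are-different}. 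You instead decompose $\mathcal Q\cap\soc$ into homogeneous components of the socle and read off the cap directly from $\e(S')\in\ol{\mathcal Q}$. Your route is more streamlined, but the step ``$\e(S')\in\ol{\mathcal Q}$ forces $S'$ to contain a cap'' is thinner than you present it: an exit-free Leavitt $(\ol E,\ol C)$-tree need not be $\widetilde{\mathfrak E}$-reduced (take $E$ with two parallel edges $e,f$ into a sink, $C=\{\{e,f\}\}$, $\mathfrak E(X)=e$, and $S'=\{v,e\}$), so $\e(S')$ need not itself be a basis element of $\ol{\mathcal B}$, and you cannot conclude by linear independence alone. The claim is still true — either expand $\e(S')$ into normal form via \eqref{eq:ng-reduction} and note that the lowest term $\e(S'_U)$ carries no cap, or observe that a cap-free $S'$ lies in $\Y_0(E,C)$ and $\e(S')$ survives in $\Le_K^\ab(E,C)$ by Theorem~\ref{thm:semigroup-generated}, hence $\e(S')\notin\mathcal Q$ — but one of these lines must be supplied. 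Your remaining preparatory lemmas (fullness of the corner, $\soc(pAp)=p\,\soc(A)\,p$ for semiprime rings with local units, decomposition of an ideal inside the socle into homogeneous components) are standard and are used implicitly by the paper as well, so isolating them is reasonable.
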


\begin{proof} First observe that $\ecb (T)= \e (T\setminus F)$ belongs to the ideal $\mathcal Q$, because $\mathcal N (T)$ is non-empty and finite.

	Let $(\ol{E},\ol{C})$ be the separated graph associated to $(E,C)$. Any $T\in \Y_0$ can be considered also as a $\ol{C}$-compatible tree in $(\ol{E},\ol{C})$. Moreover $T\in \ol{\Y}_L$, that is, $T$ belongs to the set $\Y_L(\ol{E},\ol{C})$, so the concept of an exit has been defined for $T$ with respect to $(\ol{E},\ol{C})$. By the isomorphism  $\Co^\ab _K(E,C)\cong p\Le_K^\ab (\ol{E},\ol{C})p$, the idempotent $\ecb (T)$ corresponds to $\e (T')$, for the Leavitt $(\ol{E},\ol{C})$-tree $T'$, where 
	$$T' = T\cup \big( \bigcup_{i=1}^r \{ \gamma^i_0 \gamma^i_1 d_{X_i} \}^\downarrow \Big),$$
	with $F= \sqcup _{i=1}^r  \gamma_0^i(\gamma_1^iX_i)$ the maximum blocking family for $T$. It follows that $T'$ has no exits in $(\ol{E},\ol{C})$, so that $T'$ is an isolated point in $\Omega (\ol{E},\ol{C})$ by Proposition \ref{prop:isoand-isoiso-forOmega}.

Since $|\ol{X}| \ge 2$ for all $\ol{X}\in \ol{C}$, it follows from Lemma \ref{lem:isotropy-when-all-groups-havemoreon-element} that $\Omega^{\text{iso}} (\ol{E},\ol{C}) = \Omega^{\text{iso}}_0 (\ol{E},\ol{C})$, hence $T'\in \Omega^{\text{iso}}_0 (\ol{E},\ol{C})$ and $\e (T')\in \soc (\Le_K^\ab (\ol{E},\ol{C}))$ by Theorem \ref{thm:socle-tameLeav-algebra}.

Hence we get 
	$$\e (T') \in p\Le_K^\ab (\ol{E},\ol{C})p\cap \soc (\Le_K^\ab (\ol{E},\ol{C})) = \soc (p\Le_K^\ab (\ol{E},\ol{C})p) \cong \soc (\Co_K^\ab (E,C)),$$
	and thus $\ecb (T) \in  \soc (\Co_K^\ab (E,C))\cap \mathcal Q$, as claimed. 
	       
	      Finally, suppose that $e$ is a minimal idempotent in $\mathcal Q$.

	       Let $\ol{\varphi}\colon \Co_K^\ab (E,C) \to p\Le_K^\ab ({E},\ol{C})p$ be the isomorphism from Theorem \ref{thm:relationLPACPA-TAME}, and set $\ol{\mathcal Q} = \ol{\varphi} (\mathcal Q)$. 
	        Then $\ol{\varphi} (e)$ is a minimal idempotent in the ideal 
	        $p\,  \soc (\Le_K^\ab (\ol{E},\ol{C})) p\cap \ol{\mathcal Q}$ of $p \Le_K^\ab (\ol{E},\ol{C})  p$. 
	        Now observe that 
	        $$p\, \soc (\Le_K^\ab (\ol{E},\ol{C})) p \cong \bigoplus_{[T]\in \ol{\Omega}^{\text{iso}}_0/{\mathcal R}} M_{\delta (T)}(K),$$
	        	where $\delta (T)$ is the number of elements in the orbit of $T$ which belong to $\ol{\Y}_L(v)$, for some $v\in E^0$. Indeed a system of matrix units for the component 
	        	$M_{\delta(T)}(K)$ is given by the elements $e_{\alpha,\beta}$ as in Theorem \ref{thm:socle-tameLeav-algebra}, where $\alpha,\beta \in \FC(v)$ for some $v\in E^0$. 
	       It follows that there exists some $T'\in \ol{\Omega}^{\text{iso}}_0$ such that $T'\in \ol{\Y}_L (v)$ for some $v\in E^0$ and 
	       $\ol{\varphi} (e)\sim \e (T')$ within $p\, \soc (\Le_K^\ab (\ol{E},\ol{C}))p$.
	       Now let 
	       $$T'':= T'\setminus \{ \gamma d_X : \gamma d_X\in T' \},$$
	       which is a tree in $\Y(v)$ and set $T= (T'')_0 \in \Y_0(v)$. Then $\mathcal N (T)$ is finite because $T'$ has no exits with respect to $(\ol{E},\ol{C})$, and $\ol{\varphi} (\ecb (T)) = \e (T')$. It follows that       
	       $e\sim \ecb(T)$ within $\soc (\Co_K^\ab (E,C))$. In particular, $\ecb (T)$ is a minimal idempotent in $\mathcal Q$.

	       It remains to check that $\mathcal N (T)$ is non-empty. Set  $f := \ecb (T)= \e (T\setminus F)$, where $F$ is the maximum blocking family for $T$. We have to show that $F\ne \emptyset$.      
	      	      Since $f\in \mathcal Q$, it follows from Theorem \ref{thm:BQ-basis-ofQ} that we can write \begin{equation}
	      	      	\label{eq:equality-for-f}
	      	      	f= \sum_{i=1}^r \lambda_i \e (T_i\setminus F_i),
	      	      \end{equation} 
	      	      where $\lambda_i\in K^\times$ and $F_i$ is a non-empty blocking family for $T_i\in \Y_0$, $i=1,\dots , r$. We may assume that 
	      	      $f\e (T_i\setminus F_i)\ne 0$ for all $i$. 
	      	            	     With this assumption, we then have 
	      $f = f\e(T_i\setminus F_i)$ by the minimality of $f$. Hence multiplying the relation \eqref{eq:equality-for-f} by $\prod_{i=1} ^r \e (T_i\setminus F_i)$ we get
	      $$ f= (\sum_{i=1}^r \lambda_i)  \prod_{i=1} ^r \e (T_i\setminus F_i).$$
	      Hence $f=  \prod_{i=1} ^r \e (T_i\setminus F_i) = \e \Big( (\cup_{i=1}^r T_i)\setminus (\cup_{i=1}^r F_i)\Big)$ by Lemma \ref{lem:key-for-keyforbasis-of-L}, and thus $T= \cup_{i=1}^r T_i$, $F= \cup_{i=1}^r F_i \ne \emptyset $, by Lemma \ref{lem:blocking-are-different}. Hence $F\ne \emptyset$, and the proof is complete.   	           	
	\end{proof}

\begin{example}
	\label{exam:Q-for-nonseparated}
	If $E$ is a non-separated graph, the structure of the ideal $\mathcal Q$ is transparent, see \cite[Propositions 1.5.8 and 1.5.11]{AAS}. Each regular vertex $v$ gives rise to an idempotent $q_v = v -\sum_{e\in s^{-1}(v)} ee^*\in \mathcal Q$, and we have $\mathcal Q\subseteq \mathrm{soc} (\Co_K (E))$. The idempotents $q_v$ determine the different homogeneous components of $\mathcal Q$. These facts are no longer true for a separated graph, in which case the situation is much more involved, as we will see in the following examples. Note that, in the above situation, $\{v\}$ can be completely blocked, and $\ecb (v) = q_v$.   	
\end{example}

\begin{example}
	\label{exam:Cuntz-free-separation}
Let $X$ be any non-empty set, and let $E_X$ be the graph with just one vertex $v$ and with $E_X^1 = X$. We consider the free separation $C=C_v= \{\{e\} : e\in X \}$ on $E_X$. This example was considered in \cite[Examples 3.3 and 4.9]{ABC25}. We have
$$\soc (\Co_K^\ab (E_X,C)) = \begin{cases}
	M_{\infty} (K) & \text{ if } |X| = 1\\
\,\,\, \quad 	0 & \text{ if } |X|>1.
\end{cases}
$$
The case where $|X|= 1$ is well-known, and corresponds to a non-separated graph. In that case, $\Co_K^{\ab}(E_X,C)$ is just the usual Toeplitz algebra. 

Assume that $|X| >1$. Let $T\in \Y_0$. Then $T$ can be identified with a usual Munn tree over $X$, with the property that each maximal path does not end in $X^{-1}$. If $\gamma $ is a maximal path of $T$, then we can consider an element $\gamma \nu^{-1} y$, where $\nu$ is an arbitrary word in positive edges $x\in X$, in such a way that the word $\gamma \nu^{-1} y$ is reduced. Of course this is possible because $|X|>1$. Since $\nu$ has arbitrary length, we conclude that $\mathcal N (T)$ is infinite. Therefore $\cbY= \emptyset$ and thus $\soc ((\Co_K^\ab (E_X,C)) ) =0$.  	
		\end{example}

\begin{remark}
	\label{rem:Bavula}
	Although $\soc (\Co_K^\ab (E_X,C)) = 0$ when $|X|>1$, one may find a related algebra which has a nonzero socle, by imposing some extra commuting relations. Indeed Bavula introduced in \cite{bavula} the algebra $\mathbb S_n$ as the algebra generated by $x_1,\dots ,x_n,y_1,\dots ,y_n$ with the defining relations
	$$y_1x_1= \cdots = y_nx_n=1, \quad [x_i,x_j] = [y_i,y_j] = [x_i,y_j] = 0 \, \text{ for all } i\ne j.$$
	Clearly $\mathbb S_n$ is a quotient algebra of $\Co_K^\ab (E_{X_n},C)$, where $X_n= \{ x_1,\dots ,x_n\}$. 
	By \cite[Proposition 4.1]{bavula}, the socle of $\mathbb S_n$ is a simple, essential ideal of $\mathbb S_n$.
		\end{remark}

\begin{example}
	\label{exam:free-inverse-monoid}
Let $X$ be any non-empty set. The free inverse monoid $\FIM (X)$ is realized in \cite[Examples 3.4 and 4.10]{ABC25} as a corner of the inverse semigroup of a certain separated graph $(F_X, D)$, as follows.  
Let $F_X$ be the graph with $F_X^0 = \{ v\} \sqcup X$ and $F_X^1 = \{e_x,f_x : x\in X \}$, with $s(e_x) = s(f_x) = v$ and $r(e_x) = r(f_x) = x$ for all $x\in X$.
We take the free separation $D$ on $F_X$, so that $D_v = \{\{ e_x\},\{f_x\} : x\in X\}$ and $D_x= \emptyset$ for all $x\in X$. Then by \cite[Example 4.10]{ABC25} we have a (unique) monoid isomorphism $\varphi\colon \FIM (X) \to v\IS (F_X,D) v$ such that $\varphi (x)= e_xf_x^{-1}$ for all $x\in X$. Consequently, we get a $*$-isomorphism 
$$\varphi \colon K[\FIM (X)] \to v\cdot \Co_K^\ab (F_X,D)\cdot v$$
and so we can apply the theory of separated graphs to study the monoid algebra $K[\FIM (X)]$. 
It was shown in \cite[Proposition 4.5]{AG17} that, in the case where $|X|= 1$, the socle of $K[\FIM (X)]$ is isomorphic to $\bigoplus _{i=1}^\infty M_i(K)$, and it is an essential ideal of $K[\FIM (X)]$.

We can now generalize this result for any set $X$, as follows. The bipartite separated graph $(\ol{F_X}, \ol{D})$ consists in adding sinks $v_x, v_x'$ and edges $d_x,d_x'$ to $F_X$, for each $x\in X$, such that $s(d_x)= s(d_x')= v$, $r(d_x)=v_x$, $r(d_x')= v_x'$, and 
$$\ol{D}_v = \{ \{e_x,d_x\}, \{f_x,d_x'\} : x\in X\}.$$
Note that $d_xd_x^* = v-e_xe_x^*$ corresponds to $1-xx^*$ and $d_x'(d_x')^*$ corresponds to $1-x^*x$ under the isomorphism 
$$v\cdot \Le_K^\ab (\ol{F_X},\ol{D})\cdot v \cong v\cdot  \Co_K^\ab (F_X,D)\cdot v \cong K[\FIM (X)]$$
obtained from  Corollary \ref{cor:lfbipartite} and \cite[Example 4.10]{ABC25}. 

As shown in \cite[Example 4.10]{ABC25}, each $T\in \Y_0$ corresponds uniquely with an ordinary finite tree $T$, containing $1$, in the Cayley graph of the free group $\F$ on $X$.
We will refer to such a tree simply as a finite tree, and we will denote the set of finite trees by $\mathfrak M$. The elements of $\FIM (X)$ correspond to Munn trees $(T,g)$, where $T\in \mathfrak M$ and $g\in T$, see for instance \cite[Section 6.4]{lawson}. 

Given a finite tree $T\in \mathfrak M$, we define the {\it neighborhood} $\mathcal N (T)$ of $T$ as the set of all the elements $\gamma z \in \F$, where $\gamma \in T$, $z\in X\cup X^{-1}$, and $\gamma z \notin T$. In other words the set $\mathcal N (T)$ of {\it neighbors} of $T$ is the set of words which are at distance $1$ of $T$. Hence $\mathcal N (T)$ is always non-empty, and it is finite if and only if $X$ is finite. We say that $\gamma \in T$ is an {\it interior point} of $T$ if all the reduced words $\gamma z$, $z\in X\cup X^{-1}$, belong to $T$. Of course $T$ does not have interior points when $X$ is infinite. If $ \gamma z\in \mathcal N (T)$, we say that $\gamma z$ is a neighbor of $T$ at $\gamma \in T$.

Define an equivalence relation $\sim $ on the set $\mathfrak M$ of finite trees $T$ over $X$ by declaring that $T\sim S$ if and only if $S= g\cdot T$, where $g^{-1}\in T$. 

Suppose that $X$ is finite, and take $T\in \mathfrak M$. Then the idempotent $\ecb (T)$ corresponds to the evaluation $\e (T')$, where $T'$ is the tree obtained by adding to $T$ maximal elements $\gamma d_x$ for each $\gamma x \in \mathcal N(T)$, and $\gamma d'_x$ for each $\gamma x^{-1}\in \mathcal N (T)$, for each $x\in X$. Moreover, by Proposition \ref{prop:finite-orbits}, we have
$$\soc (\Le_K ^\ab (\ol{F_X},\ol{D})) \cong \bigoplus _{[T]\in \mathfrak M /\sim} M_{|T'|} (K).$$
Hence we get
$$\soc (K[\FIM (X)]) = p\cdot \soc (\Le_K ^\ab (\ol{F_X},\ol{D}))\cdot p \cong \bigoplus _{[T]\in \mathfrak M /\sim} M_{|T|} (K).$$
 
 In conclusion, we obtain the following result for $K[\FIM (X)]\cong v\cdot \Co_K^\ab (F_X,D) \cdot v$:
 
 $$\soc (K[\FIM (X)]) \cong \begin{cases}
 	\bigoplus _{[T]\in \mathfrak M /{\sim}} M_{|T|} (K) & \text{ if } |X| <\infty \\
 	\qquad \qquad 0 & \text{ if } |X| = \infty .
 	\end{cases}$$
 It is also easy to show that $\soc (K[\FIM (X)])$ is essential in $K[\FIM (X)]$ when $X$ is finite. 
 
\end{example}

 We fix a non-empty set $X$ and we set $A= K[\FIM (X)]$. We denote by $\mathcal I$ the ideal of $A$ generated by all the elements $1-xx^*$ and $1-x^*x$, where $x\in X$. 
 The ideal $\mathcal I$ corresponds to the ideal $v\mathcal Q v$ of $v\cdot \Co_K^\ab (F_X,D) \cdot v \cong A$.  The structure of the ideal $\mathcal I$ of $K[\FIM (X)]$ was completely determined in the case where $|X|=1$ in \cite[Lemma 4.4 and Proposition 4.5]{AG17}.
 
 We can easily translate our results in Section \ref{sect:relation-with-Cohn-algebras} to obtain a linear basis of $\mathcal I$. For a finite tree $T$ on $\F$, a finite set $F\subseteq \mathcal N (T)$ and $g\in T$, we define a {\it blocked Munn tree} $(T\setminus F,g)$ as the tree $(T',g)$, with maximal elements $\gamma d_x$ if $\gamma x\in F$, $\gamma d_x'$ if $\gamma x^{-1}\in F$, and $\gamma $ if $\gamma $ is a maximal element of $T$ such that no neighbor of $\gamma $ belongs to $F$. Note that $T'=T\setminus F$ is not a usual tree on $X$, because it has some extra paths ending in sinks $v_x$ or $v_x'$. Each blocked Munn tree $(T\setminus F, g)$ has an associated element $\e (T\setminus F)\rtimes g\in A$, where
 $$\e (T\setminus F) = \prod _{\lambda \in \max (T\setminus F)} \e (\lambda) , $$
 where of course for $\lambda = \gamma d_x$, $\e (\lambda) =  \gamma (1-xx^*)\gamma^*$ and for $\lambda = \gamma d_x'$, $\e (\lambda)= \gamma (1-x^*x)\gamma^*$. Note that the idempotents $\ecb (T)$ correspond exactly to $\e (T\setminus \mathcal N (T))$. 
 
 With this notation, the following result follows directly from Theorem \ref{thm:BQ-basis-ofQ}.    
 
 \begin{proposition}
 	\label{prop:basis-for-I}
 		Let $A= K[\FIM (X)]$, where $X$ is a non-empty set. Let $\mathcal I$ be the ideal of $A$ generated by the elements $1-xx^*$ and $1-x^*x$, for all $x\in X$. Then the set $\mathcal B (\mathcal I)$ consisting of all the elements $ \e (T\setminus F)\rtimes g$, where $(T\setminus F,g)$ is a blocked Munn tree such that all the maximal elements of $T$, except possibly $g$, are blocked by $F$, is a linear basis of $\mathcal I$. Hence 
 	$\mathcal B (\mathcal I)\cup \{1\rtimes g: g\in \F\}$ is a linear basis of $A$.    
 \end{proposition}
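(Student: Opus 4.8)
The plan is to read off $\mathcal B(\mathcal I)$ from the basis $\mathcal B(\mathcal Q)$ of Theorem~\ref{thm:BQ-basis-ofQ}, transported to the corner $v\cdot\Co_K^\ab(F_X,D)\cdot v\cong A$ through the isomorphism $\varphi$ of \cite[Example 4.10]{ABC25}. The first step is to identify $\mathcal I$ with $v\mathcal Q v$. Since $\varphi(x)=e_xf_x^{-1}$, a short computation with the inverse-semigroup relations gives $xx^*=e_xe_x^*$ and $x^*x=f_xf_x^*$, so that $\varphi(1-xx^*)=v-e_xe_x^*=q_{\{e_x\}}$ and $\varphi(1-x^*x)=v-f_xf_x^*=q_{\{f_x\}}$. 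Because $D$ is the free separation, the singletons $\{e_x\},\{f_x\}$ are exactly the members of $\Cfin$, so these $q_X$ generate $\mathcal Q$; hence $\varphi(\mathcal I)=v\mathcal Q v$ and it suffices to exhibit a basis of the corner $v\mathcal Q v$.

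The second step extracts such a basis from $\mathcal B(\mathcal Q)$. Each element $\e(T\setminus F)\rtimes g\in\mathcal B(\mathcal Q)$ is homogeneous for the bigrading of $\Co_K^\ab(F_X,D)$ by the orthogonal vertex idempotents, with left vertex the root $s(g)$ of $T$ and right vertex $r(g)$; thus $v\cdot(\e(T\setminus F)\rtimes g)\cdot v$ equals the element itself if $s(g)=r(g)=v$ and is $0$ otherwise. Since $v\mathcal Q v$ is the $(v,v)$-homogeneous component of $\mathcal Q$ for this bigrading, the subfamily of $\mathcal B(\mathcal Q)$ consisting of those $\e(T\setminus F)\rtimes g$ with $s(g)=r(g)=v$ is a basis of $v\mathcal Q v\cong\mathcal I$. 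Note that $s(g)=v$ already forces $T$ to be rooted at $v$.

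The heart of the proof is to rewrite these surviving elements as blocked Munn trees. Using the dictionary of \cite[Example 4.10]{ABC25}, trees $T\in\Y_0(v)$ correspond to finite trees $T\in\mathfrak M$ in the Cayley graph of $\F(X)$, loops $g$ with $s(g)=r(g)=v$ correspond to elements of $\F(X)$, and each prime block $\gamma_0(\gamma_1X)$ with $X$ a singleton of $D$ corresponds to a neighbour of $T$; thus non-empty blocking families correspond to non-empty $F\subseteq\mathcal N(T)$ and $\e(T\setminus F)$ to the associated blocked Munn tree idempotent. The delicate point, which I expect to be the main obstacle, is the translation of the condition that $\e(T\setminus F)$ be $\mathfrak E$-reduced relatively to $g_0$. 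Here the freeness of $D$ is decisive: the choice function $\mathfrak E$ is forced, so $U=\mathfrak E(\Cfin)$ is the whole set of positive edges of $F_X$, and therefore no path ending in a positive edge is $\mathfrak E$-reduced. As every non-trivial maximal element of a tree in $\Y_0$ ends in a positive edge, the $\mathfrak E$-reduced condition can only hold when the set of non-blocked maximal elements of $T$ is contained in $\{g_0\}$. Matching the maximal elements of $T\in\Y_0(v)$ with the non-root leaves of the corresponding free-group tree, and checking that a leaf of the latter is blocked exactly when the corresponding separated-graph maximal element is blocked by a prime block (reached by stepping one edge back to $v$), this becomes precisely the defining condition of $\mathcal B(\mathcal I)$: all maximal elements of $T\in\mathfrak M$ other than $g$ are blocked by $F$. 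This matching is purely combinatorial but must be carried out with care.

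Finally, for the ``Hence'' I would pass to the short exact sequence $0\to\mathcal I\to A\to A/\mathcal I\to 0$, which under $\varphi$ is the $v$-corner of $0\to\mathcal Q\to\Co_K^\ab(F_X,D)\to\Le_K^\ab(F_X,D)\to 0$, so that $A/\mathcal I\cong v\Le_K^\ab(F_X,D)v$. Since $D$ is free, by Example~\ref{exam:free-group-and-Dgprime} the only idempotents of $\LI(F_X,D)$ are the vertices, whence $v\Le_K^\ab(F_X,D)v\cong K[\F(X)]$ with basis $\{1\rtimes g:g\in\F\}$. Lifting this basis and adjoining it to $\mathcal B(\mathcal I)$ yields the claimed basis of $A$.
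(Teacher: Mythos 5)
Your proposal is correct and follows exactly the route the paper intends: the paper's entire proof is the one-line remark that the result ``follows directly from Theorem~\ref{thm:BQ-basis-ofQ}'', and your argument is precisely the detailed version of that deduction --- identify $\mathcal I$ with $v\mathcal Q v$ (using that $v$ is full, so the corner of the ideal is the ideal of the corner), restrict the homogeneous basis $\mathcal B(\mathcal Q)$ to the $(v,v)$-component, and observe that for the free separation the forced choice function makes ``$\mathfrak E$-reduced relatively to $g_0$'' equivalent to ``all maximal elements except possibly $g$ are blocked''. The translation into blocked Munn trees and the treatment of the ``Hence'' via $A/\mathcal I\cong K[\F(X)]$ are both sound.
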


 The blocked Munn trees $(T\setminus F,g)$ appearing in the basis $\mathcal B (\mathcal I)$ have the property that all maximal elements, with the possible exception of $g$, are of the form $\gamma d_x$ or $\gamma d_x'$ for some $x\in X$, so they are (at least partially) blocked. Observe that this basis is closed under products. The product is computed as follows:
 \begin{align*}
 	& (T_1\setminus F_1,g_1) \cdot (T_2\setminus F_2, g_2) \\
 	& = \begin{cases} (T_1\cup g_1\cdot T_2)\setminus (F_1\cup g_1\cdot F_2), g_1\cdot g_2) & \text{ if } T_1\cap g_1\cdot F_2 = g_1\cdot T_2\cap F_1= \emptyset \\
 	\qquad \quad 0 & \text{ otherwise }.
 	\end{cases}
 \end{align*} 
 
 Finally, we show that when $X$ is finite, the ideal $\mathcal I$ has a filtered structure. We say that $T\setminus F$ has $t$ exits if $|\mathcal N (T)\setminus F| = t$. 
 
 \begin{proposition}
 	\label{prop:filtered-I}
 	Let $X$ be a finite non-empty set, and set $A= K[\FIM (X)]$. Then the ideal $\mathcal I$ of $A$ has a filtered structure $\mathcal I =  \cup_{i=0}^\infty B_i$, where $B_i$ is the linear span of the set $\e(I\setminus F)\rtimes g$, where $I\setminus F$ has at most $i+1$ exits. 
 	In particular $B_0$ is a subalgebra of $A$ properly containing $\soc (A)$.
 	\end{proposition}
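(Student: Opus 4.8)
The plan is to read off everything from the explicit basis $\mathcal B(\mathcal I)$ of $\mathcal I$ furnished by Proposition~\ref{prop:basis-for-I}, together with the product formula for blocked Munn trees recorded immediately after it. Write $e(T\setminus F):=|\mathcal N(T)\setminus F|$ for the number of exits of a blocked Munn tree, so that $B_i$ is by definition the linear span of those basis elements $\e(T\setminus F)\rtimes g$ with $e(T\setminus F)\le i+1$. Since $X$ is finite, each $\mathcal N(T)$ is finite, so every element of $\mathcal B(\mathcal I)$ lies in some $B_i$; as $\mathcal B(\mathcal I)$ is a basis of $\mathcal I$ and plainly $B_0\subseteq B_1\subseteq\cdots$, this already gives $\mathcal I=\bigcup_{i\ge 0}B_i$ with an increasing filtration. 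The inclusion $\soc(A)\subseteq B_0$ and the properness $\soc(A)\ne B_0$ are then immediate: by the socle computation in Example~\ref{exam:free-inverse-monoid} (via Theorem~\ref{thm:socle-tameLeav-algebra}), $\soc(A)$ is exactly the linear span of the fully blocked elements $\ecb(T)\rtimes g=\e(T\setminus\mathcal N(T))\rtimes g$, that is, of the basis elements with $e(T\setminus F)=0$, all of which lie in $B_0$; and since $\mathcal B(\mathcal I)$ is linearly independent while $B_0$ also contains basis elements with exactly one exit (for instance $\e(\{1\}\setminus(\mathcal N(\{1\})\setminus\{x\}))\rtimes 1$ for a fixed $x\in X$), we conclude $\soc(A)\subsetneq B_0$.

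The real content is that $B_0$ is closed under multiplication, which I will deduce from the multiplicative estimate $B_iB_j\subseteq B_{i+j}$, and in particular $B_0B_0\subseteq B_0$. By bilinearity it suffices to bound the number of exits of a nonzero product of two basis elements $(T_1\setminus F_1,g_1)$ and $(T_2\setminus F_2,g_2)$; set $e_1:=e(T_1\setminus F_1)$ and $e_2:=e(T_2\setminus F_2)$. By the product rule the result is $(T\setminus F,g_1\cdot g_2)$ with $T=T_1\cup g_1\cdot T_2$ and $F=F_1\cup g_1\cdot F_2$, and from $\mathcal N(T)\subseteq\mathcal N(T_1)\cup g_1\cdot\mathcal N(T_2)$ together with $F\supseteq F_1$ and $F\supseteq g_1\cdot F_2$ one obtains the crude inclusion
$$\mathcal N(T)\setminus F\ \subseteq\ (\mathcal N(T_1)\setminus F_1)\ \cup\ g_1\cdot(\mathcal N(T_2)\setminus F_2),$$
whence $e(T\setminus F)\le e_1+e_2$. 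The improvement comes from the nonvanishing conditions $g_1\cdot T_2\cap F_1=\emptyset$ and $T_1\cap g_1\cdot F_2=\emptyset$: they say that every neighbour of $T_1$ lying in $g_1\cdot T_2$ is an \emph{unblocked} exit of $T_1\setminus F_1$, and symmetrically. Hence each edge through which $g_1\cdot T_2$ protrudes from $T_1$ ends at an exit of $T_1\setminus F_1$ that is then absorbed into $T$, and likewise with the roles reversed.

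Specializing to $B_0B_0\subseteq B_0$, assume $e_1,e_2\le 1$. If one tree contains the other, say $g_1\cdot T_2\subseteq T_1$, then $T=T_1$ and $\mathcal N(T)\setminus F\subseteq\mathcal N(T_1)\setminus F_1$, so $e(T\setminus F)\le e_1\le 1$. Otherwise neither contains the other; since $g_1\cdot T_2$ is connected and meets $T_1$ at $g_1$, it must cross into $\mathcal N(T_1)$, and by the condition above it can do so only through the unique exit $w_1$ of $T_1\setminus F_1$, so $w_1\in g_1\cdot T_2\subseteq T$. Symmetrically, $T_1$ protrudes from $g_1\cdot T_2$ only through the unique exit of $g_1\cdot T_2$, which therefore lies in $T_1\subseteq T$. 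Both candidate exits in the displayed inclusion are thus absorbed, giving $e(T\setminus F)=0\le 1$. The same bookkeeping — counting how many of the $e_1+e_2$ candidate exits are forced into $T$ — yields $e(T\setminus F)\le e_1+e_2-1$ whenever $e_1,e_2\ge 1$, and $e(T\setminus F)\le\max(e_1,e_2)$ when one of them vanishes, establishing $B_iB_j\subseteq B_{i+j}$ in general and in particular that $B_0$ is a subalgebra.

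The main obstacle is precisely this absorption argument: one must argue, using the two nonvanishing conditions, that the glued trees can protrude from one another only through their unblocked exits, so that two distinct surviving exits cannot be contributed by both factors at once. Everything else — the identification of $\soc(A)$ with the $0$-exit basis elements, the finiteness of each $\mathcal N(T)$, and the linear independence of $\mathcal B(\mathcal I)$ giving properness — is straightforward bookkeeping on the basis $\mathcal B(\mathcal I)$.
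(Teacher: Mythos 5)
Your proof is correct and follows essentially the same route as the paper's: both rest on the basis $\mathcal B(\mathcal I)$ of Proposition~\ref{prop:basis-for-I}, the product formula for blocked Munn trees, and the observation that the nonvanishing conditions $T_1\cap g_1\cdot F_2=g_1\cdot T_2\cap F_1=\emptyset$ force any protrusion of one glued tree beyond the other to pass through, and hence absorb, an unblocked exit, yielding $B_iB_j\subseteq B_{i+j}$. The only differences are cosmetic — you split the cases as ``one tree contains the other'' versus ``neither does'' (absorbing an exit on each side) where the paper uses $T_1=T_2$ versus $T_1\nsubseteq T_2$ (absorbing a single exit), and you spell out the identification of $\soc(A)$ with the zero-exit span, which the paper leaves implicit.
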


  \begin{proof}
  	We have to show that $B_iB_j\subseteq B_{i+j}$ for all $i,j\in \Z^+$.
  	Suppose that $T_1\setminus F_1$ has $i+1$ exits and $T_2\setminus F_2$ has $j+1$ exits, and that $T_1\cap F_2 = T_2\cap F_1= \emptyset$. We have to show that $(T_1\cup T_2)\setminus (F_1\cup F_2)$ has $\le i+j+1$ exits. If $T_1= T_2$, then 
  	$$\mathcal N (T_1\cup T_2) -|F_1\cup F_2| \le  |\mathcal N (T_1)| -|F_1| \le i+1 \le i+j+1,$$
  	because $j\ge 0$. If $T_1\nsubseteq T_2$, take $\gamma \in T_1\setminus T_2$. Then $\gamma = \gamma _1 z \gamma_2$, where $\gamma _1 \in T_2$, $ \gamma _1 z\notin T_2$ and $z\in X\cup X^{-1}$, so that $\gamma _1 z \in \mathcal N (T_2)$ and $\gamma _1 z \notin F_2$ since $T_1\cap F_2 = \emptyset$.  
  	Now observe that
  	$$\mathcal N (T_1\cup T_2)\setminus (F_1\cup F_2 ) \subseteq (\mathcal N (T_1)\setminus F_1) \cup ((\mathcal N (T_2)\setminus F_2)\setminus \{\gamma_1 z\}),$$
  	hence $|\mathcal N (T_1\cup T_2)\setminus (F_1\cup F_2)| \le (i+1) + (j+1-1) = i+j+1$, and thus $(T_1\cup T_2)\setminus (F_1\cup F_2)$ has $\le i+j+1$ exits. By symmetry, the same inequality holds if $T_2\nsubseteq T_1$. 
    \end{proof}

The subalgebra $B_0$ of $A$ has some interesting properties. We always have $\soc (B_0)= \soc (A)$, and $B_0/\soc (A)= \soc (A/\soc (A))$ holds for $|X|=1$ by \cite[Lemma 4.4]{AG17}, but it does not hold for $|X|>1$. However by the proof of the above result, given $T_1\setminus F_1, T_2\setminus F_2$ with at most one exit, either $\e (T_1\setminus F_1) \e( T_2\setminus F_2) \in \soc (A)$, or $\e(T_1\setminus F_1) \le \e(T_2\setminus F_2)$, or $\e (T_2\setminus F_2)\le \e (T_1\setminus F_1)$.

\begin{bibdiv}
	\begin{biblist}

\bib{abrams05}{article}{
    AUTHOR = {Abrams, Gene},
    AUTHOR={Aranda Pino, Gonzalo},
     TITLE = {The {L}eavitt path algebra of a graph},
   JOURNAL = {J. Algebra},
  FJOURNAL = {Journal of Algebra},
    VOLUME = {293},
      YEAR = {2005},
    NUMBER = {2},
     PAGES = {319--334},
      ISSN = {0021-8693,1090-266X},
   MRCLASS = {46L05 (16G20)},
  MRNUMBER = {2172342},
MRREVIEWER = {N\'andor\ Sieben},
       DOI = {10.1016/j.jalgebra.2005.07.028},
       URL = {https://doi.org/10.1016/j.jalgebra.2005.07.028},
}

\bib{abrams15}{article}{
    AUTHOR = {Abrams, Gene},
     TITLE = {Leavitt path algebras: the first decade},
   JOURNAL = {Bull. Math. Sci.},
  FJOURNAL = {Bulletin of Mathematical Sciences},
    VOLUME = {5},
      YEAR = {2015},
    NUMBER = {1},
     PAGES = {59--120},
      ISSN = {1664-3607,1664-3615},
   MRCLASS = {16S99},
  MRNUMBER = {3319981},
MRREVIEWER = {Enrique\ Pardo},
       DOI = {10.1007/s13373-014-0061-7},
       URL = {https://doi.org/10.1007/s13373-014-0061-7},
}

\bib{AAS}{book}{
   author={Abrams, Gene},
   author={Ara, Pere},
   author={Siles Molina, Mercedes},
   title={Leavitt path algebras},
   series={Lecture Notes in Mathematics},
   volume={2191},
   publisher={Springer, London},
   date={2017},
   pages={xiii+287},
   isbn={978-1-4471-7343-4},
   isbn={978-1-4471-7344-1},
   review={\MR{3729290}},
}

\bib{ADN22}{article}{
	AUTHOR = {Abrams, G.},
	AUTHOR = {Dokuchaev, M.}
    AUTHOR = {Nam, T. G.},
	TITLE = {Realizing corners of {L}eavitt path algebras as {S}teinberg
		algebras, with corresponding connections to graph
		{$C^\ast$}-algebras},
	JOURNAL = {J. Algebra},
	FJOURNAL = {Journal of Algebra},
	VOLUME = {593},
	YEAR = {2022},
	PAGES = {72--104},
	ISSN = {0021-8693,1090-266X},
	MRCLASS = {16S88 (05C25 46L05)},
	MRNUMBER = {4343723},
	DOI = {10.1016/j.jalgebra.2021.11.004},
	URL = {https://doi.org/10.1016/j.jalgebra.2021.11.004},
}

\bib{zel}{article}{
	AUTHOR = {Alahmadi, Adel},  
	AUTHOR = {Alsulami, Hamed},
	AUTHOR = {Jain, S.K.},
	AUTHOR = {Zelmanov, Efim},
	TITLE = {Leavitt path algebras of finite {G}elfand-{K}irillov
		dimension},
	JOURNAL = {J. Algebra Appl.},
	FJOURNAL = {Journal of Algebra and its Applications},
	VOLUME = {11},
	YEAR = {2012},
	NUMBER = {6},
	PAGES = {1250225, 6},
	ISSN = {0219-4988,1793-6829},
	MRCLASS = {16S99 (05C20 16P90)},
	MRNUMBER = {2997464},
	MRREVIEWER = {G\"unter\ R.\ Krause},
	DOI = {10.1142/S0219498812502258},
	URL = {https://doi.org/10.1142/S0219498812502258},
}

\bib{AraWeighted}{article}{
    AUTHOR = {Ara, Pere},
     TITLE = {Leavitt path algebras of weighted and separated graphs},
   JOURNAL = {J. Aust. Math. Soc.},
    VOLUME = {115},
      YEAR = {2023},
    NUMBER = {1},
     PAGES = {1--25},
      ISSN = {1446-7887,1446-8107},
    review={\MR{4615463}},
       DOI = {10.1017/S1446788722000155},
       URL = {https://doi.org/10.1017/S1446788722000155},
}

\bib{ABPS}{article}{
    AUTHOR = {Ara, Pere}, 
    AUTHOR = {Bosa, Joan}, 
    AUTHOR = {Pardo, Enrique},
    AUTHOR = {Sims, Aidan},
     TITLE = {The groupoids of adaptable separated graphs and their type
              semigroups},
   JOURNAL = {Int. Math. Res. Not. IMRN},
      YEAR = {2021},
    NUMBER = {20},
     PAGES = {15444--15496},
      ISSN = {1073-7928,1687-0247},
      review={\MR{4329873}},
       DOI = {10.1093/imrn/rnaa022},
       URL = {https://doi.org/10.1093/imrn/rnaa022},
}

\bib{ABC23}{article}{
    AUTHOR = {Ara, Pere}, 
    AUTHOR = {Buss, Alcides},
    AUTHOR = {Dalla Costa, Ado},
     TITLE = {Free actions of groups on separated graph {$C^*$}-algebras},
   JOURNAL = {Trans. Amer. Math. Soc.},
    VOLUME = {376},
      YEAR = {2023},
    NUMBER = {4},
     PAGES = {2875--2919},
      ISSN = {0002-9947},
  review={\MR{4557884}},
       DOI = {10.1090/tran/8839},
       URL = {https://doi.org/10.1090/tran/8839},
}

\bib{ABC25}{article}{
	AUTHOR = {Ara, Pere},
	AUTHOR = {Buss, Alcides},
	AUTHOR = {Dalla Costa, Ado},
	TITLE = {Inverse semigroups of separated graphs and associated
		algebras},
	JOURNAL = {Bull. Braz. Math. Soc. (N.S.)},
	FJOURNAL = {Bulletin of the Brazilian Mathematical Society. New Series.
		Boletim da Sociedade Brasileira de Matem\'atica},
	VOLUME = {56},
	YEAR = {2025},
	NUMBER = {3},
	PAGES = {Paper No. 38, 55},
	ISSN = {1678-7544,1678-7714},
	MRCLASS = {46L55 (20M18)},
	MRNUMBER = {4922835},
	DOI = {10.1007/s00574-025-00462-7},
	URL = {https://doi.org/10.1007/s00574-025-00462-7},
}

\bib{AC24}{article}{
    AUTHOR = {Ara, Pere},
    AUTHOR = {Claramunt, Joan},
     TITLE = {A correspondence between surjective local homeomorphisms and a
              family of separated graphs},
   JOURNAL = {Discrete Contin. Dyn. Syst.},
  FJOURNAL = {Discrete and Continuous Dynamical Systems},
    VOLUME = {44},
      YEAR = {2024},
    NUMBER = {5},
     PAGES = {1178--1266},
      ISSN = {1078-0947,1553-5231},
   MRCLASS = {37B10 (16S88 37A55 46L55)},
  MRNUMBER = {4714538},
       DOI = {10.3934/dcds.2023143},
       URL = {https://doi.org/10.3934/dcds.2023143},
}

\bib{Ara-Exel:Dynamical_systems}{article}{
  author={Ara, Pere},
  author={Exel, Ruy},
  title={Dynamical systems associated to separated graphs, graph algebras, and paradoxical decompositions},
  journal={Adv. Math.},
  volume={252},
  date={2014},
  pages={748--804},
  issn={0001-8708},
  review={\MR {3144248}},
  doi={10.1016/j.aim.2013.11.009},
}

\bib{AG12}{article}{
    AUTHOR = {Ara, Pere}, 
    AUTHOR = {Goodearl, Kenneth R.},
     TITLE = {Leavitt path algebras of separated graphs},
   JOURNAL = {J. Reine Angew. Math.},
    VOLUME = {669},
      YEAR = {2012},
     PAGES = {165--224},
      ISSN = {0075-4102},
      review={\MR{2980456}},
       DOI = {10.1515/crelle.2011.146},
       URL = {https://doi.org/10.1515/crelle.2011.146},
}

\bib{AG17}{article}{
	AUTHOR = {Ara, Pere}
	AUTHOR = {Goodearl, Kenneth R.},
	TITLE = {The realization problem for some wild monoids and the {A}tiyah
		problem},
	JOURNAL = {Trans. Amer. Math. Soc.},
	FJOURNAL = {Transactions of the American Mathematical Society},
	VOLUME = {369},
	YEAR = {2017},
	NUMBER = {8},
	PAGES = {5665--5710},
	ISSN = {0002-9947,1088-6850},
	MRCLASS = {16D40 (16D70 16E50 19A13 20M25)},
	MRNUMBER = {3646775},
	MRREVIEWER = {Markus\ Schmidmeier},
	DOI = {10.1090/tran/6889},
	URL = {https://doi.org/10.1090/tran/6889},
}

\bib{AraLolk}{article}{
    AUTHOR = {Ara, Pere},
    author = {Lolk, Matias},
     TITLE = {Convex subshifts, separated {B}ratteli diagrams, and ideal
              structure of tame separated graph algebras},
   JOURNAL = {Adv. Math.},
    VOLUME = {328},
      YEAR = {2018},
     PAGES = {367--435},
      ISSN = {0001-8708,1090-2082},
      DOI = {10.1016/j.aim.2018.01.020},
       URL = {https://doi.org/10.1016/j.aim.2018.01.020},
}

\bib{AMP}{article}{
    AUTHOR = {Ara, Pere},
    AUTHOR = {Moreno, M. \'Angeles},
    AUTHOR  = {Pardo, Enrique},
     TITLE = {Nonstable {$K$}-theory for graph algebras},
   JOURNAL = {Algebr. Represent. Theory},
  FJOURNAL = {Algebras and Representation Theory},
    VOLUME = {10},
      YEAR = {2007},
    NUMBER = {2},
     PAGES = {157--178},
      ISSN = {1386-923X,1572-9079},
   MRCLASS = {46L80 (46L05)},
  MRNUMBER = {2310414},
MRREVIEWER = {Mark\ Tomforde},
       DOI = {10.1007/s10468-006-9044-z},
       URL = {https://doi.org/10.1007/s10468-006-9044-z},
}

\bib{AMBMGS2010}{article}{
    AUTHOR = {Aranda Pino, Gonzalo},
    AUTHOR = {Mart\'in Barquero, Dolores},
    AUTHOR = {Mart\'in Gonz\'alez, C\'andido},
    AUTHOR = {Siles Molina, Mercedes},
     TITLE = {Socle theory for {L}eavitt path algebras of arbitrary graphs},
   JOURNAL = {Rev. Mat. Iberoam.},
  FJOURNAL = {Revista Matem\'atica Iberoamericana},
    VOLUME = {26},
      YEAR = {2010},
    NUMBER = {2},
     PAGES = {611--638},
      ISSN = {0213-2230,2235-0616},
   MRCLASS = {16S99 (05C25)},
  MRNUMBER = {2677009},
MRREVIEWER = {Pere\ Ara},
       DOI = {10.4171/RMI/611},
       URL = {https://doi.org/10.4171/RMI/611},
}

\bib{AshHall}{article}{
  author={Ash, C. J.}, 
  author={Hall, T. E.},
  title={Inverse semigroups on graphs},
  journal={Semigroup Forum},
  volume={11},
  year={1975/76},
  number={2},
  pages={140--145},
  issn={0037-1912},
  review={\MR{387449}},
  doi={10.1007/BF02195262},
  url={https://doi.org/10.1007/BF02195262},
}

\bib{Bergman-Diamond}{article}{
    AUTHOR = {Bergman, George M.},
     TITLE = {The diamond lemma for ring theory},
   JOURNAL = {Adv. in Math.},
    VOLUME = {29},
      YEAR = {1978},
    NUMBER = {2},
     PAGES = {178--218},
      ISSN = {0001-8708},
      review={\MR{506890}},
       DOI = {10.1016/0001-8708(78)90010-5},
       URL = {https://doi.org/10.1016/0001-8708(78)90010-5},
}

\bib{bavula}{article}{
	AUTHOR = {Bavula, V. V.},
	TITLE = {The algebra of one-sided inverses of a polynomial algebra},
	JOURNAL = {J. Pure Appl. Algebra},
	FJOURNAL = {Journal of Pure and Applied Algebra},
	VOLUME = {214},
	YEAR = {2010},
	NUMBER = {10},
	PAGES = {1874--1897},
	ISSN = {0022-4049,1873-1376},
	MRCLASS = {16S99},
	MRNUMBER = {2608115},
	MRREVIEWER = {J.\ Kuzmanovich},
	DOI = {10.1016/j.jpaa.2009.12.033},
	URL = {https://doi.org/10.1016/j.jpaa.2009.12.033},
}

\bib{BaH2015}{article}{
    AUTHOR = {Brown, Jonathan H.},
    AUTHOR = {an Huef, Astrid},
     TITLE = {The socle and semisimplicity of a {K}umjian-{P}ask algebra},
   JOURNAL = {Comm. Algebra},
  FJOURNAL = {Communications in Algebra},
    VOLUME = {43},
      YEAR = {2015},
    NUMBER = {7},
     PAGES = {2703--2723},
      ISSN = {0092-7872,1532-4125},
   MRCLASS = {16S99 (16D60 16D70 16W50)},
  MRNUMBER = {3354056},
MRREVIEWER = {K.\ R.\ Goodearl},
       DOI = {10.1080/00927872.2014.888560},
       URL = {https://doi.org/10.1080/00927872.2014.888560},
}

\bib{CFST2014}{article}{
    AUTHOR = {Clark, Lisa Orloff},
    AUTHOR = {Farthing, Cynthia},
    AUTHOR = {Sims, Aidan},
    AUTHOR = {Tomforde, Mark},
     TITLE = {A groupoid generalisation of {L}eavitt path algebras},
   JOURNAL = {Semigroup Forum},
  FJOURNAL = {Semigroup Forum},
    VOLUME = {89},
      YEAR = {2014},
    NUMBER = {3},
     PAGES = {501--517},
      ISSN = {0037-1912,1432-2137},
   MRCLASS = {16S99 (20L05 22A22)},
  MRNUMBER = {3274831},
MRREVIEWER = {Roozbeh\ Hazrat},
       DOI = {10.1007/s00233-014-9594-z},
       URL = {https://doi.org/10.1007/s00233-014-9594-z},
}

\bib{CCMMR25}{article}{
    author={Lisa Orloff Clark}
    author={Cristóbal Gil Canto}
    author ={Dolores Martín Barquero}
    author = {Cándido Martín González}
    author = {Iván Ruiz Campos}
    TITLE = {On the socle of a class of {S}teinberg algebras},
   JOURNAL = {Linear Algebra Appl.},
  FJOURNAL = {Linear Algebra and its Applications},
    VOLUME = {728},
      YEAR = {2026},
     PAGES = {449--464},
      ISSN = {0024-3795,1873-1856},
   MRCLASS = {16S99 (16D25 16D70 16S88 22A22 46L55)},
  MRNUMBER = {4964174},
       DOI = {10.1016/j.laa.2025.09.016},
       URL = {https://doi.org/10.1016/j.laa.2025.09.016},
}

\bib{CorHaz24}{article}{
    AUTHOR = {Corti\~nas, Guillermo}
    AUTHOR = {Hazrat, Roozbeh},
     TITLE = {Classification conjectures for {L}eavitt path algebras},
   JOURNAL = {Bull. Lond. Math. Soc.},
  FJOURNAL = {Bulletin of the London Mathematical Society},
    VOLUME = {56},
      YEAR = {2024},
    NUMBER = {10},
     PAGES = {3011--3060},
      ISSN = {0024-6093,1469-2120},
   MRCLASS = {16S88 (19K14 19K35 46L80)},
  MRNUMBER = {4808570},
MRREVIEWER = {Jonathan\ H.\ Brown},
       DOI = {10.1112/blms.13139},
       URL = {https://doi.org/10.1112/blms.13139},
}

\bib{CorMon21}{article}{
    AUTHOR = {Corti\~nas, Guillermo},
    AUTHOR = {Montero, Diego},
     TITLE = {Algebraic bivariant {$K$}-theory and {L}eavitt path algebras},
   JOURNAL = {J. Noncommut. Geom.},
  FJOURNAL = {Journal of Noncommutative Geometry},
    VOLUME = {15},
      YEAR = {2021},
    NUMBER = {1},
     PAGES = {113--146},
      ISSN = {1661-6952,1661-6960},
   MRCLASS = {19K35 (16D70 16S88 19D50)},
  MRNUMBER = {4248209},
       DOI = {10.4171/jncg/397},
       URL = {https://doi.org/10.4171/jncg/397},
}

\bib{Exel:Inverse_combinatorial}{article}{
  author={Exel, Ruy},
  title={Inverse semigroups and combinatorial $C^*$\nobreakdash-algebras},
  journal={Bull. Braz. Math. Soc. (N.S.)},
  volume={39},
  date={2008},
  number={2},
  pages={191--313},
  issn={1678-7544},
  review={\MR{2419901}},
  doi={10.1007/s00574-008-0080-7},
}

\bib{exel2025}{article}{
      title={Consonant inverse semigroups}, 
      author={Ruy Exel},
      year={2025},
      eprint={arXiv:2508.17552 [math:OA]},
      url={https://arxiv.org/abs/2508.17552}, 
}

\bib{GR2025}{article}{
    AUTHOR = {Gon\c calves, Daniel},
    AUTHOR = {Royer, Danilo},
     TITLE = {Irreducible representations of one-sided subshift algebras},
   JOURNAL = {Results Math.},
  FJOURNAL = {Results in Mathematics},
    VOLUME = {80},
      YEAR = {2025},
    NUMBER = {6},
     PAGES = {Paper No. 186, 16},
      ISSN = {1422-6383,1420-9012},
   MRCLASS = {16S10 (16G20 16G30 16S88)},
  MRNUMBER = {4950605},
       DOI = {10.1007/s00025-025-02504-4},
       URL = {https://doi.org/10.1007/s00025-025-02504-4},
}

\bib{jac}{book}{
	AUTHOR = {Jacobson, Nathan},
	TITLE = {Structure of rings},
	SERIES = {American Mathematical Society Colloquium Publications},
	VOLUME = {Vol. 37},
	EDITION = {Revised},
	PUBLISHER = {American Mathematical Society, Providence, RI},
	YEAR = {1964},
	PAGES = {ix+299},
	MRCLASS = {16.00 (13.00)},
	MRNUMBER = {222106},
	MRREVIEWER = {Carl\ Faith},
}

\bib{JonesLawson}{article}{
  author={Jones, David G.},
  author={Lawson, Mark V.},
  title={Graph inverse semigroups: their characterization and completion},
  journal={J. Algebra},
  volume={409},
  year={2014},
  pages={444--473},
  issn={0021-8693},
  review={\MR {3198850}},
  doi={10.1016/j.jalgebra.2014.04.001},
  url={https://doi.org/10.1016/j.jalgebra.2014.04.001},
}

\bib{KocOzaydin2020}{article}{
    AUTHOR = {Ko\c c, Ayten}, 
    AUTHOR = {\"Ozayd\i n, Murad},
     TITLE = {Representations of {L}eavitt path algebras},
   JOURNAL = {J. Pure Appl. Algebra},
  FJOURNAL = {Journal of Pure and Applied Algebra},
    VOLUME = {224},
      YEAR = {2020},
    NUMBER = {3},
     PAGES = {1297--1319},
      ISSN = {0022-4049,1873-1376},
   MRCLASS = {16G20 (16D90 16G60 16S88)},
  MRNUMBER = {4009579},
MRREVIEWER = {Alireza\ Nasr-Isfahani},
       DOI = {10.1016/j.jpaa.2019.07.018},
       URL = {https://doi.org/10.1016/j.jpaa.2019.07.018},
}

\bib{lawson}{book}{
  author={Lawson, Mark V.},
  title={Inverse semigroups},
  note={The theory of partial symmetries},
  publisher={World Scientific Publishing Co., Inc., River Edge, NJ},
  year={1998},
  pages={xiv+411},
  isbn={981-02-3316-7},
  review={\MR {1694900}},
  doi={10.1142/9789812816689},
  url={https://doi.org/10.1142/9789812816689},
}

\bib{Lolk:tame}{article}{
  author={Lolk, Matias},
  title={Exchange rings and real rank zero C*-algebras associated with finitely separated graphs},  
  journal={Preprint arXiv:1705.04494},
  year={2017},
}

\bib{LuoWhangWei23}{article}{
    AUTHOR = {Luo, Yongle},
    AUTHOR = {Wang, Zhengpan},
    AUTHOR = {Wei, Jiaqun},
     TITLE = {Distributivity in congruence lattices of graph inverse
              semigroups},
   JOURNAL = {Comm. Algebra},
  FJOURNAL = {Communications in Algebra},
    VOLUME = {51},
      YEAR = {2023},
    NUMBER = {12},
     PAGES = {5046--5053},
      ISSN = {0092-7872,1532-4125},
   MRCLASS = {20M18 (05C20 06D05)},
  MRNUMBER = {4652643},
MRREVIEWER = {Desmond\ G.\ FitzGerald},
       DOI = {10.1080/00927872.2023.2224450},
       URL = {https://doi.org/10.1080/00927872.2023.2224450},
}

\bib{marg-meakin-93}{article}{
	AUTHOR={Margolis, Stuart W.},
	AUTHOR={Meakin, John},
	TITLE = {Free inverse monoids and graph immersions},
	JOURNAL = {Internat. J. Algebra Comput.},
	VOLUME = {3},
	YEAR = {1993},
	NUMBER = {1},
	PAGES = {79--99},
	ISSN = {0218-1967,1793-6500},
       review={\MR {1214007}},
	DOI = {10.1142/S021819679300007X},
	URL = {https://doi.org/10.1142/S021819679300007X},
}

\bib{meakin-milan-wang-2021}{article}{
    AUTHOR = {Meakin, John},
    author = {Milan, David},
    author = {Wang, Zhengpan},
     TITLE = {On a class of inverse semigroups related to {L}eavitt path
              algebras},
   JOURNAL = {Adv. Math.},
    VOLUME = {384},
      YEAR = {2021},
     PAGES = {Paper No. 107729, 37},
      ISSN = {0001-8708,1090-2082},
 review = {\MR{4242903}},
       DOI = {10.1016/j.aim.2021.107729},
       URL = {https://doi.org/10.1016/j.aim.2021.107729},
}

\bib{meakin-wang-2021}{article}{
  author={Meakin, John},
  author = {Wang, Zhengpan},
  title={On graph inverse semigroups},
  journal={Semigroup Forum},
  volume={102},
  year={2021},
  number={1},
  pages={217--234},
  issn={0037-1912},
 review={\MR {4214502}},
  doi={10.1007/s00233-020-10130-5},
  url={https://doi.org/10.1007/s00233-020-10130-5},
}

\bib{mesyan-mitchell-2016}{article}{
   author={Mesyan, Zachary},
   author={Mitchell, J. D.},
   title={The structure of a graph inverse semigroup},
   journal={Semigroup Forum},
   volume={93},
   date={2016},
   number={1},
   pages={111--130},
   issn={0037-1912},
   review={\MR{3528431}},
   doi={10.1007/s00233-016-9793-x},
}

\bib{Paterson:Groupoids}{book}{
  author={Paterson, Alan L. T.},
  title={Groupoids, inverse semigroups, and their operator algebras},
  series={Progress in Mathematics},
  volume={170},
  publisher={Birkh\"auser Boston Inc.},
  place={Boston, MA},
  date={1999},
  pages={xvi+274},
  isbn={0-8176-4051-7},
  review={\MR{1724106}},
  doi={10.1007/978-1-4612-1774-9},
}

\bib{Ruiz25}{article}{
    AUTHOR = {Ruiz, Efren},
     TITLE = {The algebraic {K}irchberg-{P}hillips question for {L}eavitt
              path algebras},
   JOURNAL = {Bull. Lond. Math. Soc.},
  FJOURNAL = {Bulletin of the London Mathematical Society},
    VOLUME = {57},
      YEAR = {2025},
    NUMBER = {4},
     PAGES = {1229--1248},
      ISSN = {0024-6093,1469-2120},
   MRCLASS = {16S88 (37B10 46L35)},
  MRNUMBER = {4894335},
       DOI = {10.1112/blms.70027},
       URL = {https://doi.org/10.1112/blms.70027},
}

\bib{Steinberg2010}{article}{
    AUTHOR = {Steinberg, Benjamin},
     TITLE = {A groupoid approach to discrete inverse semigroup algebras},
   JOURNAL = {Adv. Math.},
    VOLUME = {223},
      YEAR = {2010},
    NUMBER = {2},
     PAGES = {689--727},
      ISSN = {0001-8708,1090-2082},
       DOI = {10.1016/j.aim.2009.09.001},
       URL = {https://doi.org/10.1016/j.aim.2009.09.001},
}

\bib{Steinberg2019}{article}{
	AUTHOR = {Steinberg, Benjamin},
	TITLE = {Prime \'etale groupoid algebras with applications to inverse
		semigroup and {L}eavitt path algebras},
	JOURNAL = {J. Pure Appl. Algebra},
	FJOURNAL = {Journal of Pure and Applied Algebra},
	VOLUME = {223},
	YEAR = {2019},
	NUMBER = {6},
	PAGES = {2474--2488},
	ISSN = {0022-4049,1873-1376},
	MRCLASS = {20M18 (16S36 16S99 18F20 20M25 22A22)},
	MRNUMBER = {3906559},
	MRREVIEWER = {Leonid\ M.\ Martynov},
	DOI = {10.1016/j.jpaa.2018.09.003},
	URL = {https://doi.org/10.1016/j.jpaa.2018.09.003},
}

\end{biblist}
\end{bibdiv}

\end{document}